\newtheorem{thm}{Theorem}[section]
\newtheorem{cor}[thm]{Corollary}
\newtheorem{prop}[thm]{Proposition}
\newtheorem{cl}[thm]{Claim}
\newtheorem{fact}[thm]{Fact}
\begin{document}

 \title{  A negative answer to the problem: are stratifiable spaces $M_1$?}
 \author{Huaipeng Chen$^*$ and Bosen Wang$^{***}$  }

  \address {$^*$School of Mathematics and computer science, Shaanxi SCI-TECH University, Hanzhong,
    723000, Shaanxi P. R. China}
\email{chenhp@snut.edu.cn}

 \address {$^{***}$No.11 Jin Rong Avenue, Xi Cheng District, Beijing, P.R.China. 100033}
\email{wangbosen@pku.edu.cn}

 \thanks{$^*$ the author is supported by  The National Natural Science
            Foundation of China (No.11171162).
            \\
  keywords:{ stratifiable spaces, $M_1$-spaces, metric spaces,
      $g$-functions, unbounded sets}
 \\
  subjclass[2010]{ Primary: 54G20. Secondary: 04A20, 54E35, 54E20, 54E99}}

\begin{abstract}
 In accordance with  $M_3$-structures in paper [4],
  we construct  a stratifiable space
  which is not $M_1$-spaces.
\end{abstract}

\maketitle

\section{Introduction}
``Are stratifiable spaces $M_1$?'' is a well-known
   problem in set-theoretic topology. It comes from \cite{c}.
   Ceder \cite{c} defined $M_i$-spaces
   ($i= 1, 2, 3$) and proved $M_1 \Rightarrow M_2
   \Rightarrow M_3$.
   It is an interesting
   problem  whether
   these implications can be reversed.
   Borges  \cite{b} gave a characterization of
   $M_3$-spaces and renamed $M_3$-spaces as
   stratifiable spaces.
    Gruenhage  \cite{g} and Junnila \cite{j}
   proved that stratifiable spaces are
   $M_2$-spaces independently.
   Their results aroused people's great interest to
   the problem ``Are stratifiable spaces $M_1$?''.
   It\={o} and Tamano \cite{i}
   using closed mappings  got interesting results.
   T.Mizokami got some important progresses on the problem
   in \cite{tm},\cite{tmi} and \cite{tmiz}.
   Also there are many important results on
   stratifiable spaces commended by
   surveys of Tamano \cite{t}, Gruenhage \cite{gr} and
   \cite{gru}, Burke and Luter \cite{bl}.
   Wang made some comment about the problem
   and called it ``Problem on Generalized Metric Spaces"
   in \cite{W}.
   In 1990,  Rudin made some comment about the problem
   in her well-known paper ``Some Conjectures" in \cite{Ru}.
\par

   In 2000, Gartside and Reznichenko \cite{GR} gave out $C_k(P)$-spaces
   which was commended by Gruenhage \cite{grue},
   and was researched in wide range.
   In 2008, Chen \cite{ch}
   got a new characterization of stratifiable spaces.
   In this paper, we use an idea of Chen \cite{ch} wholly
   to prove the following main theorem:
\par

  Theorem 1. There exists a space $(X, \tau)$ which is a stratifiable space, and is not an $M_1$-space.
\par

   Then, by section 8 in \cite{t}, Theorem 1 gives a negative answer to the following questions:
\par

-Is every $M_3$-space an $M_1$-space (Ceder \cite{c} 1961)? .
\par

-Does any point in a stratifiable space
  have a $\sigma$-closure-preserving base (Tamano \cite{t} 1989)?
\par

-Is every (closed) subspace of an $M_1$-space
  an $M_1$-space (Ceder \cite{c} 1961)?
\par

-Is the closed image of an $M_1$-space
  an $M_1$-space (Ceder \cite{c} 1961)?
\par

-Is the perfect image of an $M_1$-space
  an $M_1$-space (Burke and Lutzer \cite{bl} 1976)?
\par

-Is every stratifiable spaces a $\mu$-space (Tamano \cite{t} 1985)?
\par

-Is each zero-dimension submetric stratifiable space $X$ with an
 $M_3$-structure an $M_1$-space (Chen \cite{ch} 2008)?
\par

Theorem 1 suggest some questions too in this paper.
\par

   Recall that a space $X$ is an
   $M_1$-\textit{space} if $X$ has a
   $\sigma$-closure preserving base $\mathscr{B}$.
   Recall that a family $\mathscr{B}$ is a
   \textit{quasi-base} for $X$ if for each open
   set $U$ of $X$ and a point $x\in U$, there is
   $B\in \mathscr{B}$ such that $x\in Int B \subset B
   \subset U$.
   A space $X$ is an
   $M_2$-\textit{space} if $X$ has a
   $\sigma$-closure preserving \textit{quasi-base}
   and an $M_3$-\textit{space} if $X$ has a
   $\sigma$-cushioned pair-base.
\par

In this paper,
   the letter $N$ denotes the set of positive integers
   and $\omega$ denotes the first infinite ordinal.
   $h,i, j, k, l,\ell, m,n,a,b,c,d$  and $e$ are used to denote
   members in $\omega$ and $N$.
If there are signs and definitions which have not been
   defined in this paper, we can see it in \cite{gr}
   or \cite{t} in topology and in \cite{ku} in set
   theory.

\section{To construct a set $X$}

Following only the idea of Theorem 5.1 in \cite{ch},
  in order to construct a zero-dimension metric
  space $(X,\rho)$, we construct the set $X$ at firstly.
  To do it let $Q=\{0,1/2,1/3,2/3,...\}=\{p_n: n\in N\}$
  be the set of all rational numbers in $[0,1)$
  and $[p_n, p_m)=\{q\in Q: p_n\leq q< p_m \}$
  for $p_n, p_m\in Q$.
  Then $Q=[0, 1)$.
  Let $\mathscr{Q}=\{[p_n, p_m): p_n, p_m\in Q\hbox{ and }q_n<q_m\}$.
  Then $\mathscr{Q}$ is a base of some topology
  such that each $[p_n, p_m)$ is a closed and open set.
  Denote the topology by $\rho'$.
  Then $(Q,\rho')$ is a zero-dimensional metric space since
  $(Q,\rho')$ is regular $T_1$ and $\mathscr{Q}
  =\{B_n:n\in N\}$ is a countable base.

\vspace{0.5cm}

\textbf{Construction 1.}

\vspace{0.5cm}

Let  $(Q,\rho')$ be the zero-dimensional metric space.
\par

 A).  Let $S_0=\{q_0\}=\{0\}\subset Q$ and
  $S_1(q_0)=\{q_{1i}:i\geq1\}\subset Q-\{q_0\}$
  be a convergence sequence which converges
  to $q_0$ with \[q_0<...<q_{1i+1}<q_{1i}<...<q_{12}<q_{11}<1
  \quad\hbox{ and }\quad [q_0, q_{1i})=\{q\in Q: q_0\leq q<q_{1i}\}.\]
  Here $q_{11}\in Q-\{q_0\}$ is the first number in $Q-\{q_0\}$ with
  $1-q_{11}=|1-q_{11}|\leq 1/2$.
  Denote $q_{1i}$ by $q^1_i$. Let $S_1=\{q^1_i:i\in N\}$.
  Then $S_1=S_1(q_0)$.
\par
B).
  Assume we have had a convergence sequences set
  $S_n=\{q^n_i:i\in N\}\subset Q$.
  Pick a $q^n_i$ from $S_n$. And then pick a $q_{i0}$
  such that:
\par
  (1). If $q^n_i\neq\hbox{max}S_n$, let
  $q_{i0}=\hbox{inf}\{q\in\cup_{i\leq n}S_i: q>q^n_i\}$.
  Then $q_{i0}\in\cup_{i\leq n}S_i$
  with $(q^n_i,q_{i0})\cap(\cup_{i\leq n}S_i)=\emptyset$.
\par
   (2). If $q^n_i=\hbox{max}S_n$,
  let $q_{i0}=1$. Then $(q^n_i,q_{i0})\cap(\cup_{i\leq n}S_i)=\emptyset$.
\par
  Denote $n+1$ by $k$.
  Take a convergence sequence
  \[S(q^n_i)=S_k(q^n_i)=\{q_{ij}:j\in N\}\subset (q^n_i,q_{i0})\cap[Q-(\cup_{i\leq n}S_i)]\]
  which converges to $q^n_i$ and
  $q^n_i<...<q_{ij}<...<q_{i2}<q_{i1}<q_{i0}$.
  Here $q_{i1}$ is the first number in
  $(q^n_i,q_{i0})\cap[Q-(\cup_{i\leq n}S_i)]$ with
  $q_{i0}-q_{i1}=|q_{i1}-q_{i0}|\leq 1/2^k$.
  Let $S_k=\cup\{S_k(q^n_i):q^n_i\in S_n\}
  =\{q_{ij}: i,j\in N\}$ with
  $S_k(q^n_i)=\{q_{i1},q_{i2},q_{i3},...,q_{in},...\}$
  for $i=1,2,3,...,n,...$.
  Then $S_k$ is countable. We numerate points of $S_k$
  in according with the following order:
   \begin{eqnarray}
  q_{11},\quad\rightarrow\quad  q_{12},\quad \quad\quad
  q_{13}, \quad\quad\quad q_{14},\quad...\quad,q_{1n},...\nonumber\\
 \swarrow \quad\quad  \ \quad\quad  \swarrow \ \ \quad  \quad\quad
  \swarrow \quad\quad  \quad ...  \quad\quad\quad\quad \nonumber\\
  q_{21},\quad\quad\quad  q_{22},\quad \quad\quad
  q_{23}, \quad\quad\quad q_{24},\quad...\quad,q_{2n},...\nonumber\\
  \quad \swarrow \quad\quad \quad\quad  \swarrow\  \quad  \quad\quad
  ...\quad\quad \quad\quad\quad \quad \quad\quad\quad \nonumber\\
   q_{31},\quad\quad\quad  q_{32},\quad \quad\quad
  q_{33}, \quad\quad\quad q_{34},\quad...\quad,q_{3n},...\nonumber\\
  \swarrow \quad\quad \quad ...  \quad  \quad\quad  \quad\quad
   \quad\quad\quad \quad \quad \quad\quad\quad \quad\nonumber \\
   q_{41},\quad\quad\quad  q_{42},\quad \quad\quad
  q_{43}, \quad\quad\quad q_{44},\quad...\quad,q_{4n},...\nonumber
 \end{eqnarray}
 \[............................................
 .......................................\]
 \[\quad \quad \hbox{ Figure 0 }\quad\]

Then $S_k=\{q_{11}, q_{12}, q_{21}, q_{13}, q_{22}, q_{31},
  ...,q_{1n}, q_{2n-1}, q_{3n-2}, ..., q_{n1},...\}$.
  Let $q^k_l=q_{ni}$. Then $l=1+2+...+(n+i-2)+n=(n+i-2)(n+i-1)/2 + n$.
  Then $S_k=\cup\{S_k(q^n_i): q^n_i\in S_n\}=\{q^k_l:l\in N\}$.
  We call it {\em $\Delta$-order} for convenience.
  Then, by induction, we may construct
  convergence sequences sets
  $S_0,S_1, ..., S_n,... $ such that
  $Q=\cup_{n\geq 0}S_n$ and $S_n\cap S_m=\emptyset$ if $n\neq m$.
  Enumerate points of $Q=\cup_{n\geq 0}S_n$ in accordance with
  $\Delta$-order.
  Then $Q=\{q_n: n\in \omega\}$.
  We will always use the following symbol throughout this paper.
  $Q$ and $S_0,S_1, ..., S_n,... $ means sets with the $\Delta$-order.
  $[q_i, q_{ij})$ means an interval with the end points $q_i$ and $q_{ij}$
  such that $q_{ij}\in S_n(q_i)\subset S_n$ and $S_n(q_i)$ converges to
  $q_i$. Here $S_n(q_i)$ means $q_i=q^{n-1}_i\in S_{n-1}$
  throughout this paper.

 \vspace{0.3cm}
Let $q=q_\ell\in Q=\cup_{n\geq 0}S_n$ and $q=q^m_{\ell'}\in S_m$.
  Then $\ell>\ell'$ by the $\Delta$-order.
\vspace{0.3cm}

\begin{prop} Let $Q=\{q_n: n\in \omega\}$ with $\Delta$-order. Then:
\par

1.  Let $q_h\in Q$ with $q_h=q^k_l\in S_k$ and $q^k_l=q_{ni}\in S_k(q_n)$.
  Then \[l=1+2+...+(n+i-2)+n=(n+i-2)(n+i-1)/2 + n \quad \hbox{ and}\]
  \[h=1+1+2+...+(k+l-2)+k>l.\]

2. If $q_{ki},q_{kj}$ in $S_n(q_k )$ with $i<j$,
  then $q_{ki}=q^n_{n_i},q_{kj}=q^n_{n_j}$ in $S_n$ with
  $\Delta$-order satisfy
  $n_i<n_j$ and $q_{ki}=q_{h_i},q_{kj}=q_{h_j}$ in
  $Q$ with $\Delta$-order satisfy $h_i<h_j$.
\par

3. For each $q_h\in Q$ with $q_h=q^k_{k_i}\in S_k$,
   there uniquely exists a $q_n\in Q$ with $q_n\in S_{k-1}$
   such that $q_h=q_{ni}\in S_k(q_n)$ and $h>n$ in $Q$.
\par

4. Let $q_{ki}\in S_n(q_k)$ and $q_{hj}\in S_m(q_h)$.
  Then $[q_k,q_{ki})\cap[q_h,q_{hj})=\emptyset$
  or $[q_k,q_{ki})\subset[q_h,q_{hj})$ or
  $[q_k,q_{ki})\supset[q_h,q_{hj})$.
  Here $q_k=q^{n-1}_k\in S_{n-1}$ and $q_h=q^{m-1}_h\in S_{m-1}$.
\par

5. $Q=\cup_{n\geq 0}S_n$ and $S_n\cap S_m=\emptyset$ if $n\neq m$.
\end{prop}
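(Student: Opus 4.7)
The plan is to verify the five parts by direct combinatorial inspection of the $\Delta$-order, exploiting the recursive structure of Construction 1.

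For Part 1, I would unfold the $\Delta$-order on $S_k$, which is the standard antidiagonal enumeration of the pairs $(n,i)$ with $n,i\in N$. The element $q_{ni}$ lies on the $(n+i-1)$st antidiagonal; the antidiagonals before it contribute $1+2+\dots+(n+i-2)=(n+i-2)(n+i-1)/2$ elements, and within its antidiagonal $q_{ni}$ sits at position $n$ since the sweep runs from small first index to large. This yields the stated formula for $l$. The formula for $h$ arises by reapplying the same antidiagonal count to the pairs $(k,l)$ enumerating $Q=\bigcup_{k\geq 0}S_k$; the extra constant $1$ accounts for $S_0=\{q_0\}$ at the outset.

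Part 2 then follows immediately: for fixed $k$, the map $l\mapsto (k+l-2)(k+l-1)/2+k$ is strictly increasing in $l$, so the $\Delta$-position in $S_n$ and, through a second application of the pairing, in $Q$, both preserve the ordering $i<j$. For Part 3, existence and uniqueness of $q_n$ come for free from the construction, since $S_k$ is a disjoint union of the sequences $S_k(q^{k-1}_j)$, $j\in N$ (they live in the pairwise disjoint intervals $(q^{k-1}_j,q_{j0})$); thus every element of $S_k$ lies in exactly one $S_k(q^{k-1}_j)$. The inequality $h>n$ reduces, via Part 1, to comparing $1+(k+l_h-2)(k+l_h-1)/2+k$ with $1+((k-1)+j-2)((k-1)+j-1)/2+(k-1)$, and since the $\Delta$-position $l_h$ of $q_h=q_{ji}$ in $S_k$ is at least $j$ (the parent-index contribution already dominates), this is a short arithmetic check.

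Part 4 carries the main content. I would assume $I_1=[q_k,q_{ki})$ and $I_2=[q_h,q_{hj})$ intersect, with $q_k\in S_{n-1}$, $q_h\in S_{m-1}$, and (after swapping if needed) $q_k\leq q_h$. If $q_k=q_h$, then Part 5 forces $n=m$, and the two right endpoints lie in the monotone sequence $S_n(q_k)$, so one interval contains the other. If $q_k<q_h$, intersection forces $q_h<q_{ki}<q_{k0}$, so $q_h$ lies in the ambient interval $(q_k,q_{k0})$ used to produce $S_n(q_k)$. A level comparison then forces $m\geq n+1$: otherwise $q_h\in\bigcup_{r\leq n-1}S_r$, and by the definition of $q_{k0}$ as the infimum of such points strictly above $q_k$ we get $q_{k0}\leq q_h$, contradicting $q_h<q_{k0}$. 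Once $m\geq n+1$, the point $q_{ki}$ lies in $\bigcup_{r\leq m-1}S_r$ and exceeds $q_h$, so the analogous minimality at level $m$ yields $q_{h0}\leq q_{ki}$; hence $q_{hj}<q_{h0}\leq q_{ki}$ and $I_2\subset I_1$. The main obstacle is exactly this level comparison, which leans on two symmetric uses of the minimality of the $q_{\cdot 0}$ endpoint together with the disjointness provision in the construction; once that is pinned down the remaining inclusions are a direct unwinding. Part 5 is read off Construction 1: the restriction $S_k(q^n_i)\subset Q-\bigcup_{r\leq n}S_r$ gives pairwise disjointness of the $S_n$, and the inductive enumeration of $Q$ ensures $\bigcup_n S_n=Q$.
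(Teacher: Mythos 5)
Your proposal is correct and follows essentially the same route as the paper: the antidiagonal count of Figure 0 for parts 1--3, and for part 4 the same case split (equal base points versus one base point lying inside the other's interval) resolved by the monotonicity of $S_n(q_k)$ and the disjointness of $(q_\cdot,q_{\cdot 0})$ from the lower levels. The only difference is that you spell out the level comparison $m\geq n+1$ and the two symmetric uses of the minimality of $q_{\cdot0}$, which the paper compresses into the phrase ``by the definition of $(q_h,q_{hj})$''; this is a clarification, not a different argument.
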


\begin{proof}
 To prove 1 let  $S^m_k(q_a)=\{q_{ai}\in S_k(q_a): i\leq m\}$ with
 $q_a=q^{k-1}_a\in S_{k-1}$
  and $a=1,2,...,n+i-2$, and  let $S^n_k=\{q^k_j\in S_k: j\leq n\}$.
  Take $q^k_l\in S_k$ and assume $q^k_l=q_{ni}\in S_k(q_n)$.
  Then we have $q^k_l=q_{ni}\in S^i_k(q_n)$,
  \[S^l_k=S^{n+i-1}_k(q_1)\cup...\cup S^{i+1}_k(q_{n-1})
  \cup S^i_k(q_n)\cup S^{i-2}_k(q_{n+1})\cup...
  \cup S^1_k(q_{n+i-2})\]
  and $l=1+2+...+(n+i-2)+n$ by the definition of $\Delta$-order in accordance with the above Figure 0. Take $q_h\in Q=\{q_n: n\in \omega\}$ with $\Delta$-order.
  Let $q_h=q^k_l\in S_k$ and $q^k_l=q_{ni}\in S_k(q_n)$, and
  let $A_h=\{q_i\in Q: 0\leq i\leq h\}$.
  Note that $S_0=\{q_0\}$ is the first line by the $\Delta$-order of $Q$.
  Then $q_h=q^k_l\in S^l_k$ and
  \[A_h=S_0\cup S^{k+l-1}_1\cup...\cup S^{l+1}_{k-1}
  \cup S^l_k\cup S^{l-2}_{k+1}\cup...\cup S^1_{k+l-2}.\]
  Then $h=1+1+2+...+(k+l-2)+k$ with $l=1+2+...+(n+i-2)+n$.

 \textit{2 of the proof.} Pick $q_{ni}$ and $q_{nj}$ from $S_k(q^{k-1}_n )$ with $i<j=i+1$.
  Then we have $q_{ni}=q^k_{k_i}\in S_k$ and $q_{ni}=q^k_{k_i}=q_{h_i}\in Q$.
  Then, by the above 1,
  \[h_i=1+1+2+...+(k+k_i-2)+k\quad\hbox{and}\quad k_i=1+2+...+(n+i-2)+n.\]
  Note  $q_{nj}=q^k_{k_j}\in S_k$ and $q_{nj}=q^k_{k_j}=q_{h_j}\in Q$ with $j=i+1$.
  Then, by the above 1,
  \[h_j=1+1+2+...+(k+k_j-2)+k\quad\hbox{and}\quad k_j=1+2+...+(n+j-2)+n.\]
  Note $j=i+1$. Then $k_j=1+2+...+(n+i-2)+(n+i-1)+n>k_i$ and $h_j>h_i$.
\par

 \textit{3 of the proof.} Pick a $q_h\in Q$ with $q_h=q^k_i\in S^i_k$.
  Then, by $\Delta$-order, there uniquely exists $q^{k-1}_n\in S_{k-1}$
  with $q_h\in S_k(q^{k-1}_n)$.
  Let $q^{k-1}_n=q_{m_n}\in Q$ for $q^{k-1}_n\in S^n_{k-1}$.
  Note that $S_0=\{q_0\}$ is the first line by the $\Delta$-order in $Q$.
  Then we have $A_{m_n}=S_0\cup S^{n+k-2}_1\cup...\cup S^n_{k-1}
   \cup S^{n-2}_k\cup...\cup S^1_{n+k-3}$ with $q^{k-1}_n\in S^n_{k-1}$ and
   \[m_n=1+1+2+...+(n+k-3)+(k-1).\]
   When $q_h\in Q$ with $q_h=q^k_i\in S^i_k$, then $h=1+1+2+...+(k+i-2)+k$.
  Note $q^k_i=q_{n\ell}\in S^\ell_k(q_n)$. Then $i=1+2+...+(n+\ell-2)+n>n$.
  Then $h=1+1+2+...+(k+i-2)+k>1+1+2+...+(k+n-2)+k>m_n$. This implies 3.
\par

To check 4, take $q_{ki}\in S_n(q_k)$ and $q_{hj}\in S_m(q_h)$
  with $[q_k,q_{ki})\cap[q_h,q_{hj})\neq\emptyset$.
\par

Case 1, $q_k=q_h$. Then $i>j$ implies
  $[q_h,q_{hj})=[q_k,q_{kj})\supset[q_k,q_{ki})$,
  $i<j$ implies
  $[q_h,q_{hj})=[q_k,q_{kj})\subset[q_k,q_{ki})$
  and $i=j$ implies
  $[q_h,q_{hj})=[q_k,q_{kj})=[q_k,q_{ki})$.
\par

Case 2, $q_k\in(q_h,q_{hj})$. Then $q_k\in [q_{hl+1},q_{hl})$
 for some $l\geq j$ with $q_{hl+1}$ and $q_{hl}$ in $S_m(q_h)$.
 Then $[q_k,q_{ki})\subset[q_k,q_{k1})\subset(q_h,q_{hj})$
 by the definition of $(q_h,q_{hj})$.
\par

Case 3, $q_h\in(q_k,q_{ki})$.
 Then $[q_h,q_{hj})\subset[q_h,q_{h1})\subset(q_k,q_{ki})$
 in the same way as    Case 2.
\par

It is easy to see 5 from Construction 1.
\end{proof}

In order to construct $g$-functions of stratifiable spaces, we
construct families of closed and open intervals in $(Q, \rho')$.

\vspace{0.5cm}
\textbf{Construction 2.}
\vspace{0.5cm}

Call a family $\mathscr{I}'$ \textbf{D in $(X,\tau)$}
  if $\mathscr{I}'$ is  discrete in $(X,\tau)$,
  and \textbf{P.D in $(X,\tau)$}
  if $\mathscr{I}'$ is pairwise disjoint $(X,\tau)$.

Call a set $B$ \textbf{c.o in $(X, \tau)$}
  if $B$ is a closed and open set in $(X, \tau)$.
  Call a family $\mathscr{I}'$ \textbf{c.o.D in $(X, \tau)$}
  if $\mathscr{I}'$ is D in $(X, \tau)$ and each $B\in \mathscr{I}'$
  is c.o in $(X, \tau)$.
\par

Take $Q$ with $\Delta$-order and pick an $a\in N$ with $a>1$
  since we always use $a=n+i\geq 2$ for $H(n,i)$ after Section 3.
\par
1. Take $q_0=0\in Q$. Let $Q_{a0}=\{q_0\}$ and
 $\mathscr{I}_{a0}=\{[q_0,q_{1a})\}$.
 Here $q_{1a}\in S_1(q_0)$.
\par

 Let $Q_{a1}=\{q^1_i\in S_1:q_0\in Q_{a0} \hbox{  with  }
  q^1_i=q_{1i}\in S_1(q_0) \hbox{ for } 1\leq i\leq a\}$.
  Then we have
  $Q_{a1}=\{q^1_j\in S_1: 1\leq j\leq a\}$  and
  $\mathscr{I}_{a1}=\{[q_i,q_{ia}): q_i\in Q_{a1}\}$.
  Then $\mathscr{I}_{a1}$ is c.o.D in $(Q, \rho')$
  and $(\cup\mathscr{I}_{a0})\cap(\cup\mathscr{I}_{a1})=\emptyset$.

\par
2. Assume we have had $Q_{ak}=\{q^k_{k_j}\in S_k: 1\leq j\leq a^k\}
  =\{q_i\in S_k: i\leq a^k\}$
  and $\mathscr{I}_{ak}=\{[q_i,q_{ia}): q_i\in Q_{ak}\}$ for $k\leq l$
  such that:
\par
  (1). $\mathscr{I}_{ak}$ is c.o.D in $(Q, \rho')$ for each $k\leq l$.
\par
  (2). $(\cup_{j<k}\cup\mathscr{I}_{aj})\cap(\cup\mathscr{I}_{ak})
   =\emptyset$ for $k\leq l$.
\par
  Pick a $q_i\in Q_{al}$. Then $q_i=q^l_{l_i}\in S_l$.
  Take $S_{l+1}(q_i)\subset S_{l+1}$. Denote $l+1$ by $\ell$.
  Let \[Q_{a\ell}=\{q_{ij}\in S_{\ell}: q_i\in Q_{al} \hbox{  with  }
  q_{ij}\in S_{\ell}(q_i) \hbox{ for } 1\leq j\leq a\}.\]
  Then we have
  $Q_{a\ell}=\{q^\ell_{\ell_j}\in S_{\ell}: 1\leq j\leq a^\ell\}
  =\{q_i\in S_{\ell}: 1\leq i\leq a^\ell\}$.
  Let \[\mathscr{I}_{a\ell}=\{[q_i,q_{ia}): q_i\in Q_{a\ell}\}.\]
  Then $\mathscr{I}_{a\ell}$ is c.o.D in $(Q, \rho')$.
  Note $q_i\in Q_{a\ell}$ implies
  $[q_i,q_{ia})\cap(\cup_{k\leq l}\cup\mathscr{I}_{ak})=\emptyset$.
\par
Then, by induction on $l$, we have $Q_{al}$ and $\mathscr{I}_{al}$
  for $l\in N$ such that (1) and (2) in 2 for each $l\in N$.
  Let $Q^*_a=\cup_lQ_{al}$,
  $\mathscr{I}_a=\cup_l\mathscr{I}_{al}$ and
  $\mathscr{I}=\cup_{a\in N}\mathscr{I}_a$ .
\par

\begin{prop}
 1. $\mathscr{I}_a=\cup_l\mathscr{I}_{al}$ is c.o.D in $(Q,
\rho')$
  and $\cup\mathscr{I}_a=[0, 1)$ for each $a\in N$.
\par

2. $Q^*_a\subset Q^*_b$ if $a<b$.
\par

3. If $b>a$, then $\cup\{[q_j,q_{jb})\in\mathscr{I}_b:
  [q_j,q_{jb})\subset[q_i,q_{ia})\}
  =[q_i,q_{ia})$ for each $[q_i,q_{ia})\in\mathscr{I}_a$,
  and $Q^*_b\cap (q_j,q_{jb})=\emptyset$ for $q_j\in Q^*_b$.
\par

4. $\cup_aQ^*_a=Q$.
\par

5. Let $[q_i,q_{ia}),[q_j,q_{jb})\in\mathscr{I}$ with $b>a$.
  Then $[q_i,q_{ia})\cap[q_j,q_{jb})=\emptyset$ or
  $[q_j,q_{jb})\subset[q_i,q_{ia})$.
\par

\end{prop}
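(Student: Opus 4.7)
The plan is to prove the five clauses in the order 1, 2, 4, 5, 3, since the later parts lean on the earlier ones.

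For clause 1 the pairwise disjointness of $\mathscr{I}_a$ is automatic: each $\mathscr{I}_{al}$ is c.o.D (hence pairwise disjoint) by Construction 2(1), and Construction 2(2) forces intervals from different levels to be disjoint. The main obstacle is the covering identity $\cup\mathscr{I}_a=[0,1)$. For this, given $q\in Q=\cup_nS_n$ with $q\in S_n$, I will iterate Proposition 2.1(3) to produce the unique ancestor chain $q_0=a_0<a_1<\cdots<a_n=q$ with $a_{k+1}\in S_{k+1}(a_k)$, recording the position $m_{k+1}$ of $a_{k+1}$ inside $S_{k+1}(a_k)$. If every $m_k\le a$ then inductively $a_k\in Q_{ak}$ and $q\in[q,(q)_a)\in\mathscr{I}_{an}$; otherwise, taking the smallest $k^*$ with $m_{k^*}>a$, we have $a_{k^*-1}\in Q_{a,k^*-1}$ and, because $a_{k^*}=(a_{k^*-1})_{m_{k^*}}<(a_{k^*-1})_a$ and all descendants of $a_{k^*}$ stay trapped inside $((a_{k^*-1})_{m_{k^*}},(a_{k^*-1})_{m_{k^*}-1})\subset[a_{k^*-1},(a_{k^*-1})_a)$, one concludes $q\in[a_{k^*-1},(a_{k^*-1})_a)\in\mathscr{I}_{a,k^*-1}$. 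With the covering in hand discreteness is free: every $q\in Q$ lies in a unique containing member $[q_i,q_{ia})$, which is itself an open neighbourhood of $q$ meeting only itself in $\mathscr{I}_a$.

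Clauses 2 and 4 are then routine. Clause 2 is an induction on $l$: $Q_{a0}=\{q_0\}=Q_{b0}$, and if $Q_{al}\subset Q_{bl}$ then every $q_i\in Q_{al}\subset Q_{bl}$ contributes its first $a$ children to $Q_{a,l+1}$, a subset of the first $b$ children contributed to $Q_{b,l+1}$. Clause 4 recycles the ancestor chain from clause 1: for $q\in S_n$ with lineage indices $m_1,\ldots,m_n$, the choice $a=\max_k m_k$ ensures each $a_k\in Q_{ak}$, so $q=a_n\in Q_{an}\subset Q^*_a$.

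For clause 5, Proposition 2.1(4) gives a trichotomy (disjoint, or one nested in the other); it remains only to exclude $[q_i,q_{ia})\subsetneq[q_j,q_{jb})$ when $b>a$. If $q_i=q_j$ then $b>a$ yields $(q_i)_b<(q_i)_a$ and hence the reverse containment, a contradiction; if $q_i\neq q_j$ then $q_i\in(q_j,q_{jb})$, while clause 2 places $q_i\in Q^*_a\subset Q^*_b$ and pairwise disjointness of $\mathscr{I}_b$ (from clause 1) forces $Q^*_b\cap(q_j,q_{jb})=\emptyset$, again a contradiction. Finally, clause 3 falls out: $\cup\mathscr{I}_b=[0,1)\supset[q_i,q_{ia})$ together with the trichotomy of clause 5 gives the covering equality, and the second statement $Q^*_b\cap(q_j,q_{jb})=\emptyset$ is precisely the pairwise-disjoint observation already extracted in the proof of clause 5. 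Everything reduces to the ancestor-chain analysis in clause 1(b); the rest is bookkeeping.
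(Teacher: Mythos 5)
Your proof is correct and follows essentially the same route as the paper: an ancestor-chain recursion for the covering identity in clause 1 (which you make more explicit than the paper's terse two-case argument), the same $a=\max\{m_k\}$ trick for clause 4, and the same unique-containing-interval plus pairwise-disjointness reasoning for clauses 3 and 5. The only cosmetic difference is that you prove 5 first and deduce 3 from it, whereas the paper proves 3 directly and cites 5 as its corollary; both orders are sound.
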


\begin{proof}
To prove 1, pick a $q\in [0, 1)=Q=\cup_nS_n$.
  Then $q\in S_n$ for some $n\in N$.
  Then $q=q_{li}\in S_n(q_l)$ for some $q_l\in S_{n-1}$.
  If $i>a$, then $q=q_{li}\in[q_l,q_{la})\in\mathscr{I}_{an}$.
  If $a\geq i>1$,
  then $q_{li}\in Q_{an+1}\subset\cup\mathscr{I}_{an+1}$.
  This implies $\cup\mathscr{I}_a=[0, 1)$.
  Note $(\cup\mathscr{I}_{an})\cap
  (\cup_{i<n}\cup\mathscr{I}_{ai})=\emptyset$
  and every $\mathscr{I}_{al}$ is c.o.D.
  So $\mathscr{I}_a=\cup_l\mathscr{I}_{al}$
  is c.o.D in $(Q, \rho')$.
  This implies 1. It is easy to see 2
  since $q=q_{li}\in S_n(q_l)$ with $1\leq i\leq a$
  implies $1\leq i\leq a<b$ if $a<b$.
\par

{\it The proof of 3.} Let $[q_i,q_{ia})\in\mathscr{I}_a$ with $b>a$.
  Then  $q_i,q_{ia}\in Q^*_a\subset Q^*_b$.
  Pick a $q_j\in[q_i,q_{ia})\subset[0,1)$.
  Then there uniquely exists a $[q_h,q_{hb})\in\mathscr{I}_b$
  with $q_j\in[q_h,q_{hb})$ by the above 1.
  Then $(q_h,q_{hb})\cap Q^*_b=\emptyset$
  since $\mathscr{I}_b$ is pairwise disjoint.
  Then $q_i,q_{ia}\notin(q_h,q_{hb})$.
  Then $q_j\in[q_h,q_{hb})\subset[q_i,q_{ia})$.
  Then $\cup\{[q_j,q_{jb})\in\mathscr{I}_b:
  [q_j,q_{jb})\subset[q_i,q_{ia})\}=[q_i,q_{ia})$
  since $\mathscr{I}_b$ is a c.o.D family
  with $\cup\mathscr{I}_b=[0,1)$.
\par

{\it The proof of 4.}
Pick a $q_i\in Q$. Then $q_i\in S_n$ for some $n\in N$.
  Then $q_i=q_{n_1k_1}\in S_n(q_{n_1})$ for some $q_{n_1}\in S_{n-1}$.
  And then $q_{n_1}=q_{n_2k_2}\in S_{n-1}(q_{n_2})$
  for some $q_{n_2}\in S_{n-2}$.... And then
  $q_{n_n}=q_{n_nk_n}\in S_2(q_{n_n})$ for some $q_{n_n}\in S_1$.
  Let $a=max\{k_n,...,k_1,i \}$. Then $i\leq a$.
  Then  $q_i=q_{n_1k_1}\in Q_{an}\subset Q^*_a$.
\par
{\it The proof of 5.} It is a corollary of 3 in Proposition 2.2.
\end{proof}

Let $Q_n$'s be copies of $Q$ with the topology $\rho'$
 for $n\in N$ and $X=\Pi_nQ_n$ with product topology.
 Let $\mathscr{Q}_i$'s be copies of $\mathscr{Q}$,
 $X_n=\Pi_{i>n}Q_i$,
 \[\mathscr{B}'_n=\{
  B_{n_1}\times...\times B_{n_n}\times X_n:
  B_{n_i}\in \mathscr{Q}_i
  \hbox{ for } i=1,2,...,n \}\]
  and $\mathscr{B}'=\cup_n\mathscr{B}'_n$.
  Then $\mathscr{B}'$ is a countable base.
  Denote the topology with the countable base
  $\mathscr{B}'$ by $\rho$.
  Then $(X,\rho)$ is a zero-dimensional metric space.

\section{To construct $\mathscr{H}$ and $\mathscr{G}$ on set $X$}

 Following only the idea of Condition A of $M_3$-structures
  of Proposition 4.7 in \cite{ch},
  we construct
  $\mathscr{H}=\cup_{n\in N}\mathscr{H}_n$ in $X$.

\vspace{0.5cm}
\textbf{Construction 3.1}
\vspace{0.5cm}

(A). Take $Q=\{q_n: n\in \omega\}$ with $\Delta$-order.
 Let $H(1,i)=\{q_i\}\times X_1$ and $\mathscr{H}'_1=\{H(1,j):j\in N\}$
 with $j=i+1$.
 Then $\mathscr{H}'_1$ has the same $\Delta$-order as $Q$.
\par
(B). Assume we have had  $\mathscr{H}'_n=\{H(n,i):i\in N\}$.
\par
 Let $P(n,i)=\{q_{i_1}\}\times\{q_{i_2}\}\times ...\times
  \{q_{i_n}\}$ for $H(n,i)=P(n,i)\times X_n\in\mathscr{H}'_n$
  and $k=n+1$. We take $P(k,l)$ in accordance with the following order:
  \begin{eqnarray}
  P(n,1)\times\{q_1\},\rightarrow P(n,1)\times\{q_2\}, \quad P(n,1)\times\{q_3\},
   \quad P(n,1)\times\{q_4\},...\quad\quad\quad\quad\quad\quad
   \nonumber\\
            \quad\quad\quad\swarrow\quad\quad\quad\quad \quad
            \quad \ \swarrow\ \quad  \quad\quad \quad\quad  \quad
             \quad\swarrow\quad\quad\quad\quad ...\quad  \ \quad
             \quad\quad \quad\quad \quad\quad
             \nonumber\\
  P(n,2)\times\{q_1\},\quad P(n,2)\times\{q_2\},  \quad P(n,2)\times\{q_3\},
    \quad P(n,2)\times\{q_4\},... \quad\quad\quad \quad  \ \quad\quad\nonumber\\
          \quad\quad\quad\swarrow\quad\quad \quad\quad\quad\quad \quad
          \swarrow \quad  \quad\quad \quad\quad  \quad ...\quad
          \quad\quad\quad\quad\quad\quad\quad \quad  \ \quad\quad\quad \quad \quad \nonumber\\
   P(n,3)\times\{q_1\},\quad P(n,3)\times\{q_2\},
     \quad P(n,3)\times\{q_3\},
     \quad P(n,3)\times\{q_4\},...\quad\quad\quad\quad\quad\quad\
     \nonumber\\
          \quad\quad\quad\swarrow\quad\quad\quad\quad  \quad\quad ...
          \quad\quad \quad\quad\ \ \quad  \quad\quad \quad\quad  \quad\quad\quad\quad\quad\quad \quad  \ \quad\quad\quad \quad  \quad  \nonumber\\
   P(n,4)\times\{q_1\},\quad P(n,4)\times\{q_2\}, \quad P(n,4)\times\{q_3\}, \quad P(n,4)\times\{q_4\},...
   \quad\quad\quad\quad\quad\quad\ \nonumber\\
         .....................
          .......................\quad\hbox{Figure I.}
  \quad
          .........................................................\quad
   \quad\quad\quad\quad\quad\nonumber
 \end{eqnarray}
 Let  $P(k,l)=P(n,i)\times\{q_j\}$ for $k=n+1$.
  Then $l=1+2+...+(i+j-2)+i=(i+j-2)(i+j-1)/2 +i$.
  We call it a $\Delta$-order still. Let
       \[ H(k,l)=P(k,l)\times X_k.\]

Let $\mathscr{H}'_k=\{H(k,l): l\in N\}$.
  Then, by induction, we have $\mathscr{H}'_k$
  for each $k\in N$ and call that $\mathscr{H}'_k$ has a $\Delta$-order.
  Let $\mathscr{H}'=\cup_n\mathscr{H}'_n$.

\begin{prop}
  \ \ \ 1 \ $\mathscr{H}'_n$ is a partition of $X$
  for each $n\in N$.
\par

2.\   If $H(n,i), H(n,i')\in\mathscr{H}'_n$,
   then $\rho(H(n,i),H(n,i'))=r>0$ or $H(n,i')=H(n,i)$.
\par

3.\   If $H(n,i), H(n',i')\in \mathscr{H}'$ with $n'>n$,
 then  $\rho(H(n,i), H(n',i'))=r>0$  or $H(n',i')\subset H(n,i)$.
 \par

4.\   If $H(n,i), H(n',i')\in \mathscr{H}'$ with
 $H(n',i')\subset H(n,i)$,
 then $n'\geq n$ and $i'\geq i$.
\par

5.\ Let $H(n,i)=\{q_{i_1}\}\times...\times\{q_{i_n}\}\times X_n$.
   Then $i\geq i_j$ for each $j\leq n$.
\end{prop}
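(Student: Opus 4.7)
The plan is to prove all five parts by induction on the level $n$, exploiting the recursive fiber-splitting that defines $\mathscr{H}'_{n+1}$ from $\mathscr{H}'_n$: each $H(n,i)=P(n,i)\times X_n$ is partitioned by peeling off $X_n=Q_{n+1}\times X_{n+1}$ into the family of children $H(n+1,l)=P(n,i)\times\{q_j\}\times X_{n+1}$, where the $\Delta$-order enumeration gives $l=(i+j-2)(i+j-1)/2+i$. This single formula will do most of the combinatorial work.

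For part 1 I would induct on $n$. The base case is immediate since the singletons $\{q_i\}$ partition $Q$, and the inductive step follows because subdividing each block of a partition by a partition again yields a partition of the ambient space. For parts 2 and 3, fix a compatible product metric $\rho$ on $X$ such that, for each coordinate $k$, a disagreement in the $k$-th coordinate contributes a term of the form $2^{-k}\tilde d_k(x_k,y_k)/(1+\tilde d_k(x_k,y_k))$, where $\tilde d_k$ is a compatible metric on $Q_k$. If $H(n,i)$ and $H(n',i')$ do not satisfy $H(n',i')\subset H(n,i)$ (or the reverse when $n'<n$), then their fixed factor singletons must specify two different points of $Q_k$ at some coordinate $k\leq\min(n,n')$. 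Every pair $x\in H(n,i)$, $y\in H(n',i')$ then has $x_k,y_k$ equal to these two fixed distinct points, forcing $\rho(x,y)\geq 2^{-k}\tilde d_k(x_k,y_k)/(1+\tilde d_k(x_k,y_k))>0$. Part 2 is the case $n=n'$, where no containment can hold between distinct partition members; part 3 is the case $n'>n$, where the only alternative to positive distance is $H(n',i')\subset H(n,i)$.

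Part 4 combines two observations. If $H(n',i')\subset H(n,i)$ with $n'<n$, then projecting to the $n$-th coordinate would map the full factor $Q_n$ (contributed by the free tail $X_{n'}$) into the singleton $\{q_{i_n}\}$ of $P(n,i)$, which is impossible; hence $n'\geq n$ and the first $n$ coordinates of $P(n',i')$ must coincide with $P(n,i)$. For $i'\geq i$ I would induct on $n'-n$: at each recursion step from level $k$ to $k+1$, the child index $l=(j+m-2)(j+m-1)/2+j$ of a parent indexed $j$ satisfies $l\geq j$, so labels can only increase along descent. Part 5 is a direct induction on $n$: assuming $i\geq i_m$ for every $m\leq n$, the child $H(n+1,l)$ obtained by appending coordinate $q_j=q_{i_{n+1}}$ has $l=(i+j-2)(i+j-1)/2+i\geq i\geq i_m$ for $m\leq n$, while $l\geq j=i_{n+1}$ reduces to $(i+j-2)(i+j-1)/2\geq j-i$, trivial when $j\leq i$ and immediate when $j>i$ from $j-i\leq i+j-2\leq (i+j-2)(i+j-1)/2$.

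The main obstacle lies in parts 2 and 3: one must verify that a mere disagreement of fixed coordinates at position $k$ yields a \emph{uniform} positive gap between the two infinite-dimensional cylinders $H(n,i)$ and $H(n',i')$. This requires pinning down a specific compatible metric on the product $X=\Pi_n Q_n$ and checking that the product structure transmits the coordinate-wise positive distance to a positive metric distance between the sets. Once that is in place, the $\Delta$-order arithmetic in parts 4 and 5 is routine bookkeeping on the formula $l=(i+j-2)(i+j-1)/2+i$, and part 1 is formal.
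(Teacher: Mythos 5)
Your proposal is correct and follows essentially the same route as the paper: induction on the level using the $\Delta$-order formula $l=(i+j-2)(i+j-1)/2+i$ for parts 1, 4 and 5, and the observation that two cylinders whose fixed coordinates disagree at some position $k\leq\min(n,n')$ are at uniformly positive $\rho$-distance for parts 2 and 3. Your version is marginally more careful in two places — you pin down an explicit compatible product metric rather than asserting $\rho(x,x')=\rho_n(P(n,i),P(n,j))$, and you run the $i'\geq i$ induction over general $n'-n$ where the paper only writes out the step $n'=n+1$ — but these are refinements of the same argument, not a different one.
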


\begin{proof} To check 1, let $Q_n=\{q_i: i\in \omega\}$ be the copy
 of $Q$ with $\Delta$-order
 and $\mathscr{H}'_1=\{\{q_i\}\times X_1: q_i\in Q_1 \}$.
 Then $\cup\mathscr{H}'_1=X$.
\par

Assume $\cup\mathscr{H}'_n=X$ with $\cup_iP(n,i)=\Pi_{i\leq n}Q_i$.
  Let $k=n+1$. Then $\cup\{P(n,i)\times\{q_j\}: i,j\in N\}
  =(\Pi_{i\leq n}Q_i)\times Q_k$ by induction of assumption.
  Then $\cup\mathscr{H}'_k=X$.
\par

On the other hand,  Take $H(n,i)$ and $H(n,j)$ from $\mathscr{H}'_n$
  with  $H(n,i)\neq H(n,j)$.
  Then $P(n,i)\neq P(n,j)$.
  Then $H(n,i)\cap H(n,j)=\emptyset$.
  This implies 1.
\par

{\it The proof of 2.}
  Take $H(n,i)$ and $H(n,j)$ from $\mathscr{H}'_n$
  with  $H(n,i)\neq H(n,j)$.
  Then $P(n,i)\neq P(n,j)$.
  Then $\rho_n(P(n,i), P(n,j))=r>0$
  in metric space $(\Pi_{i\leq n}Q_i, \rho_n)$.
  Pike $x\in H(n,i)$ and $x'\in H(n,j)$.
  Then $\rho(x,x')=\rho_n(P(n,i), P(n,j))=r>0$.
  Then $\rho(H(n,i), H(n,j))= min\{\rho(x,x'):
  x\in H(n,i) \hbox{ and } x'\in H(n,j)\}=r>0$.
\par

{\it The proof of 3.}
  Note that for each $H(k,i)\in\mathscr{H}'_k$ and $k=n+1$,
  there uniquely exists an $H(n,j')\in\mathscr{H}'_n$
  with $H(k,i)\subset H(n,j')$. Then 3 is a corollary of 2.
\par

{\it The proof of 4.}
   It is easy to see that $H(n',i')\subset H(n,i)$ and $H(n',i')\neq H(n,i)$
   imply $n'>n$. Let $n'=n+1=k$ with $H(n,i)=P(n,i)\times X_n$ and
   $H(k,i')=P(n,i)\times \{q_j\}\times X_k\in\mathscr{H}'_k$.
   Then $i'=1+2+...+(i+j-2)+i=(i+j-2)(i+j-1)/2 +i\geq i$
   by the $\Delta$-order of $\mathscr{H}'_k$.
\par

{\it The proof of 5.}
We use induction on $k$ to prove 5.
\par
A. $k=1$. Take an  $H(1,i)=\{q_i\}\times X_1\in \mathscr{H}'_1$.
   Then $i\geq i$ for each $i\in N$.
\par

B. $k=2$. Take an  $H(2,i')\in \mathscr{H}'_2$.
  Then $H(2,i')=P(1,i)\times\{q_j\}\times X_2$.
  We can prove that $i'=(i+j-2)(i+j-1)/2 + i\geq i$
  and $i'=(i+j-2)(i+j-1)/2 + i\geq j$ for $i+j\geq 2$,
  and $i'>i$ and $i'>j$ for $i+j\geq 4$.
\par

In fact, we have $i+j\geq 2$ since $i\geq 1$ and $j\geq 1$.
\par

Case 1, $i+j=2$. Then $i=j=1$. Then $i'=(i+j-2)(i+j-1)/2 + i=i=j=1$.
\par

Case 2, $i+j=3$.  Then $i'=(1\cdot2)/2+i=1+i>i$
  and $i'=1+i\geq j$ since $j\leq 2$.
\par

Case 3, $i+j=4$.  Then $i'=(2\cdot3)/2+i=3+i>i$
  and $i'=3+i>j$ since $j\leq 3$.
\par

Case 4,  $i+j\geq 5$.  Then $(i+j)^2\geq 5(i+j)$
  and $(i+j)^2-3(i+j)\geq 2(i+j)$.
  Then $i'=(i+j-2)(i+j-1)/2 + i>2i+j$.
  So $i'>i$ and $i'>j$.
\par

C. Assume that $H(n,i)=P(n,i)\times X_n=\{q_{i_1}\}\times ...
  \times\{q_{i_n}\}\times X_n$ such that $i\geq i_j$ for
   each $j\leq n$ when $k=n$. Let $k=n+1$.
\par
   Take an $H(k,i')=P(n,i)\times\{q_{i_k}\}\times X_k$.
  Then $i'=(i+i_k-2)(i+i_k-1)/2 + i$.
  Note $i\geq i_j$ for each $j\leq n$ by inductive assumption.
  Then $i'\geq i$ by the above 4. So $i'\geq i\geq i_j$ for each $j\leq n$.
  We can prove $i'=(i+i_k-2)(i+i_k-1)/2 + i\geq i_k$  if $i+i_k\geq 2$
  in the same way as the above Case 1 - 4 in B. This implies 5.

\end{proof}

Then $\mathscr{H}'=\cup_n\mathscr{H}'_n$ satisfies condition A
  of $M_3$-structure in \cite{ch}.
  Following only the idea of Condition B of $M_3$-structures
  of Proposition 4.7 in \cite{ch},
  we construct a $g$-function in $X$ by $\mathscr{H}'=\cup_n\mathscr{H}'_n$
  in the following Construction 3.2.

\vspace{0.5cm}
\textbf{Construction 3.2}
\vspace{0.5cm}

A. Take $\mathscr{H}'_n=\{H(n,i):i\in N\}$ and
   an $H(n,i)\in \mathscr{H}'_n$. Note
   \[H(n,i)=\{q_{i_1}\}\times...\times\{q_{i_n}\}\times X_n.\]
   Let $a=n+i$. Take $\mathscr{I}_a$.
   Take $[q_{l_j},q_{l_ja})$'s  from $\mathscr{I}_a$ for $j\leq i$.
   Let \[J(n,i,l)=[q_{l_1},q_{l_1a})\times...
  \times[q_{l_i},q_{l_ia})\times X_{n+i}.\]
   Let $\mathscr{J}(n,i)$ be the family of all $J(n,i,l)$'s.
   Then $\mathscr{J}(n,i)$ is countable
   and c.o.D in $(X_n,\rho)$ with $\cup\mathscr{J}(n,i)=X_n$.  Let
   $\mathscr{J}(n,i)=\{J(n,i,l): l\in N\}$,
    \[ H(n,i,l)=P(n,i)\times J(n,i,l) \hbox{   and   }
   \mathscr{H}(n,i)=\{H(n,i,l):l\in N\}.\]
   Then we have the following definition.
  \par
\vspace{0.3cm}

 \[B.\ \textbf{Definition.}\  H(n,i,l)=\{q_{i_1}\}\times...\times\{q_{i_n}\}\times
   [q_{l_1},q_{l_1a})\times...
  \times[q_{l_i},q_{l_ia})\times X_{n+i}\hbox{ and}\]
   \[ \mathscr{H}(n,i)=\{H(n,i,l):J(n,i,l)\in\mathscr{J}(n,i)\}
   =\{H(n,i,l):l\in N\}.\]
   Then $\mathscr{H}(n,i)$ is a c.D family in $(X,\rho)$
   with $\cup\mathscr{H}(n,i)=H(n,i)$.
   Let \[\mathscr{H}_n=\cup_i\mathscr{H}(n,i)\quad\quad
    and \quad\quad \mathscr{H}=\cup_n\mathscr{H}_n.\]
\par

In the same way as the $\Delta$-order of $\mathscr{H}'_n$,
  we give $\mathscr{H}_n$ a $\Delta$-order by the $\Delta$-order of
  $\mathscr{H}'_n$ as the following:
\begin{eqnarray}
  H(n,1,1),\ \rightarrow\  H(n,1,2),\quad
  H(n,1,3), \quad H(n,1,4),\quad...,\ H(n,1,l),...\quad\nonumber\\
 \swarrow \quad\quad\quad  \ \quad\quad  \swarrow \ \ \quad  \quad\quad\quad
  \swarrow \quad\quad  \quad ...  \quad\quad\quad\quad\quad\quad\quad\quad
  \quad \nonumber\\
  H(n,2,1),\quad\quad  H(n,2,2),\quad
  H(n,2,3), \quad H(n,2,4),\quad...,\ H(n,2,l),...\quad\nonumber\\
  \swarrow \quad\quad \quad\quad \quad  \swarrow\  \quad  \quad\quad
  ...\quad\quad \quad\quad\quad \quad \quad\quad\quad\quad
  \quad\quad\quad\quad\quad \nonumber\\
   H(n,3,1),\quad\quad H(n,3,2),\quad
  H(n,3,3),\quad H(n,3,4),\quad...,\ H(n,3,l),...\quad\nonumber\\
  \swarrow \quad\quad \quad ...  \quad  \quad\quad  \quad\quad
  \quad\quad\quad \quad \quad \quad\quad\quad \quad
  \quad\quad\quad \quad \quad \quad\quad\nonumber \\
   H(n,4,1),\quad\quad H(n,4,2),\quad
  H(n,4,3), \quad H(n,4,4),\quad...,\ H(n,4,l),...\quad\nonumber
 \end{eqnarray}
 \[..........................................\quad\hbox{Figure II.}
  \quad ...................................\]

  Then we can calculate the $i'$th member $H(n,i,l)$ in
  $\mathscr{H}_n=\cup_i\mathscr{H}(n,i)$  by
  \[ i'=1+2+...+(i+l-2)+i=(i+l-2)(i+l-1)/2 + i.\]
  Call that \textit{$\mathscr{H}_n$ has a $\Delta$-order.}
  For $H(n,i,l)\in\mathscr{H}(n,i)$, note $a=n+i$. Let
  \[I(n,i)=[q_{i_1},q_{i_1a})\times ...\times
  [q_{i_n},q_{i_na})\quad \hbox{ and}\quad
  g(n,i,l)=I(n,i)\times J(n,i,l).\]
  Then we have the following definition.

  \[C.\ \textbf{Definition.}\ g(n,i,l)=[q_{i_1},q_{i_1a})\times ...\times
  [q_{i_n},q_{i_na})\times [q_{l_1},q_{l_1a})\times...
  \times[q_{l_i},q_{l_ia})\times X_{n+i}.\]
  Let $\mathscr{G}(n,i)=\{g(n,i,l):l\in N\}$.
  Then \[\mathscr{G}(n,i)=\{g(n,i,l)=I(n,i)\times J(n,i,l):J(n,i,l)\in\mathscr{J}(n,i)\}.\]
  Then $\mathscr{G}(n,i)$ is a c.o.D family
  in $(X,\rho)$. Then
  \[\cup\mathscr{G}(n,i)=[q_{i_1},q_{i_1a})\times ...\times
  [q_{i_n},q_{i_na})\times X_n=I(n,i)\times X_n\]
  by $\cup\mathscr{J}(n,i)=X_n$.
  Let $\mathscr{G}_n=\cup_i\mathscr{G}(n,i)$ and
  $\mathscr{G}=\cup_n\mathscr{G}_n$.
  Then $\mathscr{G}_n=\cup_i\mathscr{G}(n,i)$ has
  a $\Delta$-order by the $\Delta$-order of $\mathscr{H}_n$.
  And then $\mathscr{G}=\cup_n\mathscr{G}_n$ has
  a $\Delta$-order by the $\Delta$-order of $\mathscr{G}_n$,
  in the same way as the $\Delta$-order of $\mathscr{H}_n$ by the $\Delta$-order of
  $\mathscr{H}'_n$.
\par

 For $1\leq k\leq n$, take a projection
 $\pi_k:X\rightarrow Q_k=Q$ and let
 \[\mathscr{J}^n_k=\{[q_{i_k},q_{i_ka}):
 [q_{i_k},q_{i_ka})=\pi_k[g(n,i,l)]
 \hbox{ for each }g(n,i,l)\in\mathscr{G}_n\}.\]
 \par

\vspace{0.3cm}
\textbf{Note D1:}  Let
$g(n,i,l)=g(n,x)=g(n,i,x)$ for $x\in H(n,i,l)\subset H(n,i)$.
    Then \[\mathscr{G}(n,i)=\{g(n,i,l):l\in N\}
  =\{g(n,i,x):x\in H(n,i)\}=\{g(n,x):x\in H(n,i)\}.\]
  It is easy to see that
  $g:N\times X\rightarrow \cup_n\mathscr{G}_n$
  is a function.
   We prove that $g:N\times X\rightarrow \cup_n\mathscr{G}_n$
  is a $g$-function of some stratifiable space in section 7.
\par

\vspace{0.3cm}
\textbf{Note D2:}
   Let $x,y\in H(n,i,l)$. Then
   $g(n,i,l)=g(n,x)=g(n,y)$.
   Let
    \[H(n,i,l)=H(n,i,x)=H(n,x)=H(n,i,y)=H(n,y)\quad\hbox{ and}\]
   \[n_1<n_2<...<n_k\quad\hbox{with}\quad
   H(n_k,x_k)\subset...\subset H(n_2,x_2)\subset H(n_1,x_1).\]
    Then $H(n_i,x_k)=H(n_i,x_i)$ since $x_k\in H(n_i,x_i)$ for $1\leq i\leq k$.
\par

\vspace{0.3cm}
\textbf{Note D3:}
 $\mathscr{G}(*,*,*)=\{g(n_i,x_i)\in\mathscr{G}: g(n_i,x_i)\hbox{ satisfies } P\}$
 if and only if $\mathscr{H}(*,*,*)=\{H(n_i,x_i)\in\mathscr{H}: g(n_i,x_i)\hbox{ satisfies } P\}$.
 So we give only one of definitions $\mathscr{G}(*,*,*)$ or $\mathscr{H}(*,*,*)$ if
 we definite families.
\par

\vspace{0.3cm}
\textbf{Note D4:} Let $H(n,i)\in \mathscr{H}'_n$ with
   \[H(n,i)=\{q_{i_1}\}\times...\times\{q_{i_n}\}\times X_n.\]
   We have $a=n+i$,
   \[g(n,i,l)=[q_{i_1},q_{i_1a})\times ...\times
  [q_{i_n},q_{i_na})\times [q_{l_1},q_{l_1a})\times...
  \times[q_{l_i},q_{l_ia})\times X_{n+i}\ \hbox{ and}\]
  \[H(n,i,l)=\{q_{i_1}\}\times...\times\{q_{i_n}\}\times
   [q_{l_1},q_{l_1a})\times...
  \times[q_{l_i},q_{l_ia})\times X_{n+i}.\]
   Let $g(n,x)=g(n,i,l)$ and $H(n,x)=H(n,i,l)$ for arbitrary $x\in H(n,i,l)$. Then
   \[g(n,x)\cap H(n,i)=H(n,x).\]
\par

\begin{prop}
Let $q_{i_k},q_{j_k}\in Q=\{q_i: i\in \omega\}$ with $\Delta$-order and $i_k,j_k\in N$. Then:
1. If $q_{j_k}\in(q_{i_k},q_{i_ka})$, then $j_k>i_k$, $j_k>a$ and
  $[q_{j_k},q_{j_kb})\subset(q_{i_k},q_{i_ka})$.

2. If $q_{j_k}\notin[q_{i_k},q_{i_ka})$ with $j_k>i_k$, then
  $[q_{i_k},q_{i_ka})\cap[q_{j_k},q_{j_kb})=\emptyset$.
\par
3. If $[q_{j_k},q_{j_kb}),[q_{i_k},q_{i_ka})\in\mathscr{J}^n_k$
  with $b>a$, then $[q_{i_k},q_{i_ka})\cap[q_{j_k},q_{j_kb})=\emptyset$
  or $[q_{j_k},q_{j_kb})\subset[q_{i_k},q_{i_ka})$.
\par

\end{prop}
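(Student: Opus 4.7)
The plan is to reduce all three parts of the proposition to the nesting-or-disjointness structure for intervals $[q_i,q_{ij})$ established in Propositions 2.1(4) and 2.2(5), combined with the explicit $\Delta$-order counting formula from Proposition 2.1(1). Part 3 is then almost a direct citation of Proposition 2.2(5): the two intervals $[q_{j_k},q_{j_kb})$ and $[q_{i_k},q_{i_ka})$ sit in $\mathscr{I}_b$ and $\mathscr{I}_a$ respectively, both subfamilies of $\mathscr{I}$, so with $b>a$ the dichotomy ``disjoint or $[q_{j_k},q_{j_kb})\subset[q_{i_k},q_{i_ka})$'' is immediate.

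For part 1, I would apply Proposition 2.1(4) to the pair of intervals. Disjointness is excluded by the hypothesis $q_{j_k}\in(q_{i_k},q_{i_ka})$, and the reverse containment $[q_{j_k},q_{j_kb})\supset[q_{i_k},q_{i_ka})$ would force $q_{j_k}\le q_{i_k}$, contradicting $q_{j_k}>q_{i_k}$. Hence $[q_{j_k},q_{j_kb})\subset[q_{i_k},q_{i_ka})$, and since every element of the smaller interval strictly exceeds $q_{i_k}$ this upgrades to $\subset(q_{i_k},q_{i_ka})$ as required. Walking up the unique ancestor chain of $q_{j_k}$ in the tree until it reaches $q_{i_k}$ and applying Proposition 2.1(3) at each step yields $j_k>i_k$. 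For $j_k>a$, I split on the parent of $q_{j_k}$: if it is $q_{i_k}$ itself, then $q_{j_k}=q_{i_k,j}$ for some $j>a$ (because $q_{i_k,j}<q_{i_k,a}$ forces $j>a$ in the monotone sequence $S_n(q_{i_k})$), and Proposition 2.1(1) gives $j_k>l\ge j>a$, where $l$ is the $\Delta$-index of $q_{j_k}$ within its stratum. If the parent is a strict descendant of $q_{i_k}$, the ancestor chain still passes through some $q_{i_k,j}$ with $j>a$, and iterating Proposition 2.1(3) forces $j_k$ to be strictly larger than that index, which itself exceeds $a$.

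For part 2, I again invoke Proposition 2.1(4). The containment $[q_{j_k},q_{j_kb})\subset[q_{i_k},q_{i_ka})$ is ruled out by $q_{j_k}\notin[q_{i_k},q_{i_ka})$. The reverse containment would place $q_{i_k}$ strictly inside $(q_{j_k},q_{j_kb})$, making $q_{i_k}$ a descendant of $q_{j_k}$ in the tree; iterating Proposition 2.1(3) up from $q_{i_k}$ then forces $i_k>j_k$, contradicting the hypothesis $j_k>i_k$. Only the disjoint alternative survives. The main obstacle is really in part 1, namely converting the sequence-level inequality $j>a$ into the $Q$-level inequality $j_k>a$; Proposition 2.1(1)'s formula $h=1+1+2+\cdots+(k+l-2)+k$ coupled with the monotonicity $l=(n'+j-2)(n'+j-1)/2+n'\ge j$ (valid for $n'\ge 1$, $j\ge 1$) is the gadget that delivers this bound cleanly, and the rest is bookkeeping along the ancestor chain.
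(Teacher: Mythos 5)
Your treatment of parts 1 and 2 follows essentially the same route as the paper: every point of $(q_{i_k},q_{i_k0})$ is a descendant of $q_{i_k}$ in the tree of convergence sequences, so iterating Proposition 2.1(3) along the ancestor chain gives $j_k>i_k$, and the branch index $l>a$ of the first child on that chain is converted into the $Q$-level bound $j_k>a$ by the counting formula of Proposition 2.1(1). Your use of the trichotomy of Proposition 2.1(4) to obtain the interval containments, and your reduction of part 2 to part 1 with the roles of $i_k$ and $j_k$ swapped, are harmless repackagings of what the paper writes out by cases.

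Part 3 is where there is a genuine gap. You treat it as ``almost a direct citation of Proposition 2.2(5)'' on the grounds that $[q_{i_k},q_{i_ka})\in\mathscr{I}_a$ and $[q_{j_k},q_{j_kb})\in\mathscr{I}_b$. But for $k\leq n$ the members of $\mathscr{J}^n_k$ are the projections of $I(n,i)=[q_{i_1},q_{i_1a})\times\cdots\times[q_{i_n},q_{i_na})$, where $q_{i_1},\ldots,q_{i_n}$ are the coordinates of an arbitrary point $P(n,i)$ and $a=n+i$; unlike the factors $[q_{l_j},q_{l_ja})$ in the coordinates beyond $n$, these are \emph{not} taken from $\mathscr{I}_a$ by construction. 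The membership $[q_{i_k},q_{i_ka})\in\mathscr{I}_a$ is equivalent to $q_{i_k}\in Q^*_a$, which is true but needs an argument: one must show (a) that every branch index along the ancestor chain of $q_h$ is at most $h$ (a consequence of the formulas in Proposition 2.1(1) and (3) that is nowhere stated in the paper), and (b) that $i_k\leq i<n+i=a$, which is exactly Proposition 3.1(5). Without (a) and (b) your appeal to Proposition 2.2(5) does not apply. The paper sidesteps all of this by deducing part 3 from part 1 of the present proposition together with Proposition 3.1(5): if the two intervals meet and $q_{i_k}\in(q_{j_k},q_{j_kb})$, then part 1 forces $i_k>b=n+j>n+i>i$, contradicting $i\geq i_k$; hence $q_{j_k}\in(q_{i_k},q_{i_ka})$ and part 1 yields $[q_{j_k},q_{j_kb})\subset[q_{i_k},q_{i_ka})$. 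Either supply the lemma $q_h\in Q^*_h$ together with Proposition 3.1(5), or adopt the paper's reduction; as written, part 3 is not proved.
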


\begin{proof}
To prove 1 let $q_{j_k}\in(q_{i_k},q_{i_k0})$,
  $q_{j_k}\in S_{j^*_k}$
 and $q_{i_k}\in S_{i^*_k}$.
 Note, by B) in Construction 1, $(q_{i_k},q_{i_k0})\cap(\cup_{j\leq i^*_k}S_j)=\emptyset$.
 Then $j^*_k>i^*_k$. Let $j^*_k=i^*_k+h$.
 Because of $q_{j_k}\in (q_{i_k},q_{i_k0})$,
 take $S_{h_1}(q_{i_k})=\{q_{i_kl}\in S_{h_1}:l\in N\}$
 with $h_1=i^*_k+1$. Then there exists a $q_{i_kl_1}\in S_{h_1}(q_{i_k})$
 such that  $q_{j_k}\in [q_{i_kl_1},q_{i_kl_1-1})$, $q_{i_kl_1}=q_{1_k}$
 and $1_k>i_k$ in $Q$ by 3 of Proposition 2.1.
\par

If $q_{j_k}=q_{i_kl_1}$, then $j_k=1_k>i_k$.
  If $q_{j_k}\neq q_{i_kl_1}$, then $q_{j_k}\in (q_{1_k},q_{1_k0})$.
  Take $S_{h_2}(q_{1_k})=\{q_{1_kl}\in S_{h_2}:l\in N\}$ with $h_2=h_1+1$.
  Then there exists a $q_{1_kl_2}\in S_{h_2}(q_{1_k})$
 such that  $q_{j_k}\in [q_{1_kl_2},q_{1_kl_2-1})$, $q_{1_kl_2}=q_{2_k}$
 and $2_k>1_k$  in $Q$ by 3 of Proposition 2.1.
\par

Note $i^*_k<i^*_k+1=h_1<...<i^*_k+h=j^*_k$. Then, by finite induction,
  there is an $m\leq h$,
  a $q_{\ell_kl_m}\in S_{h_m}(q_{\ell_k})=\{q_{\ell_kl}
  \in S_{h_m}:l\in N\}$ ($\ell=m-1$) such that
  $q_{j_k}\in [q_{\ell_kl_m},q_{\ell_kl_m-1})$,
  $q_{j_k}=q_{\ell_kl_m}=q_{m_k}$
  and $m_k>\ell_k$ in $Q$ by 3 of  Proposition 2.1.
  Then $j_k=m_k>\ell_k>...>2_k>1_k>i_k$.
\par

 On the other hand, $q_{j_k}\in(q_{i_k},q_{i_ka})$
  implies that
  there exists an $l>a$ with $q_{j_k}\in [q_{i_kl},q_{i_kl-1})$.
  Let $q_{i_kl}=q_{l_k}$ in $Q$. Then, by the above proof,
  we have $j_k\geq l_k$ in $Q$. Note that $q_{i_kl}$ is the $l$th
  in $S_{h_1}(q_{i_k})\subset Q$, and $q_{i_kl}=q_{l_k}$ is the
  $l_k$th in $Q$. So $l_k> l$ by 1 of Proposition 2.1.
  So $j_k\geq l_k>l>a$.
  And then, it is easy to see
  $[q_{j_k},q_{j_kb})\subset[q_{i_k},q_{i_ka})$
  since $q_{j_k}\in(q_{i_k},q_{i_ka})$
  implies $[q_{j_k},q_{j_kb})\subset
  [q_{j_k},q_{j_k1})\subset
  (q_{i_k},q_{i_ka})$. This implies 1.
\par

{\it The proof of 2.}
$q_{j_k}\notin [q_{i_k},q_{i_ka})$ implies $q_{j_k}<q_{i_k}$
  or $q_{i_ka}<q_{j_k}$. Case 1, $q_{i_ka}<q_{j_k}$ implies
  $[q_{i_k},q_{i_ka})\cap [q_{j_k},q_{j_kb})=\emptyset$.
  Case 2, $q_{j_k}<q_{i_k}$. Suppose $q_{i_k}\in (q_{j_k},q_{j_k0})$.
  Then $i_k>j_k$ by 1 of Proposition 3.2,
  a contradiction to $j_k>i_k$.
  So $q_{i_k}\notin (q_{j_k},q_{j_k0})$. Then $q_{j_k}<q_{i_k}$
  implies $[q_{j_k},q_{j_k0})\cap[q_{i_k},q_{i_ka})=\emptyset$.
  This implies 2.
\par

{\it The proof of 3.} Note $b>a$.
 Let $[q_{j_k},q_{j_kb})\cap[q_{i_k},q_{i_ka})\neq\emptyset$.
 If $q_{i_k}=q_{j_k}$,
 then $[q_{j_k},q_{j_kb})\subset[q_{i_k},q_{i_ka})$.
 If $q_{i_k}\neq q_{j_k}$, then $q_{i_k}\in(q_{j_k},q_{j_kb})$
 or $q_{j_k}\in(q_{i_k},q_{i_ka})$.
 $q_{i_k}\in(q_{j_k},q_{j_kb})$
 implies $i_k>j_k$ and $i_k>b$ by 1 of Proposition 3.2.
 $i_k>b$ implies $i_k>b=n+j>a=n+i>i$,
 a contradiction to $i\geq i_k$ by 5 of Proposition 3.1.
 So $q_{j_k}\in(q_{i_k},q_{i_ka})$ implies
 $[q_{j_k},q_{j_kb})\subset[q_{i_k},q_{i_ka})$.
\end{proof}

\begin{prop}
 Function $\mathscr{G}$ satisfies the following
 conditions:
\par
1 \  $\cap_n g(n,i,y)=\{ y\}$.
\par

4 \ $y\in g(n,i,x)$ implies $g(n,j,y)\subset g(n,i,x)$
  for some $j$.
\par
5 \ $g(n+1,j,x)\subset g(n,i,x)$.
\par
6 \ $j>k$ implies  $H(n,k)\cap(\cup\mathscr{G}(n,j))=\emptyset$.

\par
6$'$ \  $H(n,k,h)\cap g(n,j,l)\neq\emptyset$
  and $H(n,k,h)\neq H(n,j,l)$ imply $k>j$.

\par
7  \  Every $\mathscr{G}(n,i)$ is a c.o.D family.
\par
8  \  If $g(n,i,l),g(n,j,\ell)\in\mathscr{G}_n$ with $j>i$,
    then $g(n,i,l)\cap g(n,j,\ell)=\emptyset$ or
    $g(n,j,\ell)\subset g(n,i,l)$.
\end{prop}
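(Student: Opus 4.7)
The plan is to prove each of the seven parts by unpacking the explicit product description of $g(n,i,l)$ and lifting the one-dimensional nesting/disjointness facts already established in Propositions 2.2 and 3.2 coordinate by coordinate. Three ingredients are used repeatedly: (i) the dichotomy that two intervals coming from $\mathscr{I}_a$ and $\mathscr{I}_b$ either are disjoint or nest, with the finer one ($b>a$) sitting inside the coarser; (ii) the refinement inequality of Proposition 3.1 part 5, which forces $i\ge i_k$ whenever $H(n,i)=\{q_{i_1}\}\times\cdots\times\{q_{i_n}\}\times X_n$; and (iii) the identity $g(n,i,l)\cap H(n,i)=H(n,i,l)$ of Note D4.

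For part 1, fix $y$ and write $y\in H(n,i_n)$ for each $n$, so that $g(n,i_n,y)$ constrains the first $n+i_n$ coordinates to half-open intervals of $\mathscr{I}_{a_n}$, $a_n=n+i_n$. Since $i_n$ is non-decreasing (Proposition 3.1 part 4), $a_n\to\infty$, and the convergence of each sequence $S_{a_n}(q_{i_k})$ to $q_{i_k}$ forces the widths $q_{i_k a_n}-q_{i_k}$ to go to $0$. Combining the product metric with this width control yields $\bigcap_n g(n,i_n,y)=\{y\}$. For part 5, I identify $x\in H(n,i)$ with $x\in H(n+1,j)\subset H(n,i)$, observe $j\ge i$, whence $a'=(n+1)+j>n+i=a$; in every coordinate the relevant $\mathscr{I}_{a'}$-interval shares the point $x_k$ with the corresponding $\mathscr{I}_a$-interval, so ingredient (i) gives coordinate-wise nesting. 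Part 4 follows by the same mechanism after identifying the unique $j$ with $y\in H(n,j)$: either $y$ agrees with $x$ on the first $n$ coordinates (then $j=i$ and $g(n,j,y)=g(n,i,x)$ since $\mathscr{G}(n,i)$ is pairwise disjoint and both contain $y$), or some coordinate differs, in which case Proposition 3.2 part 1 applied to the differing coordinate yields $j_k>a=n+i$, hence $j\ge j_k>n+i>i$ via Proposition 3.1 part 5, and again coordinate-wise nesting goes through.

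For part 6, suppose toward contradiction $j>k$ and $H(n,k)\cap g(n,j,\ell)\ne\emptyset$. If the points $P(n,k)$ and $P(n,j)$ agreed in every coordinate then $k=j$, contradicting $j>k$; otherwise some $q_{k_s}\in(q_{j_s},q_{j_s b})$ with $b=n+j$, and Proposition 3.2 part 1 forces $k_s>b=n+j$, so Proposition 3.1 part 5 gives $k\ge k_s>j>k$, a contradiction. Part 6$'$ is the contrapositive packaged differently: the case $k<j$ is ruled out by part 6 applied to $H(n,k,h)\subset H(n,k)$ and $g(n,j,l)\in\mathscr{G}(n,j)$, while the case $k=j$ uses Note D4 to rewrite $H(n,k,h)\cap g(n,j,l)=H(n,k,h)\cap H(n,k,l)$, which is empty when $H(n,k,h)\ne H(n,k,l)$ because $\mathscr{H}(n,k)$ is pairwise disjoint.

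Part 7 is immediate from the product description $g(n,i,l)=I(n,i)\times J(n,i,l)$ together with the fact that $\mathscr{J}(n,i)$ is c.o.D in $(X_n,\rho)$ and $I(n,i)$ is a fixed closed-open set. Part 8 is the analog of Proposition 2.2 part 5 in the product: if $g(n,i,l)\cap g(n,j,\ell)\ne\emptyset$ then each coordinate interval of $g(n,j,\ell)$ (drawn from $\mathscr{I}_b$, $b=n+j$) meets the corresponding interval of $g(n,i,l)$ (from $\mathscr{I}_a$, $a=n+i<b$), and ingredient (i) together with Proposition 3.2 part 3 forces nesting in each of the first $n+i$ coordinates; coordinates from $n+i+1$ to $n+j$ are unconstrained in $g(n,i,l)$, and beyond both factors are $X_{n+j}$, so overall $g(n,j,\ell)\subset g(n,i,l)$. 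The main subtlety I anticipate is Part 1: one must verify that the product-metric diameter of $g(n,i_n,y)$ genuinely tends to $0$, which requires combining the convergence of each sequence $S_a(q_i)$ to $q_i$ with the tail-weighting of the product metric on $X=\Pi_n Q_n$; every other part is essentially bookkeeping on top of the one-dimensional results already proved.
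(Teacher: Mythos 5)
Your proposal is correct and follows essentially the same route as the paper: each part is reduced coordinate-by-coordinate to the interval dichotomies of Propositions 2.2 and 3.2 together with the index bound $i\ge i_k$ from Proposition 3.1(5) and the identity $g(n,i,l)\cap H(n,i)=H(n,i,l)$. Your treatment of part 1 via shrinking interval widths is in fact more explicit than the paper's one-line assertion that $\cap_n H(n,i_n,l_n)=\{x\}$ forces $\cap_n g(n,i_n,x)=\{x\}$, but the substance is the same.
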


\begin{proof}
We prove 8 at firstly.
Let $g(n,i,l),g(n,j,\ell)\in\mathscr{G}_n$ with $j>i$
and $g(n,i,l)\cap g(n,j,\ell)\neq\emptyset$.
 Then $[q_{j_k},q_{j_kb})\subset[q_{i_k},q_{i_ka})$
 by 3 of Proposition 3.2 since
 $j>i$ implies $b=n+j>n+i=a$.
 Then $[q_{l_k},q_{l_ka})\cap[q_{\ell_k},q_{\ell_kb})\neq\emptyset$
 implies $[q_{\ell_k},q_{\ell_kb})\subset[q_{l_k},q_{l_ka})$
 by 5 of Proposition 2.2.
\par

{\it The proof of 1:} Let $x\in H(n,i_n,l_n)\subset g(n,i_n,x)$
  for $n\in N$.
  Then $\cap_nH(n,i_n,l_n)=\{x\}$.
  Then $\cap_n g(n,i_n,x)=\{x\}$.
\par

{\it The proof of 7:} $\mathscr{G}(n,i)$
  is a c.o.D family in $(X,\rho)$
  by C of Construction 3.2.
\par

{\it The proof of 6:} Take an $H(n,i,l)$ and a $g(n,j,\ell)$ with
   $j>i$.
   Then there exists $k\leq n$ with $q_{i_k}\neq q_{j_k}$.
   If $q_{i_k}<q_{j_k}$, then $q_{i_k}\notin [q_{j_k},q_{j_kb})$.
   So $H(n,i,l)\cap g(n,j,\ell)=\emptyset$.
   If  $q_{i_k}\in (q_{j_k},q_{j_kb})$,
   then $i_k>j_k$ and $i_k>b$
   by 1 of Proposition 3.2.
   Note $i_k>b$ implies $i_k>b=n+j>n+i>i$,
   a contradiction to $i\geq i_k$
   by 5 of Proposition 3.1.
   So $q_{j_kb}<q_{i_k}$. Then $q_{i_k}\notin [q_{j_k},q_{j_kb})$.
   Then $H(n,i,l)\cap g(n,j,\ell)=\emptyset$.
   So $H(n,i,l)\cap[\cup\mathscr{G}(n,j)]=\emptyset$ if $j>i$.
\par

{\it The proof of 6$'$:} Suppose $j\geq k$.
  Case 1, $j=k$. Note that $h=l$ implies
  $H(n,k,h)=H(n,j,l)$,
  a contradiction to $H(n,k,h)\neq H(n,j,l)$.
  So $H(n,k,h)\cap H(n,j,l)=\emptyset$
  implies $h\neq l$. Then $g(n,k,h)\cap g(n,j,l)=\emptyset$
  by the above 7, a contradiction
  to $H(n,k,h)\cap g(n,j,l)\neq\emptyset$.
\par

  Case 2, $j>k$. Then, by the above 6,
  $H(n,k,h)\cap g(n,j,l)=\emptyset$,
  a contradiction to
  $H(n,k,h)\cap g(n,j,l)\neq\emptyset$.

{\it The proof of 4:} In fact, pick a $y\in g(n,i,x)=g(n,i,l)$.
   If $y\in H(n,i,l)$, then $g(n,i,y)=g(n,i,x)$.
   If $y\notin H(n,i,l)$, then $y\in H(n,j,\ell)$
   with $H(n,j,\ell)\cap g(n,i,x)\neq\emptyset$
   and  $H(n,j,\ell)\neq H(n,i,l)$.
   Then $j>i$ by 6$'$ of Proposition 3.3. Then $b=n+j>n+i=a$.
   Then $[q_{j_k},q_{j_kb})\subset[q_{i_k},q_{i_ka})$
   by 3 of Proposition 3.2.
   Then $[q_{\ell_k},q_{\ell_kb})\subset[q_{l_k},q_{l_ka})$
   by 5 of Proposition 2.2.
   This implies  $g(n,j,y)\subset g(n,i,x)$.
\par

{\it The proof of 5:} Let $\pi_k$ be a projection from $X$ to $Q_k$
 and let $H(n,i,l)\subset H(n-1,j,\ell)$.
 Then $i\geq j$ by 4 of Proposition 3.1.
 Then $b=n+i>(n-1)+j=a$.
 So $\pi_k(x)=q_{i_k}\in[q_{j_k},q_{j_ka})$ implies
 $[q_{i_k},q_{i_kb})\subset[q_{j_k},q_{j_ka})$
 by 3 of Proposition 3.2 for $1\leq k\leq n-1$.
 On the other hand, $H(n,i,l)\subset H(n-1,j,\ell)$ implies
 $[q_{l_k},q_{l_kb})\subset[q_{\ell_k},q_{\ell_ka})$
 by 5 of Proposition 2.2 since $[q_{l_k},q_{l_kb})$ and
 $[q_{\ell_k},q_{\ell_ka})$ in $\mathscr{I}$
 for $n< k\leq n+i=b$.
 And then $\pi_n(x)=q_{i_n}\in[q_{\ell_1},q_{\ell_1a})$
 implies $[q_{i_n},q_{i_nb})\subset[q_{\ell_1},q_{\ell_1a})$
 since $b>a$.
 So $g(n,x)=g(n,i,l)\subset g(n-1,j,\ell)=g(n-1,x)$.
\end{proof}

\begin{prop}
$\mathscr{G}=\cup_n\mathscr{G}_n$ is a base
  of metric space $(X,\rho)$.
\end{prop}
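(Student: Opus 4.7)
The plan is to show that for any $x\in X$ and any open neighborhood $U$ of $x$ there exists some $g(n,i,l)\in\mathscr{G}$ with $x\in g(n,i,l)\subset U$; since $x\in g(n,x)$ by part 1 of Proposition 3.3, it suffices to find a single $n$ for which $g(n,x)\subset U$.

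First, I would use that $\mathscr{B}'=\cup_m\mathscr{B}'_m$ is a base for $(X,\rho)$, so I can pick a basic open set $B=B_1\times\cdots\times B_m\times X_m\subset U$ containing $x$, with $B_k\in\mathscr{Q}_k$. Writing $x_k=q_{i_k}$ for $k\leq m$, the point is that $q_{i_k}\in B_k$, and I would use the fact that $B_k$ is open in $(Q_k,\rho')$ together with Construction 1: the convergent sequence $S(q_{i_k})=\{q_{i_k j}:j\in N\}$ attached to $q_{i_k}$ satisfies $q_{i_k j}\searrow q_{i_k}$, so $[q_{i_k},q_{i_k a})$ shrinks to $\{q_{i_k}\}$ as $a\to\infty$. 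Hence for each $k\leq m$ there is an integer $a_k$ such that $[q_{i_k},q_{i_k a})\subset B_k$ for every $a\geq a_k$.

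Next, for each $n\in N$ let $H(n,i_n,l_n)$ be the unique member of $\mathscr{H}_n$ containing $x$ and put $a_n=n+i_n$, so that the first $n$ coordinate slabs of $g(n,x)=g(n,i_n,l_n)$ are exactly $[q_{i_1},q_{i_1 a_n})\times\cdots\times[q_{i_n},q_{i_n a_n})$ (using that $x_k=q_{i_k}$ is forced by $x\in H(n,i_n)$). Since $i_n\geq 1$, one has $a_n\geq n+1$, so by choosing $n\geq m$ large enough I can ensure $a_n\geq\max_{k\leq m}a_k$. For such $n$, each of the first $m$ coordinate slabs of $g(n,x)$ sits inside the corresponding $B_k$; the slabs for $m<k\leq n$, the slabs $[q_{l_j},q_{l_j a_n})$ for coordinates $n<k\leq n+i_n$, and the tail $X_{n+i_n}$ all lie in the free factors of $B$. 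Therefore $g(n,x)\subset B\subset U$, which gives the base property.

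The only real subtlety is making sure the diameters of the slabs $[q_{i_k},q_{i_k a_n})$ actually collapse onto $\{q_{i_k}\}$ uniformly in the finite set $k\leq m$; this is where I would invoke the monotonicity $q_{i_k,a}\to q_{i_k}$ from B) of Construction 1 together with the fact that, for each fixed $k$, the index $i_k$ is determined by $x$ and does not change with $n$ (because the chain $H(n+1,i_{n+1})\subset H(n,i_n)$ forces the first $n$ entries of the label to be preserved). Once the uniform shrinking is in place, choosing $n$ large enough is routine and the containment $g(n,x)\subset B$ falls out coordinate by coordinate.
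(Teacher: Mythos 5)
Your proposal is correct and follows essentially the same route as the paper: reduce to a basic box $B\in\mathscr{B}'$, find for each of the finitely many constrained coordinates a threshold beyond which $[q_{i_k},q_{i_k a})\subset B_k$, take the maximum, and choose $l$ large enough that $g(l,x)\subset B$. The only cosmetic difference is that you justify the coordinate-wise thresholds uniformly via the monotone convergence $q_{i_k a}\searrow q_{i_k}$, whereas the paper runs an explicit two-case analysis on whether $q_{i_j}$ is the left endpoint of $B_j$ or interior to it; the content is the same.
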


\begin{proof}
Let $B=B_{n_1}\times...\times
  B_{n_n}\times X_n\in\mathscr{B}'$.
  Then $B=[q_{n_1},q_{m_1})\times...\times
  [q_{n_n},q_{m_n})\times X_n$.
  Let $x=(q_{i_1},...,q_{i_n},...)\in B$.
  Then $q_{i_j}\in[q_{n_j},q_{m_j})$ for $j\leq n$.
\par

Case 1, $q_{i_j}=q_{n_j}\in[q_{n_j},q_{m_j})$.
  Then $[q_{n_j},q_{n_j\ell_j})\subset
 [q_{n_j},q_{m_j})$ for some $\ell_j$.
 Take $\ell_j$.
\par

Case 2, $q_{i_j}\in(q_{n_j},q_{m_j})$.
  Let $q_{i_j}\in S_i$ and $q_{m_j}\in S_m$.
  If $i\geq m$, take $\ell_j=0$.
  If $i<m$, then there exists $[q_{i_jl_j+1},q_{i_jl_j})$
  with $q_{m_j}\in[q_{i_jl_j+1},q_{i_jl_j})$
  and $q_{i_jl_j+1}\in(q_{n_j},q_{m_j})$.
  Take $\ell_j=l_j+1$.
\par

Let $k=\hbox{max}\{\ell_j: j\leq n\}$ and
 $l=\hbox{max}\{k, n\}$.
 Note, for each $\mathscr{H}_n=\cup_i\mathscr{H}(n,i)$,
 there uniquely exists an $H(n,i_n,l_n)\in\mathscr{H}_n$
 with $x\in H(n,i_n,l_n)$.
 Let $\mathscr{H}_x=\{H(n,i_n,l_n)\in\mathscr{H}_n:
 x\in H(n,i_n,l_n)\}$.
\par

 Take $H(l,i,h)=H(l,i_l,l_l)$ from $\mathscr{H}_x$.
 Let $a=l+i_l$. Then
 \[g(l,x)=g(l,i,h)=[q_{i_1},q_{i_1a})\times...\times
  [q_{i_l},q_{i_la})\times J(l,i,h)\subset B.\]
\end{proof}

In topological space $(X,\rho)$, denote the closure
 of a set $A$ by $\hbox{Cl}_\rho A$, and let
 \[\hbox{Int}_\rho A=X-\hbox{Cl}_\rho(X-A).\]

\begin{prop}
1. \quad $\hbox{Int}_\rho H(n, x')=\emptyset$  for arbitrary $H(n, x')\in \mathscr{H}$.
\par

2. $Cl_\rho[H(n,y)-H(n+1,y)]=H(n,y)$ for arbitrary $H(n, y)\in \mathscr{H}$.
\par

3. Let $H(n_{i+1},y_{i+1})\subset H(n_i,y_i)$ and $n_i<n_{i+1}$.
  Then $\cap_i H(n_i,y_i)=\{y\}$.
\end{prop}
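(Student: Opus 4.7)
The plan is to verify the three parts directly from the explicit product structure of $H(n,y)$ given in Construction 3.2, using the key elementary fact that every basic clopen $[p_r,p_s)$ of $(Q,\rho')$ with $p_r<p_s$ contains infinitely many rationals; in particular $(Q,\rho')$ is dense-in-itself. Throughout I write $H(n,x')=\{q_{i_1}\}\times\cdots\times\{q_{i_n}\}\times[q_{l_1},q_{l_1a})\times\cdots\times[q_{l_i},q_{l_ia})\times X_{n+i}$, a product of closed singletons and clopen half-open intervals times $X_{n+i}$; in particular each $H(n,x')$ is closed in $(X,\rho)$.

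For (1), fix $y\in H(n,x')$ and any basic open neighborhood $U$ of $y$; $U$ constrains the first coordinate to an open $B_1\ni q_{i_1}$, which contains a basic clopen $[p_r,p_s)\ni q_{i_1}$ with $p_r<p_s$ and therefore some $p\neq q_{i_1}$. Replacing the first coordinate of $y$ by $p$ produces a point of $U\setminus H(n,x')$, so $y\notin\hbox{Int}_\rho H(n,x')$. For (2), closedness of $H(n,y)$ gives $\hbox{Cl}_\rho[H(n,y)-H(n+1,y)]\subset H(n,y)$; conversely, for $z\in H(n,y)$ and a basic open $U=B_1\times\cdots\times B_K\times X_K$ containing $z$, note that $\pi_{n+1}(y)$ is the value fixed in the $(n+1)$-st slot of $H(n+1,y)$, and both $\pi_{n+1}(y)$ and $\pi_{n+1}(z)$ lie in the $(n+1)$-st factor $[q_{l_1},q_{l_1a})$ of $H(n,y)$. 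Then $B_{n+1}\cap[q_{l_1},q_{l_1a})$ (with the convention $B_{n+1}=Q$ if $K\le n$) is an open neighborhood of $\pi_{n+1}(z)$ inside $[q_{l_1},q_{l_1a})$, hence contains a nontrivial basic clopen and so some $q\neq\pi_{n+1}(y)$. Replacing only $z$'s $(n+1)$-st coordinate by $q$ yields a point $z'\in U\cap H(n,y)$ with $\pi_{n+1}(z')\neq\pi_{n+1}(y)$, so $z'\notin H(n+1,y)$, as required.

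For (3), uniqueness is immediate: once $n_i\ge k$, the $k$-th coordinate is forced on $H(n_i,y_i)$, so any two points of $\bigcap_i H(n_i,y_i)$ agree in every coordinate. For nonemptiness, the nesting $H(n_{i+1},y_{i+1})\subset H(n_i,y_i)$ forces $\pi_k(y_j)$ to stabilize in $j$: for each fixed $k$, as soon as $n_{i_0}\ge k$ every subsequent $y_j$ lies in $H(n_{i_0},y_{i_0})$ and so has $\pi_k(y_j)$ equal to the value fixed by $H(n_{i_0},y_{i_0})$. Define $y\in X$ coordinate-wise by this stabilized value; a coordinate-wise check against the three-block shape of $H(n_i,y_i)$ (singletons, then the half-open intervals $[q_{l_h^{(i)}},q_{l_h^{(i)}a_i})$, then the free tail $X_{n_i+i_i}$) confirms $y\in H(n_i,y_i)$ for every $i$. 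I expect (2) to be the main point to watch, as it requires simultaneously tracking the two structured boxes $H(n,y)\supset H(n+1,y)$ and ensuring the single-coordinate modification keeps the candidate point inside both $H(n,y)$ and $U$; (1) and (3) then follow fairly mechanically from density of $(Q,\rho')$ and coordinate-wise stabilization respectively.
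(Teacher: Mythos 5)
Your proof is correct and follows essentially the same route as the paper: part 1 rests on the observation that every basic open box has a nondegenerate interval in the first coordinate while $H(n,x')$ has a singleton there (the paper phrases this as $g(\ell,x)-H(l,y)\neq\emptyset$), and part 3 is the same coordinate-stabilization argument from the explicit product form of nested $H(n_i,y_i)$'s. Your part 2 argument is also correct; the paper in fact omits any proof of part 2, so your density argument in the $(n+1)$-st coordinate supplies a detail the paper leaves to the reader.
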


\begin{proof}
Note $g(\ell,x)-H(l,y)\neq\emptyset$ for arbitrary $g(\ell,x)$ and arbitrary
  $H(l,y)$.
  Then $\hbox{Int}_\rho H(n, x')=\emptyset$
   for arbitrary $H(n, x')\in \mathscr{H}$.
\par

To see 3 note $H(n_i,y_i)=\{q_{i_1}\}\times...\times\{q_{i_n}\}\times
   [q_{l_1},q_{l_1a})\times...
  \times[q_{l_i},q_{l_ia})\times X_{n_i}$ for $n_i=n$ by the definition
  of $H(n,i,l)$. Note
  $H(n_{i+1},y_{i+1})\subset H(n_i,y_i)$ for $i\in N$. Let $n_{i+1}=m>n=n_i$.
  Then
  \[H(n_{i+1},y_{i+1})=\{q_{i_1}\}\times...\times\{q_{i_n}\}\times...\times\{q_{i_m}\}
   \times[q_{h_1},q_{h_1b})\times...
  \times[q_{h_k},q_{h_kb})\times X_{n_{i+1}}.\]
  Then $\cap_iH(n_i,y_i)=(q_{i_1},q_{i_2},...,q_{i_n},...)$.
\end{proof}

\section{To construct  Ln covers sequences on $G_m$}

  We'll use Ln covers and Ln covers sequences throughout this paper.
  To construct Ln-covers sequences on $g(m,a^*,y_0)$,
  let $m^2=m+a^*$, $n\geq m$,
  \[G_m=g(m,a^*,y_0)=I(m,a^*,y_0)\times J(m,a^*,y_0),\quad
  H_0=H(m,a^*)\cap g(m,a^*,y_0),\]
  \[\mathscr{H}(m,1)=\{H(n,1'_i,i_l)\in\mathscr{H}_n: H(n,1'_i,i_l)
  \subset G_m\} \hbox{ and }\]
  \[\mathscr{G}(m,1)=\{g(n,1'_i,i_l)\in\mathscr{G}_n: H(n,1'_i,i_l)
  \in\mathscr{H}(m,1)\}.\]
  Then $H_0=H(m,a^*,y_0)$ by Note D4. Then we have the following fact.
\vspace{0.3cm}

\textbf{Fact 4.0.} $H(\ell,h,k)\cap G_m\neq\emptyset$
 implies $g(\ell,h,k)\subset G_m$ if $\ell\geq m$.

\vspace{0.3cm}

\textbf{Construction 4.}

\vspace{0.3cm}
 \textsf{Condition.}
1. $\mathscr{H}(m,1)=\{H(n,1'_i,i_l)\in\mathscr{H}_n:
  H(n,1'_i,i_l)\subset G_m\}$ with $\Delta$-order.
\par

2.  $\cup\mathscr{H}(m,1)=\cup\mathscr{G}(m,1)=G_m$.
\par
\vspace{0.2cm}

\AE. We use Conditions of Construction 4 to construct
  a c.o.D family
  $\mathscr{G}(n,m,0)$ in $(X,\rho)$ by induction.
\par

\vspace{0.3cm}
\textbf{Operation O1.}
 \textsf{Conditions.}
1.  $\mathscr{H}(m,1)$ with $\Delta$-order.
\par

2. $1'_1<1'_2<...<1'_i<...$.
\par

We use Conditions of Operation O1 to construct the first cover of $G_m$
  by induction.
\par
A. Pick the least number $1'_1$. Denote $1'_1$ by $1_1$.
   Let \[\mathscr{H}^*(1,1)=
   \{H(n,1_1,1_l)\in\mathscr{H}(n,1_1):g(n,1_1,1_l)\subset G_m\}.\]
   Then $\mathscr{O}^*(1,1)=\{g(n,1_1,1_l):l\in N\}$
   is a c.o.D family.
   Let $O^1_m=\cup\mathscr{O}^*(1,1)$.

\par

B. Assume we have had a c.o.D family $\mathscr{O}^*(1,k)$, a
   c.D family $\mathscr{H}^*(1,k)$ and an $O^k_m$ for each $k<h$.
   Let $G^h_m=G_m-\cup_{k<h}O^k_m$
   and \[\mathscr{H}'(1,h)=\{H(n,1'_i,1_l):
   H(n,1'_i,1_l)\cap G^h_m\neq\emptyset\}.\]
   If $\mathscr{H}'(1,h)=\emptyset$,
   let $\mathscr{H}(n,m,0)=\cup_{k<h}\mathscr{H}^*(1,k)$ and
   $\mathscr{G}(n,m,0)=\cup_{k<h}\mathscr{O}^*(1,k)$.
   If $\mathscr{H}'(1,h)\neq\emptyset$,
   let $1_e=min\{1'_i:H(n,1'_i,1_l)\in\mathscr{H}'(1,h)\}$.
   Denote $1_e$ by $1_h$.
   Suppose $H(n,1_h,h_l)\in\mathscr{H}'(1,h)$
   with $H(n,1_h,h_l)\cap O^k_m\neq\emptyset$
   for some $k<h$.
   Then $H(n,1_h,h_l)\cap g(n,1_k,k_l)\neq\emptyset$
   for some $g(n,1_k,k_l)\in\mathscr{O}^*(1,k)$.
   Then $g(n,x)=g(n,1_h,h_l)\subset g(n,1_k,k_l)
   \subset O^k_m$ for $x\in H(n,1_h,h_l)\cap g(n,1_k,k_l)$
   by 4 of Proposition 3.3.
   Then $g(n,1_h,h_l)\cap G^h_m=\emptyset$,
   a contradiction to $H(n,1_h,h_l)\cap G^h_m\neq\emptyset$.
   Then $H(n,1_h,h_l)\cap G^h_m\neq\emptyset$ implies
   $g(n,1_h,h_l)\subset G^h_m$.
   Let \[\mathscr{H}^*(1,h)=
   \{H(n,1_h,h_l)\in \mathscr{G}(n,1_h):H(n,1_h,h_l)\subset G^h_m\}.\]
   Then $\mathscr{O}^*(1,h)=\{g(n,1_h,h_l):l\in N\}$
   is a c.o.D family.
   Let $O^h_m=\cup\mathscr{O}^*(1,h)$.
\par

Then, by induction, we have had a c.o.D
   family $\mathscr{O}^*(1,h)$,
   a c.D family $\mathscr{H}^*(1,h)$ and an $O^h_m$ for each $h\in N$.
   Let $\mathscr{G}(n,m,0)=\cup_h\mathscr{O}^*(1,h)$
   and $\mathscr{H}(n,m,0)=\cup_h\mathscr{H}^*(1,h)$.
\par

We call the above induction \textbf{ Operation O1 on
   $\mathscr{H}(m,1)$ and $\mathscr{G}(m,1)$.}
\par

\begin{cl}
1. $\cup\mathscr{G}(n,m,0)=G_m$ and
 $g(n,h,k)$ is a c.o set in $(X,\rho)$ for every
 $g(n,h,k)\in\mathscr{G}(n,m,0)$.
\par

2. Every $\mathscr{O}^*(1,h)$ is c.o.D,
  and
  $\mathscr{G}(n,m,0)=\cup_h\mathscr{O}^*(1,h)$ is c.o.D in $(G_m,\rho)$.
\par

3. $\mathscr{H}(n,m,0)$ is a c.D family and
  $g[n,H^1_m]=\cup\{g(n,x): x\in H^1_m\}=G_m$
  for $\cup\mathscr{H}(n,m,0)=H^1_m$.
\par

4. Let $g(\ell,i'',l'')\subset G_m$
  with $\ell\geq n$.
  Then there exists a $g(n,i',l')\in\mathscr{G}(n,m,0)$
  such that $g(\ell,i'',l'')\subset g(n,i',l')$.
\par

5. Let $g(n,i,l)$ be the first member in $\mathscr{G}(n,m,0)$
  with $\Delta$-order. Then
  \[H(n,i,l)=H(n,1_1,1_1)\subset\cup\mathscr{H}^*(1,1)\subset H_0.\]
\end{cl}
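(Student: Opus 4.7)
The plan is to unpack the inductive construction of Operation~O1 and extract from it two structural facts that together drive all five parts of the claim. First, for any $H(n,1_h,h_l)\in\mathscr{H}'(1,h)$ one actually has $g(n,1_h,h_l)\subset G^h_m$: if this $g$ met some earlier $O^k_m$, then by Proposition~3.3(4) it would be contained in a member of $\mathscr{O}^*(1,k)\subset O^k_m$, contradicting the nonempty intersection with $G^h_m$. Second, the sequence of selected first-indices $1_1<1_2<\cdots$ is strictly increasing: after step $h$ every $H(n,1_h,l)\subset G_m$ has $g(n,1_h,l)\subset G_m$ by Fact~4.0, so it is absorbed into $\mathscr{H}^*(1,h)$ and the label $1_h$ never reappears.

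For Part~1, I would argue $\cup\mathscr{G}(n,m,0)=G_m$ by contradiction: an uncovered $x\in G_m$ lies in some $H(n,1'_k,l)\in\mathscr{H}(m,1)$, so $H(n,1'_k,l)$ meets every $G^h_m$; this bounds the strictly-increasing $1_h$ by $1'_k$, forcing the induction to terminate with $\mathscr{H}'(1,h)=\emptyset$ at some $h\leq k+1$, contradicting $x\in G^h_m$. The c.o conclusion is inherited from $\mathscr{G}_n$ (Proposition~3.3(7)). For Part~2, each $\mathscr{O}^*(1,h)\subset\mathscr{G}(n,1_h)$ is c.o.D directly by Proposition~3.3(7), and pairwise disjointness across $h$'s follows because a common point of $g(n,1_h,h_l)$ and $g(n,1_{h'},h'_{l'})$ with $h<h'$ would give, via Proposition~3.3(8), $g(n,1_{h'},h'_{l'})\subset g(n,1_h,h_l)\subset O^h_m$, contradicting $g(n,1_{h'},h'_{l'})\subset G^{h'}_m$. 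A pairwise disjoint cover of $G_m$ by c.o sets is automatically c.o.D in the subspace $(G_m,\rho)$. Part~3 is parallel on the $\mathscr{H}$-side, and $g[n,H^1_m]=G_m$ is then immediate from Part~1.

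For Part~4, pick $x\in H(\ell,i'',l'')$ so that $g(\ell,x)=g(\ell,i'',l'')$; by Part~1 there is a unique $g(n,j,k)\in\mathscr{G}(n,m,0)$ with $x\in g(n,j,k)$. Iterating Proposition~3.3(5) from level $n$ up to $\ell$ yields $g(\ell,x)\subset g(n,x)$, and either $g(n,x)=g(n,j,k)$ when $x\in H(n,j,k)$, or Proposition~3.3(6${}'$) gives a larger first-index $j'$ with $x\in H(n,j',k')$, and then Proposition~3.3(4) yields $g(n,x)\subset g(n,j,k)$. For Part~5, the $\Delta$-order on $\mathscr{G}(n,m,0)$ inherited from $\mathscr{G}_n$ places the smallest first-index $1_1$ with the smallest second-index first, so the first member is $g(n,1_1,1_1)$ and $H(n,1_1,1_1)\in\cup\mathscr{H}^*(1,1)$ is immediate. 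The inclusion $\cup\mathscr{H}^*(1,1)\subset H_0$ is the delicate point: combining the minimality of $1_1$, Proposition~3.1(5) (relating the first-index to the coordinate indices), and Proposition~3.2(1) (which shows that the left endpoint $q_{j_k}$ of the interval $[q_{j_k},q_{j_k c})$ has strictly smaller $Q$-index than any other element of that interval) forces the first $m$ singleton coordinates of every $H(n,1_1,1_l)\subset G_m$ to coincide with those of $H_0=H(m,a^*,y_0)$; together with $H(n,1_1,1_l)\subset G_m$ this yields containment in $H_0$.

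The main obstacle I expect is precisely Part~5: verifying that the $\Delta$-order indexing is strictly monotonic in each coordinate requires navigating the nested diagonal-counting formula of Proposition~3.1, and pinpointing how the structural distinction between $G_m$ (a product of intervals) and $H_0$ (the same product with the first $m$ factors collapsed to singletons) is exactly what the minimization $1_1=\min\{1'_i\}$ selects out—this is the heart of the argument and the step most easily mishandled.
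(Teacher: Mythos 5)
Your treatment of parts 1--4 is essentially the paper's own argument: part 1 via the observation that every $H(n,1'_i,i_l)$ with $1_k\le 1'_i<1_{k+1}$ is already absorbed into $\cup_{j\le k}O^j_m$ (you phrase this contrapositively through the strict increase of the $1_h$'s, but it is the same mechanism, resting on the same use of 4 of Proposition 3.3 to show $H(n,1_h,h_l)\cap G^h_m\neq\emptyset$ forces $g(n,1_h,h_l)\subset G^h_m$); part 2 from 7 and 8 of Proposition 3.3 together with the disjointness of the $O^k_m$'s; part 4 by locating the covering $g(n,i^*,l^*)$ from part 1 and descending with 4 and 5 of Proposition 3.3. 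The only substantive divergence is in part 5. The paper simply asserts $\cup\mathscr{H}^*(1,1)\subset H_0$ and argues only that the first member in $\Delta$-order has first index $1_1$; you correctly identify the inclusion as the real content and propose to derive it from 5 of Proposition 3.1 and 1 of Proposition 3.2. That route is only half an argument: those two facts show that any $H(n,1_1,1_l)\subset G_m$ whose $t$-th coordinate ($t\le m$) differs from that of $H_0$ must have $1_1\ge j_t>m+a^*$, but to contradict the minimality of $1_1$ you still need an upper bound on the least first index occurring among the level-$n$ pieces that \emph{are} contained in $H_0$, and for large $n$ that least index also exceeds $m+a^*$. The missing comparison has to come from the monotonicity of the diagonal indexing formula $i'=(i+j-2)(i+j-1)/2+i$ (4 and 5 of Proposition 3.1, applied along the whole chain from level $m$ to level $n$) combined with $6'$ of Proposition 3.3 at level $m$, which shows that descendants of $H(m,a^*)$ carry strictly smaller indices than descendants of any other $H(m,b)$ meeting $G_m$. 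Since the paper supplies no proof of this inclusion at all, your sketch is not worse than the source, but as written it would not close.
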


\begin{proof}
To see 1 pick an $x\in G_m=\cup\mathscr{H}(m,1)$. Then there exists an
  $H(n, 1'_i, i_h)$ with $x\in H(n, 1'_i, i_h)$.
  Case 1, $i=1$. Then $ H(n, 1_1, 1_e)\in\mathscr{H}^*(1,1)$
  and $x\in H(n, 1_1, 1_e)\subset O^1_m$.
  Case 2, $i>1$.  Then $1_k<1'_i<1_{k+1}$.
  This implies $x\in O^k_m$, and
   $1'_i=1_{k+1}$ implies $x\in O^{k+1}_m$.
   And then it is easy to see that $g(n,h,k)$ is a c.o set
   in $(X,\rho)$.
   This implies 1.
\par

{\it The proof of 2.}
Note that $O^k_m\cap O^{k+1}_m=\emptyset$ and
 each $\mathscr{O}^*(1,h)$ is a c.o.D family.
 Then $\mathscr{G}(n,m,0)$ is a c.o.D family in $(X,\rho)$
 since $\cup\mathscr{G}(n,m,0)=G_m$ is a c.o set.
\par

{\it The proof of 4.}
 Let  $H(n,i',l')\cap G_m\neq\emptyset$.
  Then $g(n,i',l')\subset G_m$ by Fact 4.0.
  Then $H(n,i',l')\in\mathscr{H}(m,1)$.
  Then there exists an $1_i$ with $1_i\leq i'<1_{i+1}$.
  If $1_i= i'$, then $g(n,1_i,i_l)=g(n,i',l')$
  for some $i_l$.
  If $1_i< i'$, then $i'<1_{i+1}$
  and $H(n,i',l')\subset g(n,1_i,i_l)$ for some $i_l$
  by the definition of $1_i$.
  Then $g(n,i',l')\subset g(n,1_i,i_l)$.
\par
Let $\ell> n$.
  Then there exists an  $H(n,i',l')\in\mathscr{H}(m,1)$
  with $H(\ell,i'',l'')\subset H(n,i',l')$.
  Then there exists a
  $g(n,i^*,l^*)\in\mathscr{G}(n,m,0)$
  such that $ H(n,i',l') \subset g(n,i^*,l^*)$.
  So $g(\ell,i'',l'')\subset g(n,i^*,l^*)$
  by 4 and 5 of Proposition 3.3.
\par

{\it The proof of 5.} Let $g(n,i,l)\in\mathscr{G}(n,m,0)$
  be the first member in $\mathscr{G}(n,m,0)$.
  Then $H(n,i,l)\subset G_m=g(m,a^*,y_0)$.
  Then $i\geq 1_1$ since $\cup\mathscr{H}^*(1,1)\subset H_0$
  and $1_1$ is the least number.
\par

Suppose  $i>1_1$.
  Then $H(n,i,l)$ is in the $i$'th line in the Figure II.
  It is a contradiction since $H(n,i,l)$ is the first member in
  $\mathscr{G}(n,m,0)$
  with $\Delta$-order (following the direction of arrow in the Figure II).
\par

Then $i=1_1$ and $H(n,i,l)\subset\cup\mathscr{H}^*(1,1)\subset H_0$.
  Then, in the first line, we have $H(n,i,l)=H(n,1,1)=H(n,1_1,1_1)$.

\end{proof}

\textbf{Definition Ln.}
Call $\mathscr{G}(n)$
  \textbf{a least number cover ( Ln cover)
  on $(n,B)$ (or on $B$)} if and only if
  $\mathscr{G}(n)$ and $B$
  satisfy 1, 2 and 4 of Claim 4.1.
\par
It is easy to check the following fact.

\vspace{0.3cm}

\textbf{Fact *.} Let $\mathscr{G}(n)$ be a Ln cover on
   $(n,B)$.
  Then $\mathscr{G}'$ is a Ln cover on $(n,\cup\mathscr{G}')$
  if $\mathscr{G}'\subset\mathscr{G}(n)$.

\vspace{0.3cm}
\textbf{Definition Ln$'$.}
Let $H$ be a closed set in $(\cup\mathscr{G}',\rho)$ with
 $H\cap H(n,i',l')\neq\emptyset$
   for each $g(n,i',l')\in\mathscr{G}'$. Also call
   \textbf{ $\mathscr{G}'$  a Ln cover of $(n,H)$
   (or $H$).}
\vspace{0.3cm}

\textbf{Note Ln.}
Let $H$ be a closed set in $(X,\rho)$ and
  $\mathscr{G}(n,H)=\{g(n,x)\in\mathscr{G}_n: x\in H\}$.
  Then there exists $\mathscr{G}'(n,H)\subset\mathscr{G}(n,H)$
  such that $\cup\mathscr{G}'(n,H)=\cup\mathscr{G}(n,H)$
  and $\mathscr{G}'(n,H)$ is a Ln cover of $(n,H)$
  by 3 of Claim 4.1.
  Call $\mathscr{G}'(n,H)$  a Ln subcover of $(n,H)$ also.
  Then $\cup\mathscr{G}'(n,H)$ is c.o in $(X,\rho)$.
\vspace{0.3cm}

 \OE. ( \textit{Construction 4 is continued.})
  Assume that we have constructed
   a c.o.D family $\mathscr{G}(n_a,m,0)$ and \hbox{ }
   a c.D family $\mathscr{H}(n_a,m,0)$ in $(X,\rho)$
   such that Claim 4.1.
   In the following, we construct the next Ln cover by induction.
   To do it let $b=a+1$ and $n_b=n_a+1$.
\par

  Take a $g(n_a,a_j,j_h)\in\mathscr{G}(n_a,m,0)$.
  For $H(n,i,i_l)\in\mathscr{H}(m,1)$,
  let $H(n,b'_i,i_l)=H(n,i,i_l)\cap g(n_a,a_j,j_h)$
  if $H(n,i,i_l)\cap g(n_a,a_j,j_h)\neq\emptyset$, and let
  \[\mathscr{H}(n,a_j,j_h)=
  \{H(n,b'_i,i_l)=H(n,i,i_l)\cap g(n_a,a_j,j_h):
  H(n,i,i_l)\in\mathscr{H}(m,1)\}.\]
  Then $ b'_1<b'_2<...<b'_i<...$ by $\Delta$-order of $\mathscr{H}(m,1)$.
  Let
  \[\mathscr{H}'(n_b,a_j,j_h)=\{H(n_b,b_i,i_k)\in\mathscr{H}_{n_b}:
  H(n_b,b_i,i_k)\subset g(n_a,a_j,j_h)\}.\]
  Then $\cup\mathscr{H}'(n_b,a_j,j_h)=\cup\mathscr{G}'(n_b,a_j,j_h)=g(n_a,a_j,j_h)$.
  Give $\mathscr{H}'(n_b,a_j,j_h)$ a $\Delta$-order as a subsequence of $\mathscr{H}_{n_b}$.
  Then we have the following claim.

  \begin{cl}
 Let $H(n_b,b^1_1,1_e)$ be the first member in $\mathscr{H}'(n_b,a_j,j_h)$
  with $\Delta$-order, and $H(n,b'_1,1_l)$ be the first member in
  $\mathscr{H}(n,a_j,j_h)$ with $\Delta$-order. Then:
\par
1.  $H(n_b,b^1_1,1_e)\subset H(n_a,a_j,j_h)\subset H(n,b'_1,1_l)$.

\par
2.  $b'_1$ is the least number in $\mathscr{H}(n,a_j,j_h)$, and $b^1_1$
  is the least number in $\mathscr{H}'(n_b,a_j,j_h)$ with $b'_1<a_j<b^1_1$.
\end{cl}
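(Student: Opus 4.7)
The strategy is to identify the first-in-$\Delta$-order members of $\mathscr{H}(n,a_j,j_h)$ and $\mathscr{H}'(n_b,a_j,j_h)$ explicitly, then read off the two claims from Proposition~3.1(4) together with the Figure~I transition formula. The guiding principle is that the $\Delta$-order on $\mathscr{H}'_k$ is jointly monotone in the underlying singleton tuple: replacing a coordinate $q_{i_k}$ by a $Q$-element with strictly larger $\Delta$-order index strictly increases the resulting $\mathscr{H}'_k$-index through $l=(i+j-2)(i+j-1)/2+i$. Combined with Proposition~3.2(1), which says that an element of the open interval $(q_{(a_j)_k},q_{(a_j)_kc})$ has strictly larger $Q$-index than its lower endpoint $q_{(a_j)_k}$, this means that the minimizer of the $\mathscr{H}'_k$-index among tuples with each coordinate confined to prescribed intervals is exactly the one taking each lower endpoint.

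I would first identify $H(n,b'_1,1_l)$. Since $\mathscr{H}(m,1)$ partitions $G_m$, there is a unique $H(n,i_0,l_0)\in\mathscr{H}(m,1)$ containing $H(n_a,a_j,j_h)$, and its first $n$ singletons must coincide with $q_{(a_j)_1},\ldots,q_{(a_j)_n}$, i.e.\ the left endpoints of the first $n$ intervals of $g(n_a,a_j,j_h)$. By the principle above, any other $H(n,i,i_l)\in\mathscr{H}(m,1)$ meeting $g(n_a,a_j,j_h)$ has first-coordinate index strictly larger than $i_0$, so $b'_1=i_0$ and $H(n_a,a_j,j_h)\subset H(n,b'_1,1_l)$. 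A parallel argument applied to $\mathscr{H}'(n_b,a_j,j_h)$, viewed as a partition of $g(n_a,a_j,j_h)$ by $\mathscr{H}_{n_b}$-members, shows that the first-in-$\Delta$-order member takes its first $n_b$ singletons to be the lower endpoints $q_{(a_j)_1},\ldots,q_{(a_j)_{n_a}},q_{(j_h)_1}$. These match the first $n_a$ singletons of $H(n_a,a_j,j_h)$ and place the $(n_a+1)$-st singleton at the lower endpoint of the first $J$-interval of $H(n_a,a_j,j_h)$, so $H(n_b,b^1_1,1_e)\subset H(n_a,a_j,j_h)$; this establishes part~1.

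The minimality of $b'_1$ (respectively $b^1_1$) as a first-coordinate index in its family is built into the $\Delta$-order identification above. For the inequalities, the inclusion $H(n_a,a_j)\subset H(n,b'_1)$ gives $a_j\geq b'_1$ by Proposition~3.1(4), and iterating the Figure~I transition over the $n_a-n$ extra coordinate positions $n+1,\ldots,n_a$ upgrades this to $b'_1<a_j$. Similarly, the one-step extension from $n_a$ to $n_b=n_a+1$, appending the coordinate $q_{(j_h)_1}$, yields $a_j<b^1_1$. The main obstacle will be ensuring that each of these Figure~I transitions is genuinely strict rather than degenerating into the trivial $i=j=1$ case; this should follow by tracing how the appended coordinates are selected from the $Q^*_a$ of Construction~2 with $a\geq 2$, so that their $Q$-indices in $\Delta$-order are large enough to force a strict increase at every step.
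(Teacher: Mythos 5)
Your proposal follows essentially the same route as the paper: both identify the first-in-$\Delta$-order members by minimality of the leading index (the paper via a contradiction argument using 6$'$ and 7 of Proposition 3.3 together with an appeal to 5 of Claim 4.1, you via the explicit ``all lower endpoints'' description of the minimal tuple), and both derive the inequalities from 4 of Proposition 3.1 and the Figure I index formula. The strictness issue you flag as the main obstacle is handled no more carefully in the paper itself, which simply cites 4 of Proposition 3.1 (a non-strict statement) and asserts $b'_1<a_j<b^1_1$ ``since $n_b>n_a>n\geq m$''.
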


\begin{proof}
At first, we prove $H(n_a,a_j,j_h)\subset H(n,b'_1,1'_l)$
 for the first member $H(n,b'_1,1'_l)\in\mathscr{H}(n,a_j,j_h)$.
 To do it Suppose $H(n_a,a_j,j_h)\subset H(n,b_1,1_l)$  with $b'_1\neq b_1$.
\par

Then $H(n_a,a_j,j_h)\subset H(n,b_1,1_l)$ implies
  $g(n_a,a_j,j_h)\subset g(n,b_1,1_l)$ since $n<n_a$.
  Then $H(n,b'_1,1'_l)\cap g(n,b_1,1_l)\neq\emptyset$.
  Then $b'_1\neq b_1$ implies $b_1<b'_1$ by $6'$ of Proposition 3.3.
  It is a contradiction to the definition of $b'_1$.
  Then $b_1=b'_1$.
\par

On the other hand, $1'_l\neq 1_l$
  implies $g(n,b'_1,1'_l)\cap g(n,b'_1,1_l)=\emptyset$
  since $\mathscr{G}(n,b'_1)$ is discrete by 7 of Proposition 3.3.
  Then $H(n,b'_1,1'_l)\cap g(n_a,a_j,j_h)=\emptyset$,
   a contradiction to
  $H(n,b'_1,1'_l)\in\mathscr{H}(n,a_j,j_h)$. This implies
  $H(n_a,a_j,j_h)\subset H(n,b_1,1_l).$
\par

We prove  $H(n_b,b^1_1,1_e)\subset H(n_a,a_j,j_h)$.
 To do it take the first member $H(n_b,b^1_1,1_e)$ from
 $\mathscr{H}'(n_b,a_j,j_h)$.
\par

Replace $H_0$ with $H(n,b_1,1_l)$, $\cup\mathscr{H}^*(1,1)$ with
  $H(n_a,a_j,j_h)$, and $\mathscr{G}(n,m,0)$ with $\mathscr{G}'(n_b,a_j,j_h)$
  in 5 of Claim 4.1.
  Here \[\mathscr{H}'(n_b,a_j,j_h)=\{g(n_b,x)\in\mathscr{G}(n_b,m,0):
  g(n_b,x)\subset g(n_a,a_j,j_h)\}.\]
  Then, by 5 of Claim 4.1, we have
  $H(n_b,b^1_1,1_e)\subset H(n_a,a_j,j_h)\subset H(n,b'_1,1_l)$.
  Then, by 4 of Proposition 3.1, $b'_1<a_j<b^1_1$ since $n_b>n_a>n\geq m$.
\end{proof}

\textit{Construction 4 is continued.} Take $\mathscr{G}'(n_b,a_j,j_h)$
   and $\mathscr{H}'(n_b,a_j,j_h)$. Then
   \[\mathscr{H}'(n_b,a_j,j_h)=\{H(n_b,b_i,i_k)\in\mathscr{H}_{n_b}:
   i,k\in N\},\]
   \[\cup\mathscr{H}'(n_b,a_j,j_h)=\cup\mathscr{G}'(n_b,a_j,j_h)=g(n_a,a_j,j_h)
  \ \hbox{ and }\ b_1<b_2<...<b_i<....\]
  Replace $n$ with $n_b$, $G_m$ with $g(n_a,a_j,j_h)$, and
  $\mathscr{H}(m,1)$ with $\mathscr{H}'(n_b,a_j,j_h)$ in operation O1.
  Then Condition of Operation O1 is satisfied. Then we have the following claim.

\begin{cl}
1. $\mathscr{G}(n_b,a_j,j_h)$ is a Ln cover on $g(n_a,a_j,j_h)$
  for every $g(n_a,a_j,j_h)\in\mathscr{G}(n_a,m,0)$.
\par

2. $\mathscr{H}(n_b,a_j,j_h)$
   is a c.D family in $(G_m,\rho)$.
\par

3. Let $\cup\mathscr{H}(n_b,a_j,j_h)=H$.
  Then $g[n_b,H]=\cup\mathscr{G}(n_b,a_j,j_h)=g(n_a,a_j,j_h)$.
\par

4.  Let $g(\ell,i',l')\subset G_m$ with $\ell\geq n_b$.
  Then there exists a $g(n_b,i^*,l^*)\in\mathscr{G}(n_b,m,0)$
  such that $g(\ell,i',l')\subset g(n_b,i^*,l^*)$.
\par

5. $\mathscr{G}(n_b,m,0)=
   \cup\{\mathscr{G}(n_b,a_j,j_h):g(n_a,a_j,j_h)\in\mathscr{G}(n_a,m,0)\}$
   is a Ln cover on $G_m$.
\end{cl}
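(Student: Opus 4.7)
The plan is to obtain parts 1--4 as direct consequences of Claim 4.1 by verifying that the parameter substitution indicated just before the claim satisfies the Condition of Operation O1, and then to assemble these local conclusions into the global statement of part 5.

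First I would check that replacing the triple $(n,G_m,\mathscr{H}(m,1))$ by $(n_b,g(n_a,a_j,j_h),\mathscr{H}'(n_b,a_j,j_h))$ satisfies the Condition of Operation O1: the family $\mathscr{H}'(n_b,a_j,j_h)$ carries a $\Delta$-order inherited from $\mathscr{H}_{n_b}$, and the strict ordering $b_1<b_2<\ldots$ is set up in the paragraph immediately preceding the claim. Once this is confirmed, Claim 4.1 transports verbatim along the substitution. That gives part 1 (Ln-cover property for subsets of $g(n_a,a_j,j_h)$ via item 4 of Claim 4.1), part 2 (c.D-ness of $\mathscr{H}(n_b,a_j,j_h)$ is obtained in $(g(n_a,a_j,j_h),\rho)$ via item 3 of Claim 4.1, then upgrades to c.D in $(G_m,\rho)$ because $g(n_a,a_j,j_h)$ is c.o in $G_m$), part 3 (the covering and the equality $g[n_b,H]=\cup\mathscr{G}(n_b,a_j,j_h)$ from items 1 and 3 of Claim 4.1), and part 4 (the Ln-cover property internal to $g(n_a,a_j,j_h)$).

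For part 5 I would verify, one by one, the three defining properties of a Ln cover on $(n_b,G_m)$ listed in the Condition of Construction 4. Covering: $\cup\mathscr{G}(n_b,a_j,j_h)=g(n_a,a_j,j_h)$ by part 3, and summing over $g(n_a,a_j,j_h)\in\mathscr{G}(n_a,m,0)$ yields $G_m$ by the inductive hypothesis on $\mathscr{G}(n_a,m,0)$. The c.o.D property: $\mathscr{G}(n_a,m,0)$ is c.o.D in $(X,\rho)$ by the inductive hypothesis, and each $\mathscr{G}(n_b,a_j,j_h)$ is c.o.D inside the closed-open piece $g(n_a,a_j,j_h)$, so the disjointness of the pieces lifts the local discreteness to global discreteness in $(X,\rho)$. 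The Ln-cover property: given $g(\ell,i',l')\subset G_m$ with $\ell\geq n_b>n_a$, apply item 4 of Claim 4.1 at level $n_a$ to find $g(n_a,a_j,j_h)\supset g(\ell,i',l')$, then apply part 1 to find $g(n_b,i^*,l^*)\in\mathscr{G}(n_b,a_j,j_h)$ with $g(\ell,i',l')\subset g(n_b,i^*,l^*)$.

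The main obstacle I anticipate is purely bookkeeping: one has to ensure that the hypothesis feeding Operation O1 is correctly stated in the new indices, and in particular that the step in B of Operation O1 which uses Fact 4.0 to promote ``$H(n,1_h,h_l)\cap G^h_m\neq\emptyset$'' to ``$g(n,1_h,h_l)\subset G^h_m$'' still goes through when $G_m$ is replaced by $g(n_a,a_j,j_h)$. This amounts to checking that $g(\ell,h,k)\cap g(n_a,a_j,j_h)\neq\emptyset$ with $\ell\geq n_b>n_a$ forces $g(\ell,h,k)\subset g(n_a,a_j,j_h)$, which follows from items 4 and 8 of Proposition 3.3. Beyond that step the argument is a transparent unravelling of the substituted symbols.
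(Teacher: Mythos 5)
Your proposal is correct and follows essentially the same route as the paper: both transport Claim 4.1 along the substitution $(n,G_m,\mathscr{H}(m,1))\mapsto(n_b,g(n_a,a_j,j_h),\mathscr{H}'(n_b,a_j,j_h))$ to get parts 1--4, and both prove the refinement property in part 4/5 by the same two-step argument (first locate the $n_a$-level container via the inductive hypothesis, then refine inside it via part 1). Your explicit check that Fact 4.0 survives the substitution, and the upgrade of c.D-ness from $g(n_a,a_j,j_h)$ to $G_m$, are details the paper leaves implicit but do not change the argument.
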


\begin{proof}
Note 2 of Claim 4.1. Then $\mathscr{G}(n_b,a_j,j_h)
   =\cup\{\mathscr{G}(n_b,b^k,k):k\in N\}$ is a c.o.D family in $(X,\rho)$.
  Then $\cup\mathscr{G}(n_b,a_j,j_h)=g(n_a,a_j,j_h)$ by 1 of Claim 4.1
  if we replace $G_m$ with $g(n_a,a_j,j_h)$.
  Then $\mathscr{G}(n_b,a_j,j_h)$ is a Ln cover on $g(n_a,a_j,j_h)$ by 1,2 and 4 of Claim 4.1.
  Then 2 is a corollary of 1.
\par

To prove 3, replace $\cup\mathscr{H}(n,m,0)=H^1_m$ with $\cup\mathscr{H}(n_b,a_j,j_h)=H$
  in 3 of Claim 4.1. Then $g[n_b,H]=\cup\mathscr{G}(n_b,a_j,j_h)=g(n_a,a_j,j_h)$.
\par

We prove 4 by induction. Note $g(\ell,i',l')\subset G_m$ for $\ell\geq m$.
  Assume that $g(\ell,i',l')\subset G_m$ with $\ell\geq n_a$
  implies $g(\ell,i',l')\subset g(n_a,a_j,j_h)$ for some
  $g(n_a,a_j,j_h)\in\mathscr{G}(n_a,m,0)$.
\par

Let  $g(\ell,i',l')\subset G_m$ with $\ell\geq n_b$.
  Then there exists a $g(n_a,a_j,j_h)\in\mathscr{G}(n_a,m,0)$
  such that  $g(\ell,i',l')\subset g(n_a,a_j,j_h)$
  by inductive assumption.
  Note $g(\ell,i',l')\subset g(n_a,a_j,j_h)$
  and $\mathscr{G}(n_b,a_j,j_h)$ is a Ln cover on $g(n_a,a_j,j_h)$.
  Then there exists a
  $g(n_b,i^*,l^*)\in\mathscr{G}(n_b,a_j,j_h)\subset\mathscr{G}(n_b,m,0)$
  such that $g(\ell,i',l')\subset g(n_b,i^*,l^*)$ by the definition of Ln cover
  since $\mathscr{G}(n_b,a_j,j_h)$ is a Ln cover of $g(n_a,a_j,j_h)$.
  This implies 4.
\par

Note $\mathscr{G}(n_a,m,0)$ is a c.o.D family by inductive assumption.
  Then $\mathscr{G}(n_b,m,0)$ is a c.o.D family in $(G_m,\rho)$.
  So $\mathscr{G}(n_b,m,0)$ is a Ln cover on $(n_b,G_m)$.
\end{proof}

Note \AE \hbox{ and }\OE.
  We have completed induction on $b\in N$.
  Let $n_b=n_a+1$ and

  \[\mathcal{G}(n,m,L)=\{\mathscr{G}(n_i,m,0): n_i\geq n\}.\]
  Then we have proved the following claim.

\begin{cl}
1. $\mathcal{G}(n,m,L)$ is  a Ln covers sequence on $G_m$.
\par

2. Every $\mathscr{G}(n_b,m,0)$ is a Ln cover on $G_m$.
\par

3. Every $\mathscr{H}(n_b,m,0)$ is a c.D family in $(G_m,\rho)$.
\par

4. For every $g(n_a,i,j)\in\mathscr{G}(n_a,m,0)$,
  there exists a family
  $\mathscr{G}(n_b,i,j)\subset\mathscr{G}(n_b,m,0)$,
  $\cup\mathscr{G}(n_b,i,j)=g(n_a,i,j)$ and
  $[\cup(\mathscr{G}(n_b,m,0)-\mathscr{G}(n_b,i,j))]
  \cap g(n_a,i,j)=\emptyset$.
\par

5. Let $H(\ell,i',l')\cap G_m\neq\emptyset$ with $\ell\geq n_b$.
  Then there exists a $g(n_b,i^*,l^*)\in\mathscr{G}(n_b,m,0)$
  such that $g(\ell,i',l')\subset g(n_b,i^*,l^*)$.
\end{cl}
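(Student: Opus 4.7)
The plan is to read off each of the five assertions from the induction just completed in Construction 4, since $\mathscr{G}(n_b,m,0)$ was defined, for $n_b=n_a+1$, as the union over $g(n_a,a_j,j_h)\in\mathscr{G}(n_a,m,0)$ of the local families $\mathscr{G}(n_b,a_j,j_h)$ produced by applying Operation O1 inside each piece. Most of the real work has therefore already been done in Claim 4.3, and what remains is bookkeeping to promote the local statements to global ones on $G_m$.

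For statement 2, I would argue by induction on the step index $b$: the base case is Claim 4.1 (giving $\mathscr{G}(n,m,0)$ as a Ln cover on $G_m$), and the inductive step is precisely part 5 of Claim 4.3. Statement 3 then follows from part 2 of Claim 4.3 together with the observation that a union of c.D families, indexed by the pairwise disjoint clopen pieces $g(n_a,a_j,j_h)\subset G_m$ (pairwise disjointness coming from part 2 of Claim 4.1 applied one stage earlier), is itself c.D in $(G_m,\rho)$.

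For statement 5, I would first use Fact 4.0 to upgrade the hypothesis $H(\ell,i',l')\cap G_m\neq\emptyset$ (with $\ell\geq n_b\geq m$) to $g(\ell,i',l')\subset G_m$, and then invoke part 4 of Claim 4.3 directly. Statement 4 is the cleanest of all: given $g(n_a,i,j)=g(n_a,a_j,j_h)\in\mathscr{G}(n_a,m,0)$, take $\mathscr{G}(n_b,i,j):=\mathscr{G}(n_b,a_j,j_h)$; its union is $g(n_a,a_j,j_h)$ by construction, while every element of $\mathscr{G}(n_b,m,0)-\mathscr{G}(n_b,i,j)$ lies in some other $g(n_a,a_{j'},j_{h'})$, which is disjoint from $g(n_a,a_j,j_h)$ by the c.o.D hypothesis on $\mathscr{G}(n_a,m,0)$. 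Finally, statement 1 is simply the declaration that the sequence $\{\mathscr{G}(n_i,m,0):n_i\geq n\}$, each term of which is a Ln cover on $G_m$ refining the previous one in the sense of statement 4, qualifies as a Ln covers sequence.

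The one point requiring genuine care, rather than mere quotation of previous claims, is verifying that the hypotheses of Operation O1 and of Claim 4.2 are actually met at every step: namely, that the family $\mathscr{H}'(n_b,a_j,j_h)$ used as input inherits a $\Delta$-order with $b_1<b_2<\cdots$, and that the first members coincide as asserted in Claim 4.2 so that the inductive construction does land inside the intended clopen piece. I expect this bookkeeping to be routine once one checks that the $\Delta$-order on $\mathscr{H}_{n_b}$ restricts correctly to the subfamily $\mathscr{H}'(n_b,a_j,j_h)$, but it is the only place where an error could easily creep in.
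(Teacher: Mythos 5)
Your proposal matches the paper's treatment: the paper offers no separate argument for this claim, simply stating after the induction in Construction 4 (parts \AE\ and \OE) that "we have proved the following claim," so the claim is understood exactly as you present it — a summary of Claim 4.1 (base case), Claim 4.3 (inductive step, with its part 5 giving statement 2, part 2 giving statement 3, and part 4 plus Fact 4.0 giving statement 5), and the disjointness of the pieces $g(n_a,a_j,j_h)$ giving statement 4. Your explicit citations and the flagged $\Delta$-order bookkeeping only make the paper's implicit reasoning more precise.
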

This completes Construction 4.
\par

\vspace{0.5cm}

In the following, we describe a Ln cover by coordinates.
  To do it let  $\mathscr{G}(n,m,0)\in\mathcal{G}(n,m,L)$.
  Then $\cup\mathscr{G}(n,m,0)=G_m$.
  Note, by Operation O1, $\mathscr{H}(n,m,0)=\cup_h\mathscr{H}^*(1,h)$
  and $\mathscr{H}^*(1,h)=
   \{H(n,1_h,h_l)\in\mathscr{H}(n,h):H(n,1_h,h_l)\subset G^h_m\}$.
\par

A. Take $\mathscr{H}^*(1,1)$. Note the definition of $g(n,i,l)$.
  Let $1_1$ be the least number and $a_1=n+1_1$.
  Take $g(n,1_1,1_l)\in\mathscr{O}^*(1,1)$. Let $1'=1_1$.
  Then \[g(n,1_1,1_l)=[q_{1_1},q_{1_1a_1})\times...\times[q_{1_n},q_{1_na_1})\times
  [q_{l_1},q_{l_1a_1})\times...\times[q_{l_{1'}},q_{l_{1'}a_1})\times X_{a_1}.\]
  Then $\mathscr{O}^*(1,1)=\{g(n,1_1,1_l):l\in N\}$.
\par

B. Assume $\mathscr{O}^*(1,k)=\{g(n,1_k,k_l):l\in N\}$ with $a_k=n+1_k$.
 Take $\mathscr{H}^*(1,h)$ for $h=k+1$ in Operation O1. Let $h'=n+1_h$,
 $g(n,1_h,h_l)\in\mathscr{O}^*(1,h)$ and $a_h=n+1_h$.
 Then $a_h>a_k>a_1$ and
 \[g(n,1_h,h_l)=[q_{h_1},q_{h_1a_h})\times...\times[q_{h_n},q_{h_na_h})\times
  [q_{\ell_1},q_{\ell_1a_h})\times...\times[q_{\ell_{h'}},q_{\ell_{h'}a_h})\times X_{a_h}.\]
  Then $\mathscr{O}^*(1,h)=\{g(n,1_h,h_l):l\in N\}$ is c.o.D in $(X,\rho)$.
  Then, by induction on $N$, we have $\mathscr{O}^*(1,h)$ for every $h\in N$.
\par

Take $Q^*_{a_h}$ from Proposition 2.2. Note $m^2=m+a^*$ for
  $G_m=g(m,a^*,y_0)$. Let $j\leq n$ and
  \[\mathscr{I}(j,G_m)=\{[q_{h_j},q_{h_ja_h}):q_{h_j}\in Q^*_{a_h}\cap [q_{i_j},q_{i_jm^2})\}.\]
  Then $\cup\mathscr{I}(n,G_m)=[q_{i_j},q_{i_jm^2})$ and $\mathscr{I}(j,G_m)$ is c.o.D in $(Q,\rho)$ for $j<n$ by 3 of Proposition 2.2.
\par

Let $g(n,1_h,h_l)\in\mathscr{O}^*(1,h)$,
   $J(n,1_h,h_l)=[q_{\ell_1},q_{\ell_1a_h})\times...\times
  [q_{\ell_{h'}},q_{\ell_{h'}a_h})\times X_{a_h}$ and
  \[\mathscr{J}(1_h,G_m)=\{J(n,1_h,h_l):l\in N\}.\]
  Then, for every $h\in N$, we have $\cup\mathscr{J}(1_h,G_m)=J(m,a^*,y_0)$
  since \[G_m=I(m,a^*,y_0)\times J(m,a^*,y_0).\]
Then we have the following Proposition.

\begin{prop}
Let $m^2=m+a^*$ and $\mathscr{G}(n,m,0)=\cup_h\mathscr{G}^*(1,h)$ be a Ln cover on $G_m$
  and $G_m=[q_{i_1},q_{i_1m^2})\times...\times[q_{i_m},q_{i_mm^2})\times
  [q_{l_1},q_{l_1m^2})\times...\times[q_{l_{1'}},q_{l_{1'}m^2})\times X_{m^2}.$
\par

1. Let $\mathscr{J}(1_h,G_m)=\{J(n,1_h,h_l):g(n,1_h,h_l)\in\mathscr{G}^*(1,h)\}$
  for every $h\in N$.
  Then $\cup\mathscr{J}(1_h,G_m)=J(m,a^*,y_0)$ and $\mathscr{J}(1_h,G_m)$
  is c.o.D in $(J(m,a^*,y_0),\rho)$.
\par

2.  Let $j\leq m$ and
  $\mathscr{I}(j,G_m)=\{[q_{h_j},q_{h_ja_h}):g(n,1_h,h_l)\in\mathscr{G}(n,m,0)\}.$
  Then $\cup\mathscr{I}(j,G_m)=[q_{i_j},q_{i_jm^2})$
  for $j\leq m$,
  and $\mathscr{I}(j,G_m)$ is c.o.D in $(Q,\rho)$.
\end{prop}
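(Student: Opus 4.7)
The plan is to exploit the product form of both $G_m$ and of each $g(n,1_h,h_l)\in\mathscr{O}^*(1,h)$, and combine it with the discreteness properties established in Construction~4 together with Propositions~2.2 and~3.2, to read off both assertions coordinate by coordinate.

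For Part~1, I fix $h\in N$ and observe that all members of $\mathscr{O}^*(1,h)=\{g(n,1_h,h_l):l\in N\}$ share the common factor $I(n,1_h)$, since the second index $1_h$ is fixed. Consequently, $\cup\mathscr{O}^*(1,h)=I(n,1_h)\times(\cup\mathscr{J}(1_h,G_m))$. By Operation~O1, $\mathscr{O}^*(1,h)$ is c.o.D in $(X,\rho)$, and dividing out the fixed $I$-factor transfers discreteness to $\mathscr{J}(1_h,G_m)$ inside the tail space, yielding the c.o.D conclusion in $(J(m,a^*,y_0),\rho)$. For the union equality I would invoke Construction~3.2, which supplies $\cup\mathscr{J}(n,1_h)=X_n$, and then note that the constraint $g(n,1_h,h_l)\subset G_m$ is equivalent, via 3 of Proposition~2.2 and 3 of Proposition~3.2 applied coordinate by coordinate, to $J(n,1_h,h_l)\subset J(m,a^*,y_0)$ once one checks that $I(n,1_h)$ lies in the $I$-part of $G_m$.

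For Part~2, I treat each coordinate $j\leq m$ separately. For every $h\in N$ the interval $[q_{h_j},q_{h_j a_h})$ is the $j$-th factor of the fixed part $I(n,1_h)$, and as $g(n,1_h,h_l)$ ranges over the whole Ln cover $\mathscr{G}(n,m,0)$, these intervals form a subfamily of $\bigcup_a\mathscr{I}_a$ from Construction~2. Since $\cup\mathscr{G}(n,m,0)=G_m$ by Claim~4.4, projecting to the $j$-th coordinate forces $\cup\mathscr{I}(j,G_m)=[q_{i_j},q_{i_j m^2})$. Pairwise disjointness of distinct $[q_{h_j},q_{h_j a_h})$'s inside $[q_{i_j},q_{i_j m^2})$ is extracted from 3 of Proposition~2.2 combined with 6$'$ of Proposition~3.3: intervals coming from different layers $\mathscr{O}^*(1,h)\neq\mathscr{O}^*(1,h')$ must either be disjoint or one-contains-the-other, and a proper containment would contradict the least-number choice of $1_h$ made in Operation~O1. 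Since each $[q_{h_j},q_{h_j a_h})$ is c.o in $(Q,\rho)$ and the collection is pairwise disjoint with c.o union, the family is c.o.D in $(Q,\rho)$.

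The main obstacle I expect is the careful verification of the union equalities: showing that, as $h$ (resp.\ $l$) ranges over the indices actually appearing in the Ln cover, every point of $[q_{i_j},q_{i_j m^2})$ (resp.\ of $J(m,a^*,y_0)$) is picked up. This reduces to checking that Operation~O1 truly exhausts $G_m$ in each coordinate decomposition, which should follow from the inductive depletion $G^{h+1}_m=G_m-\bigcup_{k\leq h}O^k_m$ together with Proposition~2.2(3), ensuring that every relevant sub-interval of $[q_{i_j},q_{i_j m^2})$ is eventually selected as some $[q_{h_j},q_{h_j a_h})$.
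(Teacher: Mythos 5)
Your overall route --- read everything off coordinatewise from the product form of $G_m$ and of the $g(n,1_h,h_l)$'s, and reduce to 3 of Proposition 2.2 --- is the same one the paper takes (the paper's ``proof'' is essentially the two paragraphs A and B preceding the proposition together with the citation of 3 of Proposition 2.2). Your treatment of Part 1 and of the union equality in Part 2 is sound as far as it goes: within a fixed layer $h$ all members of $\mathscr{O}^*(1,h)$ share the factor $I(n,1_h)$, so discreteness passes to the $J$-parts, and since projection commutes with unions, $\cup\mathscr{G}(n,m,0)=G_m$ gives $\cup\mathscr{I}(j,G_m)=\pi_j[G_m]=[q_{i_j},q_{i_jm^2})$.

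The gap is in your disjointness argument for Part 2. You claim that a proper containment $[q_{h'_j},q_{h'_ja_{h'}})\subsetneq[q_{h_j},q_{h_ja_h})$ between intervals coming from two layers $h<h'$ ``would contradict the least-number choice of $1_h$.'' It would not. What the least-number selection in Operation O1 guarantees is that the full product sets are disjoint, i.e.\ $g(n,1_{h'},h'_k)\cap g(n,1_h,h_l)=\emptyset$; but two disjoint boxes $I(n,1_h)\times J$ and $I(n,1_{h'})\times J'$ need only have disjoint projections in \emph{some} coordinate, not in the particular coordinate $j$ you fixed. Concretely, if $P(n,1_h)$ and $P(n,1_{h'})$ have the same $j$-th coordinate $q_{h_j}=q_{h'_j}$ but differ in another coordinate (which the construction does not forbid: once $n\geq 2$, infinitely many base points of $\mathscr{H}'_n$ inside $\pi_{\leq n}[G_m]$ share any given $j$-th coordinate), then $a_h<a_{h'}$ forces $[q_{h'_j},q_{h'_ja_{h'}})\subsetneq[q_{h_j},q_{h_ja_h})$, so the family $\mathscr{I}(j,G_m)$, read over all of $\mathscr{G}(n,m,0)$ as in the statement, contains two nested distinct members and is not even pairwise disjoint, let alone discrete. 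The only reading under which the c.o.D claim holds is the one in the paper's displayed definition just before the proposition (and in Corollary 4.6), where for each fixed $h$ one takes all $\mathscr{I}_{a_h}$-subintervals of $[q_{i_j},q_{i_jm^2})$; that family is simply the trace of the partition $\mathscr{I}_{a_h}$ and is c.o.D with the stated union directly by 3 of Proposition 2.2. Your argument needs to be recast for that per-layer family rather than for the union over all layers.
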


\textbf{Note 4.6.} When $g(n_a,i,j)\in\mathscr{G}(n_a,m,0)$
  with $b>a$, let
\begin{eqnarray}
  \mathscr{G}(n_b,i,j)=\{g(n_b,i',j')\in\mathscr{G}(n_b,m,0):
  g(n_b,i',j')\subset g(n_a,i,j)\} \hbox{ and }\nonumber\\
  \mathscr{H}(n_b,i,j)=\{H(n_b,i',j')\in\mathscr{H}(n_b,m,0):
  g(n_b,i',j')\in\mathscr{G}(n_b,i,j)\}.\quad\nonumber
\end{eqnarray}
  Then $\mathscr{G}(n_b,i,j)$ is a Ln cover on
  $(n_b,g(n_a,i,j))$.
 \textit{Both $\mathscr{G}(n_b,i,j)$ and $\mathscr{H}(n_b,i,j)$
  are used  always throughout this paper
  for $g(n_a,i,j)$}.
  Then we may replace $G_m$ with $g(n_a,i,j)$,
  and $\mathscr{G}(n_b,i,j)=\cup_h\mathscr{G}(n_b,h)$
  in Proposition 4.5. We have the following corollary.

\begin{cor}
Let $\mathscr{G}(n_b,i,j)=\cup_h\mathscr{G}(n_b,h)$ be a Ln cover on $g(n_a,i,j)$.
\par

1. Let $\mathscr{J}(1_h,n_b)=\{J(n_b,1_h,h_l):g(n_b,1_h,h_l)\in\mathscr{G}(n_b,h)\}$
  for every $h\in N$.
  Then $\cup\mathscr{J}(1_h,n_b)=J(n_a,i,j)$ and $\mathscr{J}(1_h,n_b)$
  is c.o.D in $(J(n_a,i,j),\rho)$.
\par

2.  Let $l\leq n_a$, $a_h=n_b+1_h$, $c=n_a+i$ for $g(n_a,i,j)$ and
  $\mathscr{I}(l,g(n_a,i,j))=\{[q_{h_l},q_{h_la_h}):g(n_b,1_h,h_l)\in\mathscr{G}(n_b,h)\}.$
  Then $\cup\mathscr{I}(l,g(n_a,i,j))=[q_{i_l},q_{i_lc})$
  for $l\leq n_a$, and $\mathscr{I}(l,g(n_a,i,j))$ is c.o.D in $(Q,\rho)$.
\end{cor}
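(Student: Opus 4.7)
The plan is to obtain Corollary 4.7 as an immediate specialization of Proposition 4.5, via the substitution already hinted at in Note 4.6. First I would verify that the triple $(n_b, n_a, g(n_a,i,j))$ plays exactly the role of $(n, m, G_m)$ in Proposition 4.5. By Note 4.6, $\mathscr{G}(n_b,i,j)$ is a Ln cover on $(n_b, g(n_a,i,j))$, so by 5 of Claim 4.4 (applied with $n_b\geq n_a$), the construction of Ln covers on $G_m$ carries over verbatim when $G_m$ is replaced by $g(n_a,i,j)$. The only bookkeeping to do is to match constants: $G_m=I(m,a^*,y_0)\times J(m,a^*,y_0)$ with $m^2=m+a^*$ becomes $g(n_a,i,j)=I(n_a,i,j)\times J(n_a,i,j)$ with $c=n_a+i$, and $j\leq m$ becomes $l\leq n_a$.

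For part 1, the family $\mathscr{J}(1_h,n_b)$ is obtained by projecting each $g(n_b,1_h,h_l)\in\mathscr{G}(n_b,h)$ onto its $J$-factor. Since $\mathscr{G}(n_b,h)=\mathscr{O}^*(1,h)$ is c.o.D in $(X,\rho)$ and the $I$-factor is constant within one $\mathscr{O}^*(1,h)$, the family $\mathscr{J}(1_h,n_b)$ is c.o.D in $(J(n_a,i,j),\rho)$. For the union, the argument in B of the coordinate description gives $\cup\mathscr{J}(1_h,n_b)=J(n_a,i,j)$ exactly as $\cup\mathscr{J}(1_h,G_m)=J(m,a^*,y_0)$ in part 1 of Proposition 4.5 — this is now applied inside $g(n_a,i,j)$ instead of $G_m$.

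For part 2, I would unwind $\mathscr{I}(l,g(n_a,i,j))$ as the collection of $l$-th coordinate intervals $[q_{h_l},q_{h_la_h})$ arising from members of $\mathscr{G}(n_b,i,j)$. By 3 of Proposition 2.2, for each fixed $l\leq n_a$ the intervals $[q_{h_l},q_{h_la_h})$ with $q_{h_l}\in Q^*_{a_h}\cap [q_{i_l},q_{i_lc})$ partition $[q_{i_l},q_{i_lc})$, and the discreteness of $\mathscr{I}_{a_h}$ in $(Q,\rho')$ (from 1 of Proposition 2.3) yields the c.o.D property. This is the direct analogue of $\cup\mathscr{I}(j,G_m)=[q_{i_j},q_{i_jm^2})$ in part 2 of Proposition 4.5.

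The only potential obstacle is purely notational: one must check carefully that $c=n_a+i$ indeed plays the role that $m^2=m+a^*$ played, and that the index $a_h=n_b+1_h$ is consistent with the definition of $g(n_b,1_h,h_l)$ used in Construction 3.2. Once these identifications are made, no new combinatorial content is required; Corollary 4.7 is literally Proposition 4.5 stated relative to the cover element $g(n_a,i,j)\subset G_m$ rather than $G_m$ itself, and the proof is a one-line appeal to Proposition 4.5 after this substitution.
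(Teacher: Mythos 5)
Your proposal is correct and matches the paper's own treatment: the paper gives no separate proof of this corollary, but Note 4.6 explicitly instructs the reader to replace $G_m$ with $g(n_a,i,j)$ (and $m^2=m+a^*$ with $c=n_a+i$) in Proposition 4.5, which is exactly the substitution you carry out and verify. Your extra bookkeeping on the constants $a_h=n_b+1_h$ and $c=n_a+i$ only makes explicit what the paper leaves implicit.
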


\textbf{Note 4.7.} Note $\mathscr{G}(n_b,i,j)=\cup_h\mathscr{G}(n_b,h)$
  if $g(n_a,i,j)\in\mathscr{G}(n_a,m,0)$
  with $b>a$ by Corollary 4.6. Let
\begin{eqnarray}
  \mathscr{G}(n_b,i,j,H_a)=\{g(n_b,1_h,h_l)\in\mathscr{G}(n_b,i,j):
  H(n_b,1_h,h_l)\subset H(n_a,i,j)\} \hbox{ and }\nonumber\\
  \mathscr{H}(n_b,i,j,H_a)=\{H(n_b,1_h,h_l)\in\mathscr{H}(n_b,i,j):
  g(n_b,1_h,h_l)\in\mathscr{G}(n_b,i,j,H_a)\}.\quad\nonumber
\end{eqnarray}
  Then $\mathscr{G}(n_b,i,j,H_a)$ is a Ln cover of
  $H(n_a,i,j)$.
  Let \[\mathscr{G}(n_b,H_a,1_h)=\{g(n_b,1_h,h_l)\in\mathscr{G}(n_b,h):
  H(n_b,1_h,h_l)\subset H(n_a,i,j)\} \hbox{ and }\]
  \[\mathscr{H}(n_b,H_a,1_h)=\{H(n_b,1_h,h_l):
  g(n_b,1_h,h_l)\in \mathscr{G}(n_b,H_a,1_h)\}.\]

\begin{cor}
1. $\mathscr{G}(n_b,i,j,H_a)=\cup_h\mathscr{G}(n_b,H_a,1_h)$
  is a Ln cover of $H(n_a,i,j)$.
\par

2. $\mathscr{G}(n_b,H_a,1_h)$ is a Ln cover of $H(n_a,i,j)\cap H(n_b,1_h)$
  for every $h\in N$.
\end{cor}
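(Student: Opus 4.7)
The plan is to check, for each part, the three ingredients of a Ln$'$ cover per Definition Ln$'$: the family is c.o.D, each member's $H$-cell meets the target, and the target is a closed subset of the union. Both parts hinge on the scale comparison $a_h = n_b+1_h > n_a+i = c$, which follows from $1_h > i$ (Claim~4.2) and $n_b>n_a$; Proposition~2.2.5 then lets intersecting intervals at the finer scale $a_h$ sit inside intervals at the coarser scale $c$.

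For Part~1 I will first read off $\mathscr{G}(n_b,i,j,H_a)=\bigcup_h\mathscr{G}(n_b,H_a,1_h)$ from Note~4.7 and the decomposition $\mathscr{G}(n_b,i,j)=\bigcup_h\mathscr{G}(n_b,h)$ of Corollary~4.6. The c.o.D property descends from the Ln cover $\mathscr{G}(n_b,i,j)$ (Claim~4.3), $H(n_a,i,j)$ is closed as a product of closed factors, and the intersection clause $H(n_a,i,j)\cap H(n_b,1_h,h_l)=H(n_b,1_h,h_l)\neq\emptyset$ is immediate from the defining inclusion $H(n_b,1_h,h_l)\subset H(n_a,i,j)$. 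The substantive step is $H(n_a,i,j)\subset\bigcup\mathscr{G}(n_b,i,j,H_a)$. Given $x\in H(n_a,i,j)$, Claim~4.3 supplies the unique $g(n_b,1_h,h_l)\in\mathscr{G}(n_b,i,j)$ with $x\in g(n_b,1_h,h_l)$, and I will verify $H(n_b,1_h,h_l)\subset H(n_a,i,j)$ factor by factor: for $k\leq n_a$ the inclusion $H(n_b,1_h,h_l)\subset g(n_a,i,j)$ gives $q_{i'_k}\geq q_{i_k}$, while $x\in g(n_b,1_h,h_l)$ together with $x_k=q_{i_k}$ gives $q_{i'_k}\leq q_{i_k}$, forcing $q_{i'_k}=q_{i_k}$; for $n_a<k\leq n_a+i$ I apply Proposition~2.2.5 with $a_h>c$ to promote interval intersection to interval containment; for $k>n_a+i$ the $H(n_a,i,j)$-factor is all of $Q$, so nothing needs checking.

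For Part~2 the ingredients c.o.D, closedness and the intersection clause go through verbatim once one notes $H(n_b,1_h,h_l)\subset H(n_b,1_h)\cap H(n_a,i,j)$. The substantive step is coverage $H(n_a,i,j)\cap H(n_b,1_h)\subset\bigcup\mathscr{G}(n_b,H_a,1_h)$: for $x$ in the intersection I will locate the unique partition cell $H(n_b,1_h,h_l)\in\mathscr{H}(n_b,1_h)$ containing $x$, so the first index is automatically $1_h$, and rerun the Part~1 singleton-matching argument to obtain $H(n_b,1_h,h_l)\subset H(n_a,i,j)$. The main obstacle is to certify that $g(n_b,1_h,h_l)$ lies in the step-$h$ slice $\mathscr{O}^*(1,h)=\mathscr{G}(n_b,h)$ of Operation~O1, equivalently $H(n_b,1_h,h_l)\cap O^k=\emptyset$ for every $k<h$. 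I plan to derive this by combining the disjointness of the assembled Ln cover $\bigcup_k\mathscr{O}^*(1,k)$ with Proposition~3.3.8: if $g(n_b,1_h,h_l)$ met some $g(n_b,1_k,k_{l'})\in\mathscr{O}^*(1,k)$ with $k<h$, then $1_h>1_k$ and Proposition~3.3.8 would yield $g(n_b,1_h,h_l)\subset g(n_b,1_k,k_{l'})$, placing $x$ in the step-$k$ cover; the least-number rule selecting $1_h$ at step $h$ in Operation~O1, together with the fact that $x$ lies in the $H$-singleton cell $H(n_b,1_h)$ rather than merely in its surrounding interval cell, is designed precisely to rule this out.
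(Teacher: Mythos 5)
Your Part~1 is sound. The paper offers no proof of this corollary, and your factor-by-factor verification that the unique member $g(n_b,1_h,h_l)$ of $\mathscr{G}(n_b,i,j)$ containing a given $x\in H(n_a,i,j)$ satisfies $H(n_b,1_h,h_l)\subset H(n_a,i,j)$ is exactly the missing content; the two-sided inequality forcing $q_{i'_k}=q_{i_k}$ for $k\leq n_a$ is the right observation, and for $k>n_a$ the containment is even easier than you make it, since $g(n_a,i,j)$ and $H(n_a,i,j)$ have identical factors there, so $H(n_b,1_h,h_l)\subset g(n_a,i,j)$ already gives those coordinates without Proposition~2.2.5.

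Part~2, however, has a genuine gap at precisely the step you flag as ``the main obstacle.'' Your argument shows that if $H(n_b,1_h,h_l)$ met some $g(n_b,1_k,k_{l'})\in\mathscr{O}^*(1,k)$ with $k<h$ then $x$ would lie in $O^k$ --- but that is not a contradiction. A point of $H(n_b,1_h)\cap H(n_a,i,j)$ can perfectly well lie in the $g$-set of an earlier-index member of the Ln cover without lying in its $H$-cell; that is the normal situation in Operation~O1, where later-index cells are absorbed into earlier-index $g$-sets and therefore skipped, and neither the least-number rule nor the fact that $x$ sits in the singleton cell $H(n_b,1_h)$ excludes it by itself. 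What is actually needed is an ``all or nothing'' fact: whether $H(n_b,1_h,h_l)$ meets stage $k$ is governed in the first $n_b$ coordinates only by the pair of indices $1_h,1_k$ (the condition $P(n_b,1_h)\subset I(n_b,1_k)$), while in the remaining coordinates the finer $J$-box automatically lands inside a selected coarser one; hence for a fixed index either every cell with that index inside $g(n_a,i,j)$ is swallowed by stage $k$ (and the index is never selected) or none is (and all of them enter $\mathscr{O}^*(1,h)$ when the index becomes minimal). This is exactly the content of part~1 of Corollary~4.6, namely $\cup\mathscr{J}(1_h,n_b)=J(n_a,i,j)$, which the paper places at your disposal: citing it, every $x\in H(n_b,1_h)\cap H(n_a,i,j)$ lies in some selected $g(n_b,1_h,h_l)$, and since the first $n_b$ coordinates of $x$ are exactly $P(n_b,1_h)$ one even gets $x\in H(n_b,1_h,h_l)$, after which your Part~1 argument finishes. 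Without either that citation or the all-or-nothing induction, the coverage claim of Part~2 is not established.
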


We use Ln covers sequence
 $\mathcal{G}(n,m,L)=
 \{\mathscr{G}(n_i,m,0): n_i\geq n\}$ on $G_m$
 to prove the following proposition.

\begin{prop}
Let $H(n,i^*,l^*)\subset G_m$. Then there exists an
 $a\in N$ and a family
 $\mathscr{G}(a,i^*,l^*)\subset\mathscr{G}(a,m,0)$
 such that $\cup\mathscr{H}(a,i^*,l^*)\subset
 H(n,i^*,l^*)\subset\cup\mathscr{G}(a,i^*,l^*)$.
\end{prop}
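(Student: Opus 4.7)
I would take $a = n + i^*$ and set
\[\mathscr{G}(a,i^*,l^*) := \{g(a,i,j)\in\mathscr{G}(a,m,0) : H(a,i,j)\subset H(n,i^*,l^*)\},\]
with $\mathscr{H}(a,i^*,l^*)$ its Note D3 companion family. The inclusion $\cup\mathscr{H}(a,i^*,l^*)\subset H(n,i^*,l^*)$ is immediate from this defining condition, so the entire burden reduces to the reverse inclusion $H(n,i^*,l^*)\subset\cup\mathscr{G}(a,i^*,l^*)$.

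Pick $x\in H(n,i^*,l^*)\subset G_m$. Because $\mathscr{G}(a,m,0)$ is a Ln cover of $G_m$ (2 of Claim 4.4), the point $x$ lies in a unique $g(a,i,j)\in\mathscr{G}(a,m,0)$, and the task becomes to verify $H(a,i,j)\subset H(n,i^*,l^*)$. Writing $c=a+i$ and $c'=n+i^*$ and unwinding the product definitions of Construction 3.2 B, this containment is equivalent to the twin conditions
(i) $q_{i_k}=q_{i^*_k}$ for $k\leq n$, and
(ii) $q_{i_k}\in[q_{l^*_{k-n}},q_{l^*_{k-n}c'})$ for $n<k\leq n+i^*$;
the coordinates past position $n+i^*$ impose no constraint thanks to the $X_{n+i^*}$-tail factor of $H(n,i^*,l^*)$.

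With the choice $a=n+i^*$ and $i\geq 1$ the width $c=a+i\geq n+i^*+1$ dominates both $i^*$ and $c'$. At any position $k\leq n$, part 5 of Proposition 3.1 gives $i^*_k\leq i^*<c$, so 1 of Proposition 3.2 rules out $q_{i^*_k}\in(q_{i_k},q_{i_kc})$ and forces $q_{i^*_k}=q_{i_k}$, yielding (i). At any position $n<k\leq n+i^*$, the two intervals $[q_{i_k},q_{i_kc})\in\mathscr{I}_c$ and $[q_{l^*_{k-n}},q_{l^*_{k-n}c'})\in\mathscr{I}_{c'}$ share the point $x_k$ and satisfy $c>c'$, so 5 of Proposition 2.2 yields $[q_{i_k},q_{i_kc})\subset[q_{l^*_{k-n}},q_{l^*_{k-n}c'})$; in particular $q_{i_k}\in[q_{l^*_{k-n}},q_{l^*_{k-n}c'})$, which is (ii). The main conceptual step is the clean identification of (i) and (ii) from the product form of the containment; once that is matched with the index-growth statement of Proposition 3.2 and the nesting property of Proposition 2.2, the specific choice $a=n+i^*$ makes the argument essentially mechanical, and I do not anticipate any further serious obstacle.
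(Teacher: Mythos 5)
Your route is genuinely different from the paper's. The paper does not fix $a$ in advance: it isolates the finitely many ``bad'' sets $g(n,h,k)\in\mathscr{G}(n,f)$ with $H(n,i^*,l^*)\cap g(n,h,k)\neq\emptyset$ but $H(n,i^*,l^*)\cap H(n,h,k)=\emptyset$, chooses for each a threshold $a_i$ with $q_{i^*_j}\notin[q_{h_j},q_{h_ja_i})$ at a coordinate $j\leq n$ where the two columns differ, sets $a=\max a_i$, and then argues by contradiction that every $g(a,i,l)$ meeting $H(n,i^*,l^*)$ has $H(a,i,l)\subset H(n,i^*,l^*)$. Your explicit choice $a=n+i^*$ with a coordinatewise verification is cleaner and, if it closes, strictly stronger (the level depends only on $n$ and $i^*$, not on the positions of the points). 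Your treatment of the coordinates $k\leq n$ is correct: $q_{i^*_k}\in(q_{i_k},q_{i_kc})$ would give $i^*_k>c>i^*\geq i^*_k$ by 1 of Proposition 3.2 and 5 of Proposition 3.1.

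The one step you have not earned is the assertion $[q_{i_k},q_{i_kc})\in\mathscr{I}_c$ for $n<k\leq a$. These are $I$-part coordinates of $g(a,i,j)$: $q_{i_k}$ is an arbitrary coordinate of $P(a,i)$, and nothing in Construction 3.2 places $[q_{i_k},q_{i_kc})$ in $\mathscr{I}_c$ --- that requires $q_{i_k}\in Q^*_c$, which is not among the paper's numbered facts (the paper's own Proposition 5.1 runs a long case analysis precisely because it does not assume $I$-part intervals lie in $\mathscr{I}$). Without it, 5 of Proposition 2.2 does not apply, and you cannot exclude the bad nesting $[q_{l^*_{k-n}},q_{l^*_{k-n}c'})\subsetneq(q_{i_k},q_{i_kc})$, in which case $q_{i_k}\notin[q_{l^*_{k-n}},q_{l^*_{k-n}c'})$ and your family misses $x$. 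The gap is real but fillable, in either of two ways. First, prove $q_h\in Q^*_h$ for every $h$ (each position index in the tree address of $q_h$ is at most $h$, by 1--3 of Proposition 2.1); then $i_k\leq i<c$ together with 2 of Proposition 2.2 gives $q_{i_k}\in Q^*_c$ and your appeal to 5 of Proposition 2.2 goes through. Second, bypass the claim: by your step (i) the unique $H(n,i_a,l_a)\supset H(a,i,j)$ has $P(n,i_a)=P(n,i^*)$, so $i_a=i^*$; then $g(a,i,j)\subset g(n,i^*,l_a)$ by iterating 5 of Proposition 3.3, so $\pi_k(x)$ lies in both $[q_{(l_a)_{k-n}},q_{(l_a)_{k-n}c'})$ and $[q_{l^*_{k-n}},q_{l^*_{k-n}c'})$, two members of the pairwise disjoint family $\mathscr{I}_{c'}$; hence they coincide, $l_a=l^*$, and $H(a,i,j)\subset H(n,i^*,l^*)$ follows.
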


\begin{proof}
 Let $H(n,i^*,l^*)\cap g(n,h,k)\neq\emptyset$,
 $H(n,i^*,l^*)\cap H(n,h,k)=\emptyset$ and
 \[\mathscr{G}(\ell,n,h,k)=\{g(\ell,h',k')
 \in\mathscr{G}(\ell,m,0):g(\ell,h',k')\cap H(n,h,k)\neq\emptyset\}.\]
We prove that there exists an $a_h$ with
 $H(n,i^*,l^*)\cap[\cup\mathscr{G}(\ell,n,h,k)]=\emptyset$
 for each $\ell\geq a_h$.
\par

At first, $H(n,i^*,l^*)\cap g(n,h,k)\neq\emptyset$ and
 $H(n,i^*,l^*)\cap H(n,h,k)=\emptyset$
 implies $i^*>h$ by 6$'$ of Proposition 3.3.
 Note that $H(n,i^*,l^*)\cap g(n,h,k)\neq\emptyset$
 implies $g(n,i^*,l^*)\subset g(n,h,k)$ by 4 of Proposition 3.3.
 Let $\mathscr{G}(n,f)=\{g(n,h,k)\in\mathscr{G}_n:
 H(n,i^*,l^*)\cap g(n,h,k)\neq\emptyset
 \hbox{ and } H(n,i^*,l^*)\cap H(n,h,k)=\emptyset\}$.
 Then $\mathscr{G}(n,f)=\{g(n,h_i,k_i): h_i<i^*\}$  is finite
 by 7 and 8 of Proposition 3.3.
\par

 And then, $\pi_j[H(n,i^*,l^*)]=\{q_{i^*_j}\}$ and
  $\pi_j[H(n,h,k)]=\{q_{h_j}\}$ with $q_{i^*_j}\neq q_{h_j}$
  for some $j\leq n$.
  Here $\pi_j$ is a projection from $X$ to $Q_j=Q$.
\par

 Suppose $q_{i^*_j}=q_{h_j}$ for each $j\leq n$.
 Then $P(n,i^*)=P(n,h)$.
 Then $H(n,i^*,l^*)\cap H(n,h,k)=\emptyset$
 implies $J(n,i^*,l^*)\cap J(n,h,k)=\emptyset$.
 So $g(n,i^*,l^*)\cap g(n,h,k)=\emptyset$,
 a contradiction to $H(n,i^*,l^*)\cap g(n,h,k)\neq\emptyset$.
\par

 Let $q_{i^*_j}\neq q_{h_j}$.
   Then $q_{i^*_j}\notin [q_{h_j}, q_{h_ja_i})$
   for some $a_i$.
   Pick an $\ell\geq a_i$.
   Let $x\in H(\ell,h',k')\subset H(n,h,k)$.
   Then $H(n,i^*,l^*)\cap g(\ell,x)
   =H(n,i^*,l^*)\cap g(\ell,h',k')=\emptyset$
   since $q_{i^*_j}\notin[q_{h_j}, q_{h_ja'})
   =\pi_j[g(\ell,x)]$ with $a'=\ell+h'>a_i$.
   Then $H(n,i^*,l^*)\cap[\cup\mathscr{G}(\ell,n,h,k)]=\emptyset$
 for each $\ell\geq a_i$.
\par

Let $a=\hbox{max}\{a_i: h_i<i^*\}$.
  Take $\mathscr{G}(a,n,i^*,l^*)$.
  Then $H(n,i^*,l^*)\subset\cup\mathscr{G}(a,n,i^*,l^*)$
  since $\mathscr{G}(a,m,0)\in\mathcal{G}(n,m,L)$.
  Take a $g(a,i,l)\in\mathscr{G}(a,n,i^*,l^*)$.
  Then $g(a,i,l)\cap H(n,i^*,l^*)\neq\emptyset$.
  Let $H(a,i,l)\subset H(n,i_a,l_a)$.
  Then $g(n,i_a,l_a)\cap H(n,i^*,l^*)\neq\emptyset$.
  Then $g(n,i^*,l^*)\subset g(n,i_a,l_a)$.
\par

  Suppose $H(n,i^*,l^*)\cap H(n,i_a,l_a)=\emptyset$.
  Then  $g(n,i_a,l_a)=g(n,h_i,k_i)\in\mathscr{G}(n,f)$
  for some $h_i<i^*$.
  Then,  for each $\ell\geq a_i$,
  \[H(n,i^*,l^*)\cap[\cup\mathscr{G}(\ell,n,i_a,l_a)]=
  H(n,i^*,l^*)\cap[\cup\mathscr{G}(\ell,n,h_i,k_i)]=\emptyset.\]
 Then $H(n,i^*,l^*)\cap[\cup\mathscr{G}(a,n,i_a,l_a)]=\emptyset$
 for $a\geq a_i$.
 Note $H(a,i,l)\subset H(n,i_a,l_a)$
 and $g(a,i,l)\in\mathscr{G}(a,m,0)$
 since $g(a,i,l)\in\mathscr{G}(a,n,i^*,l^*)$.
 Then $g(a,i,l)\in\mathscr{G}(a,n,i_a,l_a)$.
 Then $g(a,i,l)\cap H(n,i^*,l^*)=\emptyset$,
 a contradiction to $g(a,i,l)\in\mathscr{G}(a,n,i^*,l^*)$
 and $g(a,i,l)\cap H(n,i^*,l^*)\neq\emptyset$.
\par

So $H(n,i^*,l^*)\cap H(n,i_a,l_a)\neq\emptyset$.
 Then $H(n,i^*,l^*)=H(n,i_a,l_a)$.
 Then, for each $g(a,i,l)\in\mathscr{G}(a,n,i^*,l^*)$,
 we have $H(a,i,l)\subset H(n,i^*,l^*)$.
 So $\cup\mathscr{H}(a,i^*,l^*)\subset
 H(n,i^*,l^*)\subset\cup\mathscr{G}(a,n,i^*,l^*)$.
 Then $\mathscr{G}(a,n,i^*,l^*)=\mathscr{G}(a,i^*,l^*)$
 is desired.
\end{proof}

\begin{cor}
Let $H(n^*,i^*,l^*)\subset G_m$ with $n^*\geq n$.
 Then there exists a $k\in N$ and a family
 $\mathscr{G}(k,i^*,l^*)\subset\mathscr{G}(k,m,0)$
 such that $\cup\mathscr{H}(k,i^*,l^*)\subset
 H(n^*,i^*,l^*)\subset\cup\mathscr{G}(k,i^*,l^*)$.
\end{cor}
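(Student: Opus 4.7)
The plan is to obtain Corollary 4.10 as an essentially immediate consequence of Proposition 4.9 by observing that the proof of that proposition never really used $n$ as the \emph{base} level of the Ln covers sequence $\mathcal{G}(n,m,L)$, only that $H(n,i^*,l^*)$ is contained in $G_m$ and that Ln covers $\mathscr{G}(a,m,0)$ exist for every sufficiently large $a$. Since $n^*\geq n$, the tail $\{\mathscr{G}(n_i,m,0):n_i\geq n^*\}$ is still a Ln covers sequence on $G_m$, so we may re-run the argument of Proposition 4.9 with $n^*$ in place of $n$.

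Concretely, I would repeat the steps of the proof of Proposition 4.9 almost verbatim. Put $\mathscr{G}(n^*,f)=\{g(n^*,h,k)\in\mathscr{G}_{n^*}:H(n^*,i^*,l^*)\cap g(n^*,h,k)\neq\emptyset \text{ and } H(n^*,i^*,l^*)\cap H(n^*,h,k)=\emptyset\}$. By $6'$, $7$ and $8$ of Proposition 3.3, every such $g(n^*,h,k)$ has first index $h<i^*$, so this family is finite; enumerate its members as $g(n^*,h_i,k_i)$. For each $i$, choose a coordinate $j\leq n^*$ with $q_{i^*_j}\neq q_{h_{i,j}}$, which exists because $H(n^*,i^*,l^*)\cap H(n^*,h_i,k_i)=\emptyset$ forces $P(n^*,i^*)\neq P(n^*,h_i)$. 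Since $S_{\cdot}(q_{h_{i,j}})$ converges to $q_{h_{i,j}}$, we may pick $a_i\in N$ with $a_i\geq n^*$ and $q_{i^*_j}\notin[q_{h_{i,j}},q_{h_{i,j}a_i})$. Set $k=\max_i a_i$, so that $k\geq n^*\geq n$ and hence $\mathscr{G}(k,m,0)\in\mathcal{G}(n,m,L)$.

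Now define $\mathscr{G}(k,i^*,l^*)=\{g(k,i,l)\in\mathscr{G}(k,m,0):g(k,i,l)\cap H(n^*,i^*,l^*)\neq\emptyset\}$. The containment $H(n^*,i^*,l^*)\subset\cup\mathscr{G}(k,i^*,l^*)$ is automatic since $\mathscr{G}(k,m,0)$ covers $G_m$. For the reverse containment on the $H$-side, one argues exactly as in Proposition 4.9: if $g(k,i,l)\in\mathscr{G}(k,i^*,l^*)$ and $H(k,i,l)\subset H(n^*,i_k,l_k)$ were \emph{not} contained in $H(n^*,i^*,l^*)$, then $g(n^*,i_k,l_k)$ would lie in $\mathscr{G}(n^*,f)$, i.e.\ equal some $g(n^*,h_{i},k_{i})$; but by choice of $a_i$ and $k\geq a_i$ we would have $g(k,i,l)\cap H(n^*,i^*,l^*)=\emptyset$, contradicting $g(k,i,l)\in\mathscr{G}(k,i^*,l^*)$. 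Hence $H(k,i,l)\subset H(n^*,i^*,l^*)$, giving $\cup\mathscr{H}(k,i^*,l^*)\subset H(n^*,i^*,l^*)$.

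The only real obstacle is the bookkeeping: making sure the index $k$ produced simultaneously satisfies $k\geq n^*$ (so the above coordinate argument applies) and $k\geq n$ (so $\mathscr{G}(k,m,0)$ actually belongs to $\mathcal{G}(n,m,L)$). Both are guaranteed by $n^*\geq n$ together with $a_i\geq n^*$, so no fundamentally new ingredient beyond Proposition 4.9 is required.
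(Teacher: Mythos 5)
Your proposal is correct, but it reaches the conclusion by a genuinely different route than the paper. The paper treats Proposition 4.8 as a black box: it first invokes 5 of Claim 4.4 to identify the unique $g(n^*,i',l')\in\mathscr{G}(n^*,m,0)$ meeting $H(n^*,i^*,l^*)$, splits into the trivial case $H(n^*,i',l')=H(n^*,i^*,l^*)$ and the case $H(n^*,i^*,l^*)\subset g(n^*,i',l')$ with $H(n^*,i',l')\cap H(n^*,i^*,l^*)=\emptyset$, and in the latter case applies Proposition 4.8 to the restricted Ln covers sequence $\mathscr{G}(n^*,m,0)|g(n^*,i',l'),\ \mathscr{G}(n^*+1,m,0)|g(n^*,i',l'),\dots$, whose base level is now $n^*$. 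You instead make the (correct) meta-observation that the proof of Proposition 4.8 never uses that $n$ is the base level of $\mathcal{G}(n,m,L)$ --- only that $\mathscr{G}(a,m,0)$ is a Ln cover of $G_m$ for all sufficiently large $a$ and that the target $H$-set lies in $G_m$ --- and you re-run that proof verbatim at level $n^*$, taking care to choose the separating indices $a_i\geq n^*$ so that the resulting $k$ lies in the index set of the sequence. Both arguments rest on the same core mechanism: the finiteness of the family $\mathscr{G}(n^*,f)$ of those $g(n^*,h,k)$ that meet $H(n^*,i^*,l^*)$ while $H(n^*,h,k)$ is disjoint from it, together with the coordinate-separation choice of $a_i$ and the dichotomy that distinct members of $\mathscr{H}_{n^*}$ are either equal or disjoint. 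What yours buys is directness: no restriction machinery and no case split. What the paper's buys is economy: it does not repeat the argument of Proposition 4.8, though at the cost of asserting rather briskly that the restricted family is again a legitimate Ln covers sequence to which that proposition applies.
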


\begin{proof}
Let $H(n^*,i^*,l^*)\subset G_m$.
  Then $H(n^*,i^*,l^*)\subset\cup\mathscr{G}(n^*,n^*,i^*,l^*)$.
  Then $\mathscr{G}(n^*,n^*,i^*,l^*)=\{g(n^*,i',l')\}$
  by 5 of Claim 4.4.
  If $H(n^*,i',l')=H(n^*,i^*,l^*)$,
  then $\mathscr{G}(n^*,n^*,i^*,l^*)$
  is desired.
  If $H(n^*,i',l')\neq H(n^*,i^*,l^*)$,
  then $H(n^*,i',l')\cap H(n^*,i^*,l^*)=\emptyset$
  and $H(n^*,i^*,l^*)\subset g(n^*,i',l')$.
  We
  consider Ln covers sequence on $G_m\cap g(n^*,i',l')$:
 \[\mathscr{G}(n^*,m,0)|g(n^*,i',l'),\hbox{  }
 \mathscr{G}(n^*+1,m,0)|g(n^*,i',l'),\hbox{  }
 \mathscr{G}(n^*+2,m,0)|g(n^*,i',l'),....\]
 Then, \quad by Proposition 4.8, \quad  there
 \quad exists a
 \quad $k\in N$ \quad and \quad a \quad family \quad
 $\mathscr{G}(k,n^*,i^*,l^*)|g(n^*,i',l')
 \subset\mathscr{G}(k,m,0)|g(n^*,i',l')$
 such that \[\cup[\mathscr{H}(k,n^*,i^*,l^*)|g(n^*,i',l')]\subset
 H(n^*,i^*,l^*)\subset
 \cup[\mathscr{G}(k,n^*,i^*,l^*)|g(n^*,i',l')].\]
 Note \quad $\mathscr{G}(k,n^*,i^*,l^*)|g(n^*,i',l')\quad
 =\quad\mathscr{G}(k,n^*,i^*,l^*) \quad\subset
  \quad\mathscr{G}(k,m,0)$.
 \quad Then
 $\mathscr{G}(k,n^*,i^*,l^*)|g(n^*,i',l')
 =\mathscr{G}(k,n^*,i^*,l^*)=\mathscr{G}(k,i^*,l^*)$
 is desired.
\end{proof}

Professor Gary Gruenhage went through the primitive paper, and told us
  that some sections are too complicated and too hard to understand.
  So we are going to rewriting the paper from Section 5.

\section{To construct a Ln cover on $g_c(n,i,l)-H$}

In order to construct holes in neighborhoods,
  we have to construct a Ln covers on $g(n,i,l)-H$ also.
\par

Let family $\mathscr{H}$ with $\Delta$-order and
 $\mathscr{H}'\subset\mathscr{H}$.
 Call $\mathscr{H}'$ \textit{ with $\Delta$-order if
 $\mathscr{H}'$ as a subsequence of $\mathscr{H}$.}
 We always use  $\mathscr{H}'$ with $\Delta$-order throughout
 this paper.
\par

\begin{prop}
Fix an $n\in N$ and an $m<n$. Then, for arbitrary
  $g(n,i,l)\in\mathscr{G}_n$,
  there exists  $c\in N$ such that $k>c$ and
  $H(m,k,h)\cap g(n,i,l)\neq\emptyset$
  implies $g(m,k,h)\subset g(n,i,l)$.
\end{prop}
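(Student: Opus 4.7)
The plan is to choose $c = n + i - m$ so that $k > c$ forces $b := m + k > n + i =: a$, and then to read off the containment coordinate by coordinate directly from Proposition 2.2(5). The point is that once $b > a$, any interval $[q_{h_j}, q_{h_j b})$ from $\mathscr{I}_b$ that merely meets an interval $[q_{i_j}, q_{i_j a})$ from $\mathscr{I}_a$ is automatically swallowed by it, since $\mathscr{I}$ is laminated by the refinement property in Proposition 2.2(5).

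More concretely, first I would fix $c = n+i-m$, pick any $k > c$ and any $H(m,k,h)$ with $H(m,k,h) \cap g(n,i,l) \neq \emptyset$, and translate this nonempty intersection into a coordinate-wise statement. Writing out
\[H(m,k,h) = \{q_{k_1}\}\times\cdots\times\{q_{k_m}\}\times[q_{h_1},q_{h_1b})\times\cdots\times[q_{h_k},q_{h_kb})\times X_{m+k}\]
and
\[g(n,i,l) = [q_{i_1},q_{i_1a})\times\cdots\times[q_{i_n},q_{i_na})\times[q_{l_1},q_{l_1a})\times\cdots\times[q_{l_i},q_{l_ia})\times X_{n+i},\]
nonempty intersection forces $q_{k_j} \in [q_{i_j}, q_{i_j a})$ for each $j \le m$, and also forces the corresponding $[q_{h_{j-m}}, q_{h_{j-m}b})$ and the $j$th factor of $g(n,i,l)$ to meet for $m < j \le n+i$.

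Next I would verify $g(m,k,h) \subset g(n,i,l)$ coordinate-wise. For $j \le m$ the intervals $[q_{i_j},q_{i_ja})$ and $[q_{k_j},q_{k_jb})$ both belong to $\mathscr{I}$ with $b > a$ and share the point $q_{k_j}$, so Proposition 2.2(5) yields $[q_{k_j},q_{k_jb}) \subset [q_{i_j},q_{i_ja})$. For $m < j \le n$, and again for $n < j \le n+i$, the two intervals in $\mathscr{I}$ meet with $b>a$, so Proposition 2.2(5) gives the needed inclusion of the $g(m,k,h)$-interval in the $g(n,i,l)$-interval. For $n+i < j \le m+k$ the $j$th factor of $g(n,i,l)$ is $Q_j$, which trivially contains the $j$th factor of $g(m,k,h)$, and for $j > m+k$ both factors are $Q_j$. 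Thus $g(m,k,h) \subset g(n,i,l)$.

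I do not expect a serious obstacle: the whole content of the proposition is the arithmetic threshold $c = n+i-m$ that unlocks Proposition 2.2(5) in every coordinate. The only point requiring a little care is matching indices correctly across the two products (first $m$ singleton coordinates of $H(m,k,h)$ sit inside the first $m$ interval coordinates of $g(n,i,l)$, the next $n-m$ interval coordinates of $H(m,k,h)$ land inside the $I$-block of $g(n,i,l)$, the following $i$ coordinates land inside the $J$-block, and the remaining coordinates are automatic), but this is purely bookkeeping.
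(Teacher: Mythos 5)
Your appeal to Proposition 2.2(5) is not justified in the coordinates $j\le n$, and that is exactly where the content of the proposition lives. Proposition 2.2(5) applies only to intervals that belong to $\mathscr{I}$, i.e.\ intervals $[q,q_{\cdot a})$ whose anchor $q$ lies in $Q^*_a$. In Construction 3.2 the $J$-block intervals $[q_{l_1},q_{l_1a}),\dots,[q_{l_i},q_{l_ia})$ of $g(n,i,l)$ are explicitly taken from $\mathscr{I}_a$, so your argument is fine for $n<j\le n+i$ (this is the paper's Case 3, which does cite 2.2(5)). But the $I$-block intervals $[q_{i_j},q_{i_ja})$, $j\le n$, are anchored at the coordinates of the partition point $P(n,i)$, and nothing in the paper shows $q_{i_j}\in Q^*_a$; so these intervals need not be members of $\mathscr{I}_a$, and 2.2(5) does not apply to them. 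Concretely, for $m<j\le n$ the matching factor of $g(m,k,h)$ is an interval $[q_{h_{j-m}},q_{h_{j-m}b})\in\mathscr{I}_b$, and if $q_{i_j}\notin Q^*_b$ that interval can have its anchor strictly to the left of $q_{i_j}$ while still overlapping $[q_{i_j},q_{i_ja})$ (Proposition 2.2(3) only forbids points of $Q^*_b$ from lying in the open part $(q_{h},q_{hb})$). In that situation ``meet implies containment'' fails. Your threshold $c=n+i-m$ only guarantees $b>a$; it does not guarantee that the anchors $q_{i_j}$ are visible at scale $b$.

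The paper's proof is organized around exactly this obstruction: it traces the ancestry of each coordinate $q_{i_j}$ (and $q_{l_j}$) back through Construction 1, records the offsets $k^2_j,k^3_j,\dots$ that occur, and takes $c$ to dominate all of them, so that $q_{i_j}\in Q^*_c\subseteq Q^*_b$ once $k>c$; the Subcases (b1) and (b2) of its Case 2 then rule out an $\mathscr{I}_b$-interval straddling $q_{i_j}$. That $c$ depends on the particular $g(n,i,l)$, not just on $n$ and $i$. (For $j\le m$ your appeal to 2.2(5) is also formally unjustified, but there the conclusion is recoverable from Proposition 3.2(1), because the corresponding factor of $H(m,k,h)$ is the singleton $q_{k_j}$, which is known to lie in $[q_{i_j},q_{i_ja})$.) Your argument could perhaps be repaired by first proving that the $h$-th point of $Q$ in $\Delta$-order always lies in $Q^*_h$, which together with Proposition 3.1(5) would give $q_{i_j}\in Q^*_{i_j}\subseteq Q^*_a$; but that lemma appears nowhere in the paper and is not ``purely bookkeeping.'' As written, the proposal skips the one step that the paper's elaborate choice of $c$ exists to handle.
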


\begin{proof}
1. Let $g(n,i,l)=[q_{i_1},q_{i_1a})\times ...\times
  [q_{i_n},q_{i_na})\times [q_{l_1},q_{l_1a})\times...
  \times[q_{l_i},q_{l_ia})\times X_{n+i}$ and
   $c_1=a$.
  Assume $q_{i_j}\in S_{h'_j}$ for $1\leq j\leq n$,
  and $q_{l_j}\in S_{h'_{n+j}}$ for $1\leq j\leq i$.
  Let $h'=\hbox{max}\{h'_j: 1\leq j\leq n+i\}$.
  Then $q_{i_j}\in S_{h'_j}$ implies
  $q_{i_j}=q_{\ell_jk^2_j}\in S_{h'_j}(q_{\ell_j})$
  for some $q_{\ell_j}\in S_{\ell^*_j}$
  with $\ell_j^*\leq h'-1$.
  So do $q_{l_j}\in S_{h'_{n+j}}$ for $1\leq j\leq i$.
  If $q_{i_j}=q_1=0$ for some $j$, let $k^2_j=0$.
\par

  2. Let $c_2=\hbox{max}\{k^2_j: 1\leq j\leq n+i\}$.
  Then, for each $j$, $q_{\ell_j}\in S_{l_j^*}$
  implies
  $q_{\ell_j}=q_{m_jk^3_j}\in S_{l_j^*}(q_{m_j})$
  for some $q_{m_j}\in S_{m_j^*}$ with
  $m_j^*\leq\l_j^*-1\leq h'-2$.
  If $q_{\ell_j}=q_1=0$ for some $j$, let $k^3_j=0$.
\par

 3. Let $c_3=\hbox{max}\{k^3_j: 1\leq j\leq n+i\}$.
 Note $h'$ is finite. Then, by finite induction at most $h'$ times,
 we have $c_1, c_2,..., c_{h'} $.
 Let $c=\hbox{max}\{c_i: i\leq h'\}$.
 Then $q_{i_j},q_{\ell_j},...,q_{l_j}$ in $Q^*_c$
 for  $j\leq n+i$.
 Let $\mathscr{I}(n,i,l)$ be the family of all
 $[q_{\ell_j},q_{\ell_jc})$'s,...,
 $[q_{i_j},q_{i_jc})$'s and $[q_{l_j},q_{l_jc})$'s
 for  $j\leq n+i$.
 Then $\mathscr{I}(n,i,l)\subset\mathscr{I}_c$
 by 2 of Proposition 2.2.
\par

Let $H(m,k,h)\in\mathscr{H}_m$ with $H(m,k,h)\cap g(n,i,l)\neq\emptyset$, $k>c$,
  $b=m+k$ and
  \[H(m,k,h)=\{q_{k_1}\}\times...\times\{q_{k_m}\}
   \times[q_{h_1},q_{h_1b})\times...
  \times[q_{h_k},q_{h_kb})\times X_b.\]
\par

Case 1. $1\leq j\leq m$.
  $H(m,k,h)\cap g(n,i,l)\neq\emptyset$ implies
  $q_{k_j}\in [q_{i_j},q_{i_ja})$.
  $k>c\geq c_1$ implies $b=m+k>k>c\geq a$.
  If $q_{k_j}=q_{i_j}$, then
  $[q_{k_j},q_{k_jb})\subset [q_{i_j},q_{i_jc})
  \subset [q_{i_j},q_{i_ja})$.
  If $q_{k_j}\neq q_{i_j}$,
  then $q_{k_j}\in (q_{i_j},q_{i_ja})$.
  Then $[q_{k_j},q_{k_jb})\subset [q_{i_j},q_{i_jc})
  \subset [q_{i_j},q_{i_ja})$
  by 1 of Proposition 3.2.
\par

\textbf{Case 2.} $m<j\leq n$. Then $b=m+k>k>c\geq a$.
 Let $[q_{h_j},q_{h_jb})\cap[q_{i_j},q_{i_ja})\neq\emptyset$.
 Suppose $q_{h_j}\notin[q_{i_j},q_{i_ja})$.
 Then $q_{h_j}<q_{i_j}$ or $q_{i_ja}<q_{h_j}$.
\par

\textbf{Subcase 2.1.} $q_{i_ja}<q_{h_j}$. Then
 $[q_{i_j},q_{i_ja})\cap[q_{h_j},q_{h_jb})=\emptyset$.
 It is a contradiction to
 $[q_{h_j},q_{h_jb})\cap[q_{i_j},q_{i_ja})\neq\emptyset$.
\par

\textbf{Subcase 2.2.} $q_{h_j}<q_{i_j}$.
 (We want to know $q_{h_jb}<q_{i_ja}$ or $q_{i_ja}<q_{h_jb}$.)
\par

(a). $q_{i_j}=q_{\ell_jk^2_j}\in S_{h'_j}(q_{\ell_j})$
  for some $q_{\ell_j}\in S_{\ell_j^*}$.
  Then $q_{\ell_j}<q_{i_j}$.
\par

  And then $q_{\ell_j}=q_{m_jk^3_j}\in S_{l^*_j}(q_{m_j})$
  for some $q_{m_j}\in S_{m_j^*}$. Then $q_{m_j}<q_{\ell_j}$.
\par

(b). Note that $q_{i_j}\in S_{h'_j}$ and $h'$ is finite. Let $q_{h_j}\in S_{h_j^*-1}$.
\par

If $h_j^*-1\geq h'_j$, then $q_{h_j}<q_{i_j}$ implies
$[q_{h_j},q_{h_j,0})\cap[q_{i_j},q_{i_j,0})=\emptyset$ by (1) and (2) of B in Construction 1.
It is a contradiction to $[q_{h_j},q_{h_jb})\cap[q_{i_j},q_{i_ja})\neq\emptyset$.
So $h_j^*-1<h'_j$ by Construction 1
 since $q_{i_j}\in S_{h'_j}$ and $q_{h_j}\notin[q_{i_j},q_{i_ja})$.
 Then $h_j^*\leq h'_j$ and $q_{h_jb}\in S_{h_j^*}(q_{h_j})$.
 Then there is a $p\in N$ such that $p<h'$ and $c\geq c_p\geq k^p_j$,
 and there is a $q_{m'_j}\in S_{h_j^*-1}$ such that
 $q_{m'_j}<q_{m'_jk^p_j}=q_{m_j}<...<q_{\ell_j}<q_{i_j}$.
\par

\textbf{Subcase (b1).} $q_{h_j}\neq q_{m'_j}$. Then $q_{h_j0}\leq q_{m'_j}$
  by (2) of B in Construction 1 since both $q_{h_j}$ and $q_{m'_j}$ are in
  $S_{h_j^*-1}$ with $q_{h_j}<q_{i_j}$.
  Then $[q_{h_j},q_{h_jb})\cap[q_{i_j},q_{i_ja})=\emptyset$
  since $q_{h_j0}\leq q_{m'_j}<q_{m'_jk^p_j}=q_{m_j}<...<q_{\ell_j}<q_{i_j}$.
  It is a contradiction to $[q_{h_j},q_{h_jb})\cap[q_{i_j},q_{i_ja})\neq\emptyset$.
\par

\textbf{Subcase (b2).} $q_{h_j}=q_{m'_j}$. Note $b>c\geq c_p\geq k^p_j$.
  Then it is easy to see that
  $q_{h_jb}<q_{h_jk^p_j}=q_{m'_jk^p_j}=q_{m_j}<...<q_{\ell_j}<q_{i_j}$.
  Then $[q_{h_j},q_{h_jb})\cap[q_{i_j},q_{i_ja})=\emptyset$.
  It is a contradiction to $[q_{h_j},q_{h_jb})\cap[q_{i_j},q_{i_ja})\neq\emptyset$.
\par

Summarizing Subcase 2.1, Subcase (b1) and Subcase (b2), then $q_{h_j}<q_{i_j}$
 implies $q_{h_jb}<q_{i_j}$.
 It is a contradiction to $[q_{h_j},q_{h_jb})\cap[q_{i_j},q_{i_ja})\neq\emptyset$.
\par

Then we have  $q_{h_j}\in[q_{i_j},q_{i_ja})$.
   Then $b>a$ implies $[q_{h_j},q_{h_jb})\subset [q_{i_j},q_{i_ja})$.
   This complete a proof of Case 2.
\par
We use the above Case 2 throughout this paper.
\par

Case 3, $n<j\leq n+i$. Note $b=m+k>k>c>a$.
  So $[q_{h_j},q_{h_jb})\cap[q_{l_j},q_{l_ja})\neq\emptyset$
  implies  $[q_{h_j},q_{h_jb})\subset[q_{l_j},q_{l_jc})
  \subset [q_{l_j},q_{l_ja})$ by 5 of Proposition 2.2.
\par

Case 4, $a=n+i<j\leq m+k$. Then $[q_{h_j},q_{h_jb})\subset [0,1)$.
\end{proof}

\begin{cor}
Fix an $n\in N$ and an $m<n$. Then for arbitrary
  $g(n,i,l)\in\mathscr{G}_n$,
  there exists at most finitely many $k$'s such that
  $H(m,k,h)\cap g(n,i,l)\neq\emptyset$
  and $g(m,k,h)-g(n,i,l)\neq\emptyset$.
\end{cor}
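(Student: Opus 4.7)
The plan is to read this off essentially immediately from Proposition 5.1. First, I would apply Proposition 5.1 to the fixed pair $n,m$ and to the given $g(n,i,l) \in \mathscr{G}_n$ to extract the constant $c \in N$ with the property that whenever $k > c$ and $H(m,k,h) \cap g(n,i,l) \neq \emptyset$, one has $g(m,k,h) \subset g(n,i,l)$.

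Next I would argue the contrapositive: if $k$ is an index for which there exists some $h$ with $H(m,k,h) \cap g(n,i,l) \neq \emptyset$ and $g(m,k,h) - g(n,i,l) \neq \emptyset$, then the conclusion $g(m,k,h) \subset g(n,i,l)$ of Proposition 5.1 fails, so we must have $k \leq c$. Hence every such $k$ lies in the finite set $\{1,2,\ldots,c\}$, and there are only finitely many of them.

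I do not anticipate any real obstacle, since the only work (the careful case analysis comparing the coordinate intervals $[q_{h_j},q_{h_jb})$ with $[q_{i_j},q_{i_ja})$ in the various ranges $1\leq j\leq m$, $m<j\leq n$, $n<j\leq n+i$, and $n+i<j\leq m+k$) has already been carried out in Proposition 5.1. The corollary is purely a packaging statement: a uniform bound on the ``bad'' coordinate index $k$ from Proposition 5.1 immediately yields finiteness of the set of bad $k$'s in the corollary.
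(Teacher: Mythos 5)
Your proposal is correct and is exactly the intended derivation: the paper states Corollary 5.2 without proof, treating it as immediate from Proposition 5.1 in precisely the way you describe. Taking the constant $c$ from Proposition 5.1, any $k$ for which some $H(m,k,h)$ meets $g(n,i,l)$ while $g(m,k,h)-g(n,i,l)\neq\emptyset$ must satisfy $k\leq c$, so the set of such $k$'s is finite.
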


\textbf{Note*}. By Proposition 5.1 and 5.2, we can construct
  a Ln cover $\mathscr{G}(m,i,l)$
  on $g(n,i,l)-\cup_{i\leq \ell}H(m_i,k_i)$
  with $m_i<n$  in the following Construction 5.
  Let $H(n,i,l)\subset...\subset H(1,i_1,l_1)$ with
  $H(n,i,l)\subset g(n,i,l)$. Then, by Proposition 5.1,
  there is a $c\in N$ such that $g(1,k,h)\subset g(n,i,l)$
  if $H(1,k,h)\cap g(n,i,l)\neq\emptyset$ with $k>c$.
  Note $H(m,k,h)=\{q_{k_1}\}\times...\times\{q_{k_m}\}
   \times[q_{h_1},q_{h_1b})\times...
  \times[q_{h_k},q_{h_kb})\times X_b.$
  Let
  \[\pi^a_{a+b}[H(m,k,h)]=\pi_a[H(m,k,h)]\times...\times\pi_b[H(m,k,h)].\]
  Then $\pi^1_m[H(m,k,h)]=\{q_{k_1}\}\times...\times\{q_{k_m}\}=\{(q_{k_1},...,q_{k_m})\}$. Denote
  $\pi^1_m[H(m,k,h)]$ by $(q_{k_1},...,q_{k_m})$ if $\pi^1_m[H(m,k,h)]$ is a point.
  Then we have
  \[\pi^2_{m+k}[H(m,k,h)]=\{q_{k_2}\}\times...\times\{q_{k_m}\}\times...
   \times[q_{h_1},q_{h_1b})\times...
  \times[q_{h_k},q_{h_kb})\quad\hbox{ and }\]
  \[\pi^{m+1}_{m+k}[H(m,k,h)]=[q_{h_1},q_{h_1b})\times...
  \times[q_{h_k},q_{h_kb}).\]
  Let $I_c(n,i,l)=[q_{i_1},q_{i_1c})\times...\times[q_{i_n},q_{i_nc})$,
  $J(n,i,l)=[q_{l_1},q_{l_1a})\times...\times[q_{l_i},q_{l_ia})
  \times X_a$ and
  $g_c(n,i,l)=I_c(n,i,l)\times J(n,i,l)$.

\begin{cor}
Let $\ell\leq m\leq n$, $g(n,i,l)\in\mathscr{G}_n$ and
 $g_c(n,i,l)=I_c(n,i,l)\times J(n,i,l)$.
\par

 1. If $H(\ell,k,h)\cap g_c(n,i,l)\neq\emptyset$ with $k>c$,
    then $g(\ell,k,h)\subset g_c(n,i,l)$.
\par

 2. If $H(m,k,h)\cap g_c(n,i,l)\neq\emptyset$ with
 $H(m,k,h)\cap H(\ell,i_\ell,l_\ell)=\emptyset$
 and $H(n,i,l)\subset H(\ell,i_\ell,l_\ell)$,
 then $g(m,k,h)\subset g_c(n,i,l)$ and
  \begin{align}
 g_c(n,i,l)-&H(\ell,i_\ell,l_\ell)\nonumber\\
           =&\cup\{H(m,k,h):H(m,k,h)\cap H(\ell,i_\ell,l_\ell)=\emptyset
                   \hbox{ and } H(n,i,l)\subset H(\ell,i_\ell,l_\ell)\}\nonumber\\
            =&\cup\{g(m,k,h):H(m,k,h)\cap H(\ell,i_\ell,l_\ell)=\emptyset
                   \hbox{ and } H(n,i,l)\subset H(\ell,i_\ell,l_\ell)\}.\nonumber
\end{align}
\end{cor}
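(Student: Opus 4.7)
My plan is to reduce both parts to the proof of Proposition 5.1 together with a case analysis using 4 and 5 of Proposition 3.1 and 1 of Proposition 3.2.

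For 1, I would revisit the proof of Proposition 5.1 with the hypothesis $H(\ell,k,h)\cap g_c(n,i,l)\neq\emptyset$ replacing $H(\ell,k,h)\cap g(n,i,l)\neq\emptyset$. In Case 1 ($j\leq\ell$), the proof already establishes the intermediate inclusion $[q_{k_j},q_{k_jb})\subset[q_{i_j},q_{i_jc})$, which is exactly what I need. In Case 2 ($\ell<j\leq n$), the new hypothesis gives $[q_{h_{j-\ell}},q_{h_{j-\ell}b})\cap[q_{i_j},q_{i_jc})\neq\emptyset$; the same chain of subcases 2.1, (b1), (b2) still rules out $q_{h_{j-\ell}}<q_{i_j}$, so $q_{h_{j-\ell}}\in[q_{i_j},q_{i_jc})$, and since $b=\ell+k>c$, 1 of Proposition 3.2 yields $[q_{h_{j-\ell}},q_{h_{j-\ell}b})\subset[q_{i_j},q_{i_jc})$. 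Case 3 is unchanged.

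For 2, I aim to reduce to 1 by establishing $k>c$. Write $H(\ell,i_\ell,l_\ell)$ with singleton coordinates $\{q_{\alpha_j}\}$ for $j\leq\ell$ and interval coordinates $[q_{\beta_s},q_{\beta_sb'})$ with $b'=\ell+i_\ell$, and $H(m,k,h)$ with singleton coordinates $\{q_{\xi_j}\}$ for $j\leq m$ and interval coordinates $[q_{\eta_t},q_{\eta_tb''})$ with $b''=m+k$. The containment $H(n,i,l)\subset H(\ell,i_\ell,l_\ell)$ gives $q_{\alpha_j}=q_{i_j}$ for $j\leq\ell$ and $q_{i_{\ell+s}}\in[q_{\beta_s},q_{\beta_sb'})$ for $1\leq s\leq i_\ell$. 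The disjointness $H(m,k,h)\cap H(\ell,i_\ell,l_\ell)=\emptyset$ must be witnessed at (A) some $j\leq\ell$ with $q_{\xi_j}\neq q_{\alpha_j}$, (B1) some $\ell<j\leq\min(m,b')$ with $q_{\xi_j}\notin[q_{\beta_{j-\ell}},q_{\beta_{j-\ell}b'})$, or (B2) some $m<j\leq\min(b'',b')$ with disjoint intervals. In (A), combining with $q_{\xi_j}\in[q_{i_j},q_{i_jc})$ from $H(m,k,h)\cap g_c(n,i,l)\neq\emptyset$ gives $q_{\xi_j}\in(q_{i_j},q_{i_jc})$, so by 1 of Proposition 3.2 the $Q$-index $\xi_j>c$, and by 5 of Proposition 3.1, $k\geq\xi_j>c$. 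In (B1), the alternative $q_{i_j}\in(q_{\beta_{j-\ell}},q_{\beta_{j-\ell}b'})$ would place $[q_{i_j},q_{i_jc})$ inside $[q_{\beta_{j-\ell}},q_{\beta_{j-\ell}b'})$ by 1 of Proposition 3.2, contradicting $q_{\xi_j}\notin[q_{\beta_{j-\ell}},q_{\beta_{j-\ell}b'})$; hence $q_{i_j}=q_{\beta_{j-\ell}}$, and $c\geq b'$ would yield the same contradiction, so $c<b'$, forcing $q_{\xi_j}\in[q_{\beta_{j-\ell}b'},q_{\beta_{j-\ell}c})\subset(q_{i_j},q_{i_jc})$ and $\xi_j>c$, hence $k>c$.

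The hard part is ruling out (B2). The parallel analysis again forces $c<b'=\ell+i_\ell$, but the choice of $c$ in Proposition 5.1 guarantees $c\geq a=n+i$, so $n+i<\ell+i_\ell$; meanwhile, $H(n,i)\subset H(\ell,i_\ell)$ with $n\geq\ell$ yields $i\geq i_\ell$ by 4 of Proposition 3.1, giving $n+i\geq\ell+i_\ell$, a contradiction. Hence (B2) cannot occur, and 1 of the corollary applies to give $g(m,k,h)\subset g_c(n,i,l)$.

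For the set equality I observe that in (A) the $j$-th coordinate of $g(m,k,h)$ lies in $(q_{\alpha_j},q_{\alpha_jc})$, missing the singleton $\{q_{\alpha_j}\}$ of $H(\ell,i_\ell,l_\ell)$, and in (B1) the interval $[q_{\xi_j},q_{\xi_jb''})\subset[q_{\beta_{j-\ell}b'},q_{\beta_{j-\ell}c})$ is disjoint from $[q_{\beta_{j-\ell}},q_{\beta_{j-\ell}b'})$. Hence $g(m,k,h)\cap H(\ell,i_\ell,l_\ell)=\emptyset$, giving $\cup H(m,k,h)\subset\cup g(m,k,h)\subset g_c(n,i,l)-H(\ell,i_\ell,l_\ell)$. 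The reverse containment uses that $\mathscr{H}_m$ is a partition of $X$ (1 of Proposition 3.1) and that cells are nested and shrink to singletons (3 of Proposition 3.6), so every $x\in g_c(n,i,l)-H(\ell,i_\ell,l_\ell)$ eventually lies in some $H(m,k,h)$ disjoint from $H(\ell,i_\ell,l_\ell)$, which then contributes to the union.
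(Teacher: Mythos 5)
Your argument is correct, but for part 2 it takes a genuinely different route from the paper. The paper passes to the unique level-$\ell$ cell $H(\ell,k_\ell,h_\ell)\supset H(m,k,h)$: since this cell meets $g(\ell,i_\ell,l_\ell)$ (because $g_c(n,i,l)\subset g(\ell,i_\ell,l_\ell)$) yet is distinct from $H(\ell,i_\ell,l_\ell)$, 6$'$ of Proposition 3.3 forces $k_\ell>i_\ell$, so the two level-$\ell$ cells already differ in their point-parts; this places the separating coordinate among the first $\ell$ coordinates with no case analysis, after which the same combination of 1 of Proposition 3.2 and 5 of Proposition 3.1 gives $k_\ell>c$, part 1 applies at level $\ell$, and $g(m,k,h)\subset g(\ell,k_\ell,h_\ell)\subset g_c(n,i,l)$ by 5 of Proposition 3.3. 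You instead enumerate all possible locations of the separating coordinate and kill the later ones by the inequality $c\geq n+i\geq\ell+i_\ell$; this buys an explicit explanation of \emph{why} the separation cannot occur in the interval coordinates, at the cost of a longer argument, and note that the very same inequality also makes your case (B1) vacuous (the subcase $c<b'$ you reach there is impossible), so your derivation of $k>c$ in (B1) is ex falso — harmless, but you could have discarded (B1) outright alongside (B2). A further point in your favour: you actually prove the displayed set equality $g_c(n,i,l)-H(\ell,i_\ell,l_\ell)=\cup H(m,k,h)=\cup g(m,k,h)$ (forward inclusion from $g(m,k,h)\cap H(\ell,i_\ell,l_\ell)=\emptyset$, reverse from the fact that the level-$m$ cell of any point is either contained in or disjoint from $H(\ell,i_\ell,l_\ell)$), whereas the paper's proof stops at $g(m,k,h)\subset g_c(n,i,l)$ and leaves the equality unargued.
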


\begin{proof}
To check 1 let $H(\ell,k,h)\cap g_c(n,i,l)\neq\emptyset$  with
  $k>c$ for arbitrary $\ell\in N$.
  Let $j\leq\ell$. Then $\pi_j[H(\ell,k,h)]
  =q_{k_j}\in[q_{i_j},q_{i_jc})$.
  Then $b=\ell+k>k>c$ implies
  $[q_{k_j},q_{k_jb})
  \subset[q_{i_j},q_{i_jc})$
  by 3 of Proposition 3.2 for $j\leq\ell$.
  Then, in the same way as Proposition 5.1,
  we can prove $g(\ell,k,h)\subset g_c(n,i,l)$
  by four Cases in the proof of Proposition 5.1. This implies 1.
\par

\textit{proof of 2.} Let $H(m,k,h)\cap g_c(n,i,l)\neq\emptyset$,
  $\ell\leq m\leq n$ and $H(m,k,h)\subset H(\ell,k_\ell,h_\ell)$
  for some $H(\ell,k_\ell,h_\ell)$.
  Note $g_c(n,i,l)\subset g(n,i,l)\subset g(\ell,i_\ell,l_\ell)$ and
  $H(\ell,k_\ell,h_\ell)\cap H(\ell,i_\ell,l_\ell)=\emptyset$.
  Then we have $H(\ell,k_\ell,h_\ell)\cap
  g(\ell,i_\ell,l_\ell)\neq\emptyset$.
  Then $k_\ell>i_\ell$ by 6$'$ of Proposition 3.3.
  Then $k\geq k_\ell>i_\ell$
  since $H(m,k,h)\subset H(\ell,k_\ell,h_\ell)$
  by 4 of Proposition 3.1.
  Then, by $k_\ell>i_\ell$, we have \[\pi^1_\ell[H(m,k,h)]=(q_{k'_1},q_{k'_2},...,q_{k'_\ell})\neq
  (q_{i'_1},q_{i'_2},...,q_{i'_\ell})=\pi^1_\ell[H(\ell,i_\ell,l_\ell)].\]
  Then there exists a $j\leq\ell$ with $q_{k'_j}\neq q_{i'_j}$.
  Then, for $j\leq\ell$,
  $\pi_j[H(\ell,k_\ell,h_\ell)]=q_{k'_j}\neq q_{i'_j}$
  and $\pi_j[H(\ell,k_\ell,h_\ell)]=q_{k'_j}\in(q_{i'_j},q_{i'_jc})$
  since $H(\ell,k_\ell,h_\ell)\cap g(\ell,i_\ell,l_\ell)\neq\emptyset$.
  Then $k_\ell\geq k'_j$ by 5 of Proposition 3.1,
  and $k'_j>c$ by 1 of Proposition 3.2.
  Then $g(\ell,k_\ell,h_\ell)\subset g_c(n,i,l)$
  since $k_\ell\geq k'_j>c$ and
  $H(\ell,k_\ell,h_\ell)\cap g_c(n,i,l)\neq\emptyset$
  by the above 1.
  Then $g(m,k,h)\subset g(\ell,k_\ell,h_\ell)\subset g_c(n,i,l)$
  by 5 of Proposition 3.3
  since $H(m,k,h)\subset H(\ell,k_\ell,h_\ell)$.
\end{proof}

Call  $c$ \textbf{on $ (g(n,i,l), m)$.}

To construct a Ln cover of $g(n,i,l)-H$,
 we take $H(n,i,l)\subset g(n,i,l)$.

\vspace{0.3cm}
\textbf{Construction 5.}
\vspace{0.3cm}

\textsf{Conditions.} 1. $H(n,i,l)\subset...\subset
 H(m,k,h)\subset...
 \subset H(2,i_2,l_2)\subset H(1,i_1,l_1)$.
\par

2.  $H(n,i,l)\subset g(n,i,l)$.
\par

\begin{align}
 \hbox{Let} \quad
\mathscr{H}^*(m,n,1)=\{H(m,k',h')\in\mathscr{H}_m:
    H(m,k',h')& \cap g(n,i,l)\neq\emptyset\hbox{ and } \nonumber\\
   & g(m,k',h')-g(n,i,l)\neq\emptyset\}.
  \quad\nonumber
\end{align}
 Then $H(m,k,h)\in\mathscr{H}^*(m,n,1)$ since $H(n,i,l)\subset H(m,k,h)$.  Let
 \[\mathscr{H}(m,k'_i)=\{H(m,k'_i,i_l)\in\mathscr{H}^*(m,n,1): H(m,k'_i,i_l)\subset
  H(m,k'_i)\}\]  and $H'(m,k'_i)=\cup\mathscr{H}(m,k'_i)$. Then
 $\mathscr{H}'(m,n,1)=\{H'(m,k'_j): j\leq k(m)\}$
 is finite by Corollary 5.2.
 Let \[H'(m,k_j,h_j)=H'(m,k'_j)\cap g(n,i,l)\quad
 \hbox{for}\quad 1\leq j\leq k(m),\]
 \[\mathscr{H}(m,n,f)=\{H'(m,k_j,h_j): 1\leq j\leq k(m)\},\]
 \[O(m,n,1)=g(n,i,l)-\cup\mathscr{H}(m,n,f),\]
  \[\mathscr{G}(m,n,1)=\{g(m,1'_i,i_h)\in\mathscr{G}_m:
 g(m,1'_i,i_h)\subset O(m,n,1)\} \quad\hbox{ and} \]
 \[\mathscr{H}(m,n,1)=\{H(m,1'_i,i_h)\in\mathscr{H}_m:
 g(m,1'_i,i_h)\in\mathscr{G}(m,n,1)\}.\]
 Then $\cup\mathscr{G}(m,n,1)=
 \cup\mathscr{H}(m,n,1)=O(m,n,1)$ and
 $Cl_{\rho}(\cup\mathscr{G}(m,n,1))=g(n,i,l)$ by Corollary 5.2.

\vspace{0.3cm}
\begin{fact}$\cup\mathscr{G}(m,n,1)=
   \cup\mathscr{H}(m,n,1)=O(m,n,1)$.
\end{fact}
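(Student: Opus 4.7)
The plan is to establish the chain of inclusions
\[O(m,n,1)\;\subset\;\bigcup\mathscr{H}(m,n,1)\;\subset\;\bigcup\mathscr{G}(m,n,1)\;\subset\;O(m,n,1).\]
The two rightmost inclusions are essentially tautological: $H(m,1'_i,i_h)\subset g(m,1'_i,i_h)$ by the definition of $H(m,i,l)$ and $g(m,i,l)$, which yields the middle inclusion, and the definition of $\mathscr{G}(m,n,1)$ literally asserts that each of its members is contained in $O(m,n,1)$. Hence all the work goes into proving $O(m,n,1)\subset\bigcup\mathscr{H}(m,n,1)$.

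Fix $x\in O(m,n,1)\subset g(n,i,l)$. Since $\mathscr{H}'_m$ partitions $X$ (Proposition 3.1(1)) and each $H(m,i)$ is partitioned by $\mathscr{H}(m,i)$, there is a unique $H(m,k,h)\in\mathscr{H}_m$ with $x\in H(m,k,h)$. First I would show that $H(m,k,h)\notin\mathscr{H}^*(m,n,1)$: otherwise the first index $k$ coincides with some $k'_j$, so $H(m,k,h)\in\mathscr{H}(m,k'_j)$ and
\[x\in H(m,k,h)\cap g(n,i,l)\subset H'(m,k'_j)\cap g(n,i,l)=H'(m,k_j,h_j)\subset\bigcup\mathscr{H}(m,n,f),\]
contradicting $x\in O(m,n,1)$. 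Since $H(m,k,h)$ meets $g(n,i,l)$ (via $x$) but is not in $\mathscr{H}^*(m,n,1)$, the second clause in the definition of $\mathscr{H}^*(m,n,1)$ must fail, so $g(m,k,h)\subset g(n,i,l)$.

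The main obstacle, and the step that actually uses the structural lemmas, is to upgrade this to $g(m,k,h)\subset O(m,n,1)$; once established, $g(m,k,h)\in\mathscr{G}(m,n,1)$ and hence $H(m,k,h)\in\mathscr{H}(m,n,1)$, which places $x$ in $\bigcup\mathscr{H}(m,n,1)$. Suppose for contradiction that $g(m,k,h)$ meets some $H'(m,k_j,h_j)\subset\bigcup\mathscr{H}(m,k'_j)$; pick $H(m,k'_j,h'')\in\mathscr{H}(m,k'_j)\subset\mathscr{H}^*(m,n,1)$ with $g(m,k,h)\cap H(m,k'_j,h'')\neq\emptyset$. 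I compare $k$ with $k'_j$ in three cases. If $k=k'_j$ and $h=h''$, then $H(m,k,h)\in\mathscr{H}^*(m,n,1)$, contradicting the previous paragraph; if $k=k'_j$ and $h\neq h''$, the discreteness of $\mathscr{G}(m,k)$ (item 7 of Proposition 3.3) forces $g(m,k,h)\cap g(m,k,h'')=\emptyset$, contradicting the intersection. If $k<k'_j$, item 8 of Proposition 3.3 applied to the nonempty intersection $g(m,k,h)\cap g(m,k'_j,h'')$ gives $g(m,k'_j,h'')\subset g(m,k,h)\subset g(n,i,l)$, contradicting the requirement $g(m,k'_j,h'')-g(n,i,l)\neq\emptyset$ coming from $H(m,k'_j,h'')\in\mathscr{H}^*(m,n,1)$. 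Finally, if $k>k'_j$, item $6'$ of Proposition 3.3 applied to $H(m,k'_j,h'')\cap g(m,k,h)\neq\emptyset$ with $H(m,k'_j,h'')\neq H(m,k,h)$ forces $k'_j>k$, again impossible. All cases being excluded, $g(m,k,h)\cap\bigcup\mathscr{H}(m,n,f)=\emptyset$ and the proof concludes as outlined.
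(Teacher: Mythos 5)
Your proof is correct and follows essentially the same route as the paper's: both reduce everything to showing that the unique $H(m,k,h)\in\mathscr{H}_m$ containing a point of $O(m,n,1)$ satisfies $g(m,k,h)\subset g(n,i,l)$ and then derive a contradiction (via $6'$ and the nesting property of $\mathscr{G}_m$) from the assumption that $g(m,k,h)$ meets a member of $\mathscr{H}^*(m,n,1)$. Your use of item 8 in place of the paper's item 4, and the explicit three-way case split, are only cosmetic differences.
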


\begin{proof}
Pick an $x\in g(n,i,l)-\cup\mathscr{H}(m,n,f)$.
  Then there exists an $H(m,k^*,l^*)\in\mathscr{H}_m$
  with $x\in H(m,k^*,l^*)$.
  Then $H(m,k^*,l^*)\cap g(n,i,l)\neq\emptyset$.
  Then we have $H(m,k^*,l^*)\subset g(n,i,l)\subset g(m,k,h)$
  since $H(m,k^*,l^*)\cap[\cup\mathscr{H}(m,n,f)]
  =\emptyset$.
  Suppose $g(m,k^*,l^*)\cap H(m,h_j,k_j)\neq\emptyset$
  for some $H(m,h_j,k_j)\subset H'(m,h_j,k_j)\in\mathscr{H}(m,n,f)$.
  Then $h_j>k^*$ by $6'$ of Proposition 3.3
  and $g(m,h_j,k_j)-g(n,i,l)\neq\emptyset$
  by the definition of $\mathscr{H}(m,n,f)$.
  Then $g(m,h_j,k_j)\subset g(m,k^*,l^*)$
  by 4 of Proposition 3.3.
  Then $g(m,k^*,l^*)-g(n,i,l)\neq\emptyset$,
  a contradiction to $H(m,k^*,l^*)\notin\mathscr{H}(m,n,f)$.
  So $g(m,k^*,l^*)\cap[\cup\mathscr{H}(m,n,f)]
  =\emptyset$.
  Then $g(m,k^*,l^*)\subset O(m,n,1)$.
\end{proof}
 We construct a Ln cover
 $\mathscr{G}(m,n,*)$ on $(m,O(m,n,1))$
 such that $\cup\mathscr{G}(m,n,*)=\cup\mathscr{G}(m,n,1)$
 by induction.
 Note $1'_1<1'_2<...<1'_i<...$.
\par

A. Take the least number $1'_1$ and denote $1'_1$ by $1_1$.
  Let \[\mathscr{G}(m,n,1_1)=\{g(m,1_1,1_e):
  g(m,1_1,1_e)\in\mathscr{G}(m,n,1)\}=\{g(m,1_1,1_e): e\in N\}.\]
  Then $\mathscr{G}(m,n,1_1)$
  is c.o.D family in $(X,\rho)$
  by 7 of Proposition 3.3.
  Let \[\mathscr{H}(m,n,1_1)=\{H(m,1_1,1_e): e\in N\}.\]
\par

B. Assume we have had $\mathscr{G}(m,n,1_i)$ and $\mathscr{H}(m,n,1_i)$
  for $i<k$. Let $O_k=\cup_{i<k}\cup\mathscr{G}(m,n,1_i)$.
\par

  Case 1, $H(m,1'_i,i_h)-O_k=\emptyset$ for each $H(m,1'_i,i_h)
  \in \mathscr{H}(m,n,1)$.
  Let $\mathscr{G}(m,n,*)=\cup_{i<k}\mathscr{G}(m,n,1_i)$
  and $\mathscr{H}(m,n,*)=\cup_{i<k}\mathscr{H}(m,n,1_i)$.
\par

  Case 2, $H(m,1'_i,i_h)-O_k\neq\emptyset$ for some $H(m,1'_i,i_h)
  \in\mathscr{H}(m,n,1)$.
  Let \[1_k=\hbox{min}\{1'_i:H(m,1'_i,i_h)-O_k\neq\emptyset\}.\]
  Note, for some $g(m,1_j,\ell)\in\cup_{i<k}\mathscr{G}(m,n,1_i)$,
  $H(m,1_k,k_h)\cap g(m,1_j,\ell)\neq \emptyset$ implies
  $H(m,1_k,k_h)\subset g(m,1_k,k_h)\subset g(m,1_j,\ell) \subset O_k$,
  a contradiction.  Then $H(m,1_k,k_h)-O_k\neq\emptyset$
  implies $H(m,1_k,k_h)\cap O_k=\emptyset$.
  Note $1_k>1_j$ for each
  $g(m,1_j,\ell)\in\cup_{i<k}\mathscr{G}(m,n,1_i)$.
  Then $g(m,1_k,k_h)\cap O_k=\emptyset$
  by 8 of Proposition 3.3.
\par

Let $\mathscr{G}(m,n,1_k)=\{g(m,1_k,k_h):
  H(m,1_k,k_h)-O_k\neq\emptyset\}$.
  Then $\mathscr{G}(m,n,1_k)$ is a c.o.D family in $(X,\rho)$
  and $[\cup\mathscr{G}(m,n,1_k)]\cap O_k=\emptyset$.
  Let $\mathscr{H}(m,n,1_k)=\{H(m,1_k,k_h): h\in N\}$.
\par

Then, by induction, we have c.o.D family
 $\mathscr{G}(m,n,1_k)$ and
 c.D family $\mathscr{H}(m,n,1_k)$ for each $k$.
 Let \[\mathscr{G}(m,n,*)=\cup_k\mathscr{G}(m,n,1_k)
 \quad\hbox{ and }\quad
 \mathscr{H}(m,n,*)=\cup_k\mathscr{H}(m,n,1_k).\]
 Then $\cup\mathscr{G}(m,n,*)=\cup\mathscr{G}(m,n,1)=
  O(m,n,1)=g(n,i,l)-\cup\mathscr{H}(m,n,f)$.
 Let $H=g(n,i,l)\cap[\cup\mathscr{H}(m,n,f)$.
 Then, by 1 of Proposition 3.5,
 $H\subset Cl_\rho[\cup\mathscr{G}(m,n,*)]$.
\par

Summarizing the above A and B, we have proved the following claim.

\begin{cl}
1. $\mathscr{G}(m,n,*)=\cup_k\mathscr{G}(m,n,1_k)$ is a c.o.D family in $(O(m,n,1),\rho)$
 and $g(m,h,k)$ is a c.o set in $(X,\rho)$
 for each $g(m,h,k)\in\mathscr{G}(m,n,*)$.
\par

2. $\mathscr{H}(m,n,*)=\cup_k\mathscr{H}(m,n,1_k)$ is a c.D family in $(O(m,n,1),\rho)$.
\par

3. $\mathscr{H}(m,n,f)=\{H'(m,k_j,h_j): 1\leq j\leq k(m)\}$ is finite.
\par

4. $\cup\mathscr{G}(m,n,*)=\cup\mathscr{G}(m,n,1)=
  O(m,n,1)=g(n,i,l)-\cup\mathscr{H}(m,n,f)$.
\par

5. Let $g(\ell,h,k)\subset O(m,n,1)$ with $\ell\geq m$.
  Then there exists a $g(m,i',l')\in\mathscr{G}(m,n,*)$
  such that $g(\ell,h,k)\subset g(m,i',l')$.
\par

6. Let $H=g(n,i,l)\cap[\cup\mathscr{H}(m,n,f)]$.
  Then $H\subset Cl_\rho[\cup\mathscr{G}(m,n,*)]$.
\end{cl}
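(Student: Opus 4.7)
The plan is to push the inductive construction of $\mathscr{G}(m,n,1_k)$ through six small verifications, most of which reduce to properties already established in Propositions 3.1--3.5 and Claim 4.1, together with Fact 5.3 and Corollary 5.2. Statements 3 and 4 are essentially \emph{given}: item 3 was proved while defining $\mathscr{H}(m,n,f)$ (Corollary 5.2 supplies the bound $k(m)$), and item 4 follows from Fact 5.3 combined with the observation, built into the inductive step, that every $H(m,1'_i,i_h)\in\mathscr{H}(m,n,1)$ is eventually exhausted (the sequence $1_1<1_2<\dots$ picks up every index $1'_i$, since once $1_k\geq 1'_i$ the set $H(m,1'_i,i_h)-O_{k+1}$ is forced to be empty by 8 of Proposition 3.3 and the discreteness of $\mathscr{G}(m,1'_i)$).

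For item 1, I would first recall that each $\mathscr{G}(m,n,1_k)$ is a subfamily of $\mathscr{G}(m,1_k)$, which is c.o.D in $(X,\rho)$ by 7 of Proposition 3.3, so each individual member $g(m,h,k)$ is c.o in $(X,\rho)$ and the family is discrete inside each level. To splice levels together I would use that by construction $[\cup\mathscr{G}(m,n,1_k)]\cap O_k=\emptyset$ and, symmetrically, for every $j<k$ any $g(m,1_j,\ell)\in\mathscr{G}(m,n,1_j)$ satisfies $g(m,1_j,\ell)\cap[\cup\mathscr{G}(m,n,1_k)]=\emptyset$ by 8 of Proposition 3.3 (since $1_k>1_j$ and the two index levels sit inside $\mathscr{G}_m$ with the nested/disjoint dichotomy). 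Combining this with the local discreteness at each level gives pairwise disjointness and hence discreteness in the subspace $(O(m,n,1),\rho)$. Item 2 is then automatic since $\mathscr{H}(m,n,1_k)=\{H(m,1_k,k_h)\}$ is the closed core of the corresponding $\mathscr{G}(m,n,1_k)$ and $H\subset g$ in each pair.

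For item 5, I would argue by the least-number selection rule. Suppose $g(\ell,h,k)\subset O(m,n,1)$ with $\ell\geq m$. By 4 of Claim 4.1 (applied with the Ln cover $\mathscr{G}(m,n,1)$ on $O(m,n,1)$, which is the role Fact 5.3 promotes it to), $g(\ell,h,k)$ sits inside some $g(m,1'_i,i_h)\in\mathscr{G}(m,n,1)$. Let $1_k$ be the smallest index $\leq 1'_i$ that was selected by the induction with $1_k\leq 1'_i$; either $1_k=1'_i$ and we are done, or $1_k<1'_i$, in which case the definition of $1_k$ forces $H(m,1'_i,i_h)\subset O_{k+1}$, hence $g(m,1'_i,i_h)\subset g(m,1_k,k_h)$ for some $k_h$ by 4 of Proposition 3.3, and then $g(\ell,h,k)\subset g(m,1_k,k_h)\in\mathscr{G}(m,n,*)$.

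Finally, for item 6, fix $x\in H=g(n,i,l)\cap[\cup\mathscr{H}(m,n,f)]$ and any $\mathscr{G}_\rho$-basic neighborhood $g(\ell,x)$ of $x$ with $\ell\geq m$; shrinking, I may assume $g(\ell,x)\subset g(n,i,l)$. By 1 of Proposition 3.5, $H(\ell,x)$ has empty interior inside $g(\ell,x)$, so $g(\ell,x)-\cup\mathscr{H}(m,n,f)\neq\emptyset$ (only finitely many sets are subtracted, by item 3, and each $H'(m,k_j,h_j)$ is closed with empty interior by the same proposition applied at level $m$). Picking $y\in g(\ell,x)\cap O(m,n,1)$ and applying item 5 to the point $y$ produces a member of $\mathscr{G}(m,n,*)$ meeting $g(\ell,x)$, giving $x\in\mathrm{Cl}_\rho[\cup\mathscr{G}(m,n,*)]$. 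The main obstacle I expect is the coherence check at the join of the $1_k$-levels for item 1, where one must simultaneously use 7 and 8 of Proposition 3.3 and the fact that the inductive choice rule places later-selected members outside the union of all earlier ones; everything else is bookkeeping from the construction.
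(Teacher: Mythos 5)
Your proposal is correct and follows essentially the same route as the paper: the least-number induction with $[\cup\mathscr{G}(m,n,1_k)]\cap O_k=\emptyset$ secured by 6$'$/8 of Proposition 3.3, Corollary 5.2 for finiteness, Fact 5.3 plus the exhaustion of the indices $1'_i$ for item 4, the Claim 4.1-style least-number argument for item 5, and the empty-interior observation of Proposition 3.5 for item 6. The only blemishes are cosmetic: in item 5 the containing set should be some $g(m,1_j,j_h)$ with $1_j\leq 1_k$ rather than necessarily a level-$1_k$ member, and in item 6 the appeal to item 5 is unnecessary since item 4 already gives $g(\ell,x)\cap O(m,n,1)\subset\cup\mathscr{G}(m,n,*)$.
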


Call the above process Construction 5 on $g(n,i,l)$.

Call $\mathscr{G}(m,n,*)$  \textbf{a least number cover ( Ln cover)
  on $(m,O(m,n,1))$ ( or on $O(m,n,1)$ )} if and only if
  $\mathscr{G}(m,n,*)$ and $O(m,n,1)$
  satisfy 1 and 4 - 5 of Claim 5.4.

\begin{cor}
There exists a Ln covers sequence $\mathcal{G}(n,m,L)$ on $O(m,n,1)$
   such that:
\par

1. Every $\mathscr{G}(\ell,m,*)\in\mathcal{G}(n,m,L)$ is a Ln cover on $O(m,n,1)$.
\par

2. Every $\mathscr{H}(\ell,m,*)$ is a c.D family in $(O(m,n,1),\rho)$.
\end{cor}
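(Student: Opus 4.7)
The plan is to imitate the iterative refinement used in Construction 4, but with Construction 5 playing the role of Operation O1 at each stage. The base case $\ell=m$ is handed to us for free: Claim 5.4 already shows that $\mathscr{G}(m,n,*)$ is a c.o.D family on $O(m,n,1)$ satisfying conditions 1 and 4--5 of Claim 5.4, and hence is a Ln cover on $O(m,n,1)$ in the sense defined just after Claim 5.4. So set $\mathscr{G}(m,m,*):=\mathscr{G}(m,n,*)$ (using the first slot as the level index and the second as a tag remembering we are working inside $O(m,n,1)$).

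For the inductive step, suppose the c.o.D Ln cover $\mathscr{G}(\ell,m,*)=\cup_k\mathscr{G}(\ell,m,1_k)$ has been built at level $\ell$. For each $g(\ell,1_k,k_h)\in\mathscr{G}(\ell,m,*)$ I would apply Construction 5 locally at level $\ell+1$: by Proposition 5.1 applied to the cell $g(\ell,1_k,k_h)$ there is a cutoff $c=c(\ell,1_k,k_h)$ such that every $H(\ell+1,i',l')$ meeting $g(\ell,1_k,k_h)$ with $i'>c$ satisfies $g(\ell+1,i',l')\subset g(\ell,1_k,k_h)$; by Corollary 5.2 only finitely many level-$(\ell+1)$ cells violate this. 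Enumerate the admissible level-$(\ell+1)$ members by the $\Delta$-order and run the least-number selection of Operation O1 to extract a c.o.D subfamily $\mathscr{G}(\ell+1,m,1'_k)$ covering $g(\ell,1_k,k_h)\cap O(m,n,1)$. Taking the union over $k$ produces $\mathscr{G}(\ell+1,m,*)$, and its c.o.D property follows cell by cell from 7 of Proposition 3.3 together with the disjointness of the coarser cells.

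To verify the Ln property at level $\ell+1$, suppose $g(\ell'',i'',l'')\subset O(m,n,1)$ with $\ell''\geq \ell+1$. By the Ln property of $\mathscr{G}(\ell,m,*)$ at the previous level, $g(\ell'',i'',l'')\subset g(\ell,1_k,k_h)$ for a unique cell of the coarser cover; by the cell-wise construction it then lies inside some $g(\ell+1,i^*,l^*)\in\mathscr{G}(\ell+1,m,1'_k)\subset\mathscr{G}(\ell+1,m,*)$. Condition 4 of Claim 5.4 is thus preserved, and conditions 1 and 5 follow in the same bookkeeping style as the proof of Claim 4.4. The c.D property of $\mathscr{H}(\ell+1,m,*)$ is then immediate from the c.o.D property of $\mathscr{G}(\ell+1,m,*)$ via 1 of Proposition 3.5 (every $H(\ell+1,\cdot,\cdot)$ sits inside its $g(\ell+1,\cdot,\cdot)$).

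Setting $\mathcal{G}(n,m,L):=\{\mathscr{G}(\ell,m,*):\ell\geq m\}$ yields the desired Ln covers sequence. The main technical obstacle is ensuring that the refinement really covers the same set $O(m,n,1)$ at every stage, i.e.\ that no point near the boundary of the removed hole $\cup\mathscr{H}(m,n,f)$ leaks out of the level-$(\ell+1)$ selection. This is exactly where Proposition 5.1 and Corollary 5.2 are used: they force cofinitely many fine cells to be swallowed by a single coarse cell, so the refinement respects the coarser cover and therefore respects $O(m,n,1)$ itself. Once this is in place, the inductive passage $\ell\mapsto\ell+1$ is routine, paralleling the $\textrm{\AE}$--$\textrm{\OE}$ pattern used in Construction 4.
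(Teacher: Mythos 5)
Your proposal is correct and matches the paper's (unstated) intent: the paper offers no explicit proof of this corollary, and the expected argument is exactly yours — take the base Ln cover $\mathscr{G}(m,n,*)$ on $O(m,n,1)$ from Claim 5.4 and iterate the \AE--\OE{} refinement of Construction 4 cell by cell, checking conditions 1, 4 and 5 of Claim 5.4 as in Claim 4.3/4.4. One small remark: in the inductive step you do not actually need the cutoff from Proposition 5.1 and Corollary 5.2 (those handle coarser-level cells meeting a finer $g(n,i,l)$, which only matters at the base stage); for level-$(\ell+1)$ cells meeting a level-$\ell$ cell the containment $g(\ell+1,i',l')\subset g(\ell,1_k,k_h)$ is automatic for \emph{all} such cells by 4 and 5 of Proposition 3.3 (the mechanism behind Fact 4.0), and likewise the c.D claim follows from discreteness of the $g$'s rather than from 1 of Proposition 3.5.
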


\vspace{0.3cm}
\textbf{Construction 5.1.}
\vspace{0.3cm}

 \textsf{Conditions.}
  $\{\mathscr{G}(\ell,n,0),\mathscr{G}(\ell+1,n,0)\}$ is a Ln covers
  sequence
  on $O(m,n,1)$.

  To introduce Operation $\ell^1_2$th, take $\mathscr{G}(\ell,n,0)$,
  and a  $g(\ell,i_h,h_j)\in\mathscr{G}(\ell,n,0)$.
  Let $\ell=n_a$ and $d=\ell+1=n_b$. Then we have
 \begin{eqnarray}
  \mathscr{H}'(d,i_h,h_j)=
  \{H(d,i_k,h_k)\in\mathscr{H}(d,n,0):
  H(d,i_k,h_k)\subset H(\ell,i_h,h_j)\} \hbox{ and }\nonumber\\
  \mathscr{G}'(d,i_h,h_j)
  =\{g(d,i_k,h_k)\in\mathscr{G}(d,n,0):
  H(d,i_k,h_k)\in\mathscr{H}'(d,i_h,h_j)\}.\quad\quad\nonumber
  \end{eqnarray}
 Then, by Corollary 4.7, we have the following claim.

\begin{cl}
1. $\mathscr{G}'(d,i_h,h_j)=\cup_u\mathscr{G}(d,i_h,e_u)$
  is a Ln cover of $H(\ell,i_h,h_j)$.
\par

2. $\mathscr{G}(d,i_h,e_u)$ is a Ln cover of $H(\ell,i_h,h_j)\cap H(d,k_e)$
  for every $e\in N$.
\par

3. $\mathscr{H}'(d,i_h,h_j)$ is infinite and $\mathscr{H}'(d,i_h,h_j)$
  has a $\Delta$-order as a subsequence
  of $\mathscr{H}_d$.
\end{cl}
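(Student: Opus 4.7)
The plan is to reduce the three parts of this claim to Corollary 4.7, by observing that the construction in 5.1 of $\mathscr{H}'(d,i_h,h_j)$ and $\mathscr{G}'(d,i_h,h_j)$ from the Ln covers sequence $\{\mathscr{G}(\ell,n,0), \mathscr{G}(d,n,0)\}$ on $O(m,n,1)$ is formally identical to the construction in Note 4.7 of $\mathscr{H}(n_b,i,j,H_a)$ and $\mathscr{G}(n_b,i,j,H_a)$ from the Ln covers sequence on $G_m$. So the overall strategy is to check that every structural hypothesis used in the derivation of Corollary 4.7 also holds in the present setting, and then transcribe the argument.

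First I would verify that the substitution $G_m \mapsto O(m,n,1)$, $n_a \mapsto \ell$, $n_b \mapsto d$, $(i,j) \mapsto (i_h, h_j)$ is legitimate. By Corollary 5.5, each $\mathscr{G}(\ell,n,0)$ is a Ln cover on $O(m,n,1)$ with the same c.o.D and refinement properties used in Construction 4, so the hypotheses underlying Corollary 4.7 transfer. Parts 1 and 2 then follow directly: grouping the members of $\mathscr{H}(d,n,0)$ that lie inside $H(\ell,i_h,h_j)$ by their ``least number'' row index produces the subfamilies $\mathscr{G}(d,i_h,e_u)$; Corollary 4.7(1) yields that their union is a Ln cover of $H(\ell,i_h,h_j)$, giving part 1, and Corollary 4.7(2) yields that each $\mathscr{G}(d,i_h,e_u)$ is a Ln cover on $H(\ell,i_h,h_j)\cap H(d,k_e)$, giving part 2.

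For part 3, the $\Delta$-order statement is immediate from the convention (stated at the start of Section 5) that any subfamily of a $\Delta$-ordered family inherits the $\Delta$-order as a subsequence, so only the infiniteness requires argument. I would note that by part 1 already proved, $H(\ell,i_h,h_j) \subset \cup \mathscr{G}'(d,i_h,h_j)$; since each $H(d,i_k,h_k)$ has $\hbox{Int}_\rho H(d,i_k,h_k) = \emptyset$ by Proposition 3.5(1), while $H(\ell,i_h,h_j)$ contains, by Proposition 3.5(2--3), a strictly descending chain of distinct members of $\mathscr{H}$ and so cannot be exhausted by finitely many empty-interior sets together with a nowhere-dense remainder, the family $\mathscr{H}'(d,i_h,h_j)$ must be infinite.

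The main obstacle I anticipate is the reduction step itself. Corollary 4.7 was proved for the Ln cover machinery of Construction 4 on $G_m$, whereas here the Ln cover $\mathscr{G}(d,n,0)$ lives on $O(m,n,1)$, which is not a single basic open set but the complement of the finite family $\mathscr{H}(m,n,f)$ inside $g(n,i,l)$. I must therefore check that the refinement and disjointness relations between $\mathscr{G}(\ell,n,0)$ and $\mathscr{G}(d,n,0)$ produced by Construction 5 still let me invoke Corollary 4.7 verbatim after the substitution --- in particular that $6'$ of Proposition 3.3 and 4, 5 of Proposition 3.3 (the engine behind Corollary 4.7) continue to govern containment among elements of the new Ln cover. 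Once this is verified, the bookkeeping of indices $(i_h, h_j, e_u, k_e)$ is routine.
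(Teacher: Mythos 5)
For parts 1 and 2 your reduction to Corollary 4.7 under the substitution $n_a\mapsto\ell$, $n_b\mapsto d$, $G_m\mapsto O(m,n,1)$ is exactly what the paper does; the paper in fact just writes ``1 and 2 is a corollary of Corollary 4.7 for $n_a=\ell$ and $n_b=\ell+1=d$'' without the verification you propose, so your extra care there is fine and changes nothing.

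The gap is in part 3. You argue that $H(\ell,i_h,h_j)$ ``cannot be exhausted by finitely many empty-interior sets together with a nowhere-dense remainder,'' citing $\hbox{Int}_\rho H(d,i_k,h_k)=\emptyset$ from Proposition 3.5(1). But that interior is taken in the ambient space $(X,\rho)$, and by the very same Proposition 3.5(1) the set $H(\ell,i_h,h_j)$ you are covering also has empty interior in $(X,\rho)$; a set of empty ambient interior is trivially a finite union of sets of empty ambient interior (namely itself), so no contradiction follows from the ambient notion. For a category-style argument to work you need each $H(d,i_k,h_k)$ to be nowhere dense \emph{relative to the subspace} $H(\ell,i_h,h_j)$ --- which is true, because $H(d,i_k,h_k)$ pins the $(\ell+1)$-st coordinate to the single point $q_{k_d}$ while $H(\ell,i_h,h_j)$ lets that coordinate range over the infinite interval $[q_{l_1},q_{l_1a})$ and $Q$ has no isolated points --- but that is precisely the step your appeal to a ``strictly descending chain'' from Proposition 3.5(2--3) does not supply; nor do you justify that the remainder $H(\ell,i_h,h_j)-\cup\mathscr{H}'(d,i_h,h_j)$ is relatively nowhere dense. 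The paper avoids category altogether: it fixes a coordinate $b>a=\ell+i_h$, notes that $\pi_b[H(\ell,i_h,h_j)]=[0,1)$ while each $\pi_b[H(d,i_k,h_k)]$ is a proper interval $[q_{h'_b},q_{h'_bb})$, and observes that the family $\mathscr{J}^d_b$ of these projections is a c.o.D cover of $[0,1)$ by proper closed-and-open intervals, which forces it (and hence $\mathscr{H}'(d,i_h,h_j)$) to be infinite. Either repair --- the relative-nowhere-density argument or the projection argument --- is short, but as written your part 3 does not close.
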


\begin{proof}
1 and 2 is a corollary  of Corollary 4.7 for $n_a=\ell$ and $n_b=\ell+1=d$.
\par

To prove 3, note the definition of $H(n,i,l)$.
  Then $\pi_b[H(\ell,i_h,h_j)]=[0,1)$
  if $b=d+i_k>\ell+i_h=a$ by 4 of Proposition 3.1
  and $\pi_b[H(d,i_k,h_k)]=[q_{h'_b},q_{h'_bb})$.
  Let $\mathscr{J}^d_b=\{\pi_b[H(d,i_k,h_k)]:
  H(d,i_k,h_k)\in\mathscr{H}'(d,i_h,h_j)\}$.
  Then $\mathscr{J}^d_b$ is a c.o.D family
  in $([0,1),\rho')$ and
  $\cup\mathscr{J}^d_b=[0,1)$ by definitions of
  $H(d,i_k,h_k)$ and $H(\ell,i_h,h_j)$.
  So $\mathscr{J}^d_b$ is infinite.
  Then $\mathscr{H}'(d,i_h,h_j)$ is infinite.
  Give $\mathscr{H}'(d,i_h,h_j)$ a $\Delta$-order
  as a subsequence of $\mathscr{H}_d$. Then,
  for $d=\ell+1$ and $g(\ell,i_h,h_j)\in\mathscr{G}(\ell,n,0)$,
  we have
  \[\mathscr{H}'(d,i_h,h_j)=\{H(\ell^1_2,x_i):i\in N\}.\]
\end{proof}

$\Re1$.
   \textit{Condition:} $\{\mathscr{G}(\ell,n,0),\mathscr{G}(\ell+1,n,0)\}$ is
  a Ln covers sequence on $G_m$  and $g(\ell,i_h,h_j)\in\mathscr{G}(\ell,n,0)$.
\par
\vspace{0.3cm}
  Let
  \[\mathscr{H}_d(d,i_h,h_j)=
  \{H(\ell^1_2,x_i)\in\mathscr{H}'(d,i_h,h_j):i=1\}=\{H(\ell^1_2,x_1)\}.\]
  Then $\cup\mathscr{H}_d(d,i_h,h_j)\subset H(\ell,i_h,h_j)$
  and $\mathscr{G}_d(d,i_h,h_j)\subset \mathscr{G}(\ell+1,n,0)$
  by 2 of Claim 5.7.
  Let $\mathscr{G}_d(\ell+1,n)=\cup\{\mathscr{G}_d(d,i_h,h_j):
  g(\ell,i_h,h_j)\in\mathscr{G}(\ell,n,0)\}$.
\par

Call  $\mathscr{G}_d(\ell+1,n)$
  \textbf{ 1'th Ln family}.
\vspace{0.3cm}

Recall $\mathscr{A}$ a \textit{ mad } family on $N$
  in page 115 of Van Douwen \cite{dou}
  if it is a maximal pairwise almost disjoint subfamily of $[N]^\omega$.
  $\alpha, \beta, \delta$ and $\gamma$ are used to
  denote members in $\mathscr{A}$.
  Take an $\alpha$ from $\mathscr{A}$.
  Then
  $\alpha=\{\alpha(1), \alpha(2),...\}
  \subset N$.

\textbf{$\Re2$.} 
Let $m+j=m_j$ and $\ell+1=m_2$.
Take $H(m_2,x)=H(\ell^1_2,x)$.
  Let $\mathscr{G}(m_3,x)$ with $\Delta$-order be a Ln cover of $H(m_2,x)$.
  Then \[\mathscr{G}(m_3,x)=\{g(m_3,x^i): i\in N\}.\]
  Let $H(m_3,x^i)=g(m_3,x^i)\cap H(m_2,x)$.
  Then $\cup_iH(m_3,x^i)=H(m_2,x)$.
  Let
  $\mathscr{G}(m_4,x^i)$ with $\Delta$-order be a Ln cover of $H(m_3,x^i)$.
  Then \[\mathscr{G}(m_4,x^i)=\{g(m_4,x^i_j): j\in N\}\]
  Let $H(m_4,x^i_j)=g(m_4,x^i_j)\cap H(m_2,x)$.
  Then \[\cup_jH(m_4,x^i_j)=H(m_3,x^i)\quad\hbox{ and }\quad
    \cup_{ij}H(m_4,x^i_j)=H(m_2,x).\]
  Take a $g(m_4,x^i_j)\in\mathscr{G}(m_4,x^i)$. Let
  $\mathscr{G}(m_5,x^i_j)$ with $\Delta$-order be a Ln cover of $H(m_4,x^i_j)$.
  Then \[\mathscr{G}(m_5,x^i_j)=\{g(m_5,x^i_{jl}): l\in N\}.\]
  Let $H(m_5,x^i_{jl})=H(m_2,x)\cap g(m_5,x^i_{jl})$,
  \[\mathscr{H}(m_5,x^i)=\{H(m_5,x^i_{jl}): j,l\in N\}
  \hbox{ and }\mathscr{G}(m_5,x^i)=\cup\{\mathscr{G}(m_5,x^i_j): j\in N\}.\]
  Then $\cup_lH(m_5,x^i_{jl})=H(m_4,x^i_j)\hbox{ and }
    \cup_{ijl}H(m_5,x^i_{jl})=H(m_2,x).$
\par
\vspace{0.3cm}

Fix an $\alpha=\{\alpha(1), \alpha(2),...\}
  \subset N$ for $\alpha\in\mathscr{A}$.  Let
\[\mathscr{H}_{d1}(m_5,\alpha x^i)=
  \{H(m_5,x^i_{jl})\in\mathscr{H}(m_5,x^i_j):
   l>\alpha(i)\hbox{ for } j\leq \alpha(i)\},\]
   \[\mathscr{H}_{d2}(m_5,\alpha x^i)=
  \{H(m_5,x^i_{jl})\in\mathscr{H}(m_5,x^i_j):
   l\leq\alpha(i)\hbox{ for } j>\alpha(i)\}
  \quad\hbox{ and }\]
  \[\mathscr{H}_d(m_5,\alpha x^i)=\mathscr{H}_{d1}(m_5,\alpha x^i)
  \cup\mathscr{H}_{d2}(m_5,\alpha x^i).\]
  Take $\mathscr{G}_d(m_5,\alpha x^i)=
  \{g(m_5,x^i_{jl})\in\mathscr{G}(m_5,x^i_j):
  H(m_5,x^i_{jl})\in\mathscr{H}_d(m_5,\alpha x^i)\}$.  Let
  \[\mathscr{G}_d(m_5,\alpha x)=\cup\{\mathscr{G}_d(m_5,\alpha x^i): i\in N\}\ \hbox{ and }\
  \mathscr{H}_C(m_5,\alpha x)=\cup\{\mathscr{H}_d(m_5,\alpha x^i): i\in N\}.\]
  Then $H_C(m_5,\alpha x)=\cup\mathscr{H}_C(m_5,\alpha x)\subset H(m_2,x)$.
\par

  Call \textbf{$H_C(m_5,\alpha x)$ a characterization set of
  $\alpha$ in $H(m_2,x)$
  (or $\alpha$ ch-set).} $\Box$
\vspace{0.3cm}
\par

Let
   \[\mathscr{H}_{-1}(\ell+1,n)=\mathscr{H}(\ell+1,n,0)-\mathscr{H}_d(\ell+1,n).\]
  Then both $\mathscr{H}_d(\ell+1,n)$ and
  $\mathscr{H}_{-1}(\ell+1,n)$
  are  discrete families of  closed sets
  in $(O(m,n,1),\rho)$.\ \ Take\ \ the\ relative\ \ $\mathscr{G}_d(\ell+1,n)$
  and\ $\mathscr{G}_{-1}(\ell+1,n)$.\ \
  Then both $\mathscr{G}_d(\ell+1,n)$ and $\mathscr{G}_{-1}(\ell+1,n)$
  are c.o.D families in $(O(m,n,1),\rho)$.

  Let
  $H^1_d=\cup\mathscr{H}_d(\ell+1,n)$ and
  $H_{-1}=\cup\mathscr{H}_{-1}(\ell+1,n)$.
  Then \[g[\ell+1,H^1_d]=\cup\mathscr{G}_d(\ell+1,n),\quad
  g[\ell+1,H_{-1}]=\cup\mathscr{G}_{-1}(\ell+1,n) \quad \hbox{and}\]
  \[g[\ell,H_{-1}]=g[\ell,H^1_d]=O(m,n,1).\]

\par

Summarizing the above Construction 5.1,
 we have proved the following claim.

\begin{cl}
A. If $\{\mathscr{G}(\ell,n,0),\mathscr{G}(\ell+1,n,0)\}$
  is a Ln covers sequence on $O(m,n,1)$, then
  there uniquely exists $1$'th Ln family $\mathscr{G}_d(\ell+1,n)$
  such that:
\par

1.   Both $\mathscr{G}_d(\ell+1,n)$ and
  $\mathscr{G}_{-1}(\ell+1,n)$ are infinite.
\par

2. $\mathscr{G}_d(\ell+1,n)$ is c.o.D, and  $\mathscr{H}_d(\ell+1,n)$ is c.D
 in $(O(m,n,1),\rho)$.
\par

3. Let $H_1=\cup\mathscr{H}(m,n,f)$.
  Then $H_1\subset Cl_\rho[\cup\mathscr{G}_d(\ell+1,n)]$.
\par

4. $H_1\cap O(m,n,1)=\emptyset$ and
  $H_1\cup O(m,n,1)=g(n,i,l)$.
\par

B. Let $g(\ell,i_h,h_j)\in\mathscr{G}(\ell,n,0)$.
\par

 1. $\mathscr{G}'(d,i_h,h_j)\cap
  \mathscr{G}_d(\ell+1,n)=\mathscr{G}_d(d,i_1,h_1)=\{g(d,i_1,h_1)\}=\{g(\ell^1_2,x_1)\}$.
\par

2. $\cup\mathscr{H}_d(\ell+1,i_h,h_j)\subset H(\ell,i_h,h_j)$.
\par

3.  $g[\ell+1,H^1_d]=\cup\mathscr{G}_d(\ell+1,n)$, \quad
  $g[\ell+1,\cup\mathscr{G}_d(\ell+1,n)]=\cup\mathscr{G}_d(\ell+1,n)$,
  \[g[\ell+1,H_{-1}]=\cup\mathscr{G}_{-1}(\ell+1,n)\quad\hbox{ and }\quad
  g[\ell,H_{-1}]=g[\ell,H^1_d]=O(m,n,1).\]
\end{cl}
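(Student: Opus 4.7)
The plan is to verify each clause of Claim 5.8 by unpacking Construction 5.1 and invoking the earlier structural results. The family $\mathscr{G}_d(\ell+1,n)$ is obtained in $\Re 1$ by selecting, for each $g(\ell,i_h,h_j)\in\mathscr{G}(\ell,n,0)$, the unique $\Delta$-first member $g(\ell^1_2,x_1)=g(d,i_1,h_1)$ of the family $\mathscr{H}'(d,i_h,h_j)$, which is infinite by Claim 5.7(3). Since the $\Delta$-order on $\mathscr{H}'(d,i_h,h_j)$ is intrinsic to $\mathscr{H}_d$ and the starting family $\mathscr{G}(\ell,n,0)$ is given, this selection is canonical, yielding uniqueness for clause A.

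For A.1, $\mathscr{G}(\ell,n,0)$ is infinite because it is a pairwise disjoint c.o cover of $O(m,n,1)$, and $O(m,n,1)$ contains infinitely many $H(n,\cdot,\cdot)$-blocks of empty interior by Proposition 3.5(1); hence $\mathscr{G}_d(\ell+1,n)$, containing one element per $g(\ell,i_h,h_j)$, is infinite. The complement $\mathscr{G}_{-1}(\ell+1,n)$ is infinite because each $\mathscr{H}'(d,i_h,h_j)$ is infinite (Claim 5.7(3)) and only one member is removed. For A.2, $\mathscr{G}(\ell+1,n,0)$ is c.o.D in $(O(m,n,1),\rho)$ by Corollary 5.5, and subfamilies inherit the c.o.D property; $\mathscr{H}_d(\ell+1,n)$ inherits c.D by the same token. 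Clause A.4 is immediate from the definitional identity $O(m,n,1)=g(n,i,l)-H_1$ in Construction 5.

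The main obstacle is A.3, asserting $H_1\subset Cl_\rho[\cup\mathscr{G}_d(\ell+1,n)]$; the delicate point is that $\cup\mathscr{G}_d(\ell+1,n)$ is typically a proper subset of $O(m,n,1)=\cup\mathscr{G}(\ell,n,0)$, so density near $H_1$ does not follow directly from Claim 5.4(6) (which only yields $H_1\subset Cl_\rho O(m,n,1)$). My plan is to fix $y\in H_1$ and a basic open neighborhood $U\ni y$, and then reduce to the situation where $U$ contains an entire element $g(\ell,i_h,h_j)\in\mathscr{G}(\ell,n,0)$: Proposition 4.5 describes the coordinate structure of $\mathscr{G}(\ell,n,0)$ as a product of half-open intervals from $\mathscr{I}(j,G_m)$, and for each neighborhood $U$ one can exploit the discreteness of the interval families together with Proposition 3.5(2) (which ensures that $Cl_\rho[H(n,y)-H(n+1,y)]=H(n,y)$, so removing deeper-level subblocks preserves closure behavior) to find a complete element $g(\ell,i_h,h_j)$ of the Ln cover inside $U$. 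Once such a containment is achieved, B.2 (proven below) supplies $\emptyset\ne H(d,i_1,h_1)\subset g(d,i_1,h_1)\subset g(\ell,i_h,h_j)\subset U$, so $U$ meets $\cup\mathscr{G}_d(\ell+1,n)$, giving $y\in Cl_\rho[\cup\mathscr{G}_d(\ell+1,n)]$.

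For B, all three items are largely bookkeeping against the construction. B.1 records that $\mathscr{G}_d(\ell+1,n)$ retains exactly the $\Delta$-first member of each $\mathscr{H}'(d,i_h,h_j)$, which by notational identification is $g(d,i_1,h_1)=g(\ell^1_2,x_1)$. B.2 follows from Claim 4.2(1): the $\Delta$-first member satisfies $H(d,i_1,h_1)\subset H(\ell,i_h,h_j)$, so $\cup\mathscr{H}_d(\ell+1,i_h,h_j)\subset H(\ell,i_h,h_j)$. For B.3, the identity $g[\ell+1,H^1_d]=\cup\mathscr{G}_d(\ell+1,n)$ is a direct consequence of Note D2, which asserts that $g(\ell+1,\cdot)$ is constant on each $H(\ell+1,\cdot,\cdot)$-block; the idempotency $g[\ell+1,\cup\mathscr{G}_d(\ell+1,n)]=\cup\mathscr{G}_d(\ell+1,n)$ follows from the same observation together with the fact that each $g(d,i_1,h_1)$ contains its associated $H(d,i_1,h_1)$-block whole; and $g[\ell,H^1_d]=O(m,n,1)$ holds because, by construction, $H^1_d$ meets every element of $\mathscr{G}(\ell,n,0)$, so $\cup\{g(\ell,x):x\in H^1_d\}=\cup\mathscr{G}(\ell,n,0)=O(m,n,1)$; the analogous statement for $H_{-1}$ follows because $\mathscr{H}_{-1}(\ell+1,n)$ likewise meets every element of $\mathscr{G}(\ell,n,0)$ (its complement within $\mathscr{H}(\ell+1,n,0)$ removes only one element per $g(\ell,i_h,h_j)$ out of infinitely many).
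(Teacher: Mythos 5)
Your treatment of the routine clauses is sound: A.1, A.2 and A.4 follow from Claim 5.7(3), from the fact that a subfamily of a c.o.D (resp.\ c.D) family is again c.o.D (resp.\ c.D), and from the definitional identity $O(m,n,1)=g(n,i,l)-\cup\mathscr{H}(m,n,f)$ in Construction 5; and B.1--B.3 are indeed bookkeeping against $\Re1$, Claim 5.7 and Notes D1--D2 (for B.3 the essential point, which you state correctly, is that both $\mathscr{H}_d(\ell+1,n)$ and $\mathscr{H}_{-1}(\ell+1,n)$ meet $H(\ell,i_h,h_j)$ for every $g(\ell,i_h,h_j)\in\mathscr{G}(\ell,n,0)$, so that $g(\ell,x)=g(\ell,i_h,h_j)$ for suitable $x$ in each of them). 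Be aware that the paper gives no written proof of this claim at all --- it is announced as a summary of Construction 5.1 --- so these clauses are exactly what one is expected to read off the construction.

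The genuine gap is in A.3, which you rightly identify as the only clause with real content. Your reduction is correct in shape: if every basic neighbourhood $U$ of a point $y\in H_1$ contains a \emph{complete} element $g(\ell,i_h,h_j)$ of the Ln cover, then $\emptyset\neq H(d,i_1,h_1)\subset g(d,i_1,h_1)\subset g(\ell,i_h,h_j)\subset U$ finishes the argument. But that containment hypothesis is precisely the hard part, and the tools you invoke do not deliver it. Proposition 3.5(2) concerns the chain $H(n,y)\supset H(n+1,y)$ and says nothing about where the elements of $\mathscr{G}(\ell,n,0)$ sit relative to a point of $H_1$; and discreteness cuts the other way, since $\mathscr{G}(\ell,n,0)$ is discrete only in $(O(m,n,1),\rho)$, while at $y\notin O(m,n,1)$ infinitely many cover elements may accumulate with each one individually straddling the boundary of $U$ (an element $g(\ell,1_k,k_h)$ can meet $U$ without being contained in it whenever $\ell+1_k$ is below the level of $U$). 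What is actually needed is (i) Proposition 5.1 / Corollary 5.3, which for a given $U$ produce a $c$ such that every $g(\ell,1_k,k_h)$ with $1_k>c$ and $H(\ell,1_k,k_h)\cap U\neq\emptyset$ satisfies $g(\ell,1_k,k_h)\subset U$, combined with (ii) a proof that sets $H(\ell,1_k,k_h)$ of arbitrarily large index $1_k$ accumulate at every point of $H_1$, i.e.\ that $y$ does not lie in the interior of the closure of the finitely many low-index layers of the cover. Item (ii) is exactly the kind of density argument the paper later carries out at length for the analogous statement $Cl_\rho D(1\sigma^\alpha_1,G_1)=g(m,y_0)$ (Facts 6.9--6.12), and it is missing from your proposal; without it A.3 is not established.
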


\section{To construct holes in neighborhoods }
In this section, we use Operation $\ell^1_2$th to construct holes
 families which make bases of neighborhoods are not C.P.
\par

 Take $\mathscr{H}(n,i)$ with $\cup\mathscr{H}(n,i)=H(n,i)$ in Construction 3.2.
   Note, before Construction 4, we have $H_0=H(m,a^*,y_0)$ and $G_m=g(m,a^*,y_0)$.
  Let
  \[n\geq m, \quad H(n,i)\cap H_0=\emptyset,\]
    \[\mathscr{H}'(n,1_i)=\{H(n,1_i,i_l)\in\mathscr{H}(n,1_i): H(n,1_i,i_l)\subset
  H(n,1_i)\cap G_m\},\]
  \[ H'(n,1_i)=\cup\mathscr{H}'(n,1_i),\quad
  1_1<1_2<...<1_i<...\quad \hbox{ and}\]
  \[\mathscr{H}(n,G_m)=\{H'(n,1_i): 1_i\in N(G_m)\}.\]
  Then $\cup\mathscr{H}(n,G_m)=g(m,a^*,y_0)-H(m,a^*,y_0)$.

 \begin{prop}
 Let $\mathscr{G}(n+1,1_1)$ be Ln cover of $H'(n,1_1)$.
 Then there exists an open set $g(n+1,1_2)$ such that:
\par

1.  $g(n+1,1_2)\cap [\cup\mathscr{G}(n+1,1_1)]=\emptyset$.
\par

2. Let $\mathscr{G}(n,1_2)$ be a Ln cover of $H'(n,1_2)$.
  Then there exist infinitely many $g(n,x_j)$'s in $\mathscr{G}(n,1_2)$ such that
  $g(n,x_j)\subset g(n+1,1_2)$.
\par

3. Let $\mathscr{G}(n+1,1_2)=\cup_j\mathscr{G}(n+1,1_2,c_j)$ be a Ln cover on $H'(n,1_2)$.
 Then  $\cup\mathscr{G}(n+1,1_2,c_j)\subset g(n+1,1_2)$ for
 infinitely many $\mathscr{G}(n+1,1_2,c_j)$'s.
\end{prop}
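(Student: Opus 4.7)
The plan is to select $g(n+1,1_2)$ as a concrete basic open set $g(n+1,1_2,l^*)$ whose $k$th coordinate projection separates it from every member of $\mathscr{G}(n+1,1_1)$, while its product structure still captures infinitely many small pieces. First, since $\mathscr{H}'_n$ is a partition (Proposition 3.1(1)), $H(n,1_1)\cap H(n,1_2)=\emptyset$, so there is a coordinate $k\leq n$ with $\pi_k[H(n,1_1)]=\{q_{(1_1)_k}\}\neq\{q_{(1_2)_k}\}=\pi_k[H(n,1_2)]$. Every member of $\mathscr{G}(n+1,1_1)$ has $k$th projection $[q_{(1_1)_k},q_{(1_1)_k a_1})$ with $a_1=(n+1)+1_1$, and analogously each $g(n+1,1_2,l)$ has $k$th projection $[q_{(1_2)_k},q_{(1_2)_k a_2})$ with $a_2=(n+1)+1_2$.

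I would pick any $H(n+1,1_2,l^*)\subset H(n,1_2)\cap G_m$ (nonempty by hypothesis, since $1_2\in N(G_m)$) and set $g(n+1,1_2):=g(n+1,1_2,l^*)$. Proposition 3.2(2) then yields $[q_{(1_2)_k},q_{(1_2)_k a_2})\cap[q_{(1_1)_k},q_{(1_1)_k a_1})=\emptyset$, since the two base points lie over different partition blocks; because $g$-functions are products, disjointness in one coordinate forces $g(n+1,1_2)\cap[\cup\mathscr{G}(n+1,1_1)]=\emptyset$, giving (1). For (2), let $\mathscr{G}(n,1_2)$ be any Ln cover of $H'(n,1_2)$. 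Using Claim 4.1(4) (applied with $G_m$ replaced by $H'(n,1_2)$) together with the $\Delta$-order analysis of Claim 5.7(3), there is an infinite sub-sequence of atoms $H(n+1,1_2,l)$ inside the hole whose corresponding $g(n,x_j)$'s in the Ln cover absorb them. For each such $x_j$, Proposition 5.1 applied with the cut-off constant $c$ adapted to the hole forces $g(n,x_j)\subset g(n+1,1_2)$. Part (3) follows by exactly the same argument one level up, grouping the Ln cover by its $H(n+1,c_j)$-atoms and observing that an infinite subfamily lives entirely inside the hole.

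The main obstacle is the combinatorial step confirming that infinitely many $g(n,x_j)$'s genuinely fit inside the chosen hole: the least-number selection rule defining a Ln cover could, a priori, bundle most of $H'(n,1_2)$ into a few large cylinders that escape $g(n+1,1_2)$. The resolution exploits that $H'(n,1_2)\cap g(n+1,1_2)$ is an infinite disjoint union of atoms $H(n+1,1_2,l)$ (indexed by $l$ ranging over the residual $J(n,1_2,\cdot)$ coordinates), each of which must be covered by a distinct Ln cover element because those atoms are pairwise disjoint, so the Ln cover cannot amalgamate them. Combined with the dichotomy in Proposition 3.2(3), which guarantees the basic intervals at level $n$ are either disjoint from or contained in those at level $n+1$, this forces infinitely many $g(n,x_j)$'s entirely into the hole, completing (2), and the same pigeonhole at level $n+1$ secures (3).
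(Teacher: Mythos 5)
There is a genuine gap, and it lies at the very first step: you take $g(n+1,1_2)$ to be a single member $g(n+1,1_2,l^*)$ of $\mathscr{G}_{n+1}$, but the proposition cannot be proved with such a choice, and the paper does not make it. For part 1, your appeal to Proposition 3.2(2) only applies when the new base point lies \emph{outside} the old interval; since $H'(n,1_2)\subset G_m$ forces $q_{2_h}\in[q_{i_h},q_{i_hm^2})$ for all $h\le n$, it can happen (Case 2 of the paper's proof) that $q_{2_h}\in(q_{1_h},q_{1_hb})$, in which case Proposition 3.2(1) gives \emph{containment} $[q_{2_h},q_{2_hc})\subset(q_{1_h},q_{1_hb})$ rather than disjointness. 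The first $n$ coordinates then separate $g(n,1_2,c)$ only from the cofinite family $\mathscr{G}(n+1,1_1,\notin)$; there remains a finite exceptional family $\mathscr{G}(n+1,1_1,\in)$ (finite because $\cap_j\pi_h[I_n(n+1,e_j)]=\{q_{1_h}\}$ and $q_{2_h}\neq q_{1_h}$) which must be killed at coordinate $d=n+1$. The paper does this by deleting the finitely many intervals $\mathscr{F}_{bm^2}(1)$ from $\mathscr{I}_{bm^2}$ and setting $O(b,m^2)=\cup(\mathscr{I}_{bm^2}-\mathscr{F}_{bm^2}(1))$, so that
\[
g(n+1,1_2)=I(n,1_2,c)\times O(b,m^2)\times J_{n+1}(m,a^*,y_0),
\]
an open set whose $(n+1)$-th coordinate is an \emph{infinite union} of basic intervals, not a $g$-function value. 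Your proposal omits this finite-exception bookkeeping (Claims 6.2 and 6.3), which is the heart of part 1.

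The same wrong choice also sinks parts 2 and 3. A single $g(n+1,1_2,l^*)$ has $(n+1)$-th projection equal to one interval $[q_{l^*_1},q_{l^*_1 a_2})$ with $a_2=(n+1)+1_2>m^2$, and its first $n+1$ coordinates are intervals at level $a_2$, which are \emph{smaller} than the level-$c$ intervals of the sets $g(n,1_2,l)$ in a Ln cover of $H'(n,1_2)$ (here $c=n+1_2<a_2$); by Proposition 2.2(5) the containments then run the wrong way, so you get essentially no $g(n,x_j)\subset g(n+1,1_2,l^*)$, let alone infinitely many, and no whole subfamily $\mathscr{G}(n+1,1_2,c_j)$ (whose union projects onto all of $[q_{l_1},q_{l_1m^2})$ at coordinate $d$ within a fixed $I$-block) can fit inside it. It is precisely the width of $O(b,m^2)$ --- infinitely many surviving intervals $[q_h,q_{hb})$, each anchoring a $g(n+1,2_j,y_h)$ with $\pi_d[g(n+1,2_j,y_h)]=[q_h,q_{hc_j})\subset[q_h,q_{hb})$ --- that produces the infinitely many members and subfamilies in parts 2 and 3. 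Your closing ``pigeonhole'' remark about pairwise disjoint atoms does not address this: the obstruction is not amalgamation by the Ln cover but the fact that a single basic set is too thin in coordinate $n+1$ to contain them.
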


\begin{proof}
  Take $H'(n,1_1)$.
  Then $H'(n,1_1)=H(n,1_1)\cap G_m$ by the definitions of $G_m$ and $H(n,i)$
  in (B) of Construction 3.1.
  Let
  \[H(n,1_1)=\{q_{1_1}\}\times...\times\{q_{1_{n-1}}\}
  \times\{q_{1_n}\}\times X_n.\]
  Note $m^2=m+a^*$, $n\geq m$, $G_m=g(m,a^*,y_0)=I(m,a^*,y_0)\times J(m,a^*,y_0)$
  and
  \[g(n,1_1,1_l)=[q_{1_1},q_{1_1b})\times...\times[q_{1_n},q_{1_nb})\times
  [q_{l_1},q_{l_1b})\times...\times[q_{l_{1'}},q_{l_{1'}b})\times X_b\]
  if $H(n,1_1,1_l)\in\mathscr{H}'(n,1_1)$ with $\cup\mathscr{H}'(n,1_1)=H'(n,1_1)$, $1'=1_1$
  and $b=n+1_1$.
  Let
  \[J_n(m,a^*,y_0)=[q_{l_1},q_{l_1m^2})\times[q_{l_2},q_{l_2m^2})\times...
  \times X_{m^2}.\]
  Note $G_m=[q_{i_1},q_{i_1m^2})\times...\times[q_{i_n},q_{i_nm^2})\times
  J_n(m,a^*,y_0)$. Then
  \[H'(n,1_1)=G_m\cap H(n,1_1)=\{q_{1_1}\}\times...
  \times\{q_{1_n}\}\times J_n(m,a^*,y_0)\hbox{ if }m\leq n<m^2\ \hbox{ and}\]
  \[H'(n,1_1)=G_m\cap H(n,1_1)=\{q_{1_1}\}\times...
  \times\{q_{1_n}\}\times X_n\quad\hbox{if } n\geq m^2.\]

\par
  Let $\mathscr{G}(n,1_1)=\{g(n,x_j): H(n,x_j)\subset H'(n,1_1)\}$
  be a Ln cover of $H'(n,1_1)$ and $H(n,x_j)\in\mathscr{H}(n,1_1)$.
  Then \[H(n,x_j)=H(n,1_1,1_j)\subset H'(n,1_1)=H(n,1_1)\cap G_m.\]
  Then $1_1>a^*$ since $n\geq m$, $H'(n,1_1)\subset G_m=g(m,a^*,y_0)$
  and $H'(n,1_1)\cap H_0=\emptyset$.
  Then $b=n+1_1>m^2=m+a^*$. Let $\pi_n:X\rightarrow Q_n$ be a project map.
  Then $\pi_n[H'(n,1_1)]=q_{1_n}\in[0,1)=\pi_n[G_m]$ for $n>m^2$, or
  \[\pi_n[H'(n,1_1)]=q_{1_n}\in[q_{i_n},q_{i_nm^2})=\pi_n[G_m]
  \quad\hbox{ for }\quad m<n\leq m^2.\]
  Then $[q_{1_n},q_{1_nb})\subset [q_{i_n},q_{i_nm^2})$ by $b>m^2$.
\par

A. Let
  $\mathscr{G}(n+1,1_1)$ be a Ln cover of $H'(n,1_1)$.
  Then, by 1 of Corollary 4.7,
  we have $\mathscr{G}(n+1,1_1)=\cup\{\mathscr{G}(n+1,1'_j):j\in N\}$.
  Take a $\mathscr{G}(n+1,1'_j)$.
  Let $g(n+1,x^i_j)\in\mathscr{G}(n+1,1'_j)$, $d=n+1$ and $e_j=(n+1)+1'_j$.
  Then $e_j>b=n+1_1$,
  \[g(n+1,x^i_j)=[q_{1_1},q_{1_1e_j})\times...\times[q_{1_n},
  q_{1_n e_j})\times[q_{j_d},q_{j_d e_j})\times
   [q_{j^i_1},q_{j^i_1e_j})\times...
  \times[q_{j^i_l},q_{j^i_le_j})\times X_{e_j},\]
  \[I(n+1,x^i_j)=[q_{1_1},q_{1_1e_j})\times...\times[q_{1_n},
  q_{1_n e_j})\times[q_{j_d},q_{j_d e_j}),\]
  \[I_n(n+1,x^i_j)=[q_{1_1},q_{1_1e_j})\times...\times[q_{1_n},
  q_{1_n e_j})\quad\hbox{ and}\]
  \[J(n+1,x^i_j)=
   [q_{j^i_1},q_{j^i_1e_j})\times...
  \times[q_{j^i_l},q_{j^i_le_j})\times X_{e_j}.\]
  Then, for every $g(n+1,x^h_j)\in\mathscr{G}(n+1,1'_j)$,
  we have
  \[I_n(n+1,x^h_j)=I_n(n+1,x^i_j)=I_n(n+1,e_j)
  =[q_{1_1},q_{1_1e_j})\times...\times[q_{1_n},
  q_{1_n e_j})\hbox{ and}\]
  \[I(n+1,x^h_j)=I(n+1,x^i_j)=I(n+1,e_j)=
  [q_{1_1},q_{1_1e_j})\times...\times[q_{1_n},
  q_{1_n e_j})\times[q_{j_d},q_{j_d e_j}).\]
  Let
  $\mathscr{I}(n,1_1)=\{I_n(n+1,x^i_j):  g(n+1,x^i_j)\in\mathscr{G}(n+1,1_1)\}.$
  Then
  \[\mathscr{I}(n,1_1)=\{I_n(n+1,e_j):  j\in N\}.\]
  Note the definition of $e_j$.
  Then $\cap_jI_n(n+1,e_j)=p(n,1_1)=\{q_{1_1}\}\times...\times\{q_{1_n}\}.$
  Let $p(n,1_2)=\{q_{2_1}\}\times...\times\{q_{2_n}\}$
  and $H(n,1_2)=p(n,1_2)\times X_n$.
  Then \[H'(n,1_2)=H(n,1_2)\cap G_m=p(n,1_2)\times J_n(m,a^*,y_0).\]
  Then $p(n,1_2)\neq p(n,1_1)$.
  Then $q_{1_h}\neq q_{2_h}$ for some $h\leq n$.
\par

B. Note $b=n+1_1$. Let $g(n,1_1,b)=I(n,1_1,b)\times J(n,1_1,b)$. Here
  \[I(n,1_1,b)=[q_{1_1},q_{1_1b})\times...\times[q_{1_n},q_{1_nb})
  \hbox{ and }J(n,1_1,b)=[q_{l_1},q_{l_1b})\times...
  \times[q_{1_b'},q_{1_b'b})\times X_b.\]
  Let $c=n+1_2$. Then $c>b=n+1_1$. Let $g(n,1_2,c)=I(n,1_2,c)\times J(n,1_2,c)$.
  Here
  \[I(n,1_2,c)=[q_{2_1},q_{2_1c})\times...\times[q_{2_n},q_{2_nc})
  \hbox{ and }J(n,1_2,c)=[q_{l'_1},q_{l'_1c})\times...
  \times[q_{l_c'},q_{l_c'c})\times X_c.\]
\par

Case 1.  There exists an $h\leq n$ with
  $[q_{1_h},q_{1_hb})\cap [q_{2_h},q_{2_hc})=\emptyset$.
  Then we have $I(n,1_1,b)\cap I(n,1_2,c)=\emptyset$.
  Then it is easy to see $I_n(n+1,x^i_j)\cap I(n,1_2,c)=\emptyset$ if
  $g(n+1,x^i_j)\in\mathscr{G}(n+1,1_1)$
  with $\pi_h[g(n+1,x^i_j)]=[q_{1_h},q_{1_he_j})\subset[q_{1_h},q_{1_hb})$.
  Then $g(n+1,x^i_j)\cap g(n,1_2,c)=\emptyset$.
\par

Case 2. $[q_{1_h},q_{1_hb})\cap [q_{2_h},q_{2_hc})\neq\emptyset$ for each $h\leq n$.
  Note $c>b>m^2$. Then, by 3 of Proposition 3.2,
  $[q_{1_h},q_{1_hb})\supset[q_{2_h},q_{2_hc})$ for each $h\leq n$.
  Note $p(n,1_2)\neq p(n,1_1)$.
  Then there exists an $h\leq n$ with $q_{2_h}\in(q_{1_h},q_{1_hb})$.
  Let
  \[\mathscr{I}(n,1_1,\in)=\{I_n(n+1,e_j):
  q_{2_h}\in (q_{1_h},q_{1_he_j})=\pi_h[I_n(n+1,x^i_j)]-\{q_{1_h}\}\}\quad\hbox{ and}\]
   \[\mathscr{I}(n,1_1,\notin)=\{I_n(n+1,e_j): q_{2_h}\notin (q_{1_h},q_{1_he_j})=\pi_h[I_n(n+1,x^i_j)]-\{q_{1_h}\}\}.\]
   Then $\mathscr{I}(n,1_1,\in)\cap\mathscr{I}(n,1_1,\notin)=\emptyset$
   and $\mathscr{I}(n,1_1,\in)\cup\mathscr{I}(n,1_1,\notin)=\mathscr{I}(n,1_1)$.
   Then $\mathscr{I}(n,1_1,\in)$ is finite since
   $q_{2_h}\notin \cap_j\pi_h[I_n(n+1,e_j)]=\{q_{1_h}\}$.
   Then we have
   $I_n(n+1,x^i_j)\cap I(n,1_2,c)=\emptyset$ for every $I(n+1,x^i_j)\in\mathscr{I}(n,1_1,\notin)$.
   Let
  \[\mathscr{I}(n+1,1_1,\in)=\{I(n+1,e_j): I_n(n+1,e_j)\in\mathscr{I}(n,1_1,\in)\},\]
  \[\mathscr{G}(n+1,1_1,\in)=\{g(n+1,x^i_j)\in\mathscr{G}(n+1,1_1):
  I(n+1,e_j)\in\mathscr{I}(n+1,1_1,\in)\},\]
  \[\mathscr{I}(n+1,1_1,\notin)=\{I(n+1,e_j):
  I_n(n+1,e_j)\in\mathscr{I}(n,1_1,\notin)\}\quad\hbox{ and}\]
  \[\mathscr{G}(n+1,1_1,\notin)=\{g(n+1,x^i_j)\in\mathscr{G}(n+1,1_1):
  I(n+1,e_j)\in\mathscr{I}(n+1,1_1,\notin)\}.\]
   Then $g(n+1,x^i_j)\cap g(n,1_2,c)=\emptyset$
   for every $g(n+1,x^i_j)\in\mathscr{G}(n+1,1_1,\notin)$.
  Then we have the following claim:
\par

\begin{cl}
1. Both $\mathscr{I}(n,1_1,\in)$ and $\mathscr{I}(n+1,1_1,\in)$ are finite.
\par

2. $g(n+1,x^i_j)\cap g(n,1_2,c)=\emptyset$ for every $g(n+1,x^i_j)\in\mathscr{G}(n+1,1_1,\notin)$.
\end{cl}
\vspace{0.3cm}

C. Note  $c>b>m^2$. Take $\mathscr{I}_b$ from 1 of Proposition 2.2.
  Let $\mathscr{I}_{bm^2}=\mathscr{I}_b|[q_{l_1},q_{l_1m^2})$.
  Then $\mathscr{I}_{bm^2}$ is infinite.
   Let
\begin{align}
\mathscr{F}_{bm^2}(1)=\{[q_{i_d},q_{i_db})\in\mathscr{I}_{bm^2}:
 & \hbox{ there exits an } I(n+1,e_j)\in\mathscr{I}(n+1,1_1,\in)\nonumber\\
 &\hbox{ with } \pi_d[I(n+1,e_j)]=[q_{j_d},q_{j_de_j})\subset
  [q_{i_d},q_{i_db})\}.\quad\nonumber
\end{align}
\begin{cl}
1. $\mathscr{F}_{bm^2}(1)$ is finite.
\par
2. $\mathscr{I}_{bm^2}-\mathscr{F}_{bm^2}(1)$ is infinite.
\par

3.  There exists a $q_{l_1p}\geq\hbox{max}[\cup\mathscr{F}_{bm^2}(1)]$ with $p>a=m^2=m+a^*$
  such that $[q_{l_1p},q_{l_1a})\cap[\cup\mathscr{F}_{bm^2}(1)]=\emptyset$.
\end{cl}
\vspace{0.3cm}

D. Note $J_n(m,a^*,y_0)=[q_{l_1},q_{l_1m^2})\times[q_{l_2},q_{l_2m^2})\times...
  \times X_{m^2}$.
  Let  \[O(b,m^2)=\cup(\mathscr{I}_{bm^2}-\mathscr{F}_{bm^2}(1)),\quad
  J_{n+1}(m,a^*,y_0)=[q_{l_2},q_{l_2m^2})\times...
  \times X_{m^2}\quad\hbox{ and}\]
  \[g(n+1,1_2)=I(n,1_2,c)\times O(b,m^2)\times J_{n+1}(m,a^*,y_0).\]
  Then $g(n+1,1_2)\subset G_m$, and
  \[H'(n,1_2)\cap g(n+1,1_2)=\{q_{2_1}\}\times...
  \times\{q_{2_n}\}\times O(b,m^2)\times J_{n+1}(m,a^*,y_0).\]
  Let $J(n+1,1_2)=O(b,m^2)\times J_{n+1}(m,a^*,y_0)$.
  Then \[g(n+1,1_2)=I(n,1_2,c)\times J(n+1,1_2).\]
  Let $g(n+1,x)\in\mathscr{G}(n+1,1_1,\in)$.
  Note $d=n+1$.
  Then \[\pi_d[g(n+1,x)]=[q_{j_d},q_{j_de_j})\subset
  [q_{j'_d},q_{j'_db})\in\mathscr{F}_{bm^2}(1).\]
  Then  $J(n+1,x)\cap J(n+1,1_2)=\emptyset$.
  Then $g(n+1,x)\cap g(n+1,1_2)=\emptyset$.
  Let $g(n+1,x)\in\mathscr{G}(n+1,1_1,\notin)$.
  Then $I(n+1,x)=I(n+1,e_j)$
  and $I_n(n+1,e_j)\cap I(n,1_2,c)=\emptyset$.
  Then $g(n+1,x)\cap g(n+1,1_2)=\emptyset$.
\par

\begin{cl}
$g(n+1,1_2)\cap[\cup\mathscr{G}(n+1,1_1)]=\emptyset$.
\end{cl}

\vspace{0.3cm}

E. Let $\mathscr{G}(n+1,1_2)$ be a Ln cover of $H'(n,1_2)$.
  Then, by 1 of Corollary 4.7,
  $\mathscr{G}(n+1,1_2)=\cup_j\mathscr{G}(n+1,1_2,c_j)$. Note $d=n+1$.
  Then
  \[O(b,m^2)\subset[q_{l_1},q_{l_1m^2})=\pi_d[H'(n,1_2)]
  \subset\pi_d[\cup\mathscr{G}(n+1,1_2)].\]
  Let $[q_h,q_{hb})\in\mathscr{I}_{bm^2}-\mathscr{F}_{bm^2}(1)$.
  Then $q_h\in[q_h,q_{hb})\subset O(b,m^2)\subset\pi_d[\cup\mathscr{G}(n+1,1_2)]$.
  Then there exists a $g(n+1,2_j,y_h)\in\mathscr{G}(n+1,1_2,c_j)$ with
  \[g(n+1,2_j,y_h)=[q_{2_1},q_{2_1c_j})\times...\times[q_{2_n},q_{2_n c_j})
  \times[q_{l'_1},q_{l'_1 c_j})\times...
  \times[q_{l'_{2'}},q_{l'_{2'}c_j})\times X_{c_j}\]
  and $q_h\in\pi_d[g(n+1,2_j,y_h)]=[q_{l'_1}, q_{l'_1 c_j})$.
  Here $q_{l'_h}\in [q_{l_h},q_{l_hm^2})$ for $1\leq h\leq a^*$,
  $2'=2_j$ and $c_j=(n+1)+2_j>n+1_2=c>b=n+1_1$ by 4 of Proposition 3.1 since
  $H(n+1,2_j,y_h)\subset H'(n,1_2)$.
\par

Note $c_j>b$ implies $Q^*_b\subset Q^*_{c_j}$ by 2 of Proposition 2.2,
  and $q_i\in Q^*_b$ with $[q_i,q_{ib})\in\mathscr{I}_b$
  implies $q_i\in Q^*_{c_j}$ with $[q_i,q_{ic_j})\in\mathscr{I}_{c_j}$
  and $[q_i,q_{ic_j})\subset [q_i,q_{ib})$.
  Then $q_h\in[q_h,q_{hb})\in\mathscr{I}_{bm^2}\subset\mathscr{I}_b$
  implies $q_h\in Q^*_b\subset Q^*_{c_j}$.
  Then, by 3 of Proposition 3.2,
  \[q_h=q_{l'_1}\in[q_{l'_1},q_{l'_1 c_j})=[q_h,q_{h c_j})\subset[q_h,q_{hb}).\]
  Let
  $I(n+1,2_j,y_h)=[q_{2_1},q_{2_1c_j})\times...\times[q_{2_n},q_{2_n c_j})
  \times[q_{l'_1},q_{l'_1 c_j})$ and
  \[J(n+1,2_j,y_h)=[q_{l'_2},q_{l'_2 c_j})\times...
  \times[q_{l'_{2'}},q_{l'_{2'}c_j})\times X_{c_j}.\]
  And then let $\mathscr{I}(c_j,m^2,l_h)=\mathscr{I}_{c_j}|[q_{l_h},q_{l_hm^2})$
  for $2\leq h\leq 2_j=2'$.
  Then, by 3 of Proposition 2.2,
  $\cup\mathscr{I}(c_j,m^2, l_h)=[q_{l_h},q_{l_hm^2})$.
  Let \[\mathscr{J}(n+1,c_j)=\mathscr{I}(c_j,m^2, l'_2)\times...\times\mathscr{I}(c_j,m^2,l'_{2'})
  \times\{X_{c_j}\}.\]
  Then $J(n+1,2_j,y_h)\in\mathscr{J}(n+1,c_j)$ by the definition of $g(n,i,l)$
  in A-C of Construction 3.2.
  Let \[\mathscr{G}^*(n+1,1_2,c_j)=\{I(n+1,2_j,y_h)\times J(n+1,2_j,j_l):
  J(n+1,2_j,j_l)\in\mathscr{J}(n+1,c_j)\}.\]
  Then $g(n+1,2_j,y_h)\in\mathscr{G}^*(n+1,1_2,c_j)$ since $J(n+1,2_j,y_h)\in\mathscr{J}(n+1,c_j)$.
  Then $\mathscr{G}^*(n+1,1_2,c_j)=\mathscr{G}(n+1,1_2,c_j)$
  by the definition of $\mathscr{G}(n,i)$ in C of Construction 3.2 and 2 of Corollary 4.7.
  Then there exist infinitely many $\mathscr{G}(n+1,1_2,c_j)$'s such that
  $\cup\mathscr{G}(n+1,1_2,c_j)\subset g(n+1,1_2)$.
\par

Note Claim 6.4 and the definition of $g(n+1,1_2)$. It is easy to see 1.
\par

Note the definitions of $I(n,1_2,c)$ and $g(n+1,1_2)$, and 2 of Claim 6.3.
  Then it is easy to see 2.
\end{proof}

\begin{prop}
Let $\mathscr{G}(n+1,1_i)$ be Ln cover of $H'(n,1_i)$
 for $i<j$.
 There exists an open set $g(n+1,1_j)$ such that:
\par

1.  $g(n+1,1_j)\cap [\cup\mathscr{G}(n+1,1_i)]=\emptyset$
  for every $i<j$.
\par

2. Let $\mathscr{G}(n,1_j)$ be a Ln cover of $H'(n,1_j)$.
  Then there exist infinitely many $g(n,x_i)$'s in $\mathscr{G}(n,1_j)$ such that
  $g(n,x_i)\subset g(n+1,1_j)$.
\par

3.  Let $\mathscr{G}(n+1,1_j)=\cup_b\mathscr{G}(n+1,1_j,c_b)$ be a Ln cover of $H'(n,1_j)$.
 Then  $\cup\mathscr{G}(n+1,1_j,c_b)\subset g(n+1,1_j)$ for
 infinitely many $\mathscr{G}(n+1,1_j,c_b)$'s.
\end{prop}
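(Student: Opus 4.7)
The plan is to iterate the construction of Proposition 6.1 once for each index $i<j$ and then merge the outputs at a common refined level. Set $b_i=n+1_i$ for $i<j$ and $c=n+1_j$, so $b_1<\cdots<b_{j-1}<c$ and each $b_i>m^2$ (since $1_i>a^*$). For each $i<j$ compare $p(n,1_i)$ with $p(n,1_j)$: since $H(n,1_i)\neq H(n,1_j)$, some coordinate witnesses $q^{(i)}_h\neq q^{(j)}_h$. Exactly as in Case 1 / Case 2 of Proposition 6.1, either (Case 1) some $h\le n$ satisfies $[q^{(i)}_h,q^{(i)}_{hb_i})\cap[q^{(j)}_h,q^{(j)}_{hc})=\emptyset$, in which case $I_n(n+1,x)\cap I(n,1_j,c)=\emptyset$ for every $g(n+1,x)\in\mathscr{G}(n+1,1_i)$; or (Case 2) the intersections are non-empty for all $h\le n$, in which case for the coordinate $h$ where $q^{(j)}_h\in(q^{(i)}_h,q^{(i)}_{hb_i})$ one forms the split $\mathscr{G}(n+1,1_i,\in)$ / $\mathscr{G}(n+1,1_i,\notin)$ depending on whether $q^{(j)}_h$ lies in $(q^{(i)}_h,q^{(i)}_{he_k})$ or not. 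The argument of Claim 6.2 (using $\cap_k\pi_h[I_n(n+1,e_k)]=\{q^{(i)}_h\}$) shows $\mathscr{G}(n+1,1_i,\in)$ is finite for every Case 2 index $i$. Crucially, $e_k=(n+1)+1'_k\ge (n+1)+1_i>n+1_i=b_i$, so each projection $\pi_d[g(n+1,x)]=[q_{j_d},q_{j_d e_k})$ with $g(n+1,x)\in\mathscr{G}(n+1,1_i,\in)$ is contained in a single interval of $\mathscr{I}_{b_im^2}$.

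For each Case 2 index $i<j$ define, exactly as in Proposition 6.1,
\[\mathscr{F}_{b_im^2}(i)=\{[q_{i_d},q_{i_db_i})\in\mathscr{I}_{b_im^2}:\text{some }[q_{j_d},q_{j_de_k})\in\mathscr{I}(n+1,1_i,\in)\text{ lies in }[q_{i_d},q_{i_db_i})\},\]
which is finite by the previous paragraph; set $\mathscr{F}=\bigcup_{i<j}\mathscr{F}_{b_im^2}(i)$, a finite union of half-open intervals in $[q_{l_1},q_{l_1m^2})$. Define
\[O=[q_{l_1},q_{l_1m^2})-\cup\mathscr{F},\qquad g(n+1,1_j)=I(n,1_j,c)\times O\times J_{n+1}(m,a^*,y_0).\]
Then $O$ is c.o in $[q_{l_1},q_{l_1m^2})$, and for any level $b^*\ge b_{j-1}$ large enough the set $O$ is a union of infinitely many intervals from $\mathscr{I}_{b^*m^2}$, because $\mathscr{I}_{b^*m^2}$ is infinite while $\cup\mathscr{F}$ is a finite union of $\mathscr{I}_{b_i}$-intervals (this is the generalization of Claim 6.3(2)--(3), and gives the tail argument needed for the next step).

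Property 1 follows by the case split: if $g(n+1,x)\in\mathscr{G}(n+1,1_i,\notin)$ (including all of $\mathscr{G}(n+1,1_i)$ when $i$ is a Case 1 index), some coordinate $h\le n$ already forces $I_n(n+1,x)\cap I(n,1_j,c)=\emptyset$, hence $g(n+1,x)\cap g(n+1,1_j)=\emptyset$; if $g(n+1,x)\in\mathscr{G}(n+1,1_i,\in)$, then $\pi_d[g(n+1,x)]$ lies inside some removed interval of $\mathscr{F}_{b_im^2}(i)$, so $\pi_d[g(n+1,x)]\cap O=\emptyset$ and again $g(n+1,x)\cap g(n+1,1_j)=\emptyset$. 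Properties 2 and 3 are obtained by repeating step E of Proposition 6.1: for every $q_h\in O$ that starts an interval of the relevant refined $\mathscr{I}_{b^*m^2}$, the refinement results of Proposition 2.2 and Corollary 4.7 produce a member $g(n+1,2_j,y_h)\in\mathscr{G}(n+1,1_j,c_k)$ of a Ln subcover with $\pi_d[g(n+1,2_j,y_h)]\subset O$ and all other coordinates inside $J_{n+1}(m,a^*,y_0)$, hence $g(n+1,2_j,y_h)\subset g(n+1,1_j)$; since infinitely many such $q_h$ are available, infinitely many $g(n,x_i)\in\mathscr{G}(n,1_j)$ and infinitely many Ln subcovers $\mathscr{G}(n+1,1_j,c_b)$ arise.

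The main technical obstacle is that the bad projections $[q_{j_d},q_{j_de_k})$ coming from different indices $i<j$ live at different scales $e_k$, whereas Proposition 6.1 dealt with a single scale $b=n+1_1$. The resolution is to work with each family at its own natural level $b_i=n+1_i$ (ensuring by construction that $e_k>b_i$, so each bad projection sits inside a \emph{single} interval of $\mathscr{I}_{b_im^2}$ and $\mathscr{F}_{b_im^2}(i)$ stays finite) and only merge at the end. One then must check that the finite merged removal $\cup\mathscr{F}$ still leaves an $O$ with the infinite-tail property needed for parts 2 and 3, which is immediate since $\cup\mathscr{F}$ is bounded by a finite union of bounded c.o intervals while $[q_{l_1},q_{l_1m^2})$ admits an infinite partition by $\mathscr{I}_{b^*m^2}$ for any sufficiently large $b^*$.
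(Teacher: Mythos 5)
Your proposal is correct and follows essentially the same route as the paper: split each $\mathscr{G}(n+1,1_i)$ into the finite $\in$-part and the $\notin$-part as in Claim 6.2, remove the finitely many intervals of $[q_{l_1},q_{l_1m^2})$ that trap the $\in$-projections, and define $g(n+1,1_j)=I(n,1_j,c)\times O\times J_{n+1}(m,a^*,y_0)$, finishing via steps C--E of Proposition 6.1. The only difference is that the paper collects all removed intervals at the single coarse scale $b=n+1_1$ (using $e^i_h>n+1_i>b$ for every $i$), so that $O(b,m^2,j)$ is automatically a cofinite union of the one partition $\mathscr{I}_{bm^2}$ and your extra tail argument for the merged multi-scale removal is not needed.
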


\begin{proof}
Let $\mathscr{G}(n+1,1_j)=\cup_b\mathscr{G}(n+1,1_j,c_b)$
  be a Ln cover of $H'(n,1_j)$ with $\Delta$-order.
\par

Fix an $H'(n,1_i)$ for $i<j$. Then, by Claim 6.2,
  there exists $\mathscr{I}(n+1,1_i,\in)$,
  and a set $g(n,1_j,c)$ with $c=n+1_j$
  such that $\mathscr{I}(n+1,1_i,\in)$ is finite,
  and $g(n+1,y)\cap g(n,1_j,c)=\emptyset$
  for every $g(n+1,y)\in\mathscr{G}(n+1,1_i,\notin)$.
  Note $b=n+1_1>m+a^*=m^2$. Let $\mathscr{G}(n+1,1_i)$ be Ln cover of $H'(n,1_i)$.
  Then $\mathscr{G}(n+1,1_i)=\cup_h\mathscr{G}(n+1,1_i,i_h)$
  by 1 of Corollary 4.7.
  Let $g(n+1,i_h,x)=g(n+1,x)\in\mathscr{G}(n+1,1_i,i_h)$.
  Then $e^i_h=(n+1)+i_h>n+1_i>n+1_1=b$.
  Take $\mathscr{F}_{bm^2}=\mathscr{I}_b|[q_{l_1},q_{l_1m^2})$ from 2 of Claim 6.3.
  Let
    \begin{align}
\mathscr{F}_{bm^2}(i)=\{[q_{j_d},q_{j_db})\in\mathscr{I}_{bm^2}:
 \hbox{ there exits a } & [q_{j'_d},q_{j'_de^i_h})
 \in\mathscr{I}(n+1,1_i,\in) \nonumber\\
 &\hbox{ such that }\ \ [q_{j'_d},q_{j'_de^i_h})\subset
  [q_{j_d},q_{j_db})\}.\nonumber
\end{align}
 In the same way, $\mathscr{F}_{bm^2}(i)$  is finite for each $i<j$.
 Then $\cup_{i<j}\mathscr{F}_{bm^2}(i)$ is finite.
 Then $\mathscr{I}_{bm^2}-\cup_{i<j}\mathscr{F}_{bm^2}(i)$
   is infinite.
 Let \[O(b,m^2,j)=\cup[\mathscr{I}_{bm^2}-\cup_{i<j}\mathscr{F}_{bm^2}(i)]
  \quad\hbox{ and}\]
 \[g(n+1,1_j)=I(n,1_j,c)\times O(b,m^2,j)\times [q_{l_2},q_{l_2m^2})\times...
  \times X_{m^2}. \]
Then, in the same way as the proof of C, D and E,
  open set $g(n+1,1_j)$
  satisfies 1, 2 and 3.
\end{proof}

Recall $\mathscr{A}$ a \textit{ mad } family on $N$
  in page 115 of Van Douwen \cite{dou}
  if it is a maximal pairwise almost disjoint subfamily of $[N]^N$.
  $\alpha, \beta, \delta$ and $\gamma$ are used to
  denote members in $\mathscr{A}$.
  Take a $\beta$ from $\mathscr{A}$.
  Then
  $\beta=\{\beta(1), \beta(2),...\}
  \subset N$.
\par
Recall the \textit{ quasi-order } $\leq^*$ on $[N]^N$
 in page 115 of Van Douwen \cite{dou} by
  \[\alpha\leq^*\beta \hbox{ if }
   \alpha(n)\leq\beta(n)\hbox{ for all but finitely
   many } n\in N.\]
\par

A subset  $\mathscr{A}'$  of $\mathscr{A}$ is called
 \textit{ unbounded } if  $\mathscr{A}'$  is unbounded
 in $(\mathscr{A}, \leq^*)$.
\par

Call an $\alpha\in[N]^N$ \textit{ strictly increasing } if $\alpha(i)<\alpha(j)$
  if and only if $i<j$.

\par
\vspace{0.3cm}
\textbf{Construction 6.1.}
\vspace{0.3cm}

\textbf{A.} Note $g(m,a^*,y_0)=G_m$. Let $O(m,y_0)=g(m,a^*,y_0)-H(m,a^*,y_0)$
   and $\mathcal{G}(m,m,L)=\{\mathscr{G}(m_i,m,0): m_i\geq m\}$ be  a Ln covers sequence on $O(m,y_0)$ which satisfies 1 of Corollary 5.6.
\par

Take $\mathscr{H}'(n,y_0)=\mathscr{H}(n,G_m)$  for every $n\geq m$ before Proposition 6.1. Let
  \[\mathcal{H}'(m,m,H)=\{\mathscr{H}'(n,y_0): n\geq m\}.\]
  Take $\mathscr{H}'(m,y_0)\in\mathcal{H}'(m,m,H)$ for $n=m$.
  Then
  \[\mathscr{H}'(m,y_0)=\{H'(m,1_h):h\in N\} \hbox{ and }
   1_1<1_2<...<1_i<... \] with $\Delta$-order in Construction 3.1.
  Then we have the following Condition $B^*_1$.
\par

 \textbf{Condition $B^*_1$:}
  1. $O(m,y_0)=\cup\mathscr{H}'(m,y_0)$.
\par

2. $\mathscr{H}'(m,y_0)=\{H'(m,1_h): h\in N\}$ with
    $ 1_1<1_2<...<1_h<... $.
\vspace{0.3cm}

\textbf{B.}  Take $H'(m,1_j)\in\mathscr{H}'(m,y_0)$.
  Let $m_1=m+1$ and $\mathscr{G}(m_1,1_j)$ be a Ln cover of $H'(m,1_j)$.
  Take a $g(m_1,z)\in\mathscr{G}(m_1,1_j)$. Let $m_2=m_1+1$,
  \[H(m_1,jz)=H'(m,1_j)\cap g(m_1,z)\quad\hbox{and}\]
  \[\mathscr{G}(m_2,z)=\{g(m_2,z_l)\in\mathscr{G}_{m_2}:
  l\in N\}\quad\hbox{with}\quad \Delta\hbox{-order }\]
  be a Ln cover of $H(m_1,jz)$.
  Take an $g(m_2,z_n)\in\mathscr{G}(m_2,z)$.
  Let \[H(m_2,jz_n)=H(m_1,jz)\cap g(m_2,z_n)=H'(m,1_j)\cap g(m_2,z_n).\]
  Let  $m_3=m_1+2$ and
  $\mathscr{G}(m_3,jz_n)$
  be a Ln cover of $H(m_2,jz_n)$. Let
  \[\mathscr{G}(m_3,1_j,z)=
  \cup\{\mathscr{G}(m_3,jz_n): g(m_2,z_n)\in\mathscr{G}(m_2,z)\}.\]
  Then $\mathscr{G}(m_3,1_j,z)$ is a Ln cover of $H(m_1,jz)$. Take an $H(m_2,jz_n)$.
  Let \[\mathscr{G}(m_3,jz_n)=\{g(m_3,z^n_v):  v\in N\}
  \quad\hbox{with}\quad \Delta\hbox{-order}, \]
  \[H(m_3,jz^n_v)=H(m_2,jz_n)\cap g(m_3,z^n_v)=H'(m,1_j)\cap g(m_3,z^n_v)\quad\hbox{ and}\]
  \[\mathscr{H}(m_3,jz_n)=\{H(m_3,jz^n_v):
  v\in N\}.\]
\begin{cl}
1. $\cup_vH(m_3,jz^n_v)=H(m_2,jz_n)$ and $\cup_nH(m_2,jz_n)=H(m_1,jz)$.
\par

2. $\mathscr{G}(m_3,1_j,z)$ is a Ln cover of $H(m_1,jz)$.
\par

3. $\mathscr{G}(m_3,jz_n)$ is a Ln cover of $H(m_2,jz_n)$
  for every $n\in N$.
\end{cl}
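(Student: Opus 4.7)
The plan is to chase the definitions, since this claim is essentially a bookkeeping step that combines Ln covers at successive levels. For part 1, I would start from the fact that $\mathscr{G}(m_3,jz_n)$ is a Ln cover of $H(m_2,jz_n)$, which by the definition of Ln cover (Definition Ln together with Definition Ln$'$) implies $H(m_2,jz_n)\subset\cup\mathscr{G}(m_3,jz_n)$. Intersecting with $H(m_2,jz_n)$ and using the definition $H(m_3,jz^n_v)=H(m_2,jz_n)\cap g(m_3,z^n_v)$ gives $\cup_vH(m_3,jz^n_v)=H(m_2,jz_n)$. The same argument applied one level up, using that $\mathscr{G}(m_2,z)$ is a Ln cover of $H(m_1,jz)$ and the definition $H(m_2,jz_n)=H(m_1,jz)\cap g(m_2,z_n)$, yields $\cup_nH(m_2,jz_n)=H(m_1,jz)$.

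For part 3, there is nothing to prove beyond the construction: $\mathscr{G}(m_3,jz_n)$ was defined at the start of block B precisely as a Ln cover of $H(m_2,jz_n)$, so one simply cites this.

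Part 2 is the real content. I would verify the three defining properties of a Ln cover on $(m_3,H(m_1,jz))$ (items 1, 2 and 4 of Claim 4.1 in the form of Definition Ln$'$). Covering: $\cup\mathscr{G}(m_3,1_j,z)=\cup_n\cup\mathscr{G}(m_3,jz_n)\supset\cup_nH(m_2,jz_n)=H(m_1,jz)$, using part 1. The c.o.D property: each inner family $\mathscr{G}(m_3,jz_n)$ is c.o.D by Claim 4.4, and the outer disjointness between different $n$'s will follow because $\cup\mathscr{G}(m_3,jz_n)$ is contained in $g(m_2,z_n)$, while $\mathscr{G}(m_2,z)$ is discrete by Claim 4.4 applied to the Ln cover of $H(m_1,jz)$; combining these, members from distinct $n$'s have disjoint closures. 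The refinement property (item 4 of Claim 4.1): given any $g(\ell,i',l')\subset H(m_1,jz)$ with $\ell\geq m_3$, refine first through $\mathscr{G}(m_2,z)$ to obtain a containing $g(m_2,z_n)$, then apply item 4 of Claim 4.1 for the Ln cover $\mathscr{G}(m_3,jz_n)$ of $H(m_2,jz_n)$ to land in some $g(m_3,z^n_v)\in\mathscr{G}(m_3,jz_n)\subset\mathscr{G}(m_3,1_j,z)$.

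The main obstacle I expect is the c.o.D verification: one has to be careful to check that members of $\mathscr{G}(m_3,jz_n)$ and $\mathscr{G}(m_3,jz_{n'})$ with $n\neq n'$ really do stay apart in $(X,\rho)$, not merely inside $H(m_1,jz)$. This reduces to the fact that the enclosing sets $g(m_2,z_n),g(m_2,z_{n'})$ are disjoint c.o sets (by the c.o.D property of $\mathscr{G}(m_2,z)$ from Claim 4.4), so their subfamilies inherit discreteness in $(X,\rho)$; this is the same type of two-level-discreteness argument that appears throughout Section 4, so once formulated correctly it should go through routinely.
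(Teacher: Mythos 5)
Your proposal is correct and takes the route the paper intends: Claim 6.6 carries no separate proof in the paper because it is the direct analogue of Claim 4.3(5) and Claim 4.4 applied to the two-level construction in B, and your verification (covering via part 1, the c.o.D property via pairwise disjointness of the enclosing c.o.\ sets $g(m_2,z_n)$, and refinement by passing first through $\mathscr{G}(m_2,z)$ and then through $\mathscr{G}(m_3,jz_n)$) is exactly that argument. One small correction: in the refinement step the hypothesis should read $g(\ell,i',l')\subset g(m_1,z)$ (equivalently $H(\ell,i',l')\cap H(m_1,jz)\neq\emptyset$) rather than $g(\ell,i',l')\subset H(m_1,jz)$, since $H(m_1,jz)$ has empty interior by Proposition 3.5; the argument itself is unaffected.
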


\textbf{B1.} Take an $H(m_3,jz^n_v)$.
Let $v^\alpha_0=\alpha(m+v)$, $v^\alpha_1=\alpha(m+v)+1$ and
   \[\mathcal{G}(v^\alpha_0,jz^n_v)
  =\{\mathscr{G}(v^\alpha_0,jz^n_v),
  \mathscr{G}(v^\alpha_1,jz^n_v)\}\]
  be a covers sequence of $H(m_3,jz^n_v)$.
  Then there
  exists $1$'th Ln family $\mathscr{H}^*_d(v^\alpha_1,jz^n_v)$
   with $\Delta$-order by $\Re1$.
   Then $\mathscr{G}^*_d(v^\alpha_1,jz^n_v)$ is  c.o.D in $(X,\rho)$
  by 2 of A in Claim 5.8.
\par

\textbf{B2.} For the same $H(m_3,jz^n_v)$ as B1, take $\mathscr{H}(m_5,x^v)$ and
  $\mathscr{G}(m_5,x^v)$ for $v=i$
  from $\Re2$. Let $x=z_n$ and $x^v=z^n_v$.
  Then, by $\Re2$, there exists an $\alpha$ ch-set $H_C(m_5,\alpha z_n)$,
  $\mathscr{G}(m_5, z^n_v)$  and  $\mathscr{G}_d(m_5,\alpha z^n_v)$.
  Note both $\mathscr{G}(m_5, z^n_v)$ and $\mathscr{G}(v^\alpha_1,jz^n_v)$
  are Ln covers of $H(m_3,jz^n_v)$. Let $G(5,\alpha z^n_v)=\cup\mathscr{G}_d(m_5,\alpha z^n_v)$.
  Then $g(v^\alpha_1,t)\cap G(5,\alpha z^n_v)\neq\emptyset$ implies $g(v^\alpha_1,t)\subset G(5,\alpha z^n_v)$
  if $v^\alpha_1\geq m_5$. Let $v(v^\alpha_5)$ be the least number
  such that $v\geq v(v^\alpha_5)$ implies $v^\alpha_1\geq m_5$.
  Let
  \[\mathscr{G}_d(v^\alpha_1,jz^n_v)=\{g(v^\alpha_1,t)\in\mathscr{G}^*_d(v^\alpha_1,jz^n_v):
  g(v^\alpha_1,t)\cap G(5,\alpha z^n_v)\neq\emptyset\} \quad \hbox{ if }v\geq v(v^\alpha_5),\]
  \[\mathscr{G}_d(v^\alpha_1,jz^n_v)=\mathscr{G}_d(m_5,\alpha z^n_v)
   \quad \hbox{ if }v< v(v^\alpha_5),\]
    \[\mathscr{G}_{-1}(v^\alpha_1,jz^n_v)=\{g(v^\alpha_1,t)\in\mathscr{G}(v^\alpha_1,jz^n_v):
  g(v^\alpha_1,t)\cap[\cup\mathscr{G}_d(v^\alpha_1,jz^n_v)]=\emptyset\},\]
   \[\mathscr{G}_d(n\sigma^\alpha_1,jz,p\sigma)=\cup\{\mathscr{G}_d(v^\alpha_1,jz^n_v):
  \ v\geq p \hbox{ and }g(m_3,z^n_v)\in\mathscr{G}(m_3,jz_n)\}\quad\hbox{ and }\]
  \[\mathscr{G}_{-1}(n\sigma^\alpha_1,jz,p\sigma)
  =\cup\{\mathscr{G}_{-1}(v^\alpha_1,jz^n_v):
  v\geq p \hbox{ and }g(m_3,z^n_v)\in\mathscr{G}(m_3,jz_n)\}.\]
    Note that $\mathscr{G}(m_1,1_j)$ is a Ln cover of $H'(m,1_j)$
    for $H'(m,1_j)\in\mathscr{H}'(m,y_0)$. Let
  \[\mathscr{G}_d(n\sigma^\alpha_1,1_j,p\sigma)=
  \cup\{\mathscr{G}_d(n\sigma^\alpha_1,jz,p\sigma):
  g(m_1,z)\in\mathscr{G}(m_1,1_j)\},\]
  \[\mathscr{G}_{-1}(n\sigma^\alpha_1,1_j,p\sigma)
  =\cup\{\mathscr{G}_{-1}(n\sigma^\alpha_1,jz,p\sigma):
  g(m_1,z)\in\mathscr{G}(m_1,1_j)\},\]
  \[\mathscr{G}_d(n\sigma^\alpha_1,p\sigma)=\cup\{\mathscr{G}_d(nj^\alpha_1,1_j,p\sigma):
  H'(m,1_j)\in\mathscr{H}'(m,y_0)\}\quad\hbox{ and}\]
  \[\mathscr{G}_{-1}(n\sigma^\alpha_1,p\sigma)=\cup\{\mathscr{G}_{-1}(nj^\alpha_1,1_j,p\sigma):
  H'(m,1_j)\in\mathscr{H}'(m,y_0)\}.\]
\par

\begin{cl}
Let $H(n\sigma^\alpha_1,p\sigma)=\cup\mathscr{H}_d(n\sigma^\alpha_1,p\sigma)$.
\par

 1. $H(n\sigma^\alpha_1,q\sigma)\subset H(n\sigma^\alpha_1,p\sigma)$ if $q>p$.
\par

 2. $\cap_pH(n\sigma^\alpha_1,p\sigma)=\emptyset$
  and
 $H(m_3,jz^n_u)\cap H(m_3,jz^n_v)=\emptyset$ if $u\neq v$.
\par

3. $\cup\mathscr{H}(n\sigma^\alpha_1,jz,p\sigma)\subset H(m_2,jz_n)$.
\par

4.  Let $x\in H(m_2,jz_n)$ for some $jz_n$.
  Then there uniquely exists $u$ such that
  $x\in g(u^\alpha_1,t)\in
  \mathscr{G}_d(u^\alpha_1,jz^n_u)\cup\mathscr{G}_{-1}(u^\alpha_1,jz^n_u)$.
\par

5. $\mathscr{G}(n\sigma^\alpha_1,1_j,p\sigma)=\mathscr{G}_d(n\sigma^\alpha_1,1_j,p\sigma)
  \cup\mathscr{G}_{-1}(n\sigma^\alpha_1,1_j,p\sigma)$ is c.o.D in $(X,\rho)$.
\end{cl}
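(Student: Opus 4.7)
The plan is to exploit the hierarchical Ln-cover structure one level at a time, pushing disjointness and discreteness from the inner layer outward.

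First I would dispose of parts (1)--(3) by unwinding definitions. For (1), the family $\mathscr{H}_d(n\sigma^\alpha_1,p\sigma)$ is, by definition, a union over indices $v\geq p$ (together with corresponding $z^n_v,jz_n,z$); replacing $p$ by $q>p$ restricts the $v$-index set, so the union shrinks, giving $H(n\sigma^\alpha_1,q\sigma)\subset H(n\sigma^\alpha_1,p\sigma)$. For the disjointness half of (2), $\mathscr{G}(m_3,jz_n)=\{g(m_3,z^n_v):v\in N\}$ is c.o.D by Claim~6.5(3) together with the general fact that every Ln cover is c.o.D (Claim~4.1(2)); since $H(m_3,jz^n_v)=H(m_2,jz_n)\cap g(m_3,z^n_v)$, the pieces with distinct $v$ are disjoint. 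For the intersection claim, fix $x\in H(n\sigma^\alpha_1,1\sigma)$; then $x$ lies in a unique $H(m_3,jz^n_v)$ by what was just shown, and hence cannot belong to any $H(n\sigma^\alpha_1,p\sigma)$ with $p>v$, so $\bigcap_pH(n\sigma^\alpha_1,p\sigma)=\emptyset$. Part (3) is immediate from the containment chain $H(m_3,jz^n_v)\subset H(m_2,jz_n)$ (established in Claim~6.5(1)) together with the fact that $\mathscr{G}_d(v^\alpha_1,jz^n_v)$ consists of pieces meeting $G(5,\alpha z^n_v)\subset H(m_3,jz^n_v)$.

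For (4), the point $x\in H(m_2,jz_n)$ determines $u$ uniquely by the disjointness of the $H(m_3,jz^n_v)$: there is exactly one $u$ with $x\in H(m_3,jz^n_u)$. Within that $H(m_3,jz^n_u)$ the family $\mathscr{G}(u^\alpha_1,jz^n_u)=\mathscr{G}_d(u^\alpha_1,jz^n_u)\cup\mathscr{G}_{-1}(u^\alpha_1,jz^n_u)$ is a Ln cover by construction B1 and $\Re1$ (Claim~5.8), so its members are pairwise disjoint c.o.D sets partitioning the relevant piece; hence $x$ lies in exactly one $g(u^\alpha_1,t)$ of that family. Combined with uniqueness of $u$, this yields the required unique $(u,t)$.

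The main obstacle is (5), establishing that $\mathscr{G}(n\sigma^\alpha_1,1_j,p\sigma)$ is c.o.D globally. I would proceed layer by layer using the fact that at each level the underlying Ln cover is c.o.D. Concretely, fix $y\in X$. Since $\mathscr{G}(m_1,1_j)$ is c.o.D, $y$ has a neighborhood $U_1$ meeting at most one $g(m_1,z)$; work inside this unique $g(m_1,z)$ (or note the intersection is empty otherwise). Inside $g(m_1,z)$ the Ln cover $\mathscr{G}(m_2,z)$ is c.o.D, so shrink to a neighborhood $U_2\subset U_1$ meeting at most one $g(m_2,z_n)$. Inside that, $\mathscr{G}(m_3,jz_n)$ is c.o.D, so shrink to $U_3$ meeting at most one $g(m_3,z^n_v)$. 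Finally, since $\mathscr{G}^*_d(v^\alpha_1,jz^n_v)$ and $\mathscr{G}_{-1}(v^\alpha_1,jz^n_v)$ are c.o.D in $(X,\rho)$ by Claim~5.8 (2 of A) together with the construction in B2, another shrinking gives a neighborhood meeting at most one member of $\mathscr{G}_d(v^\alpha_1,jz^n_v)\cup\mathscr{G}_{-1}(v^\alpha_1,jz^n_v)$. The delicate point is that all members of $\mathscr{G}_d(v^\alpha_1,jz^n_v)$ and $\mathscr{G}_{-1}(v^\alpha_1,jz^n_v)$ are contained in (or close enough to) $g(m_3,z^n_v)$ so that the successive shrinkings actually eliminate all but finitely many members at each level; here I would invoke Fact~4.0 and part 5 of Claim~4.4 to ensure that once $v^\alpha_1\geq m_3$, every $g(v^\alpha_1,t)$ meeting $g(m_3,z^n_v)$ is contained in it, so the separation from other $(n,v,z,j)$-branches is automatic. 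Closedness and openness of each member follow from 1 of Claim~4.1 and 7 of Proposition~3.3. The union $\mathscr{G}_d\cup\mathscr{G}_{-1}$ is then c.o.D as a union of c.o.D families whose supports nest correctly.
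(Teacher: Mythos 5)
The paper states this claim without any proof (it passes directly from B2 to B3), so there is no "paper's argument" to compare against; your proposal supplies the justification the authors evidently intended. Parts (1)--(4) are, as you say, pure definition-unwinding: monotonicity of the index set $\{v\ge p\}$, pairwise disjointness of the $H(m_3,jz^n_v)=H(m_2,jz_n)\cap g(m_3,z^n_v)$ coming from the c.o.D Ln cover $\mathscr{G}(m_3,jz_n)$, and the fact that $\mathscr{G}_d\cup\mathscr{G}_{-1}$ exhausts the pairwise disjoint Ln cover $\mathscr{G}(v^\alpha_1,jz^n_v)$; and your layered shrinking argument for (5), resting on the nesting $g(v^\alpha_1,t)\subset g(m_3,z^n_v)\subset g(m_2,z_n)\subset g(m_1,z)$ together with closedness of unions of c.o.D families, is correct and is the standard device the paper uses elsewhere (e.g.\ Claim 4.1(2), Claim 4.3). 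Your one flagged worry --- members with $v^\alpha_1<m_3$ escaping $g(m_3,z^n_v)$ --- is handled by the paper's own case split at $v(v^\alpha_5)$, since for $v<v(v^\alpha_5)$ the family is taken at level $m_5$ and so still sits inside $g(m_3,z^n_v)$; so no gap remains.
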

\vspace{0.3cm}
\par

\textbf{B3.} Take
  $\mathscr{H}(1\sigma^\alpha_1,p\sigma)=\cup\{\mathscr{H}(1j^\alpha_1,1_j,p\sigma):
  H'(m,1_j)\in\mathscr{H}'(m,y_0)\}$.
  Note $\cup\mathscr{H}_d(v^\alpha_1,jz^1_v)\subset H(m_3,jz^1_v)$
  for every $v,1_j\in N$.
We construct families by induction on $\mathscr{H}'(m,y_0)$.
\vspace{0.3cm}
\par

\textbf{B3.1.}
  Let $H'(m,1_1)\in\mathscr{H}'(m,y_0)$
  and $\mathscr{G}(m_1,1_1)$ be a Ln cover of $H'(m,1_1)$.
  For $j=1$ and $n=1$, take $\mathscr{G}_d(1\sigma^\alpha_1,1_1,p\sigma)$
  and $\mathscr{G}_{-1}(1\sigma^\alpha_1,1_1,p\sigma)$  from B2.
  Let \[H(m,1_1)=H'(m,1_1)-\cup\mathscr{G}_d(1\sigma^\alpha_1,1_1,p\sigma).\]
\par

\textbf{B3.2.} For $n=1$, assume that we have had
 $\mathscr{G}_d(1\sigma^\alpha_1,1_\ell,p\sigma)$,
  $\mathscr{G}_{-1}(1\sigma^\alpha_1,1_\ell,p\sigma)$ and $H(m,1_\ell)$ for each $\ell<j$.
 Take $H'(m,1_j)$. Let
  \[H^*(m,1_j)=H'(m,1_j)-\cup_{\ell<j}\cup\mathscr{G}_d(1\sigma^\alpha_1,1_\ell,p\sigma).\]
  Then $H^*(m,1_j)\neq\emptyset$ by 2 and 3 of Proposition 6.5.
  Let $\mathscr{G}(m_1,1_j)$
  be a Ln covers sequence of $H'(m,1_j)$ and
  \[\mathscr{G}^*(m_1,1_j)=\{g(m_1,z)\in\mathscr{G}(m_1,1_j):
  g(m_1,z)\cap H^*(m,1_j)\neq\emptyset\}.\]
  Take  $\mathscr{G}_d(1\sigma^\alpha_1,jz,p\sigma)$,
  $\mathscr{G}_{-1}(1\sigma^\alpha_1,jz,p\sigma)$ for
  every $g(m_1,z)\in\mathscr{G}^*(m_1,1_j)$ from the above B2.
  Let
  \[\mathscr{G}_d(1\sigma^\alpha_1,1_j,p\sigma)=
  \cup\{\mathscr{G}_d(1\sigma^\alpha_1,jz,p\sigma):
  g(m_1,z)\in\mathscr{G}^*(m_1,1_j)\}¡ê?\]
  \[\mathscr{G}_{-1}(1\sigma^\alpha_1,1_j,p\sigma)
  =\cup\{\mathscr{G}_{-1}(1\sigma^\alpha_1,jz,p\sigma):
  g(m_1,z)\in\mathscr{G}^*(m_1,1_j)\}\quad\hbox{ and}\]
  \[H(m,1_j)=H'(m,1_j)-\cup_{i\leq j}\cup\mathscr{G}_d(1\sigma^\alpha_1,1_i,p\sigma).\]
\vspace{0.3cm}

Then, by induction for every $j\in N$, there exist $\mathscr{G}_d(1\sigma^\alpha_1,1_j,p\sigma)$,
 $\mathscr{G}_{-1}(1\sigma^\alpha_1,1_j,p\sigma)$, $H(m_1,1_j)$,
 $\mathscr{G}^*(m_1,1_j)$,  and
  \[\mathcal{G}(v^\alpha_0,jz^1_v)
  =\{\mathscr{G}(v^\alpha_0,jz^1_v),
  \mathscr{G}(v^\alpha_1,jz^1_v)\} \hbox{ for every } v\in N.\]
    Let $\mathscr{H}(m,G_1)=\{H(m,1_j):j\in N\}$,
    \[\mathscr{G}^*(m_1,G_1)=\cup_j\mathscr{G}^*(m_1,1_j),
    \quad \mathscr{G}_{-1}(1\sigma^\alpha_1,G_1)
    =\{\mathscr{G}_{-1}(1\sigma^\alpha_1,1_j,p\sigma): j\in N\},\]
    \[ \mathscr{G}(1\sigma^\alpha_1,G_1)=\cup_j\mathscr{G}_d(1\sigma^\alpha_1,1_j,p\sigma)
    \quad\hbox{ and }\quad
    H(1\sigma^\alpha_1,G_1)=\cup\mathscr{H}(1\sigma^\alpha_1,G_1).\]

\begin{cl}

 Let $D(1\sigma^\alpha_1,G_1)=\cup\mathscr{G}(1\sigma^\alpha_1,G_1)$
  and $H(m,G_1)=\cup\mathscr{H}(m,G_1)$. Then:

1. $\mathscr{G}_d(1\sigma^\alpha_1,1_j,p\sigma)$ is c.o.D in $(X,\rho)$
  for every $j\in N$.
\par

2.  $D(1\sigma^\alpha_1,G_1)\cup H(m,G_1)=O(m,y_0)$ and
  $D(1\sigma^\alpha_1,G_1)\cap H(m,G_1)=\emptyset$.

3.  $Cl_\rho D(1\sigma^\alpha_1,G_1)=g(m,y_0)$.
\par

4. $H(m,1_j)=H'(m,1_j)-D(1\sigma^\alpha_1,G_1)
  =H'(m,1_j)-\cup_{i\leq j}\cup\mathscr{G}(1\sigma^\alpha_1,1_i,p\sigma)$
  for every $H'(m,1_j)\in\mathscr{H}'(m,y_0)$.
\par

5.  $g[p,H(1\sigma^\alpha_1,G_1)]=O(m,y_0)$ if
  $p=m_1$.
\par

6. Let $t\in g[p,D(1\sigma^\alpha_1,G_1)]-D(1\sigma^\alpha_1,G_1)$.
  Then, there exists a $k$ such that  $H(\ell,t)\cap g[q,D(1\sigma^\alpha_1,G_1)]=\emptyset$
  if $\ell,q>k$.
\end{cl}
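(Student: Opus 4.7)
The plan is to verify the six assertions in the order they are stated, drawing on the fact that the whole construction of $D(1\sigma^\alpha_1,G_1)$ is built, layer by layer, out of the Ln-cover machinery of Sections 4--5 and the hole-construction of Proposition~6.1 and Proposition~6.5. The guiding principle is that each $\mathscr{G}_d(v^\alpha_1,jz^n_v)$ sits inside one single $g(m_3,jz^n_v)$, that the $g(m_3,jz^n_v)$'s for different $(n,v)$ are pairwise disjoint (Claim~6.7.2), and that $\mathscr{G}(m_1,1_j)$ is a Ln cover of $H'(m,1_j)$ so that different choices of $j$ live essentially in disjoint products of coordinate intervals.

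First I would prove parts (4) and (2), which are essentially bookkeeping. Part (4) is immediate from the inductive definition in \textbf{B3.2}: $H(m,1_j)=H'(m,1_j)-\cup_{i\leq j}\cup\mathscr{G}_d(1\sigma^\alpha_1,1_i,p\sigma)$ is how $H(m,1_j)$ was defined, and by part (1) of Claim~6.6 plus Proposition~6.5.1, the piece $\cup\mathscr{G}_d(1\sigma^\alpha_1,1_i,p\sigma)$ for $i>j$ is disjoint from $H'(m,1_j)$, which gives the first equality. Part (2) is then a direct consequence: the disjointness of $D\cap H(m,G_1)$ follows from $H(m,1_j)=H'(m,1_j)-D$; the union $D\cup H(m,G_1)=O(m,y_0)$ follows from $\cup_j H'(m,1_j)=O(m,y_0)$ (Condition $B^*_1$) together with $\cup\mathscr{G}_d(1\sigma^\alpha_1,1_j,p\sigma)\subset H'(m,1_j)$ (which in turn is guaranteed by \textbf{B2}, since each $g(v^\alpha_1,t)$ inside it is contained in some $g(m_3,jz^n_v)\subset H'(m,1_j)$ via iterated use of 4 of Proposition~3.3).

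For part (1), I would fix $j$ and show $\mathscr{G}_d(1\sigma^\alpha_1,1_j,p\sigma)=\cup_z\mathscr{G}_d(1\sigma^\alpha_1,jz,p\sigma)$ is c.o.D. Each $g(v^\alpha_1,t)$ in it is c.o in $(X,\rho)$ by 1 of Claim~4.1. For discreteness: inside a single $H(m_3,jz^n_v)$ the family $\mathscr{G}_d(v^\alpha_1,jz^n_v)$ is c.o.D by 2 of A in Claim~5.8; varying $(n,v)$ keeps the sets inside the pairwise disjoint $g(m_3,jz^n_v)$'s (Claim~6.7.2), and since each $g(m_3,jz^n_v)$ is itself c.o the local discreteness extends; varying $z$ uses that $\mathscr{G}(m_1,1_j)$ is c.o.D (7 of Proposition~3.3). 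For part (5), note $H(1\sigma^\alpha_1,G_1)\subset O(m,y_0)$, so $g[p,H(1\sigma^\alpha_1,G_1)]\subset O(m,y_0)$; conversely every $x\in O(m,y_0)$ lies in some $H'(m,1_j)$, then in some $H(m_1,jz)$, then in some $H(m_2,jz_n)$, then in some $H(m_3,jz^n_v)$, and the construction of \textbf{B2} forces $x$ into a member of $\mathscr{G}_d(v^\alpha_1,jz^n_v)$ or of $\mathscr{G}_{-1}(v^\alpha_1,jz^n_v)$, and in the first case $H(p,x)\subset g(p,x)\subset O(m,y_0)$ since $p=m_1$ (use 5 of Claim~4.4).

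For parts (3) and (6) I would use the two ``hole'' propositions 6.1 and 6.5 as the main technical tool. For (3): given $x\in g(m,y_0)=G_m$, either $x\in H_0=H(m,a^*,y_0)$ or $x\in O(m,y_0)$. In the second case part (2) already gives $x\in D\cup H(m,G_1)$, and each point of $H(m,G_1)$ is a limit of $D$-points because at every level $n\geq m_1$ the hole $g(n,1_j)$ of Proposition~6.5 is forced to meet $D$ in infinitely many Ln-pieces (clauses 2--3 of Proposition~6.5), so Proposition~3.6 gives that every $H(n_i,y_i)\supset\{y\}$ neighborhood of a point of $H(m,G_1)$ contains members of $D$. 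For points in $H_0$ a similar argument works using that $G_m\setminus H_0$ is dense in $G_m$ by 2 of Proposition~3.6. The main obstacle, and the one I expect to be hardest, is part (6): given $t\in g[p,D]-D$, I would locate the unique $H(m_3,jz^n_v)$ containing $t$ (Claim~6.7.2), use the structure of $\mathscr{G}_d(v^\alpha_1,jz^n_v)$ and $\mathscr{G}_{-1}(v^\alpha_1,jz^n_v)$ to conclude $t$ lies in $\cup\mathscr{G}_{-1}$, and then apply Proposition~5.1 together with Corollary~5.3 to choose $k$ so large that for any $\ell,q>k$ the basic neighborhood $H(\ell,t)$ lies inside a single $g(\ell,*)\in\mathscr{G}_{-1}$-type piece, hence is disjoint from $g[q,D]$. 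The delicate point here is coordinating $\ell$ and $q$ simultaneously: this is where the c.o.D property from part (1) together with Corollary~4.9 will be needed to guarantee that the same cut-off $k$ works uniformly.
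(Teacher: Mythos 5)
Your treatment of the bookkeeping parts (1), (2), (4) and (5) is essentially the paper's: (1), (2) and (4) are read off from the inductive definitions in \textbf{B2}--\textbf{B3.2} together with 6 of Proposition 3.3 and the pairwise disjointness of the $H(m_3,jz^n_v)$'s, and (5) comes from 3 of B of Claim 5.8 exactly as you indicate. The two substantive parts, however, are where your plan has genuine gaps.

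For part (3), the mechanism you name would not deliver the conclusion. Proposition 6.5 (clauses 2--3) is about the holes $g(n+1,1_j)$ containing infinitely many pieces of a Ln cover of $H'(n,1_j)$; in this section it is used only to guarantee that $H^*(m,1_j)\neq\emptyset$ in \textbf{B3.2}, not to show that $D(1\sigma^\alpha_1,G_1)$ accumulates at points of $H(m,G_1)$. Likewise Proposition 3.6 gives $\mathrm{Int}_\rho H(n,x')=\emptyset$, so the sets $H(n_i,y_i)$ are not neighborhoods and cannot carry the density argument. What the paper actually does is: given $r\in H(m,1_j)$ and a basic $\rho$-neighborhood, it passes to a suitable $g_c(p,y')$, chooses the \emph{least} $1_\ell$ with $H'(m,1_\ell)\cap g_c(p,y')\neq\emptyset$, and proves (Facts 6.9--6.11) that the resulting family $\mathscr{G}^*(m_1,y'1_\ell)$ of ``full'' sets is nonempty and satisfies $G_d(1\sigma^\alpha_1,\ell z,p)\subset g(m_1,z)\subset g_c(p,y')$; since $G_d(1\sigma^\alpha_1,\ell z,p)\subset D(1\sigma^\alpha_1,G_1)$, the neighborhood meets $D$. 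This ``least $1_\ell$ / full set'' step is the heart of the proof and is entirely absent from your plan; without it you have no way to produce a point of $D$ inside an arbitrary basic neighborhood of a point of $H(m,G_1)$.

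For part (6) your plan only controls the contribution to $g[q,D(1\sigma^\alpha_1,G_1)]$ coming from the single $H'(m,1_j)$ (indeed the single $H(m_1,jz)$ and $H(m_3,jz^n_v)$) containing $t$. But $D(1\sigma^\alpha_1,G_1)$ is a union over \emph{all} $j$, and the level-$q$ balls around $d$-points lying over other $H'(m,1_i)$'s can still reach $t$ unless $q$ is chosen with care. The paper's proof splits this into three cases: (A) the finitely many $1_i<1_j$, handled by the positive distance $\rho(H(m,1_j-),H'(m,1_j))=r>0$ from Proposition 3.1; (B) the $1_i>1_j$, handled by 6 and $6'$ of Proposition 3.3; and (C) the part inside $H(m_1,jz)$, handled by 3 of B of Claim 5.8 ($g[q,\cup\mathscr{G}_d(v^\alpha_1,jz^1_v)]=\cup\mathscr{G}_d(v^\alpha_1,jz^1_v)$ for $q\geq v^\alpha_1$) and the disjointness of the $H(m_3,jz^1_v)$'s; the final $k$ is the maximum of the three thresholds. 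Your plan collapses (A) and (B), and your concluding step ``$H(\ell,t)$ lies inside a single $g(\ell,*)\in\mathscr{G}_{-1}$-type piece, hence is disjoint from $g[q,D]$'' does not follow: membership in a $\mathscr{G}_{-1}$-piece says nothing about the balls $g(q,x)$ for $x\in D$ over other indices. Corollary 4.9 is also not the relevant tool here.
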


\begin{proof}
It is easy to see 1, 2 and 4. \quad   5 follows from 3 of B in Claim 5.8.
\par
\textit{Proof of 3.}
To do it pick an $r\in H(m,1_j)$. Then,
  by the above definition of $H(m,1_j)$ and 4 of Claim 6.7, there uniquely exists
  $g(v^\alpha_1,r')\in\mathscr{G}(v^\alpha_1,jz^1_v)$
  such that $H(v^\alpha_1,r)\subset H(v^\alpha_1,jr')=g(v^\alpha_1,r')\cap H(m,1_j)
  \subset H(m,1_j)$ and
  \[g(v^\alpha_1,r')\cap [\cup_{i\leq j}\cup\mathscr{G}_d(1\sigma^\alpha_1,1_i,p\sigma)]=\emptyset.\]
\par

Let $p>v^\alpha_1$ and $r\in H(p,y')\subset H(v^\alpha_1,jr')$.
  Then there exists a $c$ with
  $g(m,1_\ell,t)\subset g_c(p,y')$ if $1_\ell>c$ and
  $H(m,1_\ell,t)\cap g_c(p,y')\neq\emptyset$
  by 1 of Corollary 5.3.
  Let $1_\ell$ be the least number for $m<p$.
  Then, for every $t\in H'(m,1_\ell)\cap g_c(p,y')$, we have
  \[g(m,1_\ell, t)\subset g_c(p,y')\subset g(p,y').\]
\par

  And then we may prove the following Fact.
\par

\begin{fact}
Let $\mathscr{G}(m_1,1_k)$ be a Ln cover of $H'(m,1_k)$ for
  $H'(m,1_k)\in\mathscr{H}'(m,y_0)$,
  $G(m_1,1_\ell)=\cup\{\cup\mathscr{G}(m_1,1_k): 1_j<1_k<1_\ell\}$ and
  $H(m_1,1_\ell,t)\subset g_c(p,y')$. Then
  $g(m_1,1_\ell,t)\cap G(m_1,1_\ell)=\emptyset$.
\end{fact}
\begin{proof}
Note $1_j<1_\ell$. Let $1_j<1_k<1_\ell$ for some $1_k$.
  Let $g(m_1,1_k,s)\in\mathscr{G}(m_1,1_k)$,
  \[I_c(p,y')=[q_1,q_{1c})\times...\times[q_{m_1},q_{m_1 c}),\quad
  g_c(p,y')=I_c(p,y')\times J(p,y'),\]
  \[I(m_1,1_j,r)=[q_1,q_{1c_j})\times...\times[q_{m_1},q_{m_1 c_j}),\quad
  g(m_1,1_j,r)=I(m_1,1_j,r)\times J(m_1,1_j,r),\]
  \[I(m_1,1_\ell,t)=[q'_1,q'_{1c_\ell})\times...\times[q'_{m_1},q'_{m_1 c_\ell}),\quad
  g(m_1,1_\ell,t)=I(m_1,1_\ell,t)\times J(m_1,1_\ell,t),\]
  \[I(m_1,1_k,s)=[q''_1,q''_{1c_k})\times...\times[q''_{m_1},q''_{m_1 c_k}) \quad\hbox{ and }\]
  \[g(m_1,1_k,s)=I(m_1,1_k,s)\times J(m_1,1_k,s).\]
  Note $H(m_1,1_\ell,t)\subset H'(m_1,1'_\ell)\subset H'(m,1_\ell)$,
  $H(m_1,1_k,s)\subset H'(m_1,1'_k)\subset H'(m,1_k)$ and
  $H(p,y')\subset H(v^\alpha_1,r)\subset H(m_1,1_j,r)\subset H'(m_1,1'_j)\subset H'(m,1_j)$
  such that
  $H'(m_1,1'_k)=[\{q''_1\}\times...\times\{q''_{m_1}\}\times X_{m_1}]\cap g(m,y_0)$ and $g_c(p,y')\subset g(m,y_0)$.
\par

Case 1.  $g(m_1,1_k,s)\cap g_c(p,y')=\emptyset$.
  Then $g(m_1,1_k,s)\cap g(m_1,1_\ell,t)=\emptyset$
  since $g(m,1_\ell,t)\subset g_c(p,y')$.
\par

Case 2.  $g(m_1,1_k,s)\cap g_c(p,y')\neq\emptyset$.
  Note that $1_\ell$ is the least number such that $1_\ell>c$,
  $1_\ell>1_j$ and $H'(m,1_\ell)\cap g_c(p,y')\neq\emptyset$.
  Then $1_j<1_k<1_\ell$ implies \[H'(m,1_k)\cap g_c(p,y')=\emptyset.\]
  Then, for every $H'(m_1,1'_k)\subset H'(m,1_k)$, we have
  $H'(m_1,1'_k)\cap g_c(p,y')=\emptyset$.
  Then $H'(m_1,1'_k)\cap g_c(p,y')=\emptyset$ implies $q''_i\notin[q_i,q_{ic})$
  for some $i\leq m_1$ since $g(m_1,1_k,s)\in\mathscr{H}(m_1,1_k)$
  and $g(m_1,1_k,s)\cap g_c(p,y')\neq\emptyset$.
\par

Note $1_j<1_k$. This implies $c_j=m+1_j<m+1_k=c_k$.
  Note \[H(p,y')\subset H(v^\alpha_1,r)\subset H(m,1_j).\]
  Suppose $g(m_1,1_k,s)\cap g(m_1,1_\ell,t)\neq\emptyset$ for some $1_k<1_\ell$.
  Then $g(m_1,1_\ell,t)\subset g_c(p,y')\subset g(m_1,1_j,r)$
  implies $g(m_1,1_k,s)\cap g(m_1,1_j,r)\neq\emptyset$.
  Then $1_j<1_k$ implies
  $H'(m,1_j)\cap g(m_1,1_k,s)=\emptyset$ by 6 of Proposition of 3.3, and
  implies
  $g(m_1,1_k,s)\subset g(m_1,1_j,r)$ by 8 of Proposition 3.3.
  Then $[q''_b,q''_{bc_k})\subset [q_b,q_{bc_j})$ for each $b\leq m_1$.
  Then $q''_b\in [q_b,q_{bc_j})$ for each $b\leq m_1$,
  and $q''_i\notin[q_i,q_{ic})$
  for some $i\leq m_1$.
  Then $q''_i\in [q_i,q_{ic_j})$ and $q''_i\notin[q_i,q_{ic})$.
  Then $q_{ic}<q''_i$.
  Then $[q_i,q_{ic})\cap[q''_i,q''_{ic_k})=\emptyset$.
  Note $g_c(p,y')\cap g(m_1,1_k,s)\neq\emptyset$.
  Then $[q_i,q_{ic})\cap[q''_i,q''_{ic_k})\neq\emptyset$
  for each $i\leq m_1$, a contradiction.
  Then  $g(m_1,1_k,s)\cap g(m_1,1_\ell,t)=\emptyset$ for each $1_k<1_\ell$.
  Then $G(m_1,1_\ell)\cap g(m_1,1_\ell,t)=\emptyset$.
\end{proof}

\begin{fact}
Let $H(m+1,s)\subset H'(m,1_\ell)$ and $g(m+1,s)\cap g_c(p,y')\neq\emptyset$. Then
  $H(m+1,s)\cap g_c(p,y')\neq\emptyset$,
  $g(m+1,s)\subset g_c(p,y')$ and $g(m,s)\subset g_c(p,y')$.
\end{fact}
\begin{proof}
Note $g(m,a^*,y_0)=I(m,a^*,y_0)\times J(m,a^*,y_0)$.
Let \[J(m,a^*,y_0)=[q_{l_1},q_{l_1m^2})\times[q_{l_2},q_{l_2m^2})\times...
  \times X_{m^2}.\]
  Let $H(m,1_\ell)=\{q_{1_1}\}\times...
  \times\{q_{1_m}\}\times X_m$ and $P(m,1_\ell)=\{q_{1_1}\}\times...
  \times\{q_{1_m}\}$. Then
  \[H'(m,1_\ell)=G_m\cap H(m,1_\ell)=P(m,1_\ell)\times J(m,a^*,y_0).\]
  Let $g_c(p,y')=I_m(p,y')\times J_m(p,y')$.
  Note that $1_\ell$ is the least number such that
  $H'(m,1_\ell)\cap g_c(p,y')\neq\emptyset$.
  Then \[P(m,1_\ell)\in I_m(p,y')=[q_1,q_{1c})\times...\times[q_{m},q_{m c}).\]
  Let $\mathscr{G}(m_1,1_\ell)$ be an Ln cover of $H'(m,1_\ell)$.
  Then there exists a $g(m_1,s)\in\mathscr{G}(m_1,1_\ell)$
  such that $H(m_1,s)\subset H'(m,1_\ell)$ and $g(m_1,s)\cap g_c(p,y')\neq\emptyset$.
  Then we have $H(m_1,s)\subset H(m,s)$ and $g(m,s)\cap g_c(p,y')\neq\emptyset$.
  Note \[g(m,s)=g(m,1_\ell,s)=I(m,1_\ell,s)\times J(m,1_\ell,s).\]
  Then $J(m,1_\ell,s)\cap J_m(p,y')\neq\emptyset$.
  Note $H(m,1_\ell,s)=P(m,1_\ell)\times J(m,1_\ell,s)$ by
  the definition of $H(n,i,l)$  in A of Construction 3.2.
  Then $H(m,1_\ell,s)\cap g_c(p,y')\neq\emptyset$.
  Then $g(m,s)\subset g_c(p,y')$ by the definition $1_\ell$.
  Then $g(m+1,s)\subset g_c(p,y')$.

\end{proof}

\textit{The proof of 3 is continued.}
Let $t^*\in H''(m_1,1_\ell)=H'(m_1,1_\ell)\cap g_c(p,y')$ with
  $H''(m,1_\ell)\cap H(p,y')=\emptyset$,
  and $\mathscr{G}(m_1,1_\ell)$ be an Ln cover of $H'(m,1_\ell)$.
  Then there exists a $g(m_1,x)\in\mathscr{G}(m_1,1_\ell)$
  such that \[t^*\in H''(m,1_\ell)\cap g(m_1,x)
  \subset g_c(p,y')\cap g(m_1,x).\]
  Then $g(m_1,x)\cap g_c(p,y')\neq\emptyset$.
  Then, by Fact 6.10, we have $g(m_1,x)\subset g_c(p,y')$
  and $x\in H'(m,1_\ell)\cap g(m_1,x)\subset H''(m,1_\ell)$.
\par

 Let $\mathscr{G}^{**}(m_1,y'1_\ell)=\{g(m_1,x)\in\mathscr{G}(m_1,1_\ell):
  g_c(p,y')\cap g(m_1,x)\neq\emptyset\}$
  and
  \[\mathscr{G}^*(m_1,y'1_\ell)=\{g(m_1,x)\in\mathscr{G}(m_1,1_\ell):
  g(m_1,x)\subset g_c(p,y')\}.\]
  Then $\mathscr{G}^{**}(m_1,y'1_\ell)=\mathscr{G}^*(m_1,y'1_\ell)\neq\emptyset$.
  Let $\mathscr{G}(m,1_\ell)$ be a Ln cover of $H'(m_1,1_\ell)$ and
   \[\mathscr{G}^*(m,y'1_\ell)=\{g(m,x)\in\mathscr{G}(m,1_\ell):
  g(m,x)\subset g_c(p,y')\}.\]
  Then $\mathscr{G}^*(m,y'1_\ell)\neq\emptyset$ by Fact 6.10.
\par

A. Note $\mathscr{G}_d(1\sigma^\alpha_1,1_i,p\sigma)=
  \cup\{\mathscr{G}_d(1\sigma^\alpha_1,iz,p\sigma):
  g(m_1,z)\in\mathscr{G}^*(m_1,1_i)\}$ for $i\in N$.
  Let $g(m_1,z)\in\mathscr{G}^*(m_1,y'1_\ell)$
  Then $g(m_1,z)\cap [\cup_{i\leq j}
  \cup\mathscr{G}_d(1\sigma^\alpha_1,1_i,p\sigma)]=\emptyset$
  since $g(m_1,z)\subset g_c(p,y')\subset g(v^\alpha_1,r')$
  and $g(v^\alpha_1,r')\cap [\cup_{i\leq j}\cup\mathscr{G}_d(1\sigma^\alpha_1,1_i,p\sigma)]=\emptyset$.
  Note $g(m_1,z)\cap G(m_1,1_\ell)=\emptyset$ for $j<k<\ell$ by Fact 6.9.
  Let $i>\ell$, $\mathscr{G}(m_1,1_i)$ be a Ln cover of $H'(m_1,1_i)$
  and $G(m_1,1_i)=\cup\mathscr{G}(m_1,1_i)$.
  Then,
  by 6 of Proposition 3.3, we have $G(m_1,1_i)\cap [
  \cup\mathscr{H}_d(1\sigma^\alpha_1,1_\ell,p\sigma)]=\emptyset$.
  Note $\mathscr{G}(m_1,1_\ell)$ is an Ln cover of $H'(m,1_\ell)$
  and $\mathscr{G}^*(m_1,y'1_\ell)\subset\mathscr{G}(m_1,1_\ell)$.
  Then $\mathscr{G}_d(1\sigma^\alpha_1,1_\ell,p\sigma)|g(m_1,z)
  =\mathscr{G}_d(1\sigma^\alpha_1,\ell z,p\sigma)$
  for every $g(m_1,z)\in\mathscr{G}^*(m_1,y'1_\ell)$
  by the definition of $\mathscr{H}_d(1\sigma^\alpha_1,\ell z,p\sigma)$
  in B3.2.
\par

B. Let $g(m_1,z)\in\mathscr{G}^*(m_1,y'1_\ell)$.
  Then $\cup\mathscr{G}_d(1\sigma^\alpha_1,\ell z,p\sigma)
  \subset g(m_1,z)\subset g_c(p,y')$.
  Let $G_d(1\sigma^\alpha_1,\ell z,p)
  =\cup\mathscr{G}_d(1\sigma^\alpha_1,\ell z,p\sigma)$.

  Then we have the following fact:
\begin{fact}
1. $\emptyset\neq\mathscr{G}^*(m_1,y'1_\ell)\subset\mathscr{G}^*(m_1,1_\ell)$
  and $\mathscr{G}^*(m,y'1_\ell)\neq\emptyset$.
\par

2. $\mathscr{H}_d(1\sigma^\alpha_1,p\sigma)|H(m_1,z)
  =\mathscr{H}_d(1\sigma^\alpha_1,\ell z,p\sigma)$
  for every $g(m_1,z)\in\mathscr{G}^*(m_1,y'1_\ell)$.
\par

3. $H(m_1,z)-G_d(1\sigma^\alpha_1,\ell z,p)
  \subset  g_c(p,y')$ and
  $G_d(1\sigma^\alpha_1,\ell z,p)\subset g_c(p,y')$
  for every $g(m_1,z)\in\mathscr{G}^*(m_1,y'1_\ell)$.
\end{fact}

Call $H(m_1,z)$ \textbf{full  if $g(m_1,z)\in \mathscr{G}^*(m_1,y'1_\ell)$.}

Pick an arbitrary $x_{dj}\in H(m_1,z)-G_d(1\sigma^\alpha_1,\ell z,p)$.
  Let \[A^*_{d\ell}=\{x_{dz}:x_{dz}\in H(m_1,z)-G_d(1\sigma^\alpha_1,\ell z,p)
  \hbox{ and }g(m_1,z)\in\mathscr{G}^*(m_1,1_\ell)\}\quad\hbox{ and}\]
   \[A^*_d=\{A^*_{d\ell}:H'(m,1_\ell)\in\mathscr{H}'(m,y_0)\}.\]
  Then we have the following fact:
\begin{fact}
Let $H(v^\alpha_1,t)\in\mathscr{H}_{-1}(1\sigma^\alpha_1,p\sigma)$. Then:
\par

1. $H(v^\alpha_1,t)\subset Cl_\rho A^*_d$.
\par

2. $H(v^\alpha_1,t)\subset Cl_\rho[\cup_{\ell>j}H(m,1_\ell)]$,
  $H(v^\alpha_1,t)\subset Cl_\rho[\cup_{i> j}
  \cup\mathscr{H}_d(1\sigma^\alpha_1,1_i,p\sigma)]$
  and
  $H(v^\alpha_1,t)\subset Cl_\rho[\cup_{i> j}\cup\mathscr{G}_d(1\sigma^\alpha_1,1_i,p\sigma)]$.
\end{fact}
\textit{The proof of 3 is continued.}
 Then,  by the above Fact 6.12, $Cl_\rho D(1\sigma^\alpha_1,G_1)=g(m,y_0)$.
\par

\textit{Proof of 6.}
Let $t\in g[p,D(1\sigma^\alpha_1,G_1)]-D(1\sigma^\alpha_1,G_1)$.
  Then there exists an
  $H'(m,1_j)\in\mathscr{H}'(m,y_0)$ such that
  \[t\in H'(m,1_j)\cap[g[p,D(1\sigma^\alpha_1,G_1)]-D(1\sigma^\alpha_1,G_1)].\]
\par

A.  Let $D(1,<)=\cup_{i<j}
  \cup\mathscr{G}_d(1\sigma^\alpha_1,1_i,p\sigma)$ and $D(1,=)=\cup_{i\leq j}
  \cup\mathscr{G}_d(1\sigma^\alpha_1,1_i,p\sigma)$.
  Then, by 4 of Claim 6.8,
  \[t\in H(m,1_j)=H'(m,1_j)-D(1\sigma^\alpha_1,G_1)=H'(m,1_j)-D(1,=)
  \subset H'(m,1_j)-D(1,<).\]
  Then there exits an $H(v^\alpha_1,r)\subset H(v^\alpha_1,jr)
  =g(v^\alpha_1,r)\cap H(m,1_j)\subset H(m,1_j)$
  such that $t\in H(v^\alpha_1,jr)\subset g(v^\alpha_1,r)\in\mathscr{G}_{-1}(v^\alpha_1,jz^1_v)$ and
  $H(v^\alpha_1,jr)\cap D(1,<)=\emptyset$.
  Then $H(v^\alpha_1,t)\subset H(v^\alpha_1,jr)\subset H(m,1_j)$.
\par

 Let $\mathscr{H}(m, 1_j-)
   =\{H'(m,1_l)\in\mathscr{H}'(m,y_0):
   1_l<1_j\}$.
   Then $\mathscr{H}(m, 1_j-)$ is finite.
   Let $H(m, 1_j-)=\cup\mathscr{H}(m, 1_j-).$
   Then $\rho(H(m, 1_j-), H'(m, 1_j))=r>0$
   by 3 of Proposition 3.1.
   Then there exists a $q$ such that
   \[g[q,H(m, 1_j-)]\cap H'(m, 1_j)=\emptyset.\]
   Let $g(v^\alpha_1,s)\in\mathscr{G}_d(1\sigma^\alpha_1,i,p\sigma)$
   for $i<j$.
\par

Case 1, $v^\alpha_1\leq q$. Then $g[q,g(v^\alpha_1, s)]=g(v^\alpha_1,s)$
  and $g(v^\alpha_1,s)\cap H(v^\alpha_1,t)=\emptyset$.
\par

Case 2,  $v^\alpha_1>q$. Then $g[q,g(v^\alpha_1, s)]\cap H'(m, 1_j)=\emptyset$
  since $H(v^\alpha_1, s)\subset H'(m, 1_i)\subset H(m, 1_j-)$
  and $g[q,H(m, 1_j-)]\cap H'(m, 1_j)=\emptyset$.
\par

Then  $g[q,D(1,<)]\cap H(v^\alpha_1,t)=\emptyset$.
 Let $q_1=q$.
\par

B. Let $\mathscr{H}(m, 1_j+)
   =\{H'(m,1_l)\in\mathscr{H}'(m,y_0):
   1_l>1_j\}$ and $H(m, 1_j+)=\cup\mathscr{H}(m, 1_j+).$
   Then $H'(m,1_j)\cap g[m_1,H(m_1, 1_j+)]=\emptyset$.
   Let $q_2>m_1$.
\par

C. Note $\mathscr{G}(m_1,1_j)$ is a Ln cover of $H'(m,1_j)$
 and $\mathscr{G}(m_3,1_j,z)$ is a Ln cover of $H(m_1,jz)$
 for every $g(m_1,z)\in\mathscr{G}(m_1,1_j)$ by 2 of Claim 6.6.
 Then there uniquely exists an $g(m_3,z^1_v)\in\mathscr{G}(m_3,1_j,z)$
 such that $t\in H(m_3,jz^1_v)=g(m_3,z^1_v)\cap H(m_1,jz)$.
 Then $t\in g[p,\cup\mathscr{G}_d(v^\alpha_1,jz^1_v)]-\cup\mathscr{G}_d(v^\alpha_1,jz^1_v)$
 since $\mathscr{G}(m_3,1_j,z)$ is a Ln cover of $H(m_1,jz)$.
 Note, by 3 of B of Claim 5.8, $q\geq v^\alpha_1$ implies
 \[g[q,\cup\mathscr{G}_d(v^\alpha_1,jz^1_v)]=\cup\mathscr{G}_d(v^\alpha_1,jz^1_v).\]
 Then there exists an $\ell>v^\alpha_1$ such that
 $H(l,t)\cap g[q,\cup\mathscr{G}_d(v^\alpha_1,jz^1_v)]=\emptyset$
 if $l,q>\ell$. Let $q_3>\ell$.
\par

   Let $l,q>\hbox{max}\{q_1,q_2,q_3\}$.
   Then $H(l,t)\cap g[q,D(1\sigma^\alpha_1,G_1)]=\emptyset$.
\end{proof}

To calculate $g[p, D(1\sigma^\alpha_1,G_1)]$ for $p\geq m_5$, we prove the following proposition.

\begin{prop} Let $p\geq m_5$,
  $\mathscr{H}'(m,y_0)=\{H'(m,1_h): h\in N\}$ with $O(m,y_0)=\cup\mathscr{H}'(m,y_0)$. Then:
\par

1.  $g[p,D(1\sigma^\alpha_1,G_1)]=D(p,m^\alpha)=\cup_hD(p,m^\alpha,1_h)$.
\par

2. $Cl_\rho D(p,m^\alpha)=g(m,y_0)$.
\par

3. $D(p,m^\alpha)\cap H(p,m^\alpha)=\emptyset$ and
  $D(p,m^\alpha)\cup H(p,m^\alpha)=O(m,y_0)$.
\par

4. Let $G_d(1\alpha,j)=\cup\mathscr{G}_d(1\sigma^\alpha_1,1_j,q\sigma)$
  and $g[p,G_d(1\alpha,j)]=D(p,m^\alpha,1_j)$.
  Then $D(p,m^\alpha,1_j)$ is a c.o set in $(X,\rho)$ for every $j\in N$,
  $D(p,m^\alpha)=\cup_jD(p,m^\alpha,1_j)$
  and
  $H(m,1_j)=H'(m,1_j)-D(p,m^\alpha)=H'(m,l_j)-[\cup_{i\leq j}D(p,m^\alpha,1_i)]$.
\par

5. There exists a c.o.D family $\mathscr{G}_d(p,m^\alpha,1_j)$ with
 $D(p,m^\alpha,1_j)=\cup\mathscr{G}(p,m^\alpha,1_j)$.
\par

6. Let $H(l,t)\subset H(p,m^\alpha)=\cup_jH(m,1_j)$.
  Then
  $H(l,t)\subset Cl_\rho[\cup_{j>h}H(m,1_j)]$
  and
  $H(l,t)\subset Cl_\rho[\cup_{j>h}D(p,m^\alpha,1_j)]$
  for arbitrary $h\in N$.
\end{prop}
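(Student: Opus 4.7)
The plan is to dispatch the six assertions in order, with the bulk of the real work sitting in parts 4 and 6; the other parts reduce to unpacking definitions and invoking Claim 6.8.

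For part 1, I would simply observe that $D(1\sigma^\alpha_1,G_1)=\cup_h\cup\mathscr{G}_d(1\sigma^\alpha_1,1_h,p\sigma)=\cup_h G_d(1\alpha,h)$ by the definition given in B3.2, so applying $g[p,\cdot]$ (which distributes over unions) yields $g[p,D(1\sigma^\alpha_1,G_1)]=\cup_h g[p,G_d(1\alpha,h)]=\cup_h D(p,m^\alpha,1_h)$. Part 2 follows at once from Claim 6.8(3): since $D(1\sigma^\alpha_1,G_1)\subset g[p,D(1\sigma^\alpha_1,G_1)]=D(p,m^\alpha)\subset g(m,y_0)$, taking closures sandwiches $Cl_\rho D(p,m^\alpha)$ between $g(m,y_0)$ and $g(m,y_0)$. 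Part 3 is the same kind of sandwich using Claim 6.8(2): the inclusion $D(p,m^\alpha)\cup H(p,m^\alpha)\supset D(1\sigma^\alpha_1,G_1)\cup H(m,G_1)=O(m,y_0)$ gives one containment, and the definitions (identifying $H(p,m^\alpha)$ with $H(m,G_1)=\cup_j H(m,1_j)$) together with $D(p,m^\alpha)\subset O(m,y_0)$ and the last claim of part 4 below give disjointness.

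For part 4 the key point is lifting the c.o.D structure from level $\ell\sigma^\alpha_1$ to level $p$. By Claim 6.8(1) each $\mathscr{G}_d(1\sigma^\alpha_1,1_j,p\sigma)$ is c.o.D in $(X,\rho)$, so $G_d(1\alpha,j)$ is already c.o in $(X,\rho)$; then $D(p,m^\alpha,1_j)=g[p,G_d(1\alpha,j)]=\cup\{g(p,x):x\in G_d(1\alpha,j)\}$ is open as a union of open sets, and I would verify closedness by selecting, via an Ln procedure applied at level $p$, a c.o.D subfamily whose union equals $D(p,m^\alpha,1_j)$. For the identity $H(m,1_j)=H'(m,1_j)-\cup_{i\leq j}D(p,m^\alpha,1_i)$, the inclusion $\subset$ is immediate from Claim 6.8(4) together with $G_d(1\alpha,i)\subset D(p,m^\alpha,1_i)$, and the reverse inclusion needs the observation that enlarging $G_d(1\alpha,i)$ to $g[p,G_d(1\alpha,i)]$ only picks up points outside $H'(m,1_j)$ when $i\neq j$, which is exactly what Facts 6.9 and 6.10 (together with the condition $p\geq m_5$ guaranteeing full access to the characterization family $H_C(m_5,\alpha z_n)$) were built to ensure. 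Part 5 is then automatic: take the c.o.D refinement produced in part 4 and name it $\mathscr{G}_d(p,m^\alpha,1_j)$.

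Part 6 is the delicate closure statement and is where the bulk of the work lies. I would start by observing that if $H(\ell,t)\subset H(p,m^\alpha)=\cup_j H(m,1_j)$ then $t$ lies in some $H(m,1_j)$, and so $t$ belongs to some $H(v^\alpha_1,jr)\subset g(v^\alpha_1,r)\in\mathscr{G}_{-1}(v^\alpha_1,jz^1_v)$ by the argument already given at the start of the proof of Claim 6.8(6). Fact 6.12(2) then gives $H(v^\alpha_1,t)\subset Cl_\rho[\cup_{\ell'>j}H(m,1_{\ell'})]$ and $H(v^\alpha_1,t)\subset Cl_\rho[\cup_{i>j}\cup\mathscr{G}_d(1\sigma^\alpha_1,1_i,p\sigma)]$. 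Since $H(\ell,t)\subset H(v^\alpha_1,t)$ for $\ell\geq v^\alpha_1$ and since $\cup\mathscr{G}_d(1\sigma^\alpha_1,1_i,p\sigma)=G_d(1\alpha,i)\subset D(p,m^\alpha,1_i)$, both required closure statements follow; to handle arbitrary $h$ rather than the specific $j$ produced above, one shifts indices using the unbounded character of the mad family $\alpha$.

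The main obstacle I anticipate is part 4, and specifically verifying that the passage from level $1\sigma^\alpha_1$ to level $p\geq m_5$ does not spoil either the c.o structure of each $D(p,m^\alpha,1_j)$ or the disjointness encoded in the identity $H(m,1_j)=H'(m,1_j)-\cup_{i\leq j}D(p,m^\alpha,1_i)$; this is exactly the place where the hypothesis $p\geq m_5$ is indispensable, since it is only above $m_5$ that every $g(v^\alpha_1,t)$ meeting $G(5,\alpha z^n_v)$ is absorbed into $G(5,\alpha z^n_v)$ as required by the definition of $\mathscr{G}_d(v^\alpha_1,jz^n_v)$ in B2.
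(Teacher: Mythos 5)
Your overall architecture matches the paper's: parts 1--3 and 5 by unwinding the definitions in B3.2 and invoking Claim 6.8, part 4 by exhibiting a c.o.D family at level $p$ whose union is $D(p,m^\alpha,1_j)$, and part 6 by re-running the density argument of Claim 6.8(3). Your sandwich argument for part 2 via Claim 6.8(3) is a clean shortcut the paper does not bother to spell out, and your use of Note Ln to get closedness of $D(p,m^\alpha,1_j)$ is a legitimate substitute for the paper's explicit decomposition into $\mathscr{D}(q+,z,1_j)$ and $\mathscr{D}(q-,z,1_j)$ according to whether $v^\alpha_1>q$ or $v^\alpha_1\leq q$.

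Two points are genuinely underdeveloped. First, in the identity $H(m,1_j)=H'(m,1_j)-\cup_{i\leq j}D(p,m^\alpha,1_i)$ you only discuss why the enlargement $g[p,G_d(1\alpha,i)]$ misses $H'(m,1_j)$ for $i\neq j$; the case $i=j$ is the harder one, since one must check that $g[p,G_d(1\alpha,j)]$ does not swallow points of $H'(m,1_j)-G_d(1\alpha,j)$. For the indices $v$ with $v^\alpha_1\leq p$ this follows from $g[p,g(v^\alpha_1,t)]=g(v^\alpha_1,t)$ (3 of B of Claim 5.8), and for $v^\alpha_1>p$ it needs the containment $g[p,\cup\mathscr{G}_d(v^\alpha_1,jz^1_v)]\subset g(m_3,z^1_v)$ together with the disjointness of distinct $H(m_3,jz^1_v)$'s; this is exactly what the paper's split in step A and the induction in step B supply, and it is where $p\geq m_5$ actually enters. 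Second, in part 6 the passage from the specific index $j$ to arbitrary $h$ has nothing to do with the unboundedness of $\alpha$ in $(\mathscr{A},\leq^*)$: the mechanism is that, for any $y'\in H(v^\alpha_1,t)$ and any basic neighborhood $g_c(p,y')$, the least $1_\ell$ with $H'(m,1_\ell)\cap g_c(p,y')\neq\emptyset$ can be forced above any prescribed $h$ by shrinking the neighborhood (Facts 6.9--6.11 and the set $A^*_d$), which is why the paper restates the density claim as Fact 6.14 ``in the same way as the proof of 3 of Claim 6.8'' rather than citing Fact 6.12 and shifting indices. With those two repairs your argument coincides with the paper's.
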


\begin{proof}
A. Take $\mathscr{H}_d(1\sigma^\alpha_1,1_j,p\sigma)=
  \cup\{\mathscr{H}_d(1\sigma^\alpha_1,jz,p\sigma):
  g(m_1,z)\in\mathscr{G}^*(m_1,1_j)\}$ from B3.2.
  Take $\mathscr{H}_d(v^\alpha_1,jz^1_v)\subset
  \mathscr{H}_d(1\sigma^\alpha_1,1_j,p\sigma)$.
  Let $v^\alpha_1>q$.
  Then \[D(q,v)=g[q,\cup\mathscr{H}_d(v^\alpha_1,jz^1_v)]
  =g[q,\cup\mathscr{G}_d(v^\alpha_1,jz^1_v)]\] is a c.o set
  in $(X,\rho)$ by the definition of $\mathcal{G}(v^\alpha_0,jz^n_v)$
  and 3 of B of Claim 5.8.
  Then
  $D(q,v)\subset g(m_3,z^1_v)$.
  Let $\mathscr{D}(q+,z,1_j)=\{D(q,v):v^\alpha_1>q\}$.
  Then $\mathscr{D}(q+,z,1_j)$ is c.o.D in $(X,\rho)$.
  Let $v^\alpha_1\leq q$.
  Then $g[q,g(v^\alpha_1,t)]=g(v^\alpha_1,t)$.
  Let $\mathscr{D}(q-,z,1_j)=\cup\{\mathscr{G}_d(v^\alpha_1,jz^1_v):v^\alpha_1\leq q\}$.
  Then $\mathscr{D}(q-,z,1_j)$ is c.o.D in $(X,\rho)$.
  Let
  \[\mathscr{D}(q,z,1_j)=\mathscr{D}(q-,z,1_j)\cup\mathscr{D}(q+,z,1_j)\]
  \[\mathscr{D}_d(q,m^\alpha,1_j)
  =\cup\{\mathscr{D}(q,z,1_j):
  g(m_1,z)\in\mathscr{G}^*(m_1,1_j)\}.\]
  Then $\mathscr{D}_d(q,m^\alpha,1_j)$ is c.o.D in $(X,\rho)$.
  Let $G_d(1\alpha,j)=\cup\mathscr{G}_d(1\sigma^\alpha_1,1_j,p\sigma)$.
  Then $g[q,G_d(1\alpha,j)]=\cup\mathscr{D}_d(q,m^\alpha,1_j)=D(q,m^\alpha,1_j)$
  is a c.o set for every $j\in N$.
\par

B. I. Let $k=1$. Note $H(m,1_1)=H'(m,1_1)-D(q,m^\alpha,1_1)$.
  Then we have $H(m,1_1)=H'(m,1_1)-g[q,G_d(1\alpha,1_1)]$.
   Then $H(m,1_1)\cap g[q,\cup_{j>1}G_d(1\alpha,j)]=\emptyset$.
   Then $H(m,1_1)\cap \cup_{j>1}D(q,m^\alpha,1_j)=\emptyset$
   by the definition of $D(q,m^\alpha,1_j)$.
  Then \[H(m,1_1)=H'(m,1_1)-g[q,\cup_jG_d(1\alpha,j)]
  =H'(m,1_1)-D(q,m^\alpha,1_1).\]
\par

II. Assume $g[q,\cup_{j\leq n}G_d(1\alpha,j)]=\cup_{j\leq n}D(q,m^\alpha,1_j)$.
  Let $k=n+1$. Take $H'(m,1_k)$.
  Then, by 1 and 2 Proposition 6.5, there exist  infinitely many
  $g(m_1,z)$ such that $H(m_1,z)\subset H'(m,1_k)$
  and $g(m_1,z)\cap g[q,\cup_{n<k}H'(m,1_n)]=\emptyset$.
  Let \[H''(m,1_k)=H'(m,1_k)-g[q,\cup_{j\leq n}G_d(1\alpha,j)]
  \quad\hbox{ and }\]
  \[H^{**}(m,1_k)=H'(m,1_k)-\cup_{j\leq n}D(q,m^\alpha,1_j).\]
  Then $H^{**}(m,1_k)=H''(m,1_k)\neq\emptyset$
  by inductive assumption.
  Then $H''(m,1_k)\subset H^*(m,1_k)$ by the definition of $H^*(m,1_k)$
  in B3.2.
  Take $\mathscr{G}^*(m_1,1_k)$ from B3.2.
  Let
  \[\mathscr{G}''(1\alpha,k)=\{g(m_1,z):
  g(m_1,z)\in\mathscr{G}^*(m_1,1_k)\hbox{ and }H(m_1,z)\cap H''(m,1_k)\neq\emptyset\}.\]
  \[\mathscr{G}(1\alpha,k)=\cup\{\mathscr{G}_d(1\sigma^\alpha_1,kz,p\sigma):
  g(m_1,z)\in\mathscr{G}''(1\alpha,k)\},\]
  \[\mathscr{G}(q,m^\alpha,1_k)=\{g[q,g(v^\alpha_1,t)]: g(v^\alpha_1,t)
  \in\mathscr{G}(1\alpha,k)\}\ \hbox{and}\
  G(1\alpha,k)=\cup\mathscr{G}(1\alpha,k).\]
  Then $g[q,G(1\alpha,k)]=\cup\mathscr{G}(q,m^\alpha,1_k)=D(q,m^\alpha,1_k),$
  \[g[q,G(1\alpha,k)]\cup g[q,\cup_{j\leq n}G_d(1\alpha,j)]
  =g[q,\cup_{j\leq k}G_d(1\alpha,j)],\]
  \[D(q,m^\alpha,1_k)\cup[\cup_{j\leq n}D(q,m^\alpha,1_j)]
  =\cup_{j\leq k}D(q,m^\alpha,1_j)\quad\hbox{and}\]
  \[g[q,G(1\alpha,k)]\cup g[q,\cup_{j\leq n}G_d(1\alpha,j)]=
  D(q,m^\alpha,1_k)\cup[\cup_{j\leq n}D(q,m^\alpha,1_j)].\]
  Then $H(m,1_k)=H'(m,1_k)-g[q,\cup_{j\leq k}G_d(1\alpha,j)]
  =H'(m,1_k)-\cup_{j\leq k}D(q,m^\alpha,1_j)$
  in the same way as the above I.
\par

Then we have $\mathscr{G}''(1\alpha,k)$, $\mathscr{G}(q,m^\alpha,1_k)$
  and $H(m,1_k)$ for every $k\in N$.
  Let $\mathscr{H}(q,m^\alpha,1)=\{H(m,1_k): k\in N\}$
  and $\mathscr{D}(q,m^\alpha,1)=\{D(q,m^\alpha,1_k): k\in N\}$.
  This implies 1, 3, 4 and 5.
  Let $\mathcal{G}''(1,\alpha)=\{\mathscr{G}''(1\alpha,k): k\in N\}$.
\par

Let \[A^*_{dj}=\{x_{dz}:x_{dz}\in H(m_1,z)-[\cup\mathscr{D}(q,z,1_j)]
  \hbox{ and }g(m_1,z)\in\mathscr{G}''(1\alpha,j)\}\quad\hbox{ and}\]
   \[A^*_d=\{A^*_{dj}:H'(m_1,1_j)\in\mathscr{H}'(m,y_0)\}.\]
  Then, in the same way as the proof of 3 of Claim 6.8, we have the following fact:
\begin{fact}
Let $H(l,t)\subset H(q,m^\alpha)=\cup\mathscr{H}'(q,m^\alpha,1)$. Then:
\par

1. $H(l,t)\subset Cl_\rho A^*_d$.
\par

2. $H(l,t)\subset Cl_\rho[\cup_{\ell>j}H(m,1_\ell)]$,
  $H(l,t)\subset Cl_\rho[\cup_{i> j}
  \cup\mathscr{G}(1\alpha,i)]$
  and
  $H(l,t)\subset Cl_\rho[\cup_{i> j}\cup\mathscr{H}(1\alpha,i)]$.
\end{fact}
This implies 2 and 6. Then we complete the proof of proposition.
\end{proof}

  Let $p>m_5$.
Let $D=D(1\sigma^\alpha_1,G_1)=\cup\mathscr{G}(1\sigma^\alpha_1,G_1)$.
  Then $D$ is an open set in $(X,\rho)$.
  Let $\partial D=[Cl_\rho D]-D$.
  Then $D\cap \partial D=\emptyset$.
  Take $g[p,D(1\sigma^\alpha_1,G_1)]$ from 1 of Proposition 6.13.
  Let $\hbox{max}\{p,m_5+1\}=q_2$.
  Take $\mathscr{H}_d(2\sigma^\alpha_1,q_2\sigma)$  in B2. Let
  \[H(2\sigma^\alpha_1,q_2\sigma)=[\cup\mathscr{H}_d(2\sigma^\alpha_1,q_2\sigma)]
  -g[p,D(1\sigma^\alpha_1,G_1)].\]
  Note Face 6.12 and Fact 6.14. Then we have the
  following denotation.
\par

\vspace{0.3cm}
  \textbf{Denotation 1.}
Denote  $g[p,D(1\sigma^\alpha_1,G_1)]\cup H(2\sigma^\alpha_1,q_2\sigma)$ by
  $Cl_\tau g[p,D(1\sigma^\alpha_1,G_1)]$ for $p>m_5$.
\par

\begin{cl} Let $p>m_5$.
Then:
\par

1.  $\cap_p Cl_\tau g[p,D(1\sigma^\alpha_1,G_1)]= D(1\sigma^\alpha_1,G_1)$.
\par

2. Let $t\in Cl_\tau g[p,D(1\sigma^\alpha_1,G_1)]-D(1\sigma^\alpha_1,G_1)$.
  Then there exists a $k>p$ such that
  $H(\ell,t)\cap Cl_\tau g[q,D(1\sigma^\alpha_1,G_1)]=\emptyset$
  if $\ell,q>k$.
\par

3.  $Cl_\tau g[p,D(1\sigma^\alpha_1,G_1)]=g[p,D(1\sigma^\alpha_1,G_1)]\cup
  H(2\sigma^\alpha_1,q_2\sigma)$.
\par
\end{cl}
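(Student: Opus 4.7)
The plan is to dispose of item~3 first (it is literally a restatement of Denotation~1, so there is nothing to do), then establish item~2 as the main technical content, and finally derive item~1 from item~2.

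For item~2, fix $t\in Cl_\tau g[p,D(1\sigma^\alpha_1,G_1)]-D(1\sigma^\alpha_1,G_1)$ and split into two cases according to which summand of the decomposition in Denotation~1 contains $t$. In the first case $t\in g[p,D(1\sigma^\alpha_1,G_1)]-D(1\sigma^\alpha_1,G_1)$; Claim~6.8(6) directly furnishes a threshold $k_1>p$ above which $H(\ell,t)\cap g[q,D(1\sigma^\alpha_1,G_1)]=\emptyset$ whenever $\ell,q>k_1$. It remains to handle the second summand $H(2\sigma^\alpha_1,q_2\sigma)$ of $Cl_\tau g[q,D(1\sigma^\alpha_1,G_1)]$: since $q_2=\max\{q,m_5+1\}$ grows with $q$, the family $\mathscr{H}_d(2\sigma^\alpha_1,q_2\sigma)$, built by the same recipe as $\mathscr{H}_d(1\sigma^\alpha_1,p\sigma)$ but one construction level higher, shrinks, and the discreteness of the $\mathscr{G}_d$'s together with the separation method used in Fact~6.12 and Proposition~6.13(5) provides $k_2$ with $H(\ell,t)$ disjoint from $H(2\sigma^\alpha_1,q_2\sigma)$ for $\ell,q>k_2$; then $k=\max\{k_1,k_2\}$ works.

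In the second case $t\in H(2\sigma^\alpha_1,q_2\sigma)$, the point lies in a specific member of $\mathscr{H}_d(2\sigma^\alpha_1,q_2\sigma)$, say inside $H(v^\alpha_1,r)$ for some large $v$. I invoke the level-2 analog of Claim~6.8(6) (obtained by replaying B1--B3 of Construction~6.1 with index~$2$ in place of index~$1$) to find $k$ such that $H(\ell,t)\subset g(v^\alpha_1,r)$ for every $\ell>k$. The essential disjointness $\mathscr{G}_d(2\sigma^\alpha_1,\cdot)\cap\mathscr{G}_d(1\sigma^\alpha_1,\cdot)=\emptyset$, implicit in the fact that these two families arise from the two complementary $\mathscr{H}_{d1},\mathscr{H}_{d2}$ partitions produced by the $\mathrm{mad}$-family parameter $\alpha$ in $\Re2$, then forces $H(\ell,t)$ to miss $g[q,D(1\sigma^\alpha_1,G_1)]$ for $q>k$; the shrinking argument used in the first case handles the remaining $H(2\sigma^\alpha_1,q_2\sigma)$-piece.

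For item~1, the inclusion $D(1\sigma^\alpha_1,G_1)\subset\cap_p Cl_\tau g[p,D(1\sigma^\alpha_1,G_1)]$ is immediate, since $D(1\sigma^\alpha_1,G_1)\subset g[p,D(1\sigma^\alpha_1,G_1)]\subset Cl_\tau g[p,D(1\sigma^\alpha_1,G_1)]$ for every $p$. Conversely, if $t\notin D(1\sigma^\alpha_1,G_1)$ while $t\in\cap_p Cl_\tau g[p,D(1\sigma^\alpha_1,G_1)]$, apply item~2 with any fixed $p$ to obtain $k>p$; choosing $\ell,q>k$ and noting $t\in H(\ell,t)$, item~2 yields $t\notin Cl_\tau g[q,D(1\sigma^\alpha_1,G_1)]$, contradicting the assumption. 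The main obstacle is the second case of item~2, because it requires making precise how the level-$1$ and level-$2$ $\mathscr{H}_d$ families interact in $Cl_\tau$ and, in particular, verifying their mutual separation through the almost-disjointness data carried by $\alpha\in\mathscr{A}$ in the $\Re2$ construction.
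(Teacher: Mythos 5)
Your proposal follows essentially the same route as the paper: item 3 is read off from Denotation 1, item 2 is proved by splitting $t$ according to the two summands $g[p,D(1\sigma^\alpha_1,G_1)]$ and $H(2\sigma^\alpha_1,q_2\sigma)$, handling the first via 6 of Claim 6.8 (the paper's Fact 6.16) and the second via the shrinking of $\mathscr{H}_d(2\sigma^\alpha_1,q\sigma)$ as $q$ grows (the paper's Fact 6.17, where the witnessing $H(u^\alpha_1,t)$ drops out of $\mathscr{H}(2\sigma^\alpha_1,q\sigma)$ once $q>u^\alpha_1$), and then taking the maximum of the two thresholds; item 1 is deduced from item 2 exactly as in the paper.
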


\begin{proof} To see 2 let $E(\alpha2,p)=Cl_\tau g[p,D(1\sigma^\alpha_1,G_1)]$.
  Then, for $q_2=\hbox{max}\{p,m_5+1\}$,
  \[E(\alpha2,p)=g[p,\cup\mathscr{G}(1\sigma^\alpha_1,G_1)]
  \cup H(2\sigma^\alpha_1,q_2\sigma).\]
  Let $t\in E(\alpha2,p)-D(1\sigma^\alpha_1,G_1)$.
  Then $t\in g[p,\cup\mathscr{G}(1\sigma^\alpha_1,G_1)]$ or
  $t\in H(2\sigma^\alpha_1,q_2\sigma)$.
\par

Case 1, $t\in g[p,\cup\mathscr{G}(1\sigma^\alpha_1,G_1)]$.
  Then $t\in g[p,\cup\mathscr{G}(1\sigma^\alpha_1,G_1)]-
  \cup\mathscr{G}(1\sigma^\alpha_1,G_1)$.
   Then, by 6 of Claim 6.8, there exists a $q>p$ such that  $H(\ell,t)\cap g[q,D]=\emptyset$
  if $\ell>q$. Here $D=D(1\sigma^\alpha_1,G_1)$.
  Then we have the following Fact.

\begin{fact}
Let $t\in g[p,\cup\mathscr{G}(1\sigma^\alpha_1,G_1)]-
  \cup\mathscr{G}(1\sigma^\alpha_1,G_1)$.
  Then there exists a $k_1\in N$ such that
  $H(\ell,t)\cap g[q,D(1\sigma^\alpha_1,G_1)]=\emptyset$ if $\ell,q>k_1$.
\end{fact}

Case 2. $t\in H(2\sigma^\alpha_1,q_2\sigma)$.
  Then $t\notin D$ since $H(2\sigma^\alpha_1,q_2\sigma)\cap
  g[p,D(1\sigma^\alpha_1,G_1)]=\emptyset$.
  Then $H(\ell,t)\cap g[q,D]=\emptyset$ by Fact 6.16.
\par

On the other hand, by the definition of $H(2\sigma^\alpha_1,q_2\sigma)$,
  there exists an $H(u^\alpha_1,t')\in\mathscr{H}(2\sigma^\alpha_1,q_2\sigma)$ such that $x\in H(u^\alpha_1,t')=H(u^\alpha_1,t)$.
  Note the definition of $\mathscr{H}(2\sigma^\alpha_1,q_2\sigma)$.
  Let $v_p=u^\alpha_1$ and $q>v_p$.
  Then $H(u^\alpha_1,t)\notin\mathscr{H}(2\sigma^\alpha_1,q\sigma)$
  and $H(u^\alpha_1,t)\cap [\cup\mathscr{H}(2\sigma^\alpha_1,q\sigma)]=\emptyset$.
Then we have the following Fact.

\begin{fact}
Let $t\in  H(2\sigma^\alpha_1,q_2\sigma)$.
  Then there exists a $v_p\in N$ such that
  $H(\ell,t)\cap H(2\sigma^\alpha_1,q\sigma)=\emptyset$ if $q,\ell>v_p$.
\end{fact}
\vspace{0.3cm}
Let $q>n(t)=\hbox{max}\{k_1,v_p\}$.
  Then, by Denotation 1, we have
  \[g[q,D(1\sigma^\alpha_1,G_1)]\cup H(2\sigma^\alpha_1,q\sigma)
  =Cl_\tau g[q,D(1\sigma^\alpha_1,G_1)].\]
  Then $H(\ell,t)\cap Cl_\tau g[q,D(1\sigma^\alpha_1,G_1)]=\emptyset$
  if $\ell>n(t)$.

\par
{\sl The proof of 2 is continued}.
Then, by Case 1 and  Case 2, we have proved 2.
  Then it is easy to see
  $\cap_pCl_\tau g[p,D(1\sigma^\alpha_1,G_1)]\subset D(1\sigma^\alpha_1,G_1)$.
  This implies 1.
\end{proof}

\vspace{0.3cm}

\textbf{Construction 6.1 is continued.}

\textbf{Q.}
  Assume, for  $n=h+1$, and $p_n\geq p_h\geq  ...\geq p_2\geq  p>m_5$,
  we have constructed
  \[E(\alpha n,p_n)=Cl_\tau g[p_n,E(\alpha h,p_h)]
  =g[p_n,E(\alpha h,p_h)]\cup H(n\sigma^\alpha_1,q_n\sigma).\]
  Here $q_n=\hbox{max}\{p_n,q_h\}$ and
  $H(n\sigma^\alpha_1,q_n\sigma)=[\cup\mathscr{H}_d(n\sigma^\alpha_1,q_n\sigma)]-
  g[p_n,E(\alpha h,p_h)]$.
  By induction in the same as method of B1 and B2, we construct
  \[E(\alpha k,p_k)=Cl_\tau g[p_k,E(\alpha n,p_n)]
  =g[p_k,E(\alpha n,p_n)]\cup H(k\sigma^\alpha_1,q_k\sigma)\hbox{ such that }\]
 \begin{align}
 E(\alpha k,p_k)=D(p_2,m^\alpha)\cup &
         g[p_3,H(2\sigma^\alpha_1,q_2\sigma)]\cup...\cup \nonumber\\
                            &\cup  g[p_k,H(n\sigma^\alpha_1,q_n\sigma)]\cup
                               H(k\sigma^\alpha_1,q_k\sigma)\hbox{ and}\quad\nonumber
\end{align}
$H(k\sigma^\alpha_1,q_k\sigma)=[\cup\mathscr{H}_d(k\sigma^\alpha_1,q_k\sigma)]-
  g[p_k,E(\alpha n,p_n)]$.
Here $k=n+1$, $q_k=\hbox{max}\{p_k,q_n\}$ and
  $p_k\geq p_n\geq  ...\geq p_1>m_5$.
\par

By Proposition 6.13, assume we have had $\mathscr{H}(p_n,m^\alpha,n)=\{H(m,n_i): i\in N\}$,
  $\mathscr{D}(p_n,m^\alpha,n)=\{D(p_n,m^\alpha,n_i): i\in N\}$ and
  $\mathcal{G}''(n,\alpha)=\{\mathscr{G}''(n\alpha,i): i\in N\}$ with
  \[H(m,n_j)=H'(m,1_j)-D(\alpha n,p_n)=H'(m,1_j)-[\cup_{i\leq j}D(p,m^\alpha,n_i)].\]
  Here $D(\alpha n,p_n)=\cup\mathscr{D}(p_n,m^\alpha,n)$.
\par

\vspace{0.3cm}

\textbf{Q1.}
Take $H'(m,1_1)\in\mathscr{H}'(m,y_0)$
  and $D(p_n,m^\alpha,n_1)\in\mathscr{D}(p_n,m^\alpha,n)$.
   Note $k=n+1$.
   Let $\mathscr{G}(m_1,k_1)$ be a Ln cover of $H'(m,1_1)$,
  \[H^*(m,k_1)=H'(m,1_1)-D(p_n,m^\alpha,n_1)\quad\hbox{ and}\]
  \[\mathscr{G}^*(m_1,k_1)=\{g(m_1,z)\in\mathscr{G}(m_1,k_1):
  g(m_1,z)\cap H^*(m_1,k_1)\neq\emptyset\}.\]
Let  $g(m_1,z)\in\mathscr{G}^*(m_1,k_1)$ and $H(m_1,1z)=g(m_1,z)\cap H'(m,1_1)$.
 Then we have $H(m_1,1z)\cap E(\alpha n,p_n)\subset\cup_{i\leq n}H(m_2,1z_i)$
 since $\cup\mathscr{H}(i\sigma^\alpha_1,1z,p\sigma)\subset H(m_2,1z_i)$
 by 1 of Claim 6.6 and 3 of Claim 6.7, and $p_i>m_5$ for $i\leq n$.
 Let $p_k\geq p_n$ and $q_k=\hbox{max}\{p_k,q_n\}$.
 Take $\mathscr{G}_d(k\sigma^\alpha_1,1z,q_k\sigma)$ and
  $\mathscr{G}_{-1}(k\sigma^\alpha_1,1z,q_k\sigma)$ from B2
  for every $H(m_1,z)\in\mathscr{G}^*(m_1,k_1)$.
  Let
   \[\mathscr{G}_d(k\sigma^\alpha_1,1_1,q_k\sigma)=
  \cup\{\mathscr{G}_d(k\sigma^\alpha_1,1z,q_k\sigma):
  g(m_1,z)\in\mathscr{G}^*(m_1,k_1)\},\]
  \[H(k\sigma^\alpha_1,1_1,q_k\sigma)=\cup\mathscr{H}_d(k\sigma^\alpha_1,1_1,q_k\sigma),\]
  \[\mathscr{G}_{-1}(k\sigma^\alpha_1,1_1,q_k\sigma)
  =\cup\{\mathscr{G}_{-1}(k\sigma^\alpha_1,1z,q_k\sigma):
  g(m_1,z)\in\mathscr{G}^*(m_1,k_1)\}\quad\hbox{ and}\]
  \[D(p_k,m^\alpha,k_1)=D(p_n,m^\alpha,n_1)\cup g[p_k,H(k\sigma^\alpha_1,1_1,q_k\sigma)].\]
\par

\vspace{0.3cm}
\textbf{Q2.} Assume we have had $\mathscr{H}(k\sigma^\alpha_1,1_i,q_k\sigma)$
  and $D(p_k,m^\alpha,k_i)$ for $i<j$.  Take $H'(m,1_j)\in\mathscr{H}'(m,y_0)$
  and $D(p_n,m^\alpha,n_j)\in\mathscr{D}(q_n,m^\alpha,n)$.
   Let $\mathscr{G}(m_1,k_j)$ be a Ln cover of $H'(m,1_j)$,
  \[H^*(m,k_j)=H'(m,1_j)-\cup_{i<j}D(p_k,m^\alpha,k_i)\quad\hbox{ and}\]
  \[\mathscr{G}^*(m_1,k_j)=\{g(m_1,z)\in\mathscr{G}(m_1,k_j):
  g(m_1,z)\cap H^*(m,k_j)\neq\emptyset\}.\]
  Then, by 2 and 3 of Proposition 6.5, there exists infinitely many $g(m_1,z)\in\mathscr{G}^*(m_1,k_j)$
  with $H(m_1,z)\subset H^*(m,k_j)$.
  Then, in the same way as Q1, take
  $\mathscr{H}_d(k\sigma^\alpha_1,jz,q_k\sigma)$
  and $\mathscr{G}_{-1}(k\sigma^\alpha_1,1_1,q_k\sigma)$
  for every $g(m_1,z)\in\mathscr{G}^*(m_1,k_j)$.
  Let
  \[\mathscr{G}_d(k\sigma^\alpha_1,1_j,q_k\sigma)=
  \cup\{\mathscr{G}_d(k\sigma^\alpha_1,jz,q_k\sigma):
  g(m_1,z)\in\mathscr{G}^*(m_1,k_j)\},\]
  \[H(k\sigma^\alpha_1,1_j,q_k\sigma)=\cup\mathscr{H}_d(k\sigma^\alpha_1,1_j,q_k\sigma),\]
  \[\mathscr{G}_{-1}(k\sigma^\alpha_1,1_j,q_k\sigma)
  =\cup\{\mathscr{G}_{-1}(k\sigma^\alpha_1,jz,q_k\sigma):
  g(m_1,z)\in\mathscr{G}^*(m_1,k_j)\}\quad\hbox{ and}\]
  \[D(p_k,m^\alpha,k_j)=D(p_n,m^\alpha,n_j)\cup g[p_k,H(k\sigma^\alpha_1,1_j,q_k\sigma)].\]
\par

Then, by induction, we have $D(q_k,m^\alpha,k_j)$,
  $\mathscr{H}_d(k\sigma^\alpha_1,1_j,q_k\sigma)$
  and $\mathscr{H}_{-1}(k\sigma^\alpha_1,1_j,q_k\sigma)$ for every
  $H'(m,1_j)\in\mathscr{H}'(m,y_0)$.
  Let $\mathscr{G}^*(m_1,k_\sigma)=\cup_j\mathscr{G}^*(m_1,k_j)$,
  \[D(\alpha k,p_k)=\cup_jD(p_k,m^\alpha,k_j)\quad\hbox{ and }\quad
  H(m,k_j)=H'(m,k_j)-D(\alpha k,p_k).\]
  Then $H(m,k_j)=H'(m,1_j)-[\cup_{i\leq j}D(p_k,m^\alpha,k_i)]$.
\par

  Then $\mathscr{H}_d(k\sigma^\alpha_1,1_j,q_k\sigma)$
  and $g[p_k,H(n\sigma^\alpha_1,q_n\sigma)]$
  satisfy Proposition 6.13.
 Let
 \[\mathscr{H}_d(k\sigma^\alpha_1,q_k\sigma)
 =\cup\{\mathscr{H}_d(k\sigma^\alpha_1,1_j,q_k\sigma):j\in N\},\]
 \[H(k\sigma^\alpha_1,q_k\sigma)=[\cup\mathscr{H}_d(k\sigma^\alpha_1,q_k\sigma)]-
  g[p_k,E(\alpha n,p_n)]\hbox{ and}\]
 \[E(\alpha k,p_k)=Cl_\tau g[p_k,E(\alpha n,p_n)]
  =g[p_k,E(\alpha n,p_n)]\cup H(k\sigma^\alpha_1,q_k\sigma).\]

We prove that the definition of $E(\alpha k,p_k)
  =Cl_\tau g[p_k,E(\alpha n,p_n)]$ is reasonable to definite closure operations.
\begin{prop}
Let $p'_k>p_k$. Then $Cl_\tau g[p'_k,E(\alpha n,p_n)]\subset Cl_\tau g[p_k,E(\alpha n,p_n)]$.
\end{prop}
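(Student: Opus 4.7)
The plan is to unwind the definition on the right-hand side and reduce the inclusion to two easy monotonicity facts. Recall
\[
Cl_\tau g[p_k,E(\alpha n,p_n)] = g[p_k,E(\alpha n,p_n)] \cup H(k\sigma^\alpha_1,q_k\sigma),
\]
where $q_k=\max\{p_k,q_n\}$ and $H(k\sigma^\alpha_1,q_k\sigma)=[\cup\mathscr{H}_d(k\sigma^\alpha_1,q_k\sigma)]-g[p_k,E(\alpha n,p_n)]$. Writing $q'_k=\max\{p'_k,q_n\}$, I want to show that each of the two pieces $g[p'_k,E(\alpha n,p_n)]$ and $H(k\sigma^\alpha_1,q'_k\sigma)$ sits inside $g[p_k,E(\alpha n,p_n)]\cup H(k\sigma^\alpha_1,q_k\sigma)$.

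First I would handle the $g$-piece. By iterating 5 of Proposition 3.3, $g(p'_k,x)\subset g(p_k,x)$ for every $x$, whence for any set $A$ we have $g[p'_k,A]=\bigcup_{x\in A}g(p'_k,x)\subset \bigcup_{x\in A}g(p_k,x)=g[p_k,A]$. Applying this with $A=E(\alpha n,p_n)$ yields
\[
g[p'_k,E(\alpha n,p_n)] \subset g[p_k,E(\alpha n,p_n)],
\]
so this piece is already inside the target.

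Next I would handle the closure-correction piece $H(k\sigma^\alpha_1,q'_k\sigma)$. Since $p'_k>p_k$ forces $q'_k\geq q_k$, the level-$k$ analogue of 1 of Claim 6.7 (same proof, just with the index $k$ in place of $1$, using the iterated $\Re1$/$\Re2$ construction carried out in Q1 and Q2) gives
\[
\cup\mathscr{H}_d(k\sigma^\alpha_1,q'_k\sigma)\ \subset\ \cup\mathscr{H}_d(k\sigma^\alpha_1,q_k\sigma).
\]
Now pick any $x\in H(k\sigma^\alpha_1,q'_k\sigma)$. By definition $x\in\cup\mathscr{H}_d(k\sigma^\alpha_1,q'_k\sigma)\subset\cup\mathscr{H}_d(k\sigma^\alpha_1,q_k\sigma)$. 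There are two cases: either $x\in g[p_k,E(\alpha n,p_n)]$, in which case $x$ belongs to the target; or $x\notin g[p_k,E(\alpha n,p_n)]$, in which case $x\in[\cup\mathscr{H}_d(k\sigma^\alpha_1,q_k\sigma)]-g[p_k,E(\alpha n,p_n)]=H(k\sigma^\alpha_1,q_k\sigma)$, and again $x$ lies in the target. Combining the two pieces gives the desired inclusion.

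The only real obstacle is verifying the level-$k$ version of Claim 6.7(1), i.e.\ the monotonicity $H(k\sigma^\alpha_1,q'\sigma)\subset H(k\sigma^\alpha_1,q\sigma)$ for $q'>q$; but this is built into the recursive definition in Q2, where raising the parameter $q_k$ only trims the indices $v$ with $v^\alpha_1\leq q_k$ used to form $\mathscr{H}_d(k\sigma^\alpha_1,jz,q_k\sigma)$ (exactly as in B2 for the base case). Once that monotonicity is noted, everything else is a two-line set-theoretic argument and the proposition follows.
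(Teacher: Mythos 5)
Your proposal is correct and follows essentially the same route as the paper: both arguments reduce the inclusion to the two monotonicity facts $g[p'_k,E(\alpha n,p_n)]\subset g[p_k,E(\alpha n,p_n)]$ and $\mathscr{H}_d(k\sigma^\alpha_1,q'_k\sigma)\subset\mathscr{H}_d(k\sigma^\alpha_1,q_k\sigma)$, the paper absorbing the set difference via $A\cup(B-A)=A\cup B$ where you chase elements, which is the same computation. The paper packages the level-$k$ monotonicity of $\mathscr{H}_d$ as an explicit induction on $k$ (base case $k=2$ from Claim 6.7(1) and B2, inductive step ``in the same way''), which is exactly the point you flag as the only real obstacle and resolve by appeal to the Q1--Q2 recursion.
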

\begin{proof}
By 1 of Claim 6.7, we have
  $H(1\sigma^\alpha_1,q'_1\sigma)\subset H(1\sigma^\alpha_1,q_1\sigma)$ if $q'_1>q_1$.
\par

A. Let $p'_2>p_2$. Then
  $g[p'_2, D(1\sigma^\alpha_1,G_1)]\subset g[p_2, D(1\sigma^\alpha_1,G_1)]$.
  Let $q'_2=\hbox{max}\{p'_2,q_1\}$ and $q_2=\hbox{max}\{p_2,q_1\}$.
  Then $q'_2\geq q_2$.
  Then, by B2 in Construction 6.1, we have $\mathscr{H}_d(2\sigma^\alpha_1,q'_2\sigma)
  \subset\mathscr{H}_d(2\sigma^\alpha_1,q_2\sigma)$.
  Then
\begin{align}
Cl_\tau g[p'_2,D(1\sigma^\alpha_1,G_1)]
       &=g[p'_2,D(1\sigma^\alpha_1,G_1)]\cup H(2\sigma^\alpha_1,q'_2\sigma)\nonumber\\
       &=g[p'_2,D(1\sigma^\alpha_1,G_1)]\cup
          [\cup\mathscr{H}_d(2\sigma^\alpha_1,q'_2\sigma)-g[p'_2,D(1\sigma^\alpha_1,G_1)]]  \nonumber\\
       &=g[p'_2,D(1\sigma^\alpha_1,G_1)]\cup
          [\cup\mathscr{H}_d(2\sigma^\alpha_1,q'_2\sigma)] \nonumber\\
       &\subset g[p_2,D(1\sigma^\alpha_1,G_1)]\cup
          [\cup\mathscr{H}_d(2\sigma^\alpha_1,q_2\sigma)] \nonumber\\
       &=g[p_2,D(1\sigma^\alpha_1,G_1)]\cup
          [\cup\mathscr{H}_d(2\sigma^\alpha_1,q_2\sigma)-
          g[p_2,D(1\sigma^\alpha_1,G_1)] \nonumber\\
       &=g[p_2,D(1\sigma^\alpha_1,G_1)]\cup H(2\sigma^\alpha_1,q_2\sigma)\nonumber\\
       &=Cl_\tau g[p_2,D(1\sigma^\alpha_1,G_1)].\nonumber
\end{align}

B. Assume that we have $Cl_\tau g[p'_n,E(\alpha h,p_h)]\subset Cl_\tau g[p_n,E(\alpha h,p_h)]$
  if $p'_n>p_n$ for $n=h+1$. Let $p'_k>p_k$.
  Then $q'_k=\hbox{max}\{p'_k,q_n\}\geq q_k=\hbox{max}\{p_k,q_n\}$.
  Then $g[p'_k,E(\alpha n,p_n)]\subset g[p_k,E(\alpha n,p_n)]$
  and $\mathscr{H}_d(k\sigma^\alpha_1,q'_k\sigma)\subset
  \mathscr{H}_d(k\sigma^\alpha_1,q_k\sigma)$. Then
  \[E(\alpha k,p'_k)=Cl_\tau g[p'_k,E(\alpha n,p_n)]\subset Cl_\tau g[p_k,E(\alpha n,p_n)]
  \subset E(\alpha k,p_k)\]
  in the same way as the above A.
\end{proof}

Then we have $E(\alpha1,p_1)=D(1\sigma^\alpha_1,G_1)$,
  \[E(\alpha2,p_2)=g[p_2,G(1\sigma^\alpha_1,G_1)]
  \cup H(2\sigma^\alpha_1,q_2\sigma)\quad\hbox{ with }\quad
  q_2=\hbox{max}\{p_2, q_1\}\quad\hbox{ and}\]
  \[E(\alpha k,p_k)=Cl_\tau g[p_k,E(\alpha n,p_n)]
  =g[p_k,E(\alpha n,p_n)]
  \cup H(k\sigma^\alpha_1,q_k\sigma)\]
  with $q_k=\hbox{max}\{p_k,q_n\}.$

\section{To construct a stratifiable space $(Y,\tau)$}

\vspace{0.3cm}
 \textbf{Construction 7.}

\vspace{0.3cm}

 We define a topological space $(Y,\tau)$. To do it take $g(m,a^*,y_0)$. Let
    \[Y=G_m=g(m,a^*,y_0)\quad\hbox{ and }\quad H_0=g(m,a^*,y_0)\cap H'(m,n_0). \]
  Then $H_0=H(m,a^*,y_0)$.
  For every $x\in Y$, let $\mathscr{H}_x=\{H(l,x)\in\mathscr{H}: l\geq m\}$ and
  $\mathscr{G}_x=\{g(l,x)\in\mathscr{G}: l\geq m\}$.
  Then \[\{x\}=\cap_{l\geq m} H(l,x)=\cap_{l\geq m} g(l,x).\]
\par

Let
  \[\mathscr{H}_Y=\cup\{\mathscr{H}_x: x\in Y\}
  \quad \hbox{ and } \quad
  \mathscr{G}_Y=\cup\{\mathscr{G}_x: x\in Y\}.\]

To define topological space $(Y,\tau)$, take
  $\mathscr{G}(1\sigma^\alpha_1,G_1).$ Let $D(1\sigma^\alpha_1,G_1)=\cup\mathscr{G}(1\sigma^\alpha_1,G_1)$ for $\alpha\in\mathscr{A}$.
 Let $k=n+1$, $q_k=\hbox{max}\{p_k,q_n\}$ for $p_k\geq p_n\geq  ...\geq p_1>m_5,$
\[H(k\sigma^\alpha_1,q_k\sigma)=[\cup\mathscr{H}_d(k\sigma^\alpha_1,q_k\sigma)]-
  g[p_k,E(\alpha n,p_n)],\]
 \begin{align}
 E(\alpha k,p_k)=g[p_2,D(1\sigma^\alpha_1,G_1)]\cup &
         g[p_3,H(2\sigma^\alpha_1,q_2\sigma)]\cup...\cup\nonumber\\
                            &\cup  g[p_k,H(n\sigma^\alpha_1,q_n\sigma)]\cup
                               H(k\sigma^\alpha_1,q_k\sigma),\quad\nonumber
\end{align}
  \[D(\alpha k,p_k)=g[p_2,D(1\sigma^\alpha_1,G_1)]\cup g[p_3,H(2\sigma^\alpha_1,q_2\sigma)]\cup...\cup
  g[p_k,H(n\sigma^\alpha_1,q_n\sigma)].\]
  Then $E(\alpha k,p_k)=D(\alpha k,p_k)\cup H(k\sigma^\alpha_1,q_k\sigma)$
  and $E(\alpha k,p_k)\cap H_0=\emptyset$.

\vspace{0.3cm}

\textbf{Definition A}.
1. Let $g(l,x)$
  be closed and open if $g(l,x)\in\mathscr{G}_Y$.
\par

2. Let $Cl_\tau g[p_2,D(1\sigma^\alpha_1,G_1)]=G_m$
  if $p_2\leq m_5$, and
  let \[Cl_\tau g[p_2,D(1\sigma^\alpha_1,G_1)]=
  g[p_2,D(1\sigma^\alpha_1,G_1)]\cup H(2\sigma^\alpha_1,q_2\sigma)=E(\alpha 2,p_2)\]
  be closed and open if $p_2>m_5$.
\par

3. Let
  \[U(\alpha\ell 2,p_2x)=g(\ell,x)-E(\alpha 2,p_2)\]
  be a closed and open neighborhood  of $x$
  if $x\in Y-E(\alpha2,p_2)$ and $\ell>m_5$.
\par

\textbf{Definition B}.  Let $k=n+1>2$.
\par

1. Let $Cl_\tau D(\alpha k,p_k)=G_m$
  if $p_k\leq m_5$, and
  let \[Cl_\tau D(\alpha k,p_k)
  =D(\alpha k,p_k)\cup H(k\sigma^\alpha_1,q_k\sigma)=E(\alpha k,p_k)\]
  be closed and open if $p_k>m_5$.
\par

2. Let
  \[U(\alpha\ell k,p_kx)=g(\ell,x)-E(\alpha k,p_k)\]
  be a closed and open neighborhood  of $x$ if $x\in Y-E(\alpha k,p_k)$ and $\ell>m_5$.
\par

\textbf{Definition C}.
  Let $p_h>p_k$ and \[g[p_h,U(\alpha\ell k,p_kx)]\]
  be a closed and open if
  $U(\alpha\ell k,p_kx)=g(\ell,x)-E(\alpha k,p_k)$.\quad\quad$\Box$
\par

\vspace{0.3cm}

\begin{prop}
Definition C is reasonable.
\end{prop}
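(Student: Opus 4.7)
The plan is to verify that the set declared closed-and-open in Definition~C actually depends only on the data $(\alpha, \ell, k, p_k, p_h, x)$, and that its declaration does not conflict with Definitions~A and B; this is the content of ``reasonable'' in the style of Proposition~6.18.

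First, I would show the elementary containment
\[
g[p_h, U(\alpha\ell k, p_k x)] \subset g(\ell, x).
\]
Since $U(\alpha\ell k, p_k x) \subset g(\ell, x)$ and $p_h > p_k > m_5 \geq m$, for each $y \in U(\alpha\ell k, p_k x)$ the nesting $g(p_h, y) \subset g(\ell, y) = g(\ell, x)$ follows by iterated application of part~5 of Proposition~3.3. So the set in question is sandwiched between $U(\alpha\ell k, p_k x)$ (when $p_h = \ell$) and $g(\ell, x)$.

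Next, I would identify the complement inside $g(\ell, x)$ as
\[
g(\ell, x) - g[p_h, U(\alpha\ell k, p_k x)]
 = \{\, z \in g(\ell, x) : g(p_h, z) \subset E(\alpha k, p_k)\,\}
 = g(\ell, x) \cap Cl_\tau g[p_h, D(\alpha k, p_k)]
\]
after expanding $E(\alpha k, p_k) = D(\alpha k, p_k) \cup H(k\sigma^\alpha_1, q_k\sigma)$ and invoking the $\tau$-closure formalism of Construction~6.1 together with Fact~4.0 (which turns ``$g(p_h, z)$ meets'' into ``$g(p_h, z)$ is contained in''). Both sides of the equality are already closed-and-open in $\tau$ by Definitions~A and B, so the complement of $g[p_h, U(\alpha\ell k, p_k x)]$ inside $g(\ell, x)$ is a relative closed-and-open set, making the declared set in Definition~C legitimately closed-and-open as well.

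Finally, I would establish the monotonicity/consistency check in the spirit of Proposition~6.18: if $p_h' \geq p_h > p_k$, then $g(p_h', y) \subset g(p_h, y)$ by part~5 of Proposition~3.3, and hence
\[
g[p_h', U(\alpha\ell k, p_k x)] \subset g[p_h, U(\alpha\ell k, p_k x)],
\]
so the family of sets declared closed-and-open by Definition~C is nested as $p_h$ varies and contains $U(\alpha\ell k, p_k x)$ as a common subset. Consequently the declaration in Definition~C is compatible with the neighborhoods already fixed by Definitions~A and B, and no new contradictory open set is introduced.

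The main obstacle will be the second step: pinning down the identity between $g[p_h, E(\alpha k, p_k)]$ and the iterated $Cl_\tau g[p_h, \cdot]$ expression produced by Construction~6.1, because $E(\alpha k, p_k)$ is itself a layered union
\[
g[p_2, D(1\sigma^\alpha_1, G_1)] \cup g[p_3, H(2\sigma^\alpha_1, q_2\sigma)] \cup \cdots \cup g[p_k, H(n\sigma^\alpha_1, q_n\sigma)] \cup H(k\sigma^\alpha_1, q_k\sigma).
\]
One must apply the operator $g[p_h, \cdot]$ term-by-term and absorb the resulting layers using Proposition~6.18 and the identity $g[p_h, g[p_i, V]] = g[p_i, V]$ whenever $p_h \geq p_i$ (a direct consequence of part~3B of Claim~5.8). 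Once this bookkeeping is in place, Definition~C reduces to an instance of Definition~B applied with $(k, p_k)$ replaced by their shifted counterparts, and reasonableness follows.
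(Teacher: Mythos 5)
Your reading of ``reasonable'' misses what the proposition actually has to rule out, and as a result the central step of your plan is circular. The danger created by Definition~C is not that $g[p_h,U(\alpha\ell k,p_kx)]$ might fail to be determined by its parameters or fail to nest as $p_h$ varies; it is that declaring these sets \emph{closed} might force a set that Definition~B requires to be non-open to become open. The paper's proof (part II) makes this precise: writing $U_s=g(\ell,s)-E(\alpha n,p_n)$, if for some boundary piece $H(v^\alpha_1,t)\subset H(n\sigma^\alpha_1,q_n\sigma)$ and some $y'\in H(v^\alpha_1,t)$ one had $g_c(p,y')-H(v^\alpha_1,t)\subset g[\ell,U_s]$, then $H(v^\alpha_1,t)\cap g_c(p,y')=g_c(p,y')-g[\ell,U_s]$ would be open (the subtrahend being closed by Definition~C), whence $H(v^\alpha_1,t)$ and then $H(n\sigma^\alpha_1,q_n\sigma)$ would be open, contradicting $Cl_\tau D(\alpha n,p_n)-D(\alpha n,p_n)=H(n\sigma^\alpha_1,q_n\sigma)$ from Definition~B. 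Your step~2 asserts that the complement of $g[p_h,U(\alpha\ell k,p_kx)]$ in $g(\ell,x)$ ``is already closed-and-open by Definitions A and B,'' which begs exactly the question at issue: if every Definition-C set had a Definition-A/B complement there would be nothing to prove. The identity you propose, $g(\ell,x)-g[p_h,U_x]=g(\ell,x)\cap Cl_\tau g[p_h,D(\alpha k,p_k)]$, is also unjustified: membership in $g[p_h,U_x]=\cup\{g(p_h,y):y\in U_x\}$ is not equivalent to $g(p_h,z)\subset E(\alpha k,p_k)$, and $Cl_\tau g[p_h,D(\alpha k,p_k)]$ is defined by Denotation~1 as a union with one specific boundary layer, which need not exhaust the complement.

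What is actually required --- and what occupies nearly all of the paper's argument (parts I and III) --- is a density/separation statement: for every $y'$ in a boundary piece $H(v^\alpha_1,t)$ and all large $p,\ell$, the set $g_c(p,y')-\bigl[g[\ell,U_s]\cup H(v^\alpha_1,t)\bigr]$ is nonempty. This reduces to showing $H(v^\alpha_1,t)\cap g[\ell,H(mv^\alpha_1,-t)]=\emptyset$ for all large $\ell$, where $H(mv^\alpha_1,-t)=[H'(m,l_{j0})\cap g(v^\alpha_1,t)]-H(v^\alpha_1,t)$, and that in turn needs the elaborate Ln-cover construction of part~III (the covers $\mathscr{G}(k,1_j)$ of $H(mv^\alpha_1,-t)$, the full $g(m_1,z)\subset g_c(p,y')$, and the disjointness claims 3D1--3D2). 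None of this appears in your outline; the containment and monotonicity checks you propose are true but do not substitute for it.
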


\begin{proof}

Let $y'\in H(v^\alpha_1,t)\in\mathscr{H}_d(v^\alpha_1,jz^n_v)$.
  Then $H(v^\alpha_1,t)\subset H(n\sigma^\alpha_1,p_n\sigma)
  \subset E(\alpha n,p_n)$.
  Then $H(v^\alpha_1,t)\cap[Y-E(\alpha n,p_n)]=\emptyset$.
  Let $H(v^\alpha_1,t)\subset H'(m,l_{j0})$, $1_j=l_{j0}$ and
  \[H(mv^\alpha_1,-t)=[H'(m,l_{j0})\cap g(v^\alpha_1,t)] -H(v^\alpha_1,t).\]
  Then $H(mv^\alpha_1,-t)\subset Y-E(\alpha n,p_n)$ by the definition of $E(\alpha n,p_n)$.
  Take a $g(\ell,s)$ such that $g(v^\alpha_1,t)\subset g(\ell,s)$.
  Then we have
  \[H(mv^\alpha_1,-t)\subset U_s=U(\alpha\ell n,p_ns)=g(\ell,s)-E(\alpha n,p_n)\quad\hbox{ with }\quad H(v^\alpha_1,t)\cap U_s=\emptyset.\]
\par

\textbf{I.}  $g_c(p,y')\cap\Big[Y-[g[\ell,U_s]
  \cup H(v^\alpha_1,t)]\Big]\neq\emptyset$ if  $H(v^\alpha_1,t)\cap g[\ell,H(mv^\alpha_1,-t)]=\emptyset$
   for every $p,\ell\geq\hbox{max}\{v^\alpha_1,p_1,p_2,...,p_n,q\}$.
\par

In fact,  $y'\in H(v^\alpha_1,t)\subset H'(m,l_{j0})$.
  Let $\mathscr{G}(m_1,l_{j0})$ be a Ln cover of $H'(m,l_{j0})$,
  \[H^*(m,n_{j0})=H'(m,l_{j0})-\cup_{1_i<l_{j0}}D(p_n,m^\alpha,n_i)\quad\hbox{ and}\]
  \[\mathscr{G}^*(m_1,l_{j0})=\{g(m_1,z)\in\mathscr{G}(m_1,l_{j0}):
  g(m_1,z)\cap H^*(m,l_{j0})\neq\emptyset\}.\]
  Then, by the definition of $D(p_n,m^\alpha,n_i)$,
  there is a $p\geq\hbox{max}\{v^\alpha_1,p_1,p_2,...,p_n,q\}$
  with $g_c(p,y')\cap [\cup_{1_i<l_{j0}}D(p_n,m^\alpha,n_i)]=\emptyset$.
  Here $\mathscr{G}(\ell,1_i)$ is a Ln cover on $H'(m,1_i)$
  for $1_i<l_{j0}$, and
  \[q=\hbox{min}\{\ell:[\cup\mathscr{G}(\ell,1_i)]\cap H'(m,l_{j0})=\emptyset
  \hbox{ for } 1_i<l_{j0}\}.\]
  Let $1_\ell$ be the least number with
  $g_c(p,y')\cap H'(m,1_\ell)\neq\emptyset$ just as the same definition before
  Fact 6.9.
  Then, by 1 of Fact 6.11, $\emptyset\neq\mathscr{G}^*(m_1,y'1_\ell)\subset\mathscr{G}^*(m_1,1_\ell)$.
\par

Let $g(m_1,z)\in\mathscr{G}^*(m_1,y'1_\ell)$. Then $g(m_1,z)\subset g_c(p,y')$.
  Take an $H(v^\alpha_1,t_z)$ from $\mathscr{H}_d(v^\alpha_1,jz^n_v)$
  for $j=1_\ell$ and $H(v^\alpha_1,t_z)\subset H(m_1,z)$.
  Then, by Fact 6.19, we have $g(m_1,z)\cap G(m_1,1_\ell)=\emptyset$
  for $G(m_1,1_\ell)=\cup\{\cup\mathscr{G}(m_1,1_k): l_{j0}<1_k<1_\ell\}$.
\par

  And then $H'(m,1_\ell)\cap[\cup\mathscr{G}(m_1,1_i)]=\emptyset$
  for $1_i>1_\ell$ by 6 of Proposition 3.3.
\par

Note $H(v^\alpha_1,t)\cap g[\ell,H(mv^\alpha_1,-t)]=\emptyset$
   for every $p,\ell\geq\hbox{max}\{v^\alpha_1,p_1,p_2,...,p_n,q\}$.
   Then $H(v^\alpha_1,t_z)\cap g[\ell,U_s]=\emptyset$
   if $\ell\geq\hbox{max}\{v^\alpha_1,p_1,p_2,...,p_n,q\}$.
   Then it is easy to see
   $H(v^\alpha_1,t_z)\subset Y-g[\ell,U_s]$ and $H(v^\alpha_1,t_z)\subset g_c(p,y')$.
   Then  \[g_c(p,y')\cap\Big[Y-[g[\ell,U_s]
  \cup H(v^\alpha_1,t)]\Big]\neq\emptyset.\]
\par
\vspace{0.3cm}

\textbf{II.} We prove that: if
  $g_c(p,y')\cap\Big[Y-[g[\ell,U_s]
  \cup H(v^\alpha_1,t)]\Big]\neq\emptyset$ for every $p,\ell\geq\hbox{max}\{v^\alpha_1,p_1,p_2,...,p_n,q\}$,
  then Definition C is reasonable.
\par

  In fact, $y'\in H(v^\alpha_1,t)\in\mathscr{H}_d(v^\alpha_1,jz^n_v)$ and  $H(v^\alpha_1,t)\cap[Y-E(\alpha n,p_n)]=\emptyset$.
  Suppose  $g_c(p,y')\cap\Big[Y-[g[\ell,U_s]
  \cup H(v^\alpha_1,t)]\Big]=\emptyset$ for some $p,\ell\geq\hbox{max}\{v^\alpha_1,p_1,p_2,...,p_n,q\}$.
  Then we have $g_c(p,y')-H(v^\alpha_1,t)\subset g[\ell,U_s]$.
  Note that $H(v^\alpha_1,t)\cap[g[\ell,U_s]=\emptyset$, and $g[\ell,U_s]$ is closed  by Definition C in Section 7.
  Then we have $H(v^\alpha_1,t)\cap g_c(p,y')=g_c(p,y')-g[\ell,U_s]$ is open.
  Then $H(v^\alpha_1,t)$ is open since $y'$ is arbitrary in $H(v^\alpha_1,t)$.
  Then, by the definition of $D(\alpha n,p_n)$ in Q2 of Construction 6.1,
  this implies $H(n\sigma^\alpha_1,q_n\sigma)=\cup\{H(v^\alpha_1,t)\in
  \mathscr{H}_d(n\sigma^\alpha_1,q_n\sigma):H(v^\alpha_1,t)\cap D(\alpha n,p_n)
  =\emptyset\}$. Then $H(n\sigma^\alpha_1,q_n\sigma)$ is open.
  On the other hand, by 1 of Definition B in Section 7,
  \[Cl_\tau D(\alpha n,p_n)-D(\alpha n,p_n)=H(n\sigma^\alpha_1,q_n\sigma).\]
  This is a contradiction.
\par

\vspace{0.3cm}

\textbf{III.} So, in the following we must prove that
   $H(v^\alpha_1,t)\cap g[\ell,H(mv^\alpha_1,-t)]=\emptyset$
   for every $\ell\geq\hbox{max}\{v^\alpha_1,p_1,p_2,...,p_n,q\}$.

To do it let $k=v^\alpha_1$. Then $H(v^\alpha_1,t)=H(k,t)$
  and $g(v^\alpha_1,t)=g(k,t)$.
  Let  $H(k,t)=H(k,1_1,t)\subset H'(m,l_{j0})$, $a=k+1_1=k+d$,
  \[H(mv^\alpha_1,t)=
  H'(m,l_{j0})\cap g(v^\alpha_1,t),\]
  \[g(k,t)=[q_{1_1},q_{1_1a})\times ...\times
  [q_{1_k},q_{1_ka})\times [q_{h_1},q_{h_1a})\times...
  \times[q_{h_d},q_{h_da})\times X_{k+1_1}\ \hbox{ and}\]
  \[H(k,t)=\{q_{1_1}\}\times...\times\{q_{1_m}\}\times...\times\{q_{1_k}\}\times
   [q_{h_1},q_{h_1a})\times...
  \times[q_{h_d},q_{h_da})\times X_{k+1_1}.\]
  Note $H(k,t)\subset H'(m,l_{j0})$.
  Then $H'(m,l_{j0})=[\{q_{1_1}\}\times...
  \times\{q_{1_m}\}\times X_m]\cap G_m$ and $P(m,l_{j0})=\{q_{1_1}\}\times...
  \times\{q_{1_m}\}=(q_{1_1},...,q_{1_m})$.
  Let $b=m+1$. Then
  \[H(mv^\alpha_1,t)=H'(m,l_{j0})\cap g(k,t)=P(m,l_{j0})\times
  [q_{1_b},q_{1_ba})\times...\times[q_{1_k},q_{1_ka})\times J(k,1_1,t).\]
   Let $I(mv^\alpha_1,t)=(q_{1_1},...,q_{1_m})\times
  [q_{1_b},q_{1_ba})\times...\times[q_{1_k},q_{1_ka})$.
\par

\textbf{1.} Take a Ln cover of $H(mv^\alpha_1,-t)$ for $k=n+1$.
  To do it note Figure I in (B) of Construction 3.1.
  Let $P(k,1_i)=P(n,1_l)\times \{q_{1_j}\}$ in Figure I
  satisfy
  \[P(k,1_i)=(q_{1_1},...,q_{1_m})\times \{q_{i_b}\}\times...\times\{q_{i_k}\}.\]
  Then $P(k,1_i)\times X_k=H(k,1_i)$
  satisfies $\cup_iH(k,1_i)=H(mv^\alpha_1,t).$
  Then top $m$ coordinate of $H(k,1_i)$ is $(q_{1_1},...,q_{1_m})$
  for every $i$.
\par

\textbf{1A.} Let $H(k,t)=H(k,1_1,t)$,
  $g(k,1_1,t)=I(k,1_1,t)\times J(k,1_1,t)$ and
  \[H(k,1_1)=P(n,1)\times \{q_1\}\times X_k=P(k,1_1)\times X_k.\]
  Then $H(k,1_1)\cap g(k,1_1,t)=H(k,1_1,t)$.
  Note we take a Ln cover of $H(mv^\alpha_1,-t)$ for $k=n+1$.
  Then we can not take $P(n,1)\times \{q_1\}$ in
  Figure I in (B) of Construction 3.1 since $H(k,t)=H(k,1_1)\cap g(k,1_1,t)$
  and \[P(n,1)\times \{q_1\}=P(k,1_1)
  =(q_{1_1},...,q_{1_k})\in I(k,1_1,t).\]
\par

\textbf{1B.} We must take $P(n,1)\times \{q_2\}=P(k,1_2)$
  since $1_2$ is the least number such that
  $H(k,1_2)=P(n,1)\times \{q_2\}\times X_k=P(k,1_2)\times X_k$,
  \[H(k,1_2)\cap g(k,t)\subset H(mv^\alpha_1,-t)\quad\hbox{ and }\quad
  H(k,1_1)\cap H(k,1_2)=\emptyset.\]
  Let $a_2=k+1_2$, $P(k,1_2)=(q_{2_1},...,q_{2_k})$
   and $I(k,1_2)=[q_{2_1},q_{2_1a_2})\times ...\times
  [q_{2_k},q_{2_ka_2})$.
  Then $P(k,1_2)=(q_1,...,q_m,q_{2_b},...,q_{2_k})$
  by the definition of $P(k,1_i)$ in Figure I.
  Then
  \[I(k,1_2)=[q_1,q_{1a_2})\times ...\times
  [q_m,q_{ma_2})\times[q_{2_b},q_{2_ba_2})\times ...\times
  [q_{2_k},q_{2_ka_2}).\]
  Take $\mathscr{I}_{a_2}$ from 1 of Proposition 2.2. Note $a_2=k+1_2>k+1_1=a$.
  Let
  \[\mathscr{I}_{a_2j}=\mathscr{I}_{a_2}|[q_{h_j},q_{h_ja})\quad
  \hbox{ for }\quad 1\leq j\leq k, \hbox{ and}\]
  \[I(k,a_2,h)=[q_{h'_1},q_{h'_1a_2})\times ...\times
  [q_{h'_k},q_{h'_ka_2}) \hbox{ with }[q_{h'_j},q_{h'_ja_2})
  \in\mathscr{I}_{a_2j} \hbox{ for }1\leq j\leq k\]
  and $\mathscr{I}^*(k,a_2)$ be the family of all $I(k,a_2,h)$'s.
  Then \[\mathscr{I}^*(k,a_2)=\{I(k,a_2,h):h\in N\}.\]
  Then $\cup\mathscr{I}^*(k,a_2)=I(k,1_1,t)$ by the definition of $\mathscr{I}_{a_2j}$ for
  $1\leq j\leq k$.
  Let
  \[\mathscr{I}^+(k,a_2)=\{I(k,a_2,h)\in\mathscr{I}^*(k,a_2):
  I(k,a_2,h)\cap I(mv^\alpha_1,t)\neq\emptyset\}.\]
  Then $\mathscr{I}^+(k,a_2)=\{I(k,a_2,h):h\in N\}$ with $\Delta$-order.
  Then \[P(k,1_1)\in I(k,a_2,1)\quad \hbox{ and }\quad P(k,1_2)\in I(k,a_2,2)=I(k,1_2).\]
  Then $I(k,a_2,1)\cap I(k,a_2,2)=\emptyset$ since $\mathscr{I}_{a_2j}$ is pairwise
  disjoint for $1\leq j\leq k$. Then
  \[I(k,a_2,1)=[q_{1_1},q_{1_1a_2})\times ...\times[q_{1_m},q_{1_ma_2})\times
  [q_{1_b},q_{1_ba_2})\times...\times
  [q_{1_k},q_{1_ka_2}).\]
\par
\vspace{0.3cm}

 Let
  \[\mathscr{I}_{a_2j}=\mathscr{I}_{a_2}|[q_{1_j},q_{1_ja})\quad
  \hbox{ for }\quad k<j\leq a_2=k+,\]
  \[J(k,a_2,h)=[q_{h'_{k+1}},q_{h'_{k+1}a_2})\times ...\times
  [q_{h'_{k+}},q_{h'_{k+}a_2})\times X_{a_2}\hbox{ with }[q_{h'_j},q_{h'_ja_2})
  \in\mathscr{I}_{a_2j}\] for $k<j\leq a_2=k+$,
  and $\mathscr{J}(k,a_2)$ be the family of all $J(k,a_2,h)$'s.
  Then \[\mathscr{J}(k,a_2)=\{J(k,a_2,h):h\in N\}.\]
  Then $\cup\mathscr{J}(k,a_2)=J(k,1_1,t)$ by the definition of $\mathscr{I}_{a_2j}$.
  Let \[\mathscr{G}(k,1_2)=\{I(k,1_2)\times J(k,a_2,l):J(k,a_2,l)\in\mathscr{J}(k,a_2)\}.\]
  Then $\mathscr{G}(k,1_2)$ is a Ln cover of $H(k,1_2)\cap g(k,t)$
  and $H(k,1_2)\cap g(k,t)\subset H(mv^\alpha_1,-t)$.
\par
\vspace{0.3cm}

\textbf{1C.} Assume we have had $\mathscr{G}(k,1_i)$ for $i<j$.
  Following $\Delta$-order (direction of the arrow) in Figure I,
  take the first point $P(n,i)\times\{q_{j'}\}$ with
   \[P(n,i')\times\{q_{j'}\}=P(k,1_j)\in I(k,1_1,t)-\cup_{i<j}I(k,1_i)
   \hbox{ and }P(k,1_j)\neq P(k,1_1).\]
   Let $a_j=k+1_j$.
   Then, in the same way as 1B, we have $I(k,1_j)$, $\mathscr{I}^+(k,a_j)$,
   $\mathscr{J}(k,a_j)$
   and $\mathscr{G}(k,1_j)$.
\par

Then $I(k,1_j)\cap [\cup_{i<j}I(k,1_i)]=\emptyset$.
\par

In fact, suppose $I(k,1_j)\cap [\cup_{i<j}I(k,1_i)]\neq\emptyset$.
  Then $I(k,1_j)\cap I(k,1_i)\neq\emptyset$ for some  $i<j$.
  Then $I(k,1_j)\subset I(k,1_i)$ by 8 of Proposition 3.3 since $i<j$.
  Then $P(k,1_j)\in I(k,1_j)\subset I(k,1_i)$,
  a contradiction to $P(k,1_j)\in I(k,1_1,t)-\cup_{i<j}I(k,1_i)$.
\par

 Then, by induction, we have $I(k,1_j)$, $\mathscr{I}^+(k,a_j)$
   and $\mathscr{G}(k,1_j)$ every $j\in N$ satisfying
   claim:
   \par

1. $P(k,1_1)\in I(k,a_j,1)$, $P(k,1_j)\in I(k,a_j,\ell)=I(k,1_j)$
  \[(q_{1_1},...,q_{1_m})\in I_m(k,a_j,1)\cap I_m(k,1_j)
  \quad\hbox{ and}\]
   $I(k,a_j,1)\cap I(k,a_j,\ell)=\emptyset$.
\par

2. $\mathscr{G}(k,1_j)$ is a Ln cover of $H(k,1_j)\cap g(k,t)$
  and $H(k,1_j)\cap g(k,t)\subset H(mv^\alpha_1,-t)$.
\par

3. $I(k,1_j)\cap [\cup_{i<j}I(k,1_i)]=\emptyset$.
\par

Then, by induction, we have $I(k,1_j)$, $I(k,a_j,1)$ and $\mathscr{G}(k,1_j)$
  for $j\in N$.
Let \[\mathscr{I}(k,-t)=\{I(k,1_j): j\in N\},\]
 \[\mathscr{I}(k,a_\sigma, t)=\{I(k,a_j,1): j\in N\}\quad\hbox{ and  }\quad
 \mathscr{G}(k,-t)=\cup_j\mathscr{G}(k,1_j).\]
  Then
 $\mathscr{G}(k,-t)$ is a Ln cover of $H(mv^\alpha_1,-t)$.
\par

In fact, pick an $x\in H(mv^\alpha_1,-t)$.
  Let
   \[I(mv^\alpha_1,-t)=(q_{1_1},...,q_{1_m})\times
  [q_{1_b},q_{1_ba})\times...\times[q_{1_k},q_{1_ka})-P(k,1_1),\]
  $x=P(k,1_j)\times(q'_{k+1},q'_{k+2},...)$ and
  $P(k,1_j)\in I(mv^\alpha_1,-t)$.
  If $P(k,1_j)\in\cup_{i<j} I(k,1_j)$, then
  $x\in\cup_{i<j}\mathscr{G}(k,1_i)$.
  If  $P(k,1_j)\notin\cup_{i<j} I(k,1_j)$.
  Then $P(k,1_j)\in I(k,1_j)$ and $x\in \cup\mathscr{G}(k,1_j)$.
  Then $H(mv^\alpha_1,-t)\subset \cup\mathscr{G}(k,1_j)$.
\vspace{0.3cm}

\textbf{2.}
Take an $I(k,1_j)=[q_{1_1},q_{1_1a_j})\times ...\times[q_{1_m},q_{1_ma_j})
  \times ...\times
  [q_{j_k},q_{j_ka_j})\in\mathscr{I}(k,-t)$.
  Let $I_m(k,1_j)=[q_{1_1},q_{1_1a_j})\times ...\times[q_{1_m},q_{1_ma_j})$ and
  \[\mathscr{I}_m(k,-t)=\{I_m(k,1_h):
  I(k,1_h)\in\mathscr{I}(k,-t)\}.\]
\vspace{0.3cm}

\textbf{3.}
  Let $p>v^\alpha_1$ and $y'\in H(v^\alpha_1,t)$.
  Then $H(p,y')\subset H(v^\alpha_1,t)$.
  Then there exists a $c$ with
  $g(m,l_\ell,t)\subset g_c(p,y')$ if $l_\ell>c$ and
  $H(m,l_\ell,t)\cap g_c(p,y')\neq\emptyset$
  by 1 of Corollary 5.3.
  Let $l_\ell$ be the first number.
  Then $H'(m,l_\ell)\cap g_c(p,y')\neq\emptyset$
  and $H'(m,l_\ell)\cap H'(m,l_{j0})=\emptyset$.
  Then $l_\ell>l_{j0}$ since
  $H(v^\alpha_1,t)\subset H(mv^\alpha_1,t)=g(v^\alpha_1,t)\cap H'(m,l_{j0})$.
  Then $H'(m,l_\ell)=[(q'_1,...,q'_m)\times X_m]\cap G_m$,
  \[P(m,l_\ell)=(q'_1,...,q'_m)\neq(q_{1_1},...,q_{1_m})=P(m,l_{j0})\quad\hbox{ and}\]
  \[H'(m,l_\ell)\cap g(k,t)
  =(q'_1,...,q'_m)\times
  [q_{1_b},q_{1_ba})\times ...\times
  [q_{1_k},q_{1_ka})\times J(k,t).\]
  Then there exists an $h\leq m$ with $q'_h\neq q_h$.
  Note $\cap_jI_m(k,1_j)=\{(q_{1_1},...,q_{1_m})\}$.
  Let $\pi_h:Y\rightarrow Q_h$ be a project map.
  Then there exists an $n_0$ such that
  \[q'_h\notin [q_{1_h},q_{1_ha_j})=\pi_h[I_m(k,1_j)],
  \quad\hbox{ and }\quad q_{1_ha_j}< q'_h
  \hbox{ for every } j\geq n_0.\]
\par

Then $q'_h\in (q_{1_h},q_{1_ha_j})\subset [q_{1_h},q_{1_ha_j})=\pi_h[I_m(k,1_j)]$ if $j<n_0$.
  Then we may assume
   \[P(m,l_\ell)=(q'_1,...,q'_m)\in I_m(k,1_j)
  \quad\hbox{ for every }\quad j<n_0.\]
\par

\textbf{3A.} Note  $g_c(p,y')=I(p,y',c)\times J(p,y',c).$
  Let $\mathscr{G}(m,l_\ell)$ be a Ln cover of $H'(m,l_\ell)$
  and
   \[\mathscr{G}^*(m,y'l_\ell)=\{g(m,x)\in\mathscr{G}(m,l_\ell):
  g(m,x)\subset g_c(p,y')\}.\]
  Then, by Fact 6.11 and Fact 6.10, we have
  $\mathscr{G}^*(m,y'l_\ell)\neq\emptyset$.
  Let $e=m+l_\ell$ and $g(m,x)=g(m,l_\ell,x)\in \mathscr{G}^*(m,y'l_\ell)$.
  Then $g(m,l_\ell,x)=I(m,l_\ell,x)\times J(m,l_\ell,x)$,
  \[P(m,l_\ell)=(q'_1,...,q'_m)\in I(m,l_\ell,x)=[q'_1,q'_{1e}) \times...\times
 [q'_m,q'_{me})\quad \hbox{ and}\]
 \[I(m,l_\ell,x)\cap I_m(k,1_j)=\emptyset\quad \hbox{ if }  I_m(k,1_j)\in\mathscr{I}_m(k,-t)\hbox{ with } j\geq n_0.\]
\par

Let $\mathscr{G}^*(m_1,y'l_\ell)=\{g(m_1,z)\in\mathscr{G}(m_1,l_\ell):
  g(m_1,z)\subset g_c(p,y')\}.$
  Then, by Fact 6.11 and Fact 6.10, we have
  $\mathscr{G}^*(m_1,y'l_\ell)\neq\emptyset$.
\par
\vspace{0.3cm}

\textbf{3B.} Let $b=m+1$. Note $H(p,y')\subset H(v^\alpha_1,t)=
  (q_{1_1},...,q_{1_m},q_{1_b},...,q_{1_k})\times J(k,t)$
  and $g(m,l_\ell,x)\subset g_c(p,y')$.
Let  \[I(p,y',c)= [q_{1_1},q_{1_1c}) \times...\times
 [q_{1_m},q_{1_mc})\times...\times
 [q_{1_k},q_{1_kc})\times...\times[q_{1_p},q_{1_p c}),\]
 \[g_c(p,y')=I(p,y',c)\times J(p,y',c)\quad\hbox{ and}\]
 \[H(mp,y',c)=(q'_1,...,q'_m)\times
 [q_{1_b},q_{1_bc})\times...\times
 [q_{1_p},q_{1_pc})\times J(p,y',c).\]
 Then $H(mp,y',c)=g_c(p,y')\cap H'(m,l_\ell)$.
 Note $g(m,l_\ell,x)\subset g_c(p,y')$.
 Then
 \[H(m,l_\ell,x)=H'(m,l_\ell)\cap g(m,l_\ell,x)
 \subset H'(m,l_\ell)\cap g_c(p,y')=H(mp,y',c).\]
 Let $\mathscr{G}^*(m_1,xy')=\{g(m_1,z)\in\mathscr{G}^*(m_1,y'l_\ell):
 H(m_1,z)\cap H(m,l_\ell,x)\neq\emptyset\}$. Then
 $\mathscr{G}^*(m_1,xy')=\cup\{\mathscr{G}(m_1,xl'_\ell):
 l'_\ell\in N_1\}$ such that
 $N_1$ is infinite by Corollary 4.7, and
 $\mathscr{G}(m_1,xl'_\ell)=\{g(m_1,l'_\ell,z_i)\in\mathscr{G}^*(m_1,y'l_\ell):
 i\in N(l'_\ell)\}\neq\emptyset$ for every $l'_\ell\in N_1$.
 Let $\mathscr{G}(m_1,l_\ell)$ be a Ln cover of $H'(m,l_\ell)$
 and
 \[\mathscr{G}(mp,y',c)=\{g(m_1,z)\in\mathscr{G}(m_1,l_\ell):
  g(m_1,z)\cap H(mp,y',c)\neq\emptyset\}.\]
  Then $H(mp,y',c)\subset\cup\mathscr{G}(mp,y',c)$
  and $\mathscr{G}(mp,y',c)\subset \mathscr{G}^*(m_1,y'l_\ell)$.
  Then we have $\mathscr{G}^*(m_1,xy')\subset\mathscr{G}(mp,y',c)$.
\par

\vspace{0.3cm}

\textbf{3C.}
 Let $\mathscr{I}_m(n_0,-t)
  =\{I_m(k,1_j):I(k,1_j)\in\mathscr{I}(mv^\alpha_1,-t) \hbox{ with }j<n_0\}$.
  Take $g(m_1,l'_\ell,z)\in\mathscr{G}(mp,y',c)$
  with $l'_\ell>a_j$.
  Let $e'=m_1+l'_\ell$, $b=m+1=m_1$,
  \[I(m_1,l'_\ell,z)= [q'_1,q'_{1e'}) \times...\times
  [q'_m,q'_{me'})\times[q'_b,q'_{be'})\quad\hbox{ and}\]
  \[J(m_1,l'_\ell,z)=[q'_{b+1},q'_{b+1e'}) \times...\times
  [q'_k,q'_{ke'})\times...\times[q'_{e'},q'_{e'e'})\times X_{e'}.\]
  Then $g(m_1,l'_\ell,z)=I(m_1,l'_\ell,z)\times J(m_1,l'_\ell,z)
  \in \mathscr{G}(m_1,l'_\ell)$ by Definition C in Section 3. Let
  \[I_m(m_1,l'_\ell,z)= [q'_1,q'_{1e'}) \times...\times
  [q'_m,q'_{me'}),\]
  \[I_{bk}(m_1,l'_\ell,z)=[q'_b,q'_{be'})\times ...\times[q'_k,q'_{ke'})\quad\hbox{ and}\]
  \[J_k(m_1,l'_\ell,z)=[q'_{k+1},q'_{k+1e'}) \times...
  \times[q'_{e'},q'_{e'e'})\times X_{e'}.\]
\par
\vspace{0.3cm}

\textbf{3D.}
On the other hand, take an $I(k,a_j,1)\in\mathscr{I}(k,a_\sigma, t)$.
  Then
  \[P(k,1_1)\in I(k,a_j,1)=
  [q_{1_1},q_{1_1a_j})\times ...\times[q_{1_m},q_{1_ma_j})\times[q_{1_b},q_{1_ba_j})
  \times ...\times[q_{1_k},q_{1_ka_j}).\]
  Let $I_{bk}(k,a_j,1)=[q_{1_b},q_{1_ba_j})\times ...\times[q_{1_k},q_{1_ka_j})$
  and $P_{bk}(k,1_1)=(q_{1_b},...,q_{1_k})$.
  Then
  \[P_{bk}(k,1_1)=(q_{1_b},...,q_{1_k})\in I_{bk}(k,a_j,1).\]
\par

Note $H(mp,y',c)\subset\cup\mathscr{G}(mp,y',c)$.
  Take a $g(m_1,l'_\ell,z)\in\mathscr{G}(mp,y',c)$
  with $P_{bk}(k,1_1)\in I_{bk}(m_1,l'_\ell,z)$.
  Then $I_{bk}(m_1,l'_\ell,z)=
  [q_{1_b},q_{1_be'})\times ...\times[q_{1_k},q_{1_ke'})$
  since $q_{1_i}\in Q^*_{a_j}\subset Q^*_{e'}$
  for $1_b\leq 1_i\leq 1_k$ by $e'>a_j$.
  Then \[I_{bk}(m_1,l'_\ell,z)\subset I_{bk}(k,a_j,1)\]
  by 1 of Proposition 3.2 since $e'>a_j$.
  Let
  \[\mathscr{G}^*(mp,y',c)=\{g(m_1,z)\in\mathscr{G}(mp,y',c):
  P_{bk}(k,1_1)\in I_{bk}(m_1,l'_\ell,z)\}.\]
  Then we have $\mathscr{G}^*(mp,y',c)\neq\emptyset$
  since $g(m_1,l'_\ell,z)\in\mathscr{G}^*(mp,y',c)$.
  Take a $g(m_1,z)$ from $\mathscr{G}^*(mp,y',c)$.
  It is easy to see $g(m_1,z)\subset g_c(p,y')$ and $H(m_1,z)\subset H'(m,l_\ell)$
  by the definition of $\mathscr{G}(mp,y',c)$.
  And then we have the following claim.
\par

3D1. $g(m_1,z)\cap [\cup\mathscr{G}(k,1_j)]=\emptyset$ if $j<n_0$.
\par

In fact, note $I(k,a_j,1)\cap I(k,1_j)=\emptyset$.
  Then we have $I_{bk}(m_1,l'_\ell,z)\cap I_{bk}(k,1_j)=\emptyset$ for each $j<n_0$
  since $(q_{1_1},...,q_{1_m})\in I_m(k,a_j,1)\cap I_m(k,1_j)$
  by 1 of Claim 1C,
  and $I_{bk}(m_1,l'_\ell,z)\subset I_{bk}(k,a_j,1)$.
  Then $I_k(m_1,l'_\ell,z)\cap I(k,1_j)=\emptyset$
  for every $j<n_0$.
  Then $g(m_1,z)\cap [\cup\mathscr{G}(k,1_j)]=\emptyset$ if $j<n_0$.
\par

3D2. $g(m_1,z)\cap[\cup\mathscr{G}(k,1_j)]=\emptyset$ if $j\geq n_0$.
\par

In fact, $j\geq n_0$ implies
  $q'_h\notin[q_{1_h},q_{1_ha_j})=p_h[I_m(k,1_j)]$ for some $h\leq m$ by the above \textbf{3.}
  Then $q_{1_ha_j}< q'_h$ since $q'_h\in [q_{1_h},q_{1_ha_i})=p_h[I_m(k,1_i)]$ if $i<n_0$.
  Then we have $[q_{1_h},q_{1_ha_j})\cap [q'_h,q'_{he})=\emptyset$.
  Then $g(m_1,z)\cap [\cup\mathscr{G}(k,1_j)]=\emptyset$ if $j\geq n_0$.
\par
\vspace{0.3cm}

\textbf{4.}
Take a $g(k,x)$ with $x\in H(mv^\alpha_1,-t)$.
  Then there exists a $g(k,x')\in \mathscr{G}(k,-t)$
  with $g(k,x)\cap g(k,x')\neq\emptyset$
  since $\mathscr{G}(k,-t)$ is a Ln cover of $H(mv^\alpha_1,-t)$.
  Then $g(k,x)\subset g(k,x')$
  by the definition of Ln covers.
  Then, for every $g(m_1,z)\in\mathscr{G}^*(mp,y',c)$, we have
  \[g(m_1,z)\cap g[k, H(mv^\alpha_1,-t)]=\emptyset.\]
  Note $\ell>k$ implies $g[\ell, H(mv^\alpha_1,-t)]\subset g[k, H(mv^\alpha_1,-t)]$.
  Then $g(m_1,z)\subset g_c(p,y')$
  and $[g(m_1,z)\cap E(\alpha k,p_k)]-g[\ell,U_s]\neq\emptyset$.
\end{proof}

1. Let $x\notin E(\alpha k,p_k)$. Then  $U(\alpha\ell k,p_kx)=g(\ell,x)-E(\alpha k,p_k)$
  is open by the definition A and B.
\par

2.  Let $\ell>m_5$, $x\in E(\alpha k,p_k)$ and \[U(\alpha\ell k,p_kx)=g(\ell,x)\cap E(\alpha k,p_k).\]
  Then $U(\alpha\ell k,p_kx)=g(\ell,x)\cap E(\alpha k,p_k)$ is open by
  the definition A and B.
\par

3.  Let $x\in Y$ and
  \[\mathscr{U}(x)=\{U(\alpha\ell k,p_kx): \alpha\in\mathscr{A},
  \ell>m_5,k\geq 2\hbox{ and }  p_k>m_5 \}.\]
\par

And then let $\mathscr{U}'\subset\mathscr{U}(x)$ be finite,
  $U(\alpha\ell k,p_kx)\in \mathscr{U}'$,
  $U[\alpha^-\ell k',p'_kx]=\cap\mathscr{U}'$ with $\alpha^-=\{\alpha_i: U(\alpha_i\ell_i k_i,p^i_kx)\in\mathscr{U}'\}$,
  \[\mathscr{U}_x=\{U[\alpha^-\ell k',p'_kx]: \mathscr{U}'\subset\mathscr{U}(x)
  \hbox{ is finite}\}\quad \hbox{ and } \quad \mathscr{U}=\cup\{\mathscr{U}_x: x\in Y\}.\]
  Then $\mathscr{U}$ is a family of open sets by the definition A and B.

\vspace{0.3cm}
A base $\mathscr{U}_x$ of neighborhoods of $x$
  is called an outer base of $x$ in Definition 1.3
  of \cite{t} also.

\begin{prop}
1. $\mathscr{U}_x$ is a base of neighborhoods of $x$ in some
  topological space $(Y,\tau)$ for every $x\in Y$.
\par

2. $\mathscr{H}_Y=\cup\{\mathscr{H}_x: x\in Y\}$ is a countable network of $(Y,\tau)$.
\par

3. $\mathscr{U}$ is a base of some topological space $(Y,\tau)$.
\end{prop}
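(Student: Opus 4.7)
The plan is to verify the three parts in order, using the structural properties of $g(n,x)$, $H(n,x)$ and $E(\alpha k, p_k)$ already established, plus Proposition 7.1.

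For part 1, I would verify the standard axioms of a neighborhood base at each $x\in Y$:
(i) every member contains $x$; (ii) finite intersections again contain a member; (iii) for every $U\in\mathscr{U}_x$ and every $y\in U$ there is some $V\in\mathscr{U}_y$ with $V\subset U$. Axiom (i) is immediate from Definitions A--C together with Proposition 3.4.1 ($x\in g(\ell,x)$), split into the cases $x\notin E(\alpha k,p_k)$ and $x\in E(\alpha k,p_k)$. Axiom (ii) is built into the definition $U[\alpha^-\ell k',p'_kx]=\cap\mathscr{U}'$. The heart of the matter is (iii). Fix $y\in U(\alpha\ell k,p_kx)$; I would use Proposition 3.3.4 and Proposition 3.3.5 to pick $\ell'\geq\ell$ with $g(\ell',y)\subset g(\ell,x)$, and argue separately: when $x\notin E(\alpha k,p_k)$ and $y\in g(\ell,x)\setminus E(\alpha k,p_k)$, Proposition 7.1 (the reasonableness of Definition C) supplies the required $\ell'$ so that $g(\ell',y)-E(\alpha k,p_k)\subset U(\alpha\ell k,p_kx)$; when $x\in E(\alpha k,p_k)$, the fact that $E(\alpha k,p_k)$ is closed-and-open in $(Y,\tau)$ by Definitions A.2/B.1 gives $g(\ell',y)\cap E(\alpha k,p_k)\subset U(\alpha\ell k,p_kx)$. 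Finite intersections are handled by taking the common $\ell'$ that works for each factor.

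For part 2, I would observe that $\mathscr{H}_Y=\mathscr{H}=\cup_n\mathscr{H}_n$ is countable by Construction 3.2. Every $H(l,x)\in\mathscr{H}$ is closed in $(Y,\tau)$: its complement in $g(m,a^*,y_0)$ is a union of members of $\mathscr{G}_Y$ (using Proposition 3.3 and the definition of $\mathscr{H}'_n$ as a partition in Proposition 3.1.1), each of which is open by Definition A.1. To verify the network property, take an open $U\ni x$ in $(Y,\tau)$; by part 1 there is a basic neighborhood $U(\alpha\ell k,p_kx)\subset U$. Proposition 7.1 (and its ingredients, in particular Fact 6.11 and Proposition 6.13) then produces an $H(l,x)$ with $x\in H(l,x)\subset U(\alpha\ell k,p_kx)\subset U$.

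Part 3 is then essentially a formality: since each $\mathscr{U}_x$ is a neighborhood base at $x$ by part 1, the union $\mathscr{U}=\cup_{x\in Y}\mathscr{U}_x$ is a base for the topology $\tau$ generated by assigning these neighborhood filters; one just checks the standard fact that the collection of open sets so defined really is a topology (axioms follow from (i)--(iii)) and that $\mathscr{U}$ is a base for it.

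The main obstacle will be axiom (iii) in part 1 — concretely, showing that when $y$ lies inside $U(\alpha\ell k,p_kx)=g(\ell,x)-E(\alpha k,p_k)$, a small enough $g(\ell',y)$ stays disjoint from $E(\alpha k,p_k)$. This is exactly the content that makes Definition C well-posed, so I expect to appeal directly to Proposition 7.1 together with the closure description $Cl_\tau D(\alpha k,p_k)=E(\alpha k,p_k)$ from Denotation 1 and Claim 6.15, rather than reprove its internal combinatorics; the remaining verifications (i), (ii) and the network property are bookkeeping built on Propositions 3.1--3.5 and 6.13.
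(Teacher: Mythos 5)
Your proposal is correct and follows essentially the same route as the paper: the paper verifies Engelking's base criterion (Proposition 1.2.3 of \cite{En}) by taking $t$ in the intersection of two basic sets, choosing $l$ larger than the relevant indices so that $g(l,t)$ lies inside both $g(\ell,x)$ and $g(\ell',x')$, and observing that $g(l,t)-E(\alpha k,p_k)$ (resp.\ $g(l,t)\cap E(\alpha k,p_k)$) is again a member of $\mathscr{U}(t)$ — which is exactly your axiom (iii) plus the finite-intersection bookkeeping, and the decisive inclusion $g(\ell',y)\subset g(\ell,x)$ comes from 4 and 5 of Proposition 3.3 rather than from Proposition 7.1. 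The only divergence is that you also sketch a proof of part 2 (countability of $\mathscr{H}_Y$ and the network property via $H(l,t)\subset U[\alpha^- l k',p'_kt]$), which the paper's written proof leaves implicit; your sketch of that step is consistent with the $H(u^\alpha_1,t_\alpha)$ argument the paper uses inside its case analysis.
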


\begin{proof}
Let $U(\alpha\ell k,p_kx),U(\beta\ell' n,p_nx)\in\mathscr{U}(x)$ and
  $t\in U(\alpha\ell k,p_kx)\cap U(\beta\ell'n,p_nx)$.
  Let $H(u^\alpha_1,t_\alpha)\subset U(\alpha\ell k,p_kx)$ and
  $H(v^\beta_1,t_\beta)\subset U(\beta\ell'n,p_nx)$.
  Pick a point $t$ such that $t\in H(u^\alpha_1,t_\alpha)\cap H(v^\beta_1,t_\beta)$.
\par

Case 1, $U(\beta\ell' n,p_nx)=g(\ell',x)-E(\beta n,p_n)$ and
  $U(\alpha\ell k,p_kx)=g(\ell,x)-E(\alpha k,p_k)$.
  Then $H(u^\alpha_1,t_\alpha)\cap E(\alpha k,p_k)=\emptyset$ and
  $H(v^\beta_1,t_\beta)\cap E(\beta n,p_n)=\emptyset$. Let
   \[l>\hbox{max}\{u^\alpha_1,v^\beta_1,\ell',\ell\}.\]
  Then $g(l,t)\cap U(\alpha\ell k,p_kx)=
  g(l,t)\cap[g(\ell,x)-E(\alpha k,p_k)]$.
  Then \[U(\alpha l k,p_kt)=g(l,t)\cap U(\alpha\ell k,p_kx)
  =g(l,t)-E(\alpha k,p_k)\in\mathscr{U}(t).\]
  Then, in the similar way, we have
  \[g(l,t)\cap U(\beta\ell' n,p_nx)=g(l,t)-E(\beta n,p_n)
  =U(\beta l n,p_nt)\in \mathscr{U}(t).\]
  Then, by the definitions of $H(u^\alpha_1,t_\alpha)$ and $H(v^\beta_1,t_\beta)$, we have
  \[H(l,t)\subset U[\alpha^- l k,p_kt]=U(\alpha l k,p_kt)\cap U(\beta l n,p_nt)\in \mathscr{U}_t.\]
\par

Case 2, $U(\beta\ell' n,p_nx)=g(\ell',x)-E(\beta n,p_n)$ and
  $U(\alpha\ell k,p_kx)=g(\ell,x)\cap E(\alpha k,p_k)$.
  Then $H(v^\beta_1,t_\beta)\cap E(\beta n,p_n)=\emptyset$
  and $H(a,t_\alpha)\subset g(\ell,x)\cap E(\alpha k,p_k)$.
  Let
   \[l>\hbox{max}\{a,v^\beta_1,\ell',\ell\}.\]
  Then $g(l,t)\cap U(\alpha\ell k,p_kx)=
  g(l,t)\cap[g(\ell,x)\cap E(\alpha k,p_k)]$.
  Then \[U(\alpha l k,p_kt)=g(l,t)\cap U(\alpha\ell k,p_kx)
  =g(l,t)\cap E(\alpha k,p_k)\in\mathscr{U}(t).\]
  Then, in the similar way, we have
  \[g(l,t)\cap U(\beta\ell' n,p_nx)=g(l,t)-E(\beta n,p_n)
  =U(\beta l n,p_nt)\in \mathscr{U}(t).\]
  Then, by the definitions of $H(u^\alpha_1,t_\alpha)$ and $H(v^\beta_1,t_\beta)$, we have
  \[H(l,t)\subset U[\alpha^- l k,p_kt]=U(\alpha l k,p_kt)\cap U(\beta l n,p_nt)\in \mathscr{U}_t.\]
\par

Case 3, $U(\beta\ell' n,p_nx)=g(\ell',x)\cap E(\beta n,p_n)$ and
  $U(\alpha\ell k,p_kx)=g(\ell,x)\cap E(\alpha k,p_k)$.
  Then $H(b,t_\beta)\subset g(\ell',x)\cap E(\beta n,p_n)$
  and $H(a,t_\alpha)\subset g(\ell,x)\cap E(\alpha k,p_k)$.
  Let
   \[l>\hbox{max}\{a,b,\ell',\ell\}.\]
  Then we have $g(l,t)\cap U(\alpha\ell k,p_kx)=
  g(l,t)\cap E(\alpha k,p_k)=U(\alpha lk,p_kt)\in\mathscr{U}(t)$ and
  $g(l,t)\cap U(\beta\ell' n,p_nx)=U(\beta l n,p_nt)\in \mathscr{U}(t).$
  Then, by the definitions of $H(u^\alpha_1,t_\alpha)$ and $H(v^\beta_1,t_\beta)$, we have
  \[H(l,t)\subset U[\alpha^- l k',p'_kt]=U(\alpha l k,p_kt)\cap U(\beta l n,p_nt)\in \mathscr{U}_t.\]

\vspace{0.3cm}

Let $U[\alpha^-\ell k',p'_kx],U[\beta^-\ell' n',p'_ny]\in\mathscr{U}$ and
  $t\in U[\alpha^-\ell k',p'_kx]\cap U[\beta^-\ell'n',p'_ny]$ in general.
Then, in the same way as the above proof,
  there exists a $g(l,t)$ and a $U[\gamma^- lk',p'_kt]\in\mathscr{U}_t$
  such that
  \[H(l,t)\subset U[\gamma^- l h',p'_ht]\subset
  U[\alpha^-\ell k',p'_kx]\cap U[\beta^-\ell'n',p'_ny].\]
  Then, by Proposition 1.2.3 in \cite{En}, $\mathscr{U}$ is a base for some
  topological space $(Y,\tau)$.
\end{proof}

  Let $B_0$ be a defined set in Definitions A-C. Then $B_0$ is c.o in $(Y,\tau)$
  because both $B_0$ and $Y-B_0$ are in $\mathscr{U}$.
  Denote the closure of $B_0$ in $(Y,\tau)$ by $Cl^*B_0$, and
  in Definitions of A-C by $\hbox{Cl}_\tau B_0$.
  Then \[Cl^*B_0=B_0=\hbox{Cl}_\tau B_0.\]
\par

Take  $E(\alpha k,p_k)=D(\alpha k,p_k)\cup H(k\sigma^\alpha_1,q_k\sigma)$
   from Definition A-B.
\par

\textbf{Note $\complement$.} $Cl^*D(\alpha k,p_k)=Cl_\tau D(\alpha k,p_k)$
  for every $\alpha\in\mathscr{A}$, $k\geq 2$ and $p_k\in N$.
\par

In fact, let $t\notin Cl_\tau D(\alpha k,p_k)=E(\alpha k,p_k)$.
  Then $U(\alpha l k,p_kt)=g(l,t)-E(\alpha k,p_k)$ satisfies
  $U(\alpha l k,p_kt)\cap E(\alpha k,p_k)=\emptyset$.
  Note $Y-E(\alpha k,p_k)\in\mathscr{U}$.
  Then $D(\alpha k,p_k)\subset E(\alpha k,p_k)$ implies
   $Cl^*D(\alpha k,p_k)\subset E(\alpha k,p_k)$.
  Then $t\notin Cl^*D(\alpha k,p_k)$. Then
  \[Cl^*D(\alpha k,p_k)\subset Cl_\tau D(\alpha k,p_k).\]
\par

Pick an $x\in H(k\sigma^\alpha_1,q_k\sigma)$.
  Then there exists an $H(v^\alpha_1,x)\in\mathscr{H}_d(k\sigma^\alpha_1,q_k\sigma)$
  with $H(v^\alpha_1,x)\cap D(\alpha k,p_k)=\emptyset$.
  Take a $g_c(\ell,x)$ for $\ell>m_5$.
  Then, by Fact 6.10-6.11 and 3 of Proposition 6.5,
  there exists a full $H(m_1,z)$ with $H(m_1,z)\subset g_c(\ell,x)$ and $g(m_1,z)\in \mathscr{G}^*(m_1,y'1_\ell)$.
  Then, for every $ E(\beta h,p_h)$, we have
  \[H(m_1,z)\cap [g_c(\ell,x)\cap E(\beta h,p_h)]\neq\emptyset\quad \hbox{and} \quad
  H(m_1,z)\cap [g_c(\ell,x)-E(\beta h,p_h)]\neq\emptyset.\]
  Then $x\in Cl^*D(\alpha k,p_k)$.
  Then $Cl_\tau D(\alpha k,p_k)\subset Cl^*D(\alpha k,p_k)$. $\square$
\vspace{0.3cm}

Then the closure operation $\hbox{Cl}_\tau$ in definitions A-C is the same as
  the closure operation $Cl^*$ in $(Y,\tau)$.
 \[ \textbf{Denote the topological space by $(Y,\tau)$.}\]
  So, in topological space $(Y,\tau)$, we can denote the closure
  of a set $A$ by $\hbox{Cl}_\tau A$ by Note $\complement$. Let
  \[\hbox{Int}_\tau A=Y-\hbox{Cl}_\tau(Y-A).\]

\begin{prop}
 1. $\cap_p Cl_\tau g[p,E(\alpha k,p_k)]=E(\alpha k,p_k)$.
\par

2. Let $t\in Cl_\tau g[p,E(\alpha k,p_k)]-E(\alpha k,p_k)$.
 Then there exists an $\ell$ such that:  if $l,q>\ell$,
 $H(l,t)\cap Cl_\tau g[q,E(\alpha k,q_k)]=\emptyset$.
\end{prop}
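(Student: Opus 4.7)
The plan is to proceed by induction on $k \geq 1$, with the base case handled essentially by Claim 6.15 and Denotation~1. For $k=1$, we have $E(\alpha 1, p_1) = D(1\sigma^\alpha_1, G_1)$ by construction, and $Cl_\tau g[p, D(1\sigma^\alpha_1, G_1)] = g[p, D(1\sigma^\alpha_1, G_1)] \cup H(2\sigma^\alpha_1, q_2\sigma)$ by Denotation~1; so part~1 is exactly item~1 of Claim~6.15 and part~2 is item~2.

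For the inductive step, assume the proposition for $k-1$ and prove it for $k$. By the construction in \textbf{Q} of Construction~6.1,
$$Cl_\tau g[p, E(\alpha k, p_k)] = g[p, E(\alpha k, p_k)] \cup H((k{+}1)\sigma^\alpha_1, q_{k+1}\sigma),$$
where $q_{k+1} = \max\{p, q_k\}$ and $H((k{+}1)\sigma^\alpha_1, q_{k+1}\sigma)$ is disjoint from $g[p, E(\alpha k, p_k)]$. Take $t \in Cl_\tau g[p, E(\alpha k, p_k)] - E(\alpha k, p_k)$ and split into two cases exactly as in the proof of Claim~6.15. In Case~1, when $t \in g[p, E(\alpha k, p_k)] - E(\alpha k, p_k)$, the idea is to generalize Fact~6.16: using the inductive structure $E(\alpha k, p_k) = D(\alpha k, p_k) \cup H(k\sigma^\alpha_1, q_k\sigma)$ together with Claim~6.7(4) applied separately to each summand $g[p_j, H((j{-}1)\sigma^\alpha_1, q_{j-1}\sigma)]$ for $2 \leq j \leq k$, one produces a level $k_1$ with $H(\ell,t) \cap g[q, E(\alpha k, p_k)] = \emptyset$ whenever $\ell, q > k_1$. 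In Case~2, when $t \in H((k{+}1)\sigma^\alpha_1, q_{k+1}\sigma)$, there is a unique $H(u^\alpha_1, t) \in \mathscr{H}_d((k{+}1)\sigma^\alpha_1, q_{k+1}\sigma)$ containing $t$, and by the $\Delta$-order least-number definition of $\mathscr{H}_d$, raising the second coordinate to any $q > u^\alpha_1$ ejects $H(u^\alpha_1,t)$ from $\cup \mathscr{H}_d((k{+}1)\sigma^\alpha_1, q\sigma)$. This yields $v_p$ with $H(\ell,t) \cap H((k{+}1)\sigma^\alpha_1, q\sigma) = \emptyset$ for $\ell, q > v_p$, exactly as in Fact~6.17. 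Setting $\ell = \max\{k_1, v_p\}$ and invoking the identity displayed above gives part~2.

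Part~1 then follows at once from part~2. The inclusion $E(\alpha k, p_k) \subset \bigcap_p Cl_\tau g[p, E(\alpha k, p_k)]$ is trivial since $E(\alpha k, p_k) \subset g[p, E(\alpha k, p_k)]$ for every $p$. Conversely, if $t \in \bigcap_p Cl_\tau g[p, E(\alpha k, p_k)] - E(\alpha k, p_k)$, apply part~2 at some fixed $p$ to obtain $\ell$ as above; choosing any $l, q > \ell$ gives $H(l,t) \cap Cl_\tau g[q, E(\alpha k, p_k)] = \emptyset$, so in particular $t \notin Cl_\tau g[q, E(\alpha k, p_k)]$, contradicting membership in the intersection.

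The principal obstacle is Case~1 of the inductive step: $E(\alpha k, p_k)$ is no longer the ``flat'' open set $D(1\sigma^\alpha_1, G_1)$ handled by Claim~6.8(6), but a telescoping union $D(\alpha k, p_k) \cup H(k\sigma^\alpha_1, q_k\sigma)$ built layer by layer through \textbf{Q1}-\textbf{Q2}, where each layer $g[p_j, H((j{-}1)\sigma^\alpha_1, q_{j-1}\sigma)]$ contributes its own accumulation behavior. One must verify that the separation constant $k_1$ can be chosen uniformly across all finitely many layers, which will require invoking Proposition~6.13(6) inductively on the inner structure and combining with Proposition~6.18 (monotonicity in the parameter $p$) so that the calculation is stable under increasing $p$. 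Once this uniform separation is in hand, the rest of the argument is an essentially formal adaptation of Claim~6.15.
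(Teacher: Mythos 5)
Your proposal takes essentially the same route as the paper's proof: decompose $Cl_\tau g[q,E(\alpha k,p_k)]$ into the part lying inside $g[q,E(\alpha k,p_k)]$ and the new boundary layer $H((k{+}1)\sigma^\alpha_1,q\sigma)$, extract a separation level for each piece (Fact 6.16 and Claim 6.8 for the layers of $D(\alpha k,p_k)$ and for $g[q,H(k\sigma^\alpha_1,q_k\sigma)]$, a Fact 6.17-style argument for the boundary), take the maximum, and deduce part 1 from part 2. The paper organizes this as a single five-way case analysis at fixed $k$ rather than an induction on $k$, but the uniformity issue you flag for the telescoped $D(\alpha k,p_k)$ is resolved there by the same appeals to Fact 6.16, Claim 6.8 and the monotonicity of Proposition 6.18 that you propose.
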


\begin{proof}
Let $p>p_k$. Then $g[p,E(\alpha k,p_k)]=D(\alpha k,p_k)\cup g[p,H(k\sigma^\alpha_1,q_k\sigma)]$.
  Then, by 1 of Definition B,
  \[Cl_\tau g[p,E(\alpha k,p_k)]=g[p,E(\alpha k,p_k)]\cup H(h\sigma^\alpha_1,q_h\sigma).\]
  Here $h=k+1$ and $q_h=\hbox{max}\{p_k,q_n\}$.
\par

  Let $t\in H(m_1,jz)=H'(m,1_j)\cap g(m_1,z)$
  for some $g(m_1,z)\in\mathscr{G}^*(m_1,1_j)$,
  \[\mathscr{H}(t,1_j-)=
  \{H'(m,1_i)\in\mathscr{H}'(m,y_0):i<j\},
  \quad H_-=\cup\mathscr{H}(m,1_j-),\]
  \[\mathscr{H}(m,1_j+)=
  \{H'(m,1_i)\in\mathscr{H}'(m,y_0):i>j\}\
  \hbox{ and  }\ H_+=\cup\mathscr{H}(m,1_j+).\]
\par

1.  $t\notin g[p,H_+]$ if $p>m_1=\ell_1$ by 6$'$ of Proposition 3.3.
\par

2.  $t\notin g[p,H_-\cap E(\alpha k,p_k)]$ if $p>\ell_2$
  for some $\ell_2$ since $\mathscr{H}(t_0,1_j-)$ is finite.
\par

3. Note $p>p_k$ and $t\in \Big[H(m_1,jz)\cap
  Cl_\tau g[p,E(\alpha k,p_k)]\Big] -E(\alpha k,p_k)$.
  Then $t\notin D(\alpha k,p_k)$ since
  $g[p,D(\alpha k,p_k)]=D(\alpha k,p_k)\subset E(\alpha k,p_k)$
  if $p>p_k$.
  Then there exists an $\ell_3$ such that
  $l>\ell_3$ implies $H(l,t)\cap D(\alpha k,p_k)=\emptyset$ by Fact 6.16.
\par

4. $t\in \Big[g[p,H(k\sigma^\alpha_1,q_k\sigma)]\cap
  H(m_1,jz)\Big]- E(\alpha k,p_k)$.
  Then, by the definition of $H(k\sigma^\alpha_1,q_k\sigma)$, there exists an $\mathscr{H}_d(v^\alpha_1,jz^l_v)\subset \mathscr{H}(k\sigma^\alpha_1,jz,q_k\sigma)$ such that
   \[t\in g[p,\cup\mathscr{H}_d(v^\alpha_1,jz^l_v)]-E(\alpha k,p_k).\]
   Then there exists an $H(v^\alpha_1,s')\in\mathscr{H}_d(v^\alpha_1,jz^l_v)$
   such that $t\in g[p,H(v^\alpha_1,s')]-E(\alpha k,p_k)$.
  Note $H(v^\alpha_1,s')$ is closed in $(Y,\rho)$.
  Then there exists $\ell_4>v^\alpha_1$ such that
  $H(h,t)\cap g[q,H(v^\alpha_1,s')]=\emptyset$
  if $h,q>\ell_4$ by 1 of Claim 6.8.
  Note $p>p_k>m_5$,
  \[t\in g[p,\cup\mathscr{H}_d(v^\alpha_1,jz^l_v)]\cap H(m_1,jz)
  \subset H(m_3,jz^l_v)\quad\hbox{ and}\]
  $H(m_3,jz^l_v)\cap H(m_3,jz^n_u)=\emptyset$ if $l\neq n$
  or $v\neq u$.
  Then
  \[H(h,t)\cap g[q,H(k\sigma^\alpha_1,q_k\sigma)]=\emptyset.\]
\par

5. $t\in H(h\sigma^\alpha_1,q_h\sigma)$. Note
  $H(h\sigma^\alpha_1,q_h\sigma)\cap g[p,H(k\sigma^\alpha_1,q_k\sigma)]=\emptyset$.
  Then, in the same way as the proof of Fact 6.17,
  there exists an $\ell_5$ such that
  $H(l,t)\cap H(h\sigma^\alpha_1,q^*_h\sigma)=\emptyset$ if $q^*_h,l>\ell_5$.
\par

Let $\ell,q>\hbox{max}\{\ell_i: i\leq 5\}$.
  Then, by 1 of Definition B, we have
  \[Cl_\tau g[q,E(\alpha k,p_k)]=g[q,E(\alpha k,p_k)]
  \cup H(h\sigma^\alpha_1,q_h\sigma).\]
  Here $q_h=\hbox{max}\{q,q_k\}$ and $h=k+1$.
  Then $H(\ell,t)\cap Cl_\tau g[q,E(\alpha k,p_k)]=\emptyset$.
\end{proof}

\begin{prop}
Let $U(\alpha\ell k,p_kx)=g(\ell,x)-E(\alpha k,p_k)$.
 Then:
\par

1.  $\cap_pCl_\tau g[p,U(\alpha\ell k,p_kx)] =U(\alpha\ell k,p_kx)$.
\par

2. Let $t\in Cl_\tau g[p,U(\alpha\ell k,p_kx)]-U(\alpha\ell k,p_kx)$.
 Then there exists an $\ell$ such that
 $H(l,t)\cap Cl_\tau g[q,U(\alpha\ell k,p_kx)]=\emptyset$ if $l,q>\ell$.
\end{prop}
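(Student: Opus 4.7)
The plan is to mirror the structure of the proof of Proposition~7.3, but to exploit two simplifying observations specific to $U = U(\alpha\ell k, p_k x) = g(\ell,x) - E(\alpha k, p_k)$. First, $U$ is clopen in $(Y,\tau)$: by Definition~A.3/B.2, $E(\alpha k, p_k)$ is clopen, $g(\ell,x)$ is clopen by Definition~A.1, and their set-difference inherits clopenness. Second, Definition~C declares that $g[p_h, U]$ is already clopen whenever $p_h > p_k$, so $Cl_\tau g[p,U] = g[p,U]$ for all such $p$. This reduces both assertions to statements about the nested clopen family $\{g[p,U]\}_{p>p_k}$, which is decreasing in $p$ by 5 of Proposition~3.3 applied pointwise.

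For part~1, the inclusion $U \subset \cap_p g[p,U]$ is immediate because $y \in g(p,y) \subset g[p,U]$ for every $y \in U$. For the reverse inclusion, fix $t \in \cap_{p > p_k} g[p,U]$ and pick, for each $p$, a witness $y_p \in U$ with $t \in g(p,y_p)$. Since $y_p \in U \subset g(\ell,x)$ and $p > \ell$, 4 and 5 of Proposition~3.3 give $g(p,y_p) \subset g(\ell,x)$, so $t \in g(\ell,x)$. Moreover, $g(p,y_p)$ contains both $t$ and $y_p$ and shrinks to a point (by 1 of Proposition~3.3, together with the metric description of $g(n,i,l)$), hence $y_p \to t$ in $(X,\rho)$. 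Because $(Y,\tau)$ refines $(X,\rho)$ on the set-theoretic level and $U$ is closed in $(Y,\tau)$, the limit $t$ must lie in $U$.

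For part~2, let $t \in Cl_\tau g[p,U] - U$. By part~1 there exists some $q_0 > p_k$ with $t \notin g[q_0,U]$, and then $t \notin g[q,U]$ for every $q \geq q_0$ by the monotonicity noted above. Since $g[q_0,U]$ is clopen, its complement is an open neighborhood of $t$; using that the families $\{g(l,t)\}_l$ form a neighborhood base at $t$ (Proposition~3.4), pick $l_0$ with $g(l_0,t) \cap g[q_0,U] = \emptyset$. Setting $\ell^* = \max\{q_0, l_0, p_k\}$, for every $l,q > \ell^*$ we have $g(l,t) \subset g(l_0,t)$ and $g[q,U] \subset g[q_0,U]$, whence $H(l,t) \subset g(l,t)$ is disjoint from $g[q,U] = Cl_\tau g[q,U]$.

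The main obstacle, such as it is, is the bookkeeping needed to be sure that $Y - g[q,U]$ is genuinely open in the topology $\tau$ rather than merely in $(X,\rho)$; once that point is pinned down via Definition~C and Note~$\complement$ (which identify the declared closure with the actual $(Y,\tau)$-closure), the rest of the argument is a standard zero-dimensional separation, avoiding the elaborate boundary analysis of Proposition~7.3 because no extra points of the form $H(k\sigma^\alpha_1, q_k\sigma)$ need to be adjoined to $g[p,U]$.
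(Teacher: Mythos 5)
Your reduction of both claims to the family $\{g[p,U]\}_{p>p_k}$ of $\tau$-clopen sets is fine as far as it goes (Definition C does make $Cl_\tau g[p,U]=g[p,U]$), but the separation step at the heart of part 2 fails. You pick $l_0$ with $g(l_0,t)\cap g[q_0,U]=\emptyset$ on the grounds that $Y-g[q_0,U]$ is an open neighborhood of $t$ and the $g(l,t)$ form a neighborhood base at $t$. That conflates the two topologies: Proposition 3.4 makes $\{g(l,t)\}_l$ a base at $t$ only for the \emph{metric} topology $\rho$, whereas $g[q_0,U]$ is clopen only in $\tau$ (by decree) and is emphatically not $\rho$-clopen. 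For the points that actually matter here --- the paper's Case 3 shows the troublesome $t$ lies in $H(k\sigma^\alpha_1,q_k\sigma)$, hence in some $H(v^\alpha_1,s')$ --- one has $H(v^\alpha_1,s')\subset Cl_\rho H(mv^\alpha_1,-s')$ with $H(mv^\alpha_1,-s')\subset U$ (this is the content of 2 of Proposition 3.5 and the ``holes'' construction), so \emph{every} $g(l,t)$ meets $U\subset g[q,U]$ and no such $l_0$ exists. The true statement is only that the nowhere-dense slice $H(l,t)\subsetneq g(l,t)$ eventually misses $g[q,U]$, and proving that is exactly the work the paper does via Facts 7.4.1--7.4.2 and the three cases $n<k$, $n>k$, $n=k$; there is no ``standard zero-dimensional separation'' shortcut, because $U$ has nonempty $\rho$-boundary even though it is $\tau$-clopen.

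Part 1 has a related, smaller gap: from $y_p\to t$ in $(X,\rho)$ and $U$ being $\tau$-closed you cannot conclude $t\in U$, since $\tau$ is strictly finer than $\rho$ (every $g(\ell,x)$ is $\tau$-open but the $E(\alpha k,p_k)$ are $\tau$-clopen without being $\rho$-closed), so $\tau$-closed sets need not be $\rho$-closed and $\rho$-limits need not be $\tau$-limits. One could repair this by invoking condition 2 of the $g$-function ($t\in g(p,y_p)$ for all $p$ forces $y_p\to t$ in $\tau$), or, as the paper implicitly does, by deriving part 1 from part 2; but as written the argument does not close. The proposal therefore needs the paper's boundary analysis and cannot avoid it.
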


\begin{proof}
Let $U_x=U(\alpha\ell k,p_kx)$, $p>p_k$ and $t\in Cl_\tau g[p,U_x]-U_x$.
 Then  $t\in g[p,U_x]-U_x$ by Definition C.

Let $t\in H_j= H'(m,l_j)$ for some $H'(m,l_j)\in\mathscr{H}'(m,y_0)$,
 \[\mathscr{H}(m,j+)=\{H(m,l_i)\in\mathscr{H}(m,y_0): i>j\},
  \quad H_+=\cup\mathscr{H}(m,j+),\]
 \[\mathscr{H}(m,j-)=\{H(m,l_j)\in\mathscr{H}(m,y_0): i<j\}
 \quad\hbox{ and }\quad H_-=\cup\mathscr{H}(m,j-).\]
  Then $\mathscr{H}(m,j-)$ is finite.
\par

\textbf{1.} Let $\ell_1=m_1$. Then $g[q, H_+]\cap H_j=\emptyset$ if $q>\ell_1$.
\par

\textbf{2.} Note $\rho(H_-,H'(m,l_j))=r>0$ by 2 of Proposition 3.1.
 Then there exists an $\ell_2$ such that
 $g[q, H_-\cap U_x]\cap H_j=\emptyset$ if $q>\ell_2$.
\par

\textbf{3.} Note $U_x=U(\alpha\ell k,p_kx)=g(\ell,x)-E(\alpha k,p_k)$.
   Then
   \[H'(m,l_j)\cap U_x=[g(\ell,x)\cap H'(m,l_j)]-[D(\alpha k,p_k)\cup H(k\sigma^\alpha_1,q_k\sigma)].\]
    Let $\mathscr{G}(m_1,k_j)$ be a Ln cover of $H'(m,1_j)$.
    From Q2 of Construction 6.1, take
  \[H^*(m,k_j)=H'(m,1_j)-\cup_{i<j}D(p_k,m^\alpha,k_i)\quad\hbox{ and}\]
  \[\mathscr{G}^*(m_1,k_j)=\{g(m_1,z)\in\mathscr{G}(m_1,k_j):
  g(m_1,z)\cap H^*(m_1,k_j)\neq\emptyset\}.\]

\textbf{Fact 7.4.1.} \textit{Let $g(m_1,z)\in \mathscr{G}^*(m_1,k_j)$
  and $s\in H(m_2,jz_n)\in\mathscr{H}(m_2,jz)$ with $n<k$.
  If $s\notin \cup_{i<j}D(p_k,m^\alpha,k_i)$, then
  $g(p,s)\cap[\cup_{i<j}D(p_k,m^\alpha,k_i)]=\emptyset$ for $p>p_k$.}
\par

\begin{proof}
Note $s\in H(m_2,jz_n)\subset H(m_1,jz)$ and $s\notin \cup_{i<j}D(p_k,m^\alpha,k_i)$.
 Take $g(m,s)$. Then $g(m,s)\cap[\cup_{i<j}H'(m,1_i)]=\emptyset$ by
 6 of Proposition 3.3 since $H(m,s)\subset H'(m,1_j)$.
 Take $H(p_k,s)$. Then $H(p_k,s)\subset H(m,s)$ since $p_k>m_5>m$.
\par

  Suppose
  $g(p_k,s)\cap[\cup_{i<j}D(p_k,m^\alpha,k_i)]\neq\emptyset$.
\par

Case 1. There exists an $n<k$ such that  $D(p_n,m^\alpha,n_i)
 \subset \cup_{i<j}D(p_k,m^\alpha,k_i)$ and
 $g(p_k,s)\cap D(p_n,m^\alpha,n_i)\neq\emptyset$.
  Then there exists a $g(p_n,s')$ with $H(p_n,s')\subset H'(m,1_i)$
  for some $i<j$ and $g(p_k,s)\cap g(p_n,s')\neq\emptyset$.
  Then $g(p_n,s)\cap g(p_n,s')\neq\emptyset$ since
  $g(p_k,s)\subset g(p_n,s)$ by $p_k>p_n$.
  Then \[g(p_k,s)\subset g(p_n,s)\subset g(p_n,s')\subset D(p_n,m^\alpha,n_i)
 \subset \cup_{i<j}D(p_k,m^\alpha,k_i).\]
 It is a contradiction to $s\notin \cup_{i<j}D(p_k,m^\alpha,k_i)$.
\par

Case 2.  Then there exists a $g(p_k,s')\subset \cup_{i<j}D(p_k,m^\alpha,k_i)$
  with $H(p_k,s')\subset \cup_{i<j}H'(m,1_i)$
  such that \[g(p_k,s)\cap g(p_k,s')\neq\emptyset.\]
  Then $g(p_k,s)\subset g(p_k,s')$ by 8 of Proposition 3.3.
  Then we have \[s\in g(p_k,s')\subset\cup_{i<j}D(p_k,m^\alpha,k_i).\]
  It is a contradiction to $s\notin \cup_{i<j}D(p_k,m^\alpha,k_i)$.
  So $g(p_k,s)\cap[\cup_{i<j}D(p_k,m^\alpha,k_i)]=\emptyset$.
  Then $g(p,s)\cap[\cup_{i<j}D(p_k,m^\alpha,k_i)]=\emptyset$ for $p>p_k$.
\end{proof}

\textbf{Fact 7.4.2.} \textit{Let $H(m_2,jz_n)\in\mathscr{H}(m_2,jz)$ with $n<k$.
  If $p>p_k\geq p_n$, then
  \[g[p,H(m_2,jz_n)\cap U_x]\subset U_x.\]}
\par

\begin{proof}
Note $\cup_vH(m_3,jz^n_v)=H(m_2,jz_n)$ by 1 of Claim 6.6.
 Take an $H(m_3,jz^n_v)$.
\par

Case 1, $v^\alpha_0<p_n$. Take $\mathscr{H}_d(v^\alpha_1,jz^n_v)$
  and $H(v^\alpha_1,t)\in\mathscr{H}_d(v^\alpha_1,jz^n_v)$.
  Then we have
  \[\emptyset\neq H(v^\alpha_0,t)-g[p_n,H(v^\alpha_1,t)]
  =H(v^\alpha_0,-v^\alpha_1t)\subset U_x\quad\hbox{ and}\]
  \[g[p_n,H(v^\alpha_0,-v^\alpha_1t)]\cap g[p_n,H(v^\alpha_1,t)]
  =\emptyset\quad\hbox{ by 8 of Proposition 3.3.}\]
  Note $\mathscr{G}(v^\alpha_0,jz^n_v)$ is a Ln cover of
  $H(m_3,jz^n_v)$.
  Let $H_d(v^\alpha_1,jz^n_v)=\cup\mathscr{H}_d(v^\alpha_1,jz^n_v)$
  and
  \[H(m_3,-jz^n_<)=\cup\{H(m_3,jz^n_v)-g[p_n,H_d(v^\alpha_1,jz^n_v)]:
  v^\alpha_0<p_n\}.\]
  Then $H(m_3,-jz^n_<)\subset U_x$ and
  $g[p_n,H(m_3,-jz^n_<)]\cap g[p_n,H_d(v^\alpha_1,jz^n_v)]=\emptyset$
  for every $v^\alpha_0<p_n$.
\par

Case 2, $v^\alpha_0=p_n$. Then $g[p_n,H(v^\alpha_1,t)]=g(v^\alpha_0,t)$
  for every $H(v^\alpha_1,t)\in\mathscr{H}_d(v^\alpha_1,jz^n_v)$.
  Then $g[p_n,\cup\mathscr{H}_d(v^\alpha_1,jz^n_v)]
  =[\cup\mathscr{G}(v^\alpha_0,jz^n_v)]\cap G(5,\alpha z^n_v)$.
  Then
  \[g[p_n,\cup\mathscr{H}_d(v^\alpha_1,jz^n_v)]\cap H(m_3,jz^n_v)
  =H(m_3,jz^n_v)\cap G(5,\alpha z^n_v).\]
  Let $H(m_3,-jz^n_v)=H(m_3,jz^n_v)-G(5,\alpha z^n_v)$.
  Note $p_n>m_5$.
  Then
  \[g[p_n,H(m_3,-jz^n_v)]\cap g[p_n,G(5,\alpha z^n_v)]=\emptyset.\]

\par

Case 3, $v^\alpha_0>p_n>m_5$. Then we have
   \[H(m_3,jz^n_v)]\cap g[p_n,H_d(v^\alpha_1,jz^n_v)]=H(m_3,jz^n_v)]\cap
   G(5,\alpha z^n_v).\]
  Let $H(m_3,-jz^n_v)=H(m_3,jz^n_v)-G(5,\alpha z^n_v)$.
  Note $p_n>m_5$.
  Then we have $g[p_n, H(m_3,-jz^n_v)]\cap G(5,\alpha z^n_v)=\emptyset$.
  Then
  \[g[p_n,H(m_3,-jz^n_v)]\cap g[p_n,G(5,\alpha z^n_v)]=\emptyset.\]
  Let
  \[H(m_3,-jz^n_\geq)=\cup\{H(m_3,jz^n_v)-g[p_n,H_d(v^\alpha_1,jz^n_v)]:
  v^\alpha_0\geq p_n\}.\]
  Then $H(m_3,-jz^n_\geq)\subset U_x$ and
  $g[p_n,H(m_3,-jz^n_\geq)]\cap g[p_n,H_d(v^\alpha_1,jz^n_v)]=\emptyset$
  for every $v^\alpha_0\geq p_n$.
\par

\vspace{0.3cm}

Let
  $H(m_3,-jz^n_\sigma)=H(m_3,-jz^n_<)\cup H(m_3,-jz^n_\geq).$
  Then, for every $v\in N$,
  \[g[p_n,\cup\mathscr{H}_d(v^\alpha_1,jz^n_v)]\cap g[p_n,H(m_3,-jz^n_\sigma)]
  =\emptyset.\]
  Then, for every $s\in U_x\cap H(m_2,jz_n)$ with $n<k$,
  we have $g[p,\{s\}]\cap H(m_2,jz_n)\subset U_x$ if $p>p_n$.
\end{proof}

\textit{The proof of Proposition 7.4 is continued.}
   Let $\mathscr{G}(m_1,k_j)$ be a Ln cover of $H'(m,1_j)$.
   Take $H^*(m,k_j)=H'(m,1_j)-\cup_{i< j}D(p_k,m^\alpha,k_i)$ and
  \[\mathscr{G}^*(m_1,k_j)=\{g(m_1,z)\in\mathscr{G}(m_1,k_j):
  g(m_1,z)\cap H^*(m,k_j)\neq\emptyset\}\]
  from Q2 in Section 6.
   Let  $s\in H^*(m,k_j)$. Then $s\notin \cup_{i< j}D(p_k,m^\alpha,k_i)$
   and $g[p,\{s\}]\cap[\cup_{i< j}D(p_k,m^\alpha,k_i)]=\emptyset$
   by Fact 7.4.1.
   Then  $H(p,s)\cap[\cup_{i\neq j}H'(m,1_i)]=\emptyset$
   by the above \textbf{1} and \textbf{2}.
   Then $H(p,s)\subset H(m_3,jz^n_v)$ for some $g(m_1,z)\in\mathscr{G}^*(m_1,k_j)$.
\par

Case 1, $n<k$.
  Let $s\in H(m_3,jz^n_v)\cap U_x$.
  Then $H(p,s)\subset H(m_3,jz^n_v)\cap U_x$ by Fact 7.4.2.
  Then $t\notin H(p,s)$ if $p>p_k$.
\par

Case 2, $n>k$.
  Then $H(m_3,jz^n_v)\cap g(\ell,x)\subset U_x$ since $U_x=g(\ell,x)-E(\alpha k,p_k)$.
  Then $H(p,s)\subset H(m_3,jz^n_v)\cap U_x$ if $s\in H(m_3,jz^n_v)\cap g(\ell,x)$,
  $p>\ell$ and $p>p_k>m_5$.
  Then $t\notin H(p,s)$.
\par

Case 3, $n=k$. Note $t\in g[p,U_x]-U_x$. Then there exists
  an $s\in H(m_3,jz^k_v)\cap U_x$ such that
  $t\in \Big[g[p,\{s\}]\cap H(m_3,jz^k_v)\Big]-U_x$. Note,
  by the definition of $E(\alpha k,p_k)$,
  \[\Big[g[p,\{s\}]\cap H(m_3,jz^k_v)\Big]-H(k\sigma^\alpha_1,q_k\sigma)\subset U_x.\]
  Then $t\in H(k\sigma^\alpha_1,q_k\sigma)$.
\par

To see it suppose $t\notin H(k\sigma^\alpha_1,q_k\sigma)$.
  Note $t\in g[p,\{s\}]\cap H(m_3,jz^k_v)$.
  Then we have $t\in \Big[g[p,\{s\}]\cap H(m_3,jz^k_v)\Big]-H(k\sigma^\alpha_1,q_k\sigma)\subset U_x$.
  It is a contradiction to $t\in \Big[g[p,\{s\}]\cap H(m_3,jz^k_v)\Big]-U_x$.
\par

Then $t\in H(k\sigma^\alpha_1,q_k\sigma)$ implies that there exists an $H(m_3,jz^k_v)$
  and an $H(v^\alpha_1,s')\in\mathscr{H}_{d}(v^\alpha_1,jz^k_v)$
  such that $t\in H(v^\alpha_1,s')$.
  Then $g(v^\alpha_1,s')\cap[\cup\mathscr{H}_{d}(v^\alpha_1,jz^k_v)]=H(v^\alpha_1,s')$.
  Then $t\in H'(m,l_j)\cap g(v^\alpha_1,s')=H(mv^\alpha_1,s')$
  and \[t\notin H(mv^\alpha_1,-s')=H(mv^\alpha_1,s')-H(v^\alpha_1,s')\subset U_x.\]
\par

Let $p\geq v^\alpha_1$. Note $g[p,U_x]\cap H(mv^\alpha_1,s')=
   H(mv^\alpha_1,-s')\subset U_x$. Then we have
  $g[p,U_x]\cap g(v^\alpha_1,s')\cap H'(m,l_j)=H(mv^\alpha_1,-s')$.
  Then
\begin{align}
 H(mv^\alpha_1,-s')&\subset g[p,H(mv^\alpha_1,-s')]
                    \cap g(v^\alpha_1,s')\cap H'(m,l_j)\nonumber\\
                  &\subset g[p, U_x]\cap g(v^\alpha_1,s')\cap H'(m,l_j)
                    =H(mv^\alpha_1,-s').
  \quad\nonumber
\end{align}
  Then $g[p,H(mv^\alpha_1,-s')]\cap H(v^\alpha_1,s')=\emptyset$.
   Let $p>\ell_3=v^\alpha_1$. Then we have
   \[ H(v^\alpha_1,s')\cap g[p,U_x]=\emptyset.\]
   Note $t\in H(v^\alpha_1,s')$ implies $H(v^\alpha_1,t)=H(v^\alpha_1,s')$.
   Then $H(v^\alpha_1,t)\cap g[p,U_x]=\emptyset$ if $p>\ell_3$.
\par

Let $q,l >\hbox{max}\{\ell_i: i\leq 3\}$.
  Then $H(l,t)\cap Cl_\tau g[q,U_x]=\emptyset$.
\end{proof}

Note $g[p,U(\alpha\ell k,p_kx)]$ is a closed set
  if $U(\alpha\ell k,p_kx)=g(\ell,x)-E(\alpha k,p_k)$ by Definition C.
\begin{prop}
Let $U_x=U(\alpha\ell k,p_kx)=g(\ell,x)-E(\alpha k,p_k)$.
 Then:
\par

1. $Cl_\tau g[l,g[p,U_x]]=g[p,U_x]$ if $l\geq p$.
\par

2. $\cap_lCl_\tau g[l,g[p,U_x]] =g[p,U_x]$.
\end{prop}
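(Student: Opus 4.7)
The plan is to first establish the set-theoretic identity
\[g[l,g[p,U_x]]=g[p,U_x]\quad\text{whenever } l\geq p,\]
from which both parts of the proposition follow essentially by bookkeeping. The inclusion $\supseteq$ is trivial because every $y$ lies in $g(l,y)$. For the reverse inclusion $\subseteq$, take $z\in g(l,y)$ with $y\in g(p,y')$ and $y'\in U_x$. By property 4 of Proposition 3.3 we have $g(p,y)\subset g(p,y')$, and iterating property 5 of the same proposition gives $g(l,y)\subset g(p,y)$ for $l\geq p$. Chaining these, $z\in g(l,y)\subset g(p,y)\subset g(p,y')\subset g[p,U_x]$.

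For part 1, Definition C tells us that $g[p,U_x]$ is closed and open in $(Y,\tau)$ (this is precisely what Proposition 7.1 was set up to justify). Combining this with the identity above yields
\[Cl_\tau g[l,g[p,U_x]]=Cl_\tau g[p,U_x]=g[p,U_x].\]

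Part 2 then follows immediately: the inclusion $g[p,U_x]\subset\bigcap_l Cl_\tau g[l,g[p,U_x]]$ holds because every $y\in g[p,U_x]$ lies in $g(l,y)\subset g[l,g[p,U_x]]\subset Cl_\tau g[l,g[p,U_x]]$ for every $l$, while the reverse inclusion is witnessed by any single $l\geq p$, for which part 1 forces the $l$-th term of the intersection to equal $g[p,U_x]$.

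There is no genuine obstacle in this proposition because all the substantive work has been inherited: the closedness of $g[p,U_x]$ comes from Definition C (legitimized by Proposition 7.1), while the nesting of the $g$-function comes from properties 4 and 5 of Proposition 3.3. The only point requiring care is to keep the implicit hypotheses $p>p_k$ and $\ell>m_5$ active, so that $U_x$ and $g[p,U_x]$ do belong to the appropriate collections of c.o.\ sets used throughout Definitions A--C.
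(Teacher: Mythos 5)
Your proposal is correct and follows essentially the same route as the paper's own proof: both rest on the facts that $g[p,U_x]$ is a c.o set by Definition C and that $g[l,g[p,U_x]]=g[p,U_x]$ for $l\geq p$. You merely supply the (correct) justification of that identity via 4 and 5 of Proposition 3.3, which the paper asserts without detail.
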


\begin{proof}
Note $g[p,U_x]$ is a c.o set and $g[l,g[p,U_x]] =g[p,U_x]$ if $l\geq p$.
  Then $t\notin g[p,U_x]$ implies $H(l,t)\cap g[p,U_x]=\emptyset$ for
  some $l$. Then we have $Cl_\tau g[l,g[p,U_x]]=g[p,U_x]$
  by the definition C.
\end{proof}

Recall  Proposition 3.3.
 Function $\mathscr{G}$ satisfies the following
 conditions:
\par
1 \  $\cap_\ell g(\ell,i,y)=\{ y\}$.
\par

4 \ $y\in g(\ell,i,x)$ implies $g(\ell,j,y)\subset g(\ell,i,x)$
  for some $j$.
\par

5 \ $g(\ell+1,j,x)\subset g(\ell,i,x)$.
\par

6 \ $j>k$ implies  $H(\ell,k)\cap(\cup\mathscr{G}(\ell,j))=\emptyset$.
\par

6$'$ \  $H(\ell,k,h)\cap g(\ell,j,l)\neq\emptyset$
  and $H(\ell,k,h)\neq H(\ell,j,l)$ imply $k>j$.

\par
7  \  Each $\mathscr{G}(\ell,i)$ is c.o.D family.
\par
8  \  If $g(\ell,i,l),g(\ell,j,e)\in\mathscr{G}_\ell$ with $j>i$,
    then $g(\ell,i,l)\cap g(\ell,j,e)=\emptyset$ or
    $g(\ell,j,e)\subset g(\ell,i,l)$.

\vspace{0.3cm}

\textbf{Theorem 1. }a.
$(Y,\tau)$ is a stratifiable space and
  $\mathscr{G}_Y$ is a $g$-function of $(Y,\tau)$.

\begin{proof}
To prove Theorem 1 we show the function $\mathscr{G}_Y$ such that:
\par
2, $x\in g(\ell,x_\ell)\in\mathscr{G}_Y$ implies $x_\ell\rightarrow x$ and
\par
3, if $H$ is closed and $x\notin H$, then $x \notin
   Cl_{\tau}(\cup \{ g(\ell,x')\in\mathscr{G}_Y: x'\in H \})$ for some $\ell$.
\par

\textit{proof of 2.} Let $x\in Y$.
  Let $g(\ell,x_\ell)=g(\ell,i_\ell,l_\ell)$, $x_\ell\in H(\ell,i_\ell,l_\ell)$,
   $x\in H(\ell,i'_\ell,l'_\ell)=\{q_{l_1}\}\times
   ...\times\{q_{l_\ell}\}\times J(\ell,i'_\ell,l'_\ell)$ and
   $S=\{x_\ell: \ell\in N\}$.
  Suppose that there exists an $U[\alpha^- lk,p_kx]$ with
  \[S-U[\alpha^- lk,p_k,x]=S_1=\{x_{n_i}: i\in N\}.\]
  Let $x_{\ell_i}\in H(\ell_i,i_{\ell_i},l_{\ell_i})\subset H(\ell,j_i,k_i)
  \subset g(\ell,j_i,k_i)$
  for $i\in N$, and let \[N_1=\{j_i:
  x_{\ell_i}\in H(\ell_i,i_{\ell_i},l_{\ell_i})\subset H(\ell,j_i,k_i)\hbox{ for }
  i\in N\}.\]
\par
Suppose that there exists a strictly increasing infinite subsequence of $N_1$,
  to say $j_1<j_2<...<j_i<...$. Then $x_{\ell_i}\in g(\ell,j_i,k_i)$ implies
  $g(\ell_i,x_{\ell_i})\subset g(\ell,j_i,k_i)$
  by 4 of Proposition 3.3.
  Note $x\in H(\ell,i'_\ell,l'_\ell)$. Then
  $j_i\rightarrow +\infty$
  implies $j_i>i'_\ell$ when $i>n_0$ for some $n_0$.
  Then, by 6 of Proposition 3.3,
  $ g(\ell,j_i,k_i)\cap H(\ell,i'_\ell,l'_\ell)=\emptyset$,
  a contradiction to $x\in g(\ell_i,x_{\ell_i})\subset g(\ell,j_i,k_i)$.
\par

So we may assume  $j=j_1=j_2=...=j_i=...$ and $g(\ell,j_i,k_i)=g(\ell,j,k_i)$.
 Let $\mathscr{G}(\ell, j)=\{g(\ell,j,k_i): i\in N\}$.
 Then $\mathscr{G}(\ell, j)$ is a c.o.D family in $(Y,\rho)$
 by 7 of Proposition 3.3.
 Note $x\in g(\ell_i,x_{\ell_i})\subset g(\ell,j,k_i)$ for each $i$.
 Then it is easy to see $k_1=k_2=...=k_i=...$.
 So $H(\ell,j_i,k_i)=H(\ell,j,k)$ for each $i$.
\par

Suppose $H(\ell,i'_\ell,l'_\ell)\neq H(\ell,j,k)
 =\{q_{j_1}\}\times...\times\{q_{j_\ell}\}\times J(\ell,j,k)$.
 Note the definition of $H(\ell,i'_\ell,l'_\ell)$.
 Then there exists an $e\leq \ell$ with
 $q_{j_e}\neq q_{l_e}$.
\par

 In fact, suppose $q_{j_e}=q_{l_e}$ for each $e\leq \ell$.
 Then $H(\ell,j,k)\neq  H(\ell,i'_\ell,l'_\ell)$
 implies $J(\ell,j,k)\cap J(\ell,i'_\ell,l'_\ell)=\emptyset$.
 Then $g(\ell,j,k)\cap g(\ell,i'_\ell,l'_\ell)=\emptyset$,
 a contradiction to $x\in g(\ell_i,x_{\ell_i})\subset g(\ell,j,k)$.
\par

 So $H(\ell,i'_\ell,l'_\ell)\neq H(\ell,j,k)$ implies
 $q_{j_\ell}\neq q_{l_\ell}$ for some $e\leq \ell$.
 Note $x_{\ell_i}\in H(\ell,j,k)$, and
 $\cap_i g(\ell_i,x_{\ell_i})=\{x\}$ since $x\in g(\ell_i,x_{\ell_i})$.
 Then $x_{\ell_i}$ converges to $x$ in $(Y,\rho)$,
 and $x\in H(\ell,j,k)$ since $H(\ell,j,k)$ is closed in $(Y,\rho)$.
 Note $x\in H(\ell,i'_\ell,l'_\ell)$.
 Then $H(\ell,i'_\ell,l'_\ell)=H(\ell,j,k)$ by 1 of Proposition 3.1,
 a contradiction to supposition $H(\ell,i'_\ell,l'_\ell)\neq H(\ell,j,k)$.
\par

 This implies $H(\ell,j,k)=H(\ell,i'_\ell,l'_\ell)$.
 So $S_1\subset H(\ell,j,k)=H(\ell,i'_\ell,l'_\ell)$, a contradiction to
 $S_1\cap U[\alpha^- lk,p_kx]=\emptyset$ and
 $H(\ell,i'_\ell,l'_\ell)\subset U[\alpha^- lk,p_kx]$.
 So $S-U[\alpha^-,l,p,h,x]$ is finite for each $U[\alpha^- lk,p_kx]$.
 So $x_\ell\rightarrow x$.
\par

\textit{proof of 3.} Let $H$ be closed in $(Y,\tau)$.
  Then there exists an $O\in\tau$ and
  a family
  $\mathscr{O}=\{U_{\lambda}: \lambda\in\Lambda\}\subset\mathscr{U}$
  such that $O=\cup\mathscr{O}$ and
  \[H=Y-O=Y-\cup\mathscr{O}=
  Y-\cup\{U_\lambda: \lambda\in\Lambda\}.\]
  Note $U_\lambda\in \mathscr{U}$.
  Then $U_\lambda=\cap\{U_{i\lambda}\in\mathscr{U}(x_\lambda):i\leq n(\lambda)\}$.
  Let $U_{i\lambda}=Y-H_{i\lambda}$.
  Then $U_\lambda=Y-\cup_{i\leq n(\lambda)}H_{i\lambda}$. Then
  \[H=Y-\cup\{U_\lambda: \lambda\in\Lambda\}=
  Y-\cup\{Y-\cup_{i\leq n(\lambda)}H_{i\lambda}: \lambda\in\Lambda\}
  =\cap\{\cup_{i\leq n(\lambda)}H_{i\lambda}: \lambda\in\Lambda\}.\]
  Let $s\notin H$. Then there exists an $\cup_{i\leq n(\lambda)}H_{i\lambda}$
  such that $s\notin \cup_{i\leq n(\lambda)}H_{i\lambda}\supset H$.
\par

Note $s\notin \cup_{i\leq n(\lambda)}H_{i\lambda}$. Then
  $s\notin H_{i\lambda}$ for each $i\leq n(\lambda)$.
  Then, by Proposition 7.3-7.5, there exists an $\ell_i$
  such that $H(l_i,s)\cap Cl_\tau g[q_i,H_{i\lambda}]=\emptyset$ if $l_i,q_i>\ell_i$.
  Let $\ell>\hbox{max}\{q_i,l_i:i\leq n(\lambda)\}$.
  Then $H(l,s)\cap [\cup_{i\leq n(\lambda)}Cl_\tau g[q,H_{i\lambda}]]=\emptyset$
  if $l,q>\ell$.
  Note \[\cup_{i\leq n(\lambda)}Cl_\tau g[q,H_{i\lambda}]
  =Cl_\tau g[q,\cup_{i\leq n(\lambda)}H_{i\lambda}].\]
  Then $H(l,s)\cap Cl_\tau g[q,\cup_{i\leq n(\lambda)}H_{i\lambda}]=\emptyset$
  if $l,q>\ell$.
  Then $H(l,s)\cap Cl_\tau g[q,H]=\emptyset$
  if $l,q>\ell$.
\end{proof}

\section{ Properties  of neighborhoods  of
  stratifiable space $(Y,\tau)$ }

\begin{prop}
Let $y\in H_0$, $y'\in U[\alpha^- b k',p'_ky]=
 \cap_{i\leq n}U(\alpha_i b_i k_i,p_{k_i}y)\in\mathscr{U}_y$,
 $U[\alpha^- b k',p'_ky]=g(b,y)-E[\alpha^- k',p'_k]$ and
 $U(\delta bk,p_ky')=g(b,y')-E(\delta k,p_k)$
 with $\delta\neq \alpha_i$ for $i\leq n$.
 Then there exists an $H(l,t)\subset E(\delta k,p_k)$
 and $H(l,t)\subset U[\alpha^- b k',p'_ky]$.
\end{prop}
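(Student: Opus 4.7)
The plan is to exploit the almost disjointness of $\delta$ from $\alpha_1,\dots,\alpha_n$ in the mad family $\mathscr{A}$ to manufacture a closed set $H(\ell^\alpha_1,t)$ in the $\delta$-characterisation machinery that escapes every $\alpha_i$-characterisation set, then verify that $H(\ell^\alpha_1,t)$ lands inside $g(b,y)$ but avoids each $E(\alpha_i k_i,p_{k_i})$.

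First I would locate $y'$ within the stratification built in Construction 6.1. Since $y'\in g(b,y)$ and $y\in H_0$, we can take $y'\in H'(m,1_j)$ for some $j$ and descend through the Ln covers to find $g(m_1,z)\in\mathscr{G}^\ast(m_1,1_j)$ with $y'\in H(m_1,jz)$, and then $g(m_2,z_p)\in\mathscr{G}(m_2,z)$, and $g(m_3,z^p_v)\in\mathscr{G}(m_3,jz_p)$, obtaining $H(m_3,jz^p_v)\subset H(m_1,jz)\cap H(m_2,jz_p)$. This is the base where, in $\mathfrak{R}2$, the families $\mathscr{G}(m_5,z^p_v)$, $\mathscr{G}(m_5,z^p_{vr})$, and the $\alpha$-characterisation pieces $\mathscr{H}_{d1}(m_5,\alpha z^p),\mathscr{H}_{d2}(m_5,\alpha z^p)$ are defined. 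We replay $\mathfrak{R}2$ with $\delta$ in place of $\alpha$ to obtain a $\delta$-characterisation set $H_C(m_5,\delta z_p)\subset H(m_2,jz_p)\subset g(b,y)$.

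Next I would use that $\delta\cap\alpha_i$ is finite for each $i\le n$ to select an index $i^\ast$ and a piece $H(m_5,z^{p\,\ast}_{jl})$ of $H_C(m_5,\delta z_p)$ that lies in no $\alpha_i$-ch-set. Concretely, because $\alpha_i$ and $\delta$ are strictly increasing and almost disjoint, there is $N_0$ such that for $i^\ast>N_0$ the values $\delta(i^\ast)$ and $\alpha_i(i^\ast)$ are pairwise distinct for every $i\le n$; then the defining inequalities of $\mathscr{H}_{d1}(m_5,\delta z^{p\,\ast})\cup\mathscr{H}_{d2}(m_5,\delta z^{p\,\ast})$ can be simultaneously satisfied while all of the $\alpha_i$-inequalities fail. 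This gives an $H(m_5,z^{p\,\ast}_{jl})$ belonging to $\mathscr{H}_d(m_5,\delta z^{p\,\ast})$ but disjoint from $\cup_i\mathscr{H}_d(m_5,\alpha_i z^{p\,\ast})$. Passing, as in B2 of Construction 6.1, to the corresponding $v^\delta_1$-piece with $v$ large enough that $v^\delta_1\ge\max\{p_k,p_{k_1},\dots,p_{k_n},m_5+1\}$ and $v^\delta_1\ge v(v^\delta_5)$, I obtain $H(v^\delta_1,t)\in\mathscr{H}_d(v^\delta_1,jz^{p\,\ast}_v)\subset\mathscr{H}_d(k\sigma^\delta_1,q_k\sigma)$. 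Taking $\ell=v^\delta_1$ and $H(\ell,t)$ as the candidate, Claim~6.7(2) and the construction of $E(\delta k,p_k)$ give $H(\ell,t)\subset E(\delta k,p_k)$.

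It remains to show $H(\ell,t)\subset U[\alpha^- b k',p'_ky]=g(b,y)-\cup_{i\le n}E(\alpha_i k_i,p_{k_i})$. Containment in $g(b,y)$ is automatic from $H(\ell,t)\subset H(m_3,jz^{p\,\ast}_v)\subset H(m_1,jz)\subset g(b,y)$ (here one uses that $y\in H_0$ and that the entire descent in Construction~6.1 takes place inside $G_m=g(m,a^\ast,y_0)$, combined with Proposition~3.3(5) to climb back up to the scale $b$). For the exclusion, observe that $H(m_5,z^{p\,\ast}_{jl})$ was chosen to be disjoint from each $\mathscr{H}_d(m_5,\alpha_i z^{p\,\ast})$; since $v^\delta_1\ge m_5$ and, by the choice made in B2, membership in $\mathscr{G}_d(v^{\alpha_i}_1,jz^{p\,\ast}_v)$ reduces to meeting $\cup\mathscr{G}_d(m_5,\alpha_i z^{p\,\ast})$, the set $H(\ell,t)$ lies in none of the $\alpha_i$-hole families used to build $E(\alpha_i k_i,p_{k_i})$. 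The remaining pieces $g[p_{k_i},H(r\sigma^{\alpha_i}_1,q_{r}\sigma)]$ for $r<k_i$ appearing in $E(\alpha_i k_i,p_{k_i})$ live, by 2 and 3 of Claim~6.7 and the nested construction in Q1--Q2 of Construction~6.1, inside $H(m_2,jz_p)$-level strata that are disjoint from our $H(\ell,t)$ (or can be made so by further enlarging $v$), because $\delta\ne\alpha_i$ separates the respective characterisation sets.

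The main obstacle will be the bookkeeping in the last paragraph: one must check carefully that when $E(\alpha_i k_i,p_{k_i})$ is unfolded as $D(\alpha_i k_i,p_{k_i})\cup H(k_i\sigma^{\alpha_i}_1,q_{k_i}\sigma)$ and then iteratively as $g[p_{k_i},D(1\sigma^{\alpha_i}_1,G_1)]\cup g[p_{k_i},H(2\sigma^{\alpha_i}_1,q_2\sigma)]\cup\cdots$, none of the countably many strata meets our freshly selected $H(\ell,t)$. This is where almost-disjointness of $\delta$ and $\alpha_i$ is used in full strength, together with Fact~6.16--6.17 to ensure that, for sufficiently large $\ell$, $H(\ell,t)$ evades each $Cl_\tau g[q,D(1\sigma^{\alpha_i}_1,G_1)]$ not already excluded by the ch-set disjointness above.
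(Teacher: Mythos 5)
Your plan follows essentially the same route as the paper's proof: reduce to the point $y$ (resp.\ locate $y'$ in the Ln-cover stratification of Construction 6.1), then use the eventual distinctness of $\delta(i)$ from the values $\alpha_h(i)$ to extract a piece of the $\delta$-characterization set lying in no $\alpha_h$-characterization set, push it into $H(k\sigma^\delta_1,q_k\sigma)\subset E(\delta k,p_k)$ via B2, and check disjointness from each $E(\alpha_i k_i,p_{k_i})$. The only difference is that the paper makes your ``inequalities can be simultaneously satisfied'' step explicit by sandwiching $\beta(i)<\delta(i)<\gamma(i)$ between the nearest $\alpha_h(i)$-values and taking indices $(j,l)$ with one in $(\beta(i),\delta(i)]$ and the other in $(\delta(i),\gamma(i)]$ (the family $\mathscr{H}(m_5\delta,-\beta\gamma,t^i)$ of its step C), which is exactly the window your sketch leaves implicit.
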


\begin{proof} Let $\alpha^-=\{\alpha_i:i\leq n\}$.
 Note $E[\alpha^- k',p'_k]\cap H_0=\emptyset$ for $\alpha\in\mathscr{A}$.
\par

\textbf{A.} Suppose $y\neq y'$. Then there exists an $l$
  with $g(l,y')\cap g(l,y)=\emptyset$.
  Then we may assume  $y'=y\in H_0$.
\par

\textbf{B.}
  Take $g(l,y)$.
  Then there exists $c$ such that
  $H(m,\ell,h)\cap g_c(l,y)\neq\emptyset$ and $\ell>c$
  imply $g(m,\ell,h)\subset g_c(l,y)$ by 1 of Corollary 5.3.
  Let $1_j$ be the least number such that
  $1_j>c$ and $H'(m,1_j)\cap g_c(l,y)\neq\emptyset$.
\par

Let $\mathscr{G}(m_1,1_j)$ be an Ln cover of $H'(m,1_j)$ and
 \[\mathscr{G}^*(m_1,y1_j)=\{g(m_1,x)\in\mathscr{G}(m_1,1_j):
  g(m_1,x)\subset g_c(l,y)\}.\]
  Take $G_d(n\sigma^\alpha_1,jz,p)
  =\cup\mathscr{G}_d(n\sigma^\alpha_1,jz,p\sigma)$ from Fact 6.11.
  Then, by Fact 6.11,  we have the following fact:
\par

\textbf{Fact B**.}
1. $\emptyset\neq\mathscr{G}^*(m_1,y1_j)\subset\mathscr{G}^*(m_1,1_j)$.
\par

2. $\mathscr{H}_d(n\sigma^\alpha_1,1_j,p\sigma)|H(m_1,z)
  =\mathscr{H}_d(n\sigma^\alpha_1,jz,p\sigma)$
  for every $g(m_1,z)\in\mathscr{G}^*(m_1,y1_j)$.
\par

3. $H(m_1,z)-G_d(n\sigma^\alpha_1,jz,p)
  \subset  g_c(l,y)$ and
  $G_d(n\sigma^\alpha_1,jz,p)\subset g_c(l,y)$
  for every $g(m_1,z)\in\mathscr{G}^*(m_1,y1_j)$.  $\Box$
\par

Let  $k>n^*=\hbox{max}\{k_i: i\leq n\}$. If $H(l,t)\subset H(k\sigma^\delta_1,q_k\sigma)
  \cap H(m_2,jz_k)\subset H(m_1,z)$ for $g(m_1,z)\in\mathscr{G}^*(m_1,y1_j)$,
  then $H(l,t)\cap E[\alpha^- k',p'_k]=\emptyset$
  since $E[\alpha^- k',p'_k]\cap H(m_1,z)\subset\cup_{i\leq n}H(m_2,jz_i)$
  by 2 of Fact $B^{**}$ for $\alpha^-$.
  So, without loss of generality, we may assume $k\leq\hbox{min}\{k_i: i\leq n\}$.

\vspace{0.3cm}

\textbf{C.} Take $\alpha^-=\{\alpha_i:i\leq n\}$.
\par

1.  Let $e$ be the least number
  such that $i>e$ implies that $\{\alpha_h(i): h\leq n\}$ is different each other.
  We may assume
  \[\alpha_1(i)<...<\alpha_b(i)=\beta(i)<\delta(i)<
  \gamma(i)=\alpha_{b+1}(i)<...<\alpha_n(i).\]
\par

2. Take $\mathscr{H}_d(m_5,\theta t^i)$,
  $\mathscr{H}_{d1}(m_5,\theta t^i)$ and
  $\mathscr{H}_{d2}(m_5,\theta t^i)$ from $\Re2$.
   Let
  \[\mathscr{H}_{-z}(m_5,\theta t^i)=
  \{H(m_5,t^i_{jl})\in\mathscr{H}(m_5,\theta t^i_j):
   l\leq\theta(i)\hbox{ for } j\leq \theta(i)\}\quad\hbox{ and}\]
   \[\mathscr{H}_{-y}(m_5,\theta t^i)=
  \{H(m_5,t^i_{jl})\in\mathscr{H}(m_5,\theta t^i_j):
   l>\theta(i)\hbox{ for } j>\theta(i)\}.\]
   Note $\gamma(i)=\alpha_{b+1}(i)<...<\alpha_n(i)$.
   Then
   $\mathscr{H}_{-z}(m_5,\gamma t^i)\subset...
   \subset\mathscr{H}_{-z}(m_5,\alpha_n t^i)$
   and \[\mathscr{H}_{-z}(m_5,\gamma t^i)=
   \cap\{\mathscr{H}_{-z}(m_5,\alpha_j t^i): b+1\leq j\leq n\}.\]
   Note $\alpha_1(i)<...<\alpha_b(i)=\beta(i)$.
   Then
   $\mathscr{H}_{-y}(m_5,\alpha_1 t^i)\supset...
   \supset\mathscr{H}_{-y}(m_5,\beta t^i)$
   and \[\mathscr{H}_{-y}(m_5,\beta t^i)=
   \cap\{\mathscr{H}_{-y}(m_5,\alpha_j t^i): 1\leq j\leq b\}.\]
\par

   Take $\mathscr{H}_d(m_5,\delta t^i)$ from $\Re2$. Let
  \[\mathscr{H}(m_5\delta,-\beta\gamma,t^i)=
  \mathscr{H}_d(m_5,\delta t^i)\cap[\mathscr{H}_{-y}(m_5,\beta t^i)
  \cap\mathscr{H}_{-z}(m_5,\gamma t^i)].\]
  Then
  \begin{align}
 \mathscr{H}(m_5\delta,-\beta\gamma,t^i)
 =\{H(m_5,t^i_{jl}):&\
 \beta(i)<l\leq\delta(i)\hbox{ if }
       \delta(i)<j\leq\gamma(i),\hbox{ or } \nonumber\\
  &\delta(i)<l\leq\gamma(i)\hbox{ if } \beta(i)<j\leq\delta(i)\ \}.
  \quad\nonumber
\end{align}
\par

\vspace{0.3cm}

3.  We translate $\mathscr{H}(m_5\delta,-\beta\gamma,t^i)$ into
  $\mathscr{H}(m_5\delta,-\beta\gamma,z^k_v)$.
\par

To do it take $\mathscr{H}(m_5,x^v)$ and $\mathscr{G}(m_5,x^v)$ for $v=i$
  from $\Re2$. Let $x^v=z^k_v$ and take $H(m_3,jz^k_v)$ from Claim 6.6
  for $v>e$. Here $e$ is in the above 1 of C. Let
\begin{align}
 \mathscr{H}(m_5\delta,-\beta\gamma,z^k_v)
 =\{H(m_5,t^v_{jl})\in&\mathscr{H}(m_5,x^v):\
 \beta(v)<l\leq\delta(v)\hbox{ if }
       \delta(v)<j\leq\gamma(v), \nonumber\\
  &\hbox{ or }\delta(v)<l\leq\gamma(v)\hbox{ if } \beta(v)<j\leq\delta(v)\ \}.
  \quad\nonumber
\end{align}
  Note
  $H(m_5,t^v_{jl})=H(m,j'z)\cap g(m_5,t^v_{jl})$
  for $H(m_5,t^v_{jl})\in\mathscr{H}(m_5\delta,-\beta\gamma,z^k_v)$.
  Let \[\mathscr{G}(m_5\delta,-\beta\gamma,z^k_v)=\{g(m_5,t^v_{jl})\in\mathscr{G}(m_5,x^v):
  H(m_5,t^v_{jl})\in\mathscr{H}(m_5\delta,-\beta\gamma,z^k_v)\}.\]
\par

\vspace{0.3cm}

4. Take  $H(m_3,jz^k_v)$ and $\mathscr{G}^*_d(v^\alpha_1,jz^k_v)$ from B1 of
  Construction 6.1.  Let
  \[\mathscr{H}(m_5\delta,-\beta\gamma, q_kz)=
  \cup\{\mathscr{H}(m_5\delta,-\beta\gamma,z^k_v): z^k_v\in H(m_1,jz),
  v>e \hbox{ and }v>q_k\}\quad \hbox{ and }\]
  \[H(m_5\delta,-\beta\gamma, q_kz)
  =\cup\mathscr{H}(m_5\delta,-\beta\gamma, q_kz).\]
  Then, for $ j\leq n$,
  $H(m_5\delta,-\beta\gamma, q_kz)\cap E(\alpha_j k_j,p_{k_j})=\emptyset.$
  Then, by the definition of $\mathscr{H}(m_5\delta,-\beta\gamma,z^k_v),$
  we have
   \[H(m_5\delta,-\beta\gamma, q_kz)\cap[\cup\mathscr{H}_d(v^\delta_1,jz^k_v)]\subset H(k\sigma^\delta_1,q_k\sigma)
   \subset E(\delta k,p_k).\]

\begin{cl}
Let
   \[\mathscr{G}(m_5\delta,-\beta\gamma,q_k)=
   \cup\{\mathscr{G}(m_5\delta,-\beta\gamma, q_kz): g(m_1,z)\in\mathscr{G}^*(m_1,1_j)
   \hbox{ and } j\in N\}\hbox{ and}\]
   \[\mathscr{H}(m_5\delta,-\beta\gamma,q_k) =\cup\{\mathscr{H}(m_5\delta,-\beta\gamma, q_kz):
   g(m_1,z)\in\mathscr{G}^*(m_1,1_j) \hbox{ and } j\in N\}.\]
 Then:  1. $\mathscr{G}(m_5\delta,-\beta\gamma,q_kz)$ is a Ln family,
\par

2. For every $H(m_5,t^v_{jl})\in \mathscr{H}(m_5\delta,-\beta\gamma,q_k)$,
  there uniquely
  exists a $g(m_5,t^v_{jl})\in\mathscr{G}(m_5\delta,-\beta\gamma,q_k)$
  with $H(m_5,t^v_{jl})=g(m_5,t^v_{jl})\cap H(m_1,j'z)$,
\par

3. $H(m_5\delta,-\beta\gamma, q_k)\cap E(\alpha_j k_j,p_{k_j})=\emptyset$
  for each $\alpha_j\in\alpha^-$.
  Here $H(m_5\delta,-\beta\gamma,q_k)=\cup\mathscr{H}(m_5\delta,-\beta\gamma,q_k)$.
\par

4.  $H(m_5\delta,-\beta\gamma, q_k)\cap[\cup\mathscr{H}_d(v^\delta_1,jz^k_v)]
  \subset H(k\sigma^\delta_1,q_k\sigma)\subset E(\delta k,p_k)$.
\end{cl}

\textit{The proof is continued.}
  Let $H(v^\delta_1,t)\in \mathscr{H}_d(v^\delta_1,jz^k_v)|H(m_5\delta,-\beta\gamma, q_k)$.
  Then $H(v^\delta_1,t)\subset E(\delta k,p_k)$,
  $H(v^\delta_1,t)\subset g_c(l,y)$ and $H(v^\delta_1,t)\cap E(\alpha_j k_j,p_{k_j})=\emptyset$.
  Then $H(v^\delta_1,t)\subset g_c(l,y)-E(\alpha_j k_j,p_{k_j})$
  for $j\leq n$.
\end{proof}

\vspace{0.3cm}

Let $\mathscr{A}$ be a \textit{mad} family on $N$.
  Take an $\alpha\in \mathscr{A}$.
  Let $k\geq 1$. Then there exists a family
  $\mathscr{E}(\alpha,k)=\{E(\alpha k,p): p>m_5\}$ by Definition B.
  Let $\mathscr{E}(\alpha)=\cup_k\mathscr{E}(\alpha,k)$.
\par

Let $\mathscr{V}_y=\{V_\lambda: \lambda\in \Lambda\}$ be
  an arbitrary  neighborhoods base of $y\in H_0$ in $(Y,\tau)$.
  Then $V_\lambda$ is open in $(Y,\tau)$ for every $V_\lambda\in\mathscr{V}_y$
  by the definition of  neighborhoods base
  before Proposition 7.2.
  Let $b>m_5$, $y\in H_0$, $p_1>m_5$, $c$ on $(g(b,y),m)$ and
  \[\mathscr{U}(y,2,c)=\{U(\alpha b2,p_2y)=g_c(b,y)-E(\alpha2,p_2)\in\mathscr{U}(y):
  \alpha\in \mathscr{A}\}.\]
  Take $\mathscr{U}_y$ from Proposition 7.2.
  Then, for arbitrary $U(\alpha b2,p_2y)\in\mathscr{U}(y,2,c)$, there exists a
  $V_\alpha\in\mathscr{V}_y$ such that $V_\alpha\subset U(\alpha b2,p_2y)$ by the definition
  of $\mathscr{V}_y$.
  Then there exists an $U[\alpha^- b k',p'_ky]\in\mathscr{U}_y$
  such that $U[\alpha^- b k',p'_ky]\subset V_\alpha$ by the definition of
  $\mathscr{U}_y$.
  Let $U[\alpha^- b k',p'_ky]=
  \cap_{i\leq n}U(\alpha_i b_i k_i,p_{k_i}y)\in\mathscr{U}_y$, $\alpha^-=\{\alpha_i: i\leq n\}$ and
   $E[\alpha^- k',p'_k]=\cup_{i\leq n}E(\alpha_i k_i,p_{k_i})$.
\par

\par
 Take $\mathscr{H}_d(k\sigma^\alpha_1,q_k\sigma)$ from B2, and
  take $H(m_5\alpha,-\beta\gamma, q_kz)$ from Claim 8.2 for
   $g(m_1,z)\in\mathscr{G}^*(m_1,1_j)$.
  Let
  \[\mathscr{H}(m_5\sigma^\alpha_1,-\beta\gamma,q_kz)
  =\mathscr{H}_d(k\sigma^\alpha_1,q_k\sigma)|H(m_5\alpha,-\beta\gamma, q_kz)
  \quad\hbox{ and}\]
  \[H(m_5\sigma^\alpha_1,-\beta\gamma,q_kz)=
  \cup\mathscr{H}(m_5\sigma^\alpha_1,-\beta\gamma,q_kz).\]
\par

\textit{Call $V_\alpha$} \textbf{fine} \textit{if  there exists an $n\leq k$ and a full
  $g(m_1,z)\in \mathscr{G}^*(m_1,1_j)$
  such that
  $H(m_5\alpha,-\beta\gamma,q_nz)-
  H(m_5\sigma^\alpha_1,-\beta\gamma,q_nz)\subset U[\alpha^- b n',p'_ns]\subset
  V_\alpha \subset U(\alpha b 2,p_2y)$
  and $H(m_5\sigma^\alpha_1,-\beta\gamma,q_nz)\cap Cl_\tau V_\alpha=\emptyset$
  for $V_\alpha\in\mathscr{V}_y$.}

\begin{prop}
Let $y\in H_0$, $U[\alpha^- b k',p'_ky]=
 \cap_{i\leq n}U(\alpha_i b_i k_i,p_{k_i}y)\in\mathscr{U}_y$, $V_\alpha\in\mathscr{V}_y$ and
 $U(\alpha b 2,p_2y)\in\mathscr{U}(y,2,c)$
 such that \[U[\alpha^- b k',p'_ky]\subset V_\alpha\subset U(\alpha b2,p_2y)\subset g_c(b,y).\]
 Then $V_\alpha$ is fine.
\end{prop}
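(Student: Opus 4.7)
The plan is to invoke Claim 8.2 with $\delta$ replaced by $\alpha$ to produce the characteristic set witnessing that $V_\alpha$ is fine, and then to verify the two required conditions using Fact 6.11 together with Propositions 7.3 and 7.4.

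First, I would set up the Claim 8.2 machinery for $\alpha$. Since $V_\alpha \subset U(\alpha b 2, p_2 y)$, the index $\alpha$ together with the $\alpha_i$ in $\alpha^-$ form a finite subfamily of the \textit{mad} family $\mathscr{A}$; by almost disjointness there is an $e$ such that for $i > e$ all values $\{\alpha_h(i) : h \leq n\} \cup \{\alpha(i)\}$ are distinct, and after reordering I may assume $\alpha_1(i) < \cdots < \alpha_b(i) = \beta(i) < \alpha(i) < \gamma(i) = \alpha_{b+1}(i) < \cdots < \alpha_n(i)$ for $i > e$. This puts me in the situation of Claim 8.2 with $\delta = \alpha$. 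At the same time I would use Fact 6.11 to choose a full $g(m_1, z) \in \mathscr{G}^*(m_1, y\, 1_j) \subset \mathscr{G}^*(m_1, 1_j)$, i.e.\ one with $g(m_1, z) \subset g_c(b, y)$, where $1_j$ is the least index with $H'(m, 1_j) \cap g_c(b, y) \neq \emptyset$.

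Second, I would choose the parameter $n \leq k$. Parts 3 and 4 of Claim 8.2 furnish the characteristic family $\mathscr{H}(m_5\alpha, -\beta\gamma, q_n z)$ whose members are disjoint from $E(\alpha_j k_j, p_{k_j})$ for every $\alpha_j \in \alpha^-$, and whose intersection with $\cup\mathscr{H}_d(v^\alpha_1, j z^n_v)$ is contained in $H(n\sigma^\alpha_1, q_n\sigma) \subset E(\alpha n, p_n)$. For $n$ large enough that $p_n \geq p_2$, the $\sigma$-slice $H(m_5\sigma^\alpha_1, -\beta\gamma, q_n z)$ is contained in $E(\alpha 2, p_2)$ by the nested definition of the sets $E(\alpha i, p_i)$ in Construction~6.1, while the complement $H(m_5\alpha, -\beta\gamma, q_n z) - H(m_5\sigma^\alpha_1, -\beta\gamma, q_n z)$ misses every $E(\alpha_j k_j, p_{k_j})$ with $\alpha_j \in \alpha^-$ and hence sits inside $U[\alpha^- b k', p'_k y]$. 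Picking any $s$ in this complement, Proposition 7.2 produces a basic neighborhood $U[\alpha^- b n', p'_n s]$ of $s$ refining $V_\alpha$; all points of the complement share the same $m_5$--level coordinates inside the full cell $H(m_1, z)$, so a single $U[\alpha^- b n', p'_n s]$ captures the entire set.

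Third, and this is the main obstacle, I must verify $H(m_5\sigma^\alpha_1, -\beta\gamma, q_n z) \cap Cl_\tau V_\alpha = \emptyset$. Each point $t$ of the $\sigma$-slice lies in some $H(v^\alpha_1, t) \in \mathscr{H}_d(v^\alpha_1, j z^n_v)$ and hence in $E(\alpha n, p_n) \subset E(\alpha 2, p_2)$, so $t \notin U(\alpha b 2, p_2 y) \supseteq V_\alpha$. To upgrade this to $t \notin Cl_\tau V_\alpha$, I would apply Propositions 7.3 and 7.4 to each of the finitely many closed sets $E(\alpha_i k_i, p_{k_i})$ (and $E(\alpha 2, p_2)$) whose complements cut out $V_\alpha$: each yields a level $\ell_i$ with $H(\ell_i, t) \cap Cl_\tau g[q, H_{i}] = \emptyset$ for $q, \ell_i$ sufficiently large, where $H_i$ denotes the corresponding closed complement. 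The finite intersection then gives a single $\ell$ with $H(\ell, t) \cap Cl_\tau V_\alpha = \emptyset$. The delicate point is ensuring that the Proposition 7.3--7.4 estimates survive the finite intersection defining the basic neighborhoods of $\mathscr{U}_y$; this is where I expect to spend the most care, and it mirrors the argument carried out in the proof of Theorem~1(3).

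Assembling these three steps gives a full $g(m_1, z)$, an index $n \leq k$, and a point $s$ satisfying both inclusions and the closure disjointness required by the definition of fine, which completes the proof.
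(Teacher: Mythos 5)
Your proposal has the right shape for the \emph{inclusion} half of ``fine'' (Claim 8.2 parts 3 and 4 do give $H(m_5\alpha,-\beta\gamma,q_kz)-H(m_5\sigma^\alpha_1,-\beta\gamma,q_kz)=H(m_5\alpha,-\beta\gamma,q_kz)-E[\alpha^- k',p'_k]\subset U[\alpha^- bk',p'_ky]$), but the third step --- the closure--disjointness $H(m_5\sigma^\alpha_1,-\beta\gamma,q_nz)\cap Cl_\tau V_\alpha=\emptyset$ --- contains two errors that together miss the entire difficulty of the proposition. First, your containment is backwards: $H(m_5\sigma^\alpha_1,-\beta\gamma,q_kz)$ lives inside $H(k\sigma^\alpha_1,q_k\sigma)$, which by construction is $[\cup\mathscr{H}_d(k\sigma^\alpha_1,q_k\sigma)]-g[p_k,E(\alpha n,p_n)]$ and is therefore \emph{disjoint} from $E(\alpha 2,p_2)$ for $k>2$, not contained in it. So the one piece of information you actually have about $Cl_\tau V_\alpha$ --- namely $Cl_\tau V_\alpha\cap E(\alpha2,p_2)=\emptyset$, which follows from $V_\alpha\subset U(\alpha b2,p_2y)$ and $E(\alpha2,p_2)$ being c.o --- settles only the base case $k=2$, where the slice does sit in $E(\alpha2,p_2)$. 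Second, $V_\alpha$ is an arbitrary member of an arbitrary neighborhood base $\mathscr{V}_y$; it is \emph{not} cut out by the finitely many sets $E(\alpha_i k_i,p_{k_i})$, it is merely sandwiched between $U[\alpha^- bk',p'_ky]$ and $U(\alpha b2,p_2y)$. Hence ``apply Propositions 7.3--7.4 to the closed sets whose complements cut out $V_\alpha$'' has nothing to act on: $Cl_\tau V_\alpha$ can in principle meet $H(k\sigma^\alpha_1,q_k\sigma)$, and indeed the paper must explicitly allow for this.

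That possibility is exactly what the paper's proof is organized around, and it is absent from your proposal. The paper proceeds by induction on $k$ with a dichotomy at each level: either some full $g(m_1,z)$ already satisfies $H(m_5\sigma^\alpha_1,-\beta\gamma,q_kz)\cap Cl_\tau V_\alpha=\emptyset$ (Case 1, done), or every full cell's slice meets $Cl_\tau V_\alpha$ (Case 2). In Case 2 one uses that $g[p_k,H(h\sigma^\alpha_1,q_h\sigma)]-E(\alpha h,p_h)$ is open and dense in $E(\alpha k,p_k)-E(\alpha h,p_h)$ to find a point $x\in V_\alpha$ inside that set, extracts a basic neighborhood $U[\beta^-\ell h',p'_hx]\subset V_\alpha$ at the strictly lower level $h=k-1$ with $\alpha\in\beta^-$ and $E(\alpha h,p_h)\subset E[\beta^- h',e'_h]$ (Facts C.1--C.2), shrinks it via Decompositions B.3--B.4 to a $U[\beta^-\ell_s h',e'_hs]$ containing a full cell, and invokes the inductive hypothesis Fact B.h. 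Without this descent mechanism there is no way to produce the required slice disjoint from $Cl_\tau V_\alpha$, so the proposal as written does not prove the proposition for $k>2$.
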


\begin{proof}
We prove the proposition by induction on $k$ for $U[\alpha^- b k',p'_ky]$.
  To do it note $U[\alpha^- b k',p'_ky]=g_c(b,y)-E[\alpha^- k',p'_k]\subset
  V_\alpha\subset U(\alpha b2,p_2y)$
  since $y\in H_0$ and $H_0\cap E(\beta h,p_h)=\emptyset$ for every $\beta\in\mathscr{A}$
  and every $h\geq 2$.
  Let $\alpha^-=\{\alpha_i: i\leq n\}$
  and $U[\alpha^- b k',p'_ky]=\cap_{i\leq n}U(\alpha_i b_i k_i,p_{k_i}y)$.
  Then $\alpha\in\alpha^-$ by Proposition 8.1.
  Then  $\alpha=\alpha_a$ and $E(\alpha k,p_{k_a})
  \subset E[\alpha^- k',p'_k]=\cup_{i\leq n}E(\alpha_i k_i,p_{k_i})$.
  Then we may assume $E(\alpha k,p_k)\subset E[\alpha^- k',p'_k]$.
\par

In fact,  if $p_{k_a}\leq p_k$, then
  $E(\alpha k,p_k)\subset E(\alpha k,p_{k_a})\subset E[\alpha^- k',p'_k]$.
\par

Let $p_k<p_{k_a}$. Then $E(\alpha k,p_{k_a})\subset E(\alpha k,p_k)$ and
  $U(\alpha b k,p_ky)\subset U(\alpha bk,p_{k_a})$.
  Then $U[\alpha^- bk',p'_ky]\cap U(\alpha bk,p_ky)
  \subset U[\alpha^- bk',p'_ky]\subset V_\alpha\subset U(\alpha b2,p_2y).$
  we denote $E[\alpha^- k',p'_k]\cup E(\alpha k,p_k)$ by $E[\alpha^- k',p'_k]$,
  and $U[\alpha^- bk',p'_ky]\cap U(\alpha bk,p_ky)$ by $U[\alpha^- bk',p'_ky]$
  still. Then $E(\alpha k,p_k)\subset E[\alpha^- k',p'_k]$.
  Then we have the following fact.
\par

\vspace{0.3cm}

\textbf{Fact 1.} There exists an $U[\alpha^- b k',p'_ky]\in\mathscr{U}_y$
  such that $E(\alpha k,p_k)\subset E[\alpha^- k',p'_k]$, $\alpha\in\alpha^-$
  and $U[\alpha^- b k',p'_ky]=g_c(b,y)-E[\alpha^- k',p'_k]\subset V_\alpha\subset
  U(\alpha b 2,p_2y)\in\mathscr{U}(y,2,c)$.
\par

\vspace{0.3cm}

A. $k=2$  and $U[\alpha^- b2',p'_2y]=g_c(b,y)-E[\alpha^- 2',p'_2]
  \subset V_\alpha\subset U(\alpha b2,p_2y)$.
\par

  Then $E(\alpha2,p_2)\subset E[\alpha^-2',p'_2]$ by Fact 1.
  Note $V_\alpha\subset U(\alpha b2,p_2y)$.
   Then we have $E(\alpha2,p_2)\cap U(\alpha b2,p_2y)=\emptyset$.
  Then $E(\alpha2,p_2)\cap Cl_\tau V_\alpha=\emptyset$
  since $E(\alpha2,p_2)$ is a c.o set by 1 of Definition B.
  Then, by 4 of Claim 8.2, $H(m_5\sigma^\alpha_1,-\beta\gamma,q_2z)\cap Cl_\tau V_\alpha=\emptyset$
  since $H(m_5\sigma^\alpha_1,-\beta\gamma,q_2z)\subset E(\alpha2,p_2)$.
  Note $H(m_1,z)-E(\alpha2,p_2)\subset U(\alpha b2,p_2y)$
  by the definition of $U(\alpha b2,p_2y)$
  for every $H'(m,1_j)\in\mathscr{H}'(m,y_0)$ and
  every $g(m_1,z)\in\mathscr{G}^*(m_1,1_j)$ with $g(m_1,z)\subset g_c(b,y)$.
  Then, by 3 and 4 of Claim 8.2, we have
  \[H=H(m_5\alpha,-\beta\gamma, q_2z)-H(m_5\sigma^\alpha_1,-\beta\gamma,q_2z)
  =H(m_5\alpha,-\beta\gamma, q_2z)-E(\alpha2,p_2) \quad\hbox{ and}\]
  \[H=H(m_5\alpha,-\beta\gamma, q_2z)-E[\alpha^-2',p'_2]
  \subset U[\alpha^- b2',p'_2y]\subset V_\alpha.\]
\par

Then $V_\alpha$ is fine if $U[\alpha^- b2',p'_2y]\subset V_\alpha\subset U(\alpha b2,p_2y)$
  with $\alpha\in\alpha^-$.
\par

\vspace{0.3cm}
B. $k=3$  and $U[\alpha^- b3',p'_3y]=g_c(b,y)-E[\alpha^-3',p'_3]
 \subset V_\alpha\subset U(\alpha b2,p_2y)$.
\par

Then $\alpha\in\alpha^-$ and
  $E(\alpha3,p_3)\subset E[\alpha^-3',p'_3]$ by Fact 1.
\par

Case 1, there exists an $H'(m,1_j)\in\mathscr{H}'(m,y_0)$ and a full
  $g(m_1,z)\in\mathscr{G}^*(m_1,1_j)$  such that
  $H(m_5\sigma^\alpha_1,-\beta\gamma,q_3z)\cap Cl_\tau V_\alpha=\emptyset$.
  Then we can prove that $V_\alpha$ is fine.
\par

In fact, by 3 and 4 of Claim 8.2, we have
  \[H=H(m_5\alpha,-\beta\gamma, q_3z)-H(m_5\sigma^\alpha_1,-\beta\gamma,q_3z)
  =H(m_5\alpha,-\beta\gamma, q_3z)-E(\alpha3,p_3)\hbox{ and}\]
  \[H=H(m_5\alpha,-\beta\gamma, q_3z)-E[\alpha^- 3',p'_3]
  \subset U[\alpha^- b3',p'_3y]\subset V_\alpha.\]
\par

Case 2, for every $H'(m,1_j)\in\mathscr{H}'(m,y_0)$ and every full
  $g(m_1,z)\in\mathscr{G}^*(m_1,1_j)$,
  we have
  \[H(m_5\sigma^\alpha_1,-\beta\gamma,q_3z)\cap Cl_\tau V_\alpha\neq\emptyset.\]
\par

  Let $\delta\in \alpha^-$ with $\delta\neq\alpha$. Then
  $H(m_5\sigma^\alpha_1,-\beta\gamma,q_3z)\cap E(\delta3,p_3)=\emptyset$ by 3 of Claim 8.2
  for every $H'(m,1_j)\in\mathscr{H}'(m,y_0)$ and every $g(m_1,z)\in\mathscr{G}^*(m_1,1_j)$.
  Then $E(\alpha3,p_3)$ is the unique open set with $H(m_5\sigma^\alpha_1,-\beta\gamma,q_3z)\subset E(\alpha3,p_3)$
  in $\alpha^-$.
  Take $D(\alpha3,p_3)=g[p_2,D(1\sigma^\alpha_1,G_1)]\cup
  g[p_3,H(2\sigma^\alpha_1,q_2\sigma)]$.
  Then, by the definition of $E(\alpha2,p_2)$,
  $g[p_2,D(1\sigma^\alpha_1,G_1)]\subset E(\alpha2,p_2)$.
  Note $g[p_3,H(2\sigma^\alpha_1,q_2\sigma)]-E(\alpha2,p_2)$ is open and
  dense in $E(\alpha3,p_3)-E(\alpha2,p_2)$ by the definition of $E(\alpha3,p_3)$.
  Then
  \[H(m_5\sigma^\alpha_1,-\beta\gamma,q_3z)\cap Cl_\tau \big[V_\alpha\cap
  g[p_3,H(2\sigma^\alpha_1,q_2\sigma)]\big]\neq\emptyset.\]
  Let $V_\alpha(p_3)=V_\alpha\cap
  g[p_3,H(2\sigma^\alpha_1,q_2\sigma)]$
  and $x\in V_\alpha(p_3)$.
  Then there exists an open neighborhood
  $U[\beta^-\ell k',p'_kx]$
  such that $U[\beta^-\ell k',p'_kx]\subset V_\alpha(p_3)$
  because both $V_\alpha$ and $g[p_3,H(2\sigma^\alpha_1,q_2\sigma)]$
  are open sets in $(Y,\tau)$.
  Then we have the following fact.
\par
\vspace{0.3cm}

\textbf{Fact B.1.}
  $U[\beta^-\ell k',p'_kx]\subset V_\alpha(p_3)\subset V_\alpha$ for some $U[\beta^-\ell k',p'_kx]\in \mathscr{U}_x$.
\vspace{0.3cm}

Note $p_3>m_5$ and $x\in V_\alpha(p_3)=V_\alpha\cap
  g[p_3,H(2\sigma^\alpha_1,q_2\sigma)]\subset g[p_3,H(2\sigma^\alpha_1,q_2\sigma)]$.
  Then $x\in E(\alpha2,p_2)$ or
  $x\in g[p_3,H(2\sigma^\alpha_1,q_2\sigma)]-E(\alpha2,p_2)$.
\par

Note $x\in V_\alpha(p_3)\subset V_\alpha\subset U(\alpha b2,p_2y)=g_c(b,y)-E(\alpha2,p_2) $.
  Then  $x\in g[p_3,H(2\sigma^\alpha_1,q_2\sigma)]-E(\alpha2,p_2)$.
  Then $x\in g(\ell,x)-E(\alpha2,p_2)$ for some $\ell\in N$ with $g(\ell,x)\subset g_c(b,y)$.
  Note 1 of Proposition 7.2. We may let
  \[E_\cap[\theta h,p_h]=g(\ell,x)\cap\bigcap_{i\leq n_1} E(\theta_i h_i,p^i_h),
  \quad E[\beta^-2',p'_2]=\cup_{i\leq n_2} E(\beta_i2_i,p^i_2)\quad \hbox{ and}\]
  \[U[\beta^-\ell k',p'_kx]=
  U[\beta^-\ell2',p'_2x]=E_\cap[\theta h,p_h]
  \cap g(\ell,x)\cap\big[g_c(b,y)-E[\beta^-2',p'_2]\big].\]
Note $U[\beta^-\ell2',p'_2x]\subset U(\alpha b2,p_2y)=g_c(b,y)-E(\alpha2,p_2)$.
  Then we have
  \[U[\beta^-\ell2',p'_2x]=U[\beta^-\ell2',p'_2x]\cap U(\alpha b2,p_2y)\quad\hbox{ and}\]
  \[U[\beta^-\ell2',p'_2x]=E_\cap[\theta h,p_h]\cap g(\ell,x)\cap\big[g_c(b,y)-E[\beta^-2',p'_2]\big]\cap[g_c(b,y)-E(\alpha2,p_2)].\]
  Then $U[\beta^-\ell2',p'_2x]=E_\cap[\theta h,p_h]\cap g(\ell,x)\cap\Big[g_c(b,y)-\big[E[\beta^-2',p'_2]\cup E(\alpha2,p_2)\big]\Big]$.
  Let  $\beta^-=\{\beta_i: i\leq n_2\}$ with $\alpha\in \beta^-$.
  Denote $E[\beta^-2',p'_2]\cup E(\alpha2,p_2)$ by $E[\beta^-2',p'_2]$
  still. Then we have proved the following fact.
\par
\vspace{0.3cm}

\textbf{Fact B.2.} $\alpha\in \beta^-$, $U[\beta^-\ell2',p'_2x]
  =E_\cap[\theta h,p_h]\cap g(\ell,x)\cap\big[g_c(b,y)-E[\beta^-2',p'_2]\big]$ and
  $E(\alpha2,p_2)\subset E[\beta^-2',p'_2]$.
\vspace{0.3cm}

In order to prove that $V_\alpha$ is fine in case 2 if $U[\beta^-\ell2',p'_2x]\subset V_\alpha
  \subset U(\alpha b2,p_2y)$ and $\alpha\in\beta^-$, we give the following
  decompositions.
\par
\vspace{0.3cm}

\textbf{Decomposition B.3.}  Decompose $E_\cap[\theta h,p_h]$.
\par
\vspace{0.3cm}

  To do it note $x\in U[\beta^-\ell2',p'_2x]\subset V_\alpha(p_3)=V_\alpha\cap
  g[p_3,H(2\sigma^\alpha_1,q_2\sigma)]$.
\par

I.\ \  Note the definition of $H(k\sigma^\alpha_1,q_k\sigma)$.
  Then we have $x\notin H(3\sigma^\alpha_1,q_3\sigma)$ since
  $g[p_3,H(2\sigma^\alpha_1,q_2\sigma)]\cap H(3\sigma^\alpha_1,q_3\sigma)
  =\emptyset$.
  Then $x\notin H(n\sigma^\alpha_1,q_n\sigma)$ since
  $H(n\sigma^\alpha_1,q_n\sigma)\cap D(n\sigma^\alpha_1,q_n\sigma)=\emptyset$
  and $g[p_3,H(2\sigma^\alpha_1,q_2\sigma)]\subset D(n\sigma^\alpha_1,q_n\sigma)$.
  Note $x\in \cap_{i\leq n_1} E(\theta_ih_i,p^i_h)$.
  Then $x\in H(2\sigma^{\theta_i}_1,q_2\sigma)$
  for $n'_1<i\leq  n_1$, and $x\in D(\theta_i h_i,p^i_h)$ for $i\leq n'_1\leq n_1$.
  Note $x\in D(\theta_i h_i,p^i_h)$ implies that there exists a
  $g(\ell_1,x)\subset D(\theta_i h_i,p^i_h)\subset E(\theta_i h_i,p^i_h)$
  if $\ell_1=\hbox{max}\{v^{\theta_i}_1:i\leq n'_1\quad \hbox{and}\quad x\in H(m_3,j'z^2_v)\}$.
  Here $H(m_3,j'z^2_v)$ is defined in B2 of Construction 6.1.
  Then we may assume
  \[x\in E'_\cap[\theta h,p_h]=
  g(\ell_1,x)\cap\bigcap_{n'_1<i\leq n_1} E(\theta_i2_i,p^i_2).\]
  Then $x\in \bigcap_{n'_1<i\leq n_1} H(2\sigma^{\theta_i}_1,q^i_2\sigma)$,
  $Cl_\tau D(\theta_i2,p^i_2)=D(\theta_i2,p^i_2)\cup H(2\sigma^{\theta_i}_1,q^i_2\sigma)$
  for $n'_1<i\leq n_1$, and $x\in U[\beta^- b2',p'_2y]=g_c(b,y)-E[\beta^-2',p'_2]$.
\par

II.\ \ Assume $\theta=\theta_{n'}$  with $\theta_1(v)\leq ...\leq \theta_{n'}(v)$
  and $n'=n_1-n'_1$. Note
  $H(v^{\theta_i}_1,x)\in\mathscr{H}_d(2\sigma^{\theta_i}_1,q^i_2\sigma)$
  since $x\in H(2\sigma^{\theta_i}_1,q^i_2\sigma)$.
  Then $H(v^\theta_1,x)\subset\bigcap_{n'_1<i\leq n_1} H(2\sigma^{\theta_i}_1,q^i_2\sigma)$.
  Note $x\in U[\beta^- b2',p'_2y]=g_c(b,y)-E[\beta^-2',p'_2]$. Then
  $H(l,x)\subset U[\beta^- b2',p'_2y]$ for some $l\in N$ by 2 of Proposition 7.2.
  Then we can assume  $H(l,x)\subset H(v^\theta_1,x)$.
  Let $D[g_x,\theta,p'_2]=g(l,x)\cap \bigcap_{n'_1<i\leq n_1} D(\theta_i2,p^i_2)$.
  Then $H(l,x)\subset Cl_\tau D[g_x,\theta,p'_2]$
  since $D(\theta_i2,p^i_2)$ is open and dense in $(E(\theta_i2,p^i_2),\tau)$ for
  $n'_1<i\leq n_1$.
\par
  Then there exists a $g(k_t,t)\subset D[g_x,\theta,p'_2]$ such that
  $g(k_t,t)-E[\beta^-2',p'_2]\neq\emptyset$.
\par

In fact, suppose $g(k_t,t)\subset E[\beta^-2',p'_2]$ for every
  $g(k_t,t)\subset D[g_x,\theta,p'_2]$.
  Then $D[g_x,\theta,p'_2]\subset E[\beta^-2',p'_2]$
  and $D[g_x,\theta,p'_2]$ is open and dense in $(g(l,x)\cap E'_\cap[\theta h,p_h],\tau)$.
  Then \[H(l,x)\subset Cl_\tau D[g_x,\theta,p'_2]\subset Cl_\tau E[\beta^-2',p'_2]=E[\beta^-2',p'_2].\]
  It is a contradiction to $H(l,x)\subset U[\beta^- b2',p'_2y]
  =g_c(b,y)-E[\beta^-2',p'_2]$.
\par

Then $g(k_t,t)-E[\beta^-2',p'_2]\neq\emptyset$ for some $g(k_t,t)\subset D[g_x,\theta,p'_2]$.
  Then there exists a $g(\ell_s,s)\subset D[g_x,\theta,p'_2]$ such that
  $H(\ell_s,s)\subset g(\ell_s,s)-E[\beta^-2',p'_2]$. Then
  \[g(\ell_s,s)\subset g(\ell_1,x)\cap\bigcap_{n'_1<i\leq n_1} D(\theta_i2_i,p^i_2)
    \subset g(\ell_1,x)\cap\bigcap_{n'_1<i\leq n_1} E(\theta_i2_i,p^i_2)
    \subset E_\cap[\theta h,p_h].\]
    Then  $\emptyset\neq\cup\mathscr{G}^*(m,s1'_\ell)\subset g_{c_s}(\ell_s,s)$
    by 1 of Fact 6.11.
    Let $H(m_1,z)\in\mathscr{G}^*(m_1,s1'_\ell)$.
    Then we have $H(m_1,z)\subset g_{c_s}(\ell_s,s)\subset E_\cap[\theta h,p_h]$,
    $g_{c_s}(\ell_s,s)\subset g(\ell_s,s)\subset g(\ell,x)$
    and $H(\ell_s,s)\subset g(\ell_s,s)-E[\beta^-2',p'_2]$.
    This completes a decomposition of $E_\cap[\theta h,p_h]$.\quad $\Box$
\par

\vspace{0.3cm}

\textbf{Decomposition B.4.}  Decompose $U[\beta^- b2',p'_2y]=g_c(b,y)-E[\beta^-2',p'_2]$.
\par
\vspace{0.3cm}

Note $g(\ell_s,s)\subset E_\cap[\theta h,p_h]$ and $U[\beta^-\ell2',p'_2x]
  =E_\cap[\theta h,p_h]\cap\Big[g(\ell,x)-E[\beta^-2',p'_2]\Big]$.
  Let  $U[\beta^-\ell_s 2',p'_2s]=g(\ell_s,s)-E[\beta^-2',p'_2]$.
  Then $H(\ell_s,s)\subset U[\beta^-\ell_s 2',p'_2s]$ and
  \[g(\ell_s,s)\cap U[\beta^- b2',p'_2y]= g(\ell_s,s)\cap U[\beta^-\ell2',p'_2x]
  =g(\ell_s,s)-E[\beta^-2',p'_2].\]
  Then $U[\beta^-\ell_s 2',p'_2s]\subset U[\beta^-\ell2',p'_2x]\subset V_\alpha(p_3)$.
  \quad $\Box$
\par

\textit{The proof of Case 2 is continued.}
  Note $V_\alpha\subset U(\alpha b2,p_2y)=g_c(b,y)-E(\alpha2,p_2)$.
  Then $V_\alpha\cap E(\alpha2,p_2)=\emptyset$. Then $E(\alpha2,p_2)\cap Cl_\tau V_\alpha=\emptyset$ since $E(\alpha2,p_2)$ is a c.o set. Let
  $W_\beta=U[\beta^- b2',p'_2y]$.  Then
  $E(\alpha2,p_2)\subset E[\beta^-2',p'_2]$ and
  \[U[\beta^- b2',p'_2y]\subset W_\beta\subset U(\alpha b2,p_2y).\]
  Take $H(m_1,z)\in\mathscr{H}^*(m_1,s1'_\ell)$
  since $\emptyset\neq\cup\mathscr{G}^*(m,s1'_\ell)\subset g_c(\ell_s,s)$.
  Then, by 3 of Claim 8.2,
  we have
  $\mathscr{H}(m_5\alpha,-\beta\gamma,q_2z)\subset H(m_1,z)$,
  \[H=H(m_5\alpha,-\beta\gamma, q_2z)-H(m_5\sigma^\alpha_1,-\beta\gamma,q_2z)
  =H(m_5\alpha,-\beta\gamma, q_2z)-E[\beta^-2',p'_2].\]
  Note $H(m_5\sigma^\alpha_1,-\beta\gamma,q_2z)\subset E(\alpha2,p_2)$
  by 4 of Claim 8.2.
  Then
  \[H(m_5\sigma^\alpha_1,-\beta\gamma,q_2z)\cap Cl_\tau V_\alpha=\emptyset.\]
  Then $H(m_1,z)\subset E_\cap[\theta h,p_h]$ and
  $H(m_5\alpha,-\beta\gamma, q_2z)\subset H(m_1,z)\subset g_c(\ell_s,s)$.
  Then $H(m_5\alpha,-\beta\gamma, q_2z)-E[\beta^-2',p'_2]
  \subset g(\ell_s,s)-E[\beta^-2',p'_2]=U[\beta^-\ell_s2',p'_2s]$.
  Then \[H\subset U[\beta^-\ell_s2',p'_2s]=E_\cap[\theta h,p_h]\cap\big[g(\ell_s,s)-E[\beta^-2',p'_2]\big]
 \subset U[\beta^-\ell2',p'_2x]\subset V_\alpha.\]
 This completes a proof of Case 2.
 \par

Summarizing the above Case 1 and Case 2,
  we have the following Fact.

\vspace{0.3cm}
\textbf{Fact B.5.}
Let $U[\alpha^- b3',p'_3y]=g_c(b,y)-E[\alpha^-3',p'_3]
 \subset V_\alpha\subset U(\alpha b2,p_2y)$
  with $E(\alpha3,p_3)\subset E[\alpha^-3',p'_3]$  and $\alpha\in\alpha^-$.
  Then the following 1 or 2 holds.
\par

1.  There exists an $H'(m,1_j)\in\mathscr{H}'(m,y_0)$ and a full
  $g(m_1,z)\in\mathscr{G}^*(m_1,1_j)$  such that
  $H(m_5\sigma^\alpha_1,-\beta\gamma,q_3z)\subset H(m_1,z)$,
  $H(m_5\sigma^\alpha_1,-\beta\gamma,q_3z)\cap Cl_\tau V_\alpha=\emptyset,$
  \[H=H(m_5\alpha,-\beta\gamma, q_3z)-H(m_5\sigma^\alpha_1,-\beta\gamma,q_3z)=
  H(m_5\alpha,-\beta\gamma, q_3z)-E(\alpha3,p_3)\quad \hbox{and}\]
  \[H=H(m_5\alpha,-\beta\gamma, q_3z)-E[\beta^-3'_i,p'_3]
  \subset U[\alpha^- b3',p'_3y]\subset V_\alpha.\]
\par

2. If  there exists $U[\beta^-\ell2',p'_2x]
  =E_\cap[\theta h,p_h]\cap g(\ell,x)\cap\big[g_c(b,y)-E[\beta^-2',p'_2]\big]$
  with $U[\beta^-\ell2',p'_2x]\subset V_\alpha\subset U(\alpha b2,p_2y)$ and
  $E(\alpha 2,p_2)=E(\alpha_a 2_a,p^2_a)\subset E[\beta^-2',p'_2]$,
  then there exists $U[\beta^-\ell_s2',p'_2s]=g(\ell_s,s)-E[\beta^-2',p'_2]
  \subset U[\beta^-\ell2',p'_2x]$ and exists $H(m_1,z)\in\mathscr{H}^*(m_1,s1'_\ell)$
  such that
   \[H=H(m_5\alpha,-\beta\gamma, q_2z)-H(m_5\sigma^\alpha_1,-\beta\gamma,q_2z)
  =H(m_5\alpha,-\beta\gamma, q_2z)- E[\beta^-2',p'_2],\]
  \[H(m_5\sigma^\alpha_1,-\beta\gamma,q_2z)\cap Cl_\tau V_\alpha=\emptyset
  \ \hbox{ and }\  H\subset U[\beta^-\ell2',p'_2x]\subset V_\alpha.\]
\vspace{0.3cm}

\textit{The proof of B is continued.}   If $k=3$ and
 $U[\alpha^- b3',p'_3y]\subset V_\alpha\subset U(\alpha b2,p_2y)$,
 then $V_\alpha$ is fine by Fact B.5.
\vspace{0.3cm}

C. Assume, for $k=h=l+1$, we have proved the following fact.
\par

\textbf{Fact B.h.}
Let $U[\alpha^- bh',p'_hy]=g_c(b,y)-E[\alpha^-h',p'_h]\subset V_\alpha\subset U(\alpha b2,p_2y)$
  with $\alpha\in\alpha^-$ and $E(\alpha h,p_h)\subset E[\alpha^-h',p'_h]$.
  Then the following 1 or 2 holds.
\par

1.  There exists an $H'(m,1_j)\in\mathscr{H}'(m,y_0)$ and a full
  $g(m_1,z)\in\mathscr{G}^*(m_1,1_j)$  such that
  $H(m_5\sigma^\alpha_1,-\beta\gamma,q_hz)\subset H(m_1,z)$,
  $H(m_5\sigma^\alpha_1,-\beta\gamma,q_hz)\cap Cl_\tau V_\alpha=\emptyset,$
  \[H=H(m_5\alpha,-\beta\gamma, q_hz)-H(m_5\sigma^\alpha_1,-\beta\gamma,q_hz)
  =H(m_5\alpha,-\beta\gamma, q_hz)-E(\alpha h,p_h)\quad \hbox{and}\]
  \[H=H(m_5\alpha,-\beta\gamma, q_hz)-E[\alpha^- h',p'_h]
  \subset U[\alpha^- bh',p'_hy]\subset V_\alpha.\]
\par

2. If there exists an $l<h$ and exists an $U[\beta^-\ell l',p'_lx]$ such that
  \[U[\beta^-\ell l',p'_lx]=E_\cap[\theta h,p_h]\cap g(\ell,x)
  \cap\big[g_c(b,y)-E[\beta^- l',p'_l]\big]\subset V_\alpha\subset U(\alpha b2,p_2y)\]
  and $E(\alpha l,p_l)\subset E[\beta^- l',p'_l]$,
  then there exists an $U_s\subset U[\beta^-\ell l',p'_lx]$
  such that $H(\ell_s,s)\subset U_s=g(\ell_s,s)-E[\beta^- l',p'_l]$, and exists an
   $H(m_1,z)\in\mathscr{H}^*(m_1,s1'_\ell)$ such that
   \[H=H(m_5\alpha,-\beta\gamma, q_lz)-H(m_5\sigma^\alpha_1,-\beta\gamma,q_lz)
  =H(m_5\alpha,-\beta\gamma, q_lz)-E[\beta^- l',p'_l],\]
  \[H(m_5\sigma^\alpha_1,-\beta\gamma,q_lz)\cap Cl_\tau V_\alpha=\emptyset\
   \hbox{ and }\ H\subset U[\beta^-\ell l',p'_lx]\subset V_\alpha.\]
\vspace{0.3cm}

\textbf{C*.} Let $k=h+1$ and
 $U[\alpha^- bk',p'_ky]=g_c(b,y)-E[\alpha^- k',p'_k]
 \subset V_\alpha\subset U(\alpha b2,p_2y).$
\par

Then $\alpha\in\alpha^-$ and $E(\alpha k,p_k)\subset E[\alpha^- k',p'_k]$
  by Fact 1.
\par

Case 1, there exists an $H'(m,1_j)\in\mathscr{H}'(m,y_0)$ and a full
  $g(m_1,z)\in\mathscr{G}^*(m_1,1_j)$
  such that
  \[H(m_5\sigma^\alpha_1,-\beta\gamma,q_kz)\cap Cl_\tau V_\alpha=\emptyset.\]
  Then, by 3 and 4 of Claim 8.2, we have
  \[H=H(m_5\alpha,-\beta\gamma, q_kz)-H(m_5\sigma^\alpha_1,-\beta\gamma,q_kz)
  =H(m_5\alpha,-\beta\gamma, q_kz)-E(\alpha k,p_k)\quad \hbox{and}\]
  \[H=H(m_5\alpha,-\beta\gamma, q_kz)-E[\alpha^- k',p'_k]
  \subset U[\alpha^- bk',p'_ky]\subset V_\alpha.\]
  Then $V_\alpha$ is fine. Otherwise we have the following case.
\par

Case 2, for every $H'(m,1_j)\in\mathscr{H}'(m,y_0)$ and
  every full $g(m_1,z)\in\mathscr{G}^*(m_1,1_j)$,
  $H(m_5\sigma^\alpha_1,-\beta\gamma,q_kz)\cap Cl_\tau V_\alpha\neq\emptyset.$
\par

Let
 $\delta\in\alpha^-$ with $\delta\neq\alpha$. Then
  $H(m_5\sigma^\alpha_1,-\beta\gamma,q_kz)\cap E(\delta k,p_k)=\emptyset$ by 3 of Claim 8.2.
  Then we can prove that $g[p_k,H(h\sigma^\alpha_1,q_h\sigma)]-E(\alpha h,p_h)$
  is open and dense in $\Big( D(\alpha k,p_k)-E(\alpha h,p_h),\tau\Big)$ for the unique $\alpha\in\alpha^-$.
\par

  In fact, note $H(k\sigma^\alpha_1,q_k\sigma)\cap E(\alpha h,p_h)=\emptyset$.
  Then $H(k\sigma^\alpha_1,q_k\sigma)\subset Y-E(\alpha h,p_h)$.
  Note the definitions of $D(\alpha k,p_k)$ and $E(\alpha h,p_h)$.
  Then \[H(k\sigma^\alpha_1,q_k\sigma)\subset Cl_\tau [D(\alpha k,p_k)-E(\alpha h,p_h)]
  \quad\hbox{ and}\]
  $D(\alpha k,p_k)-E(\alpha h,p_h)=g[p_k,H(h\sigma^\alpha_1,q_h\sigma)]-E(\alpha h,p_h).$
  Then we have
  \[H(k\sigma^\alpha_1,q_k\sigma)\subset Cl_\tau[D(\alpha k,p_k)-E(\alpha h,p_h)]
  =Cl_\tau \big[g[p_k,H(h\sigma^\alpha_1,q_h\sigma)]-E(\alpha h,p_h)\big].\]
  Then $H(m_5\sigma^\alpha_1,-\beta\gamma,q_kz)\cap Cl_\tau \Big[V_\alpha\cap
  \big[g[p_k,H(h\sigma^\alpha_1,q_h\sigma)]-E(\alpha h,p_h)\big]\Big]\neq\emptyset$.
  Let
  \[V_\alpha(p_k)=V_\alpha\cap
  \big[g[p_k,H(h\sigma^\alpha_1,q_h\sigma)]-E(\alpha h,p_h)\big]\quad \hbox{
  and }\quad x\in V_\alpha(p_k).\]
  Then, in the same way as the proof of Fact B.1, we have the following fact.
\par
\vspace{0.3cm}
\textbf{Fact C.1.} There exists an open neighborhood
  $U[\beta^-\ell h',p'_hx]\in\mathscr{U}_x$
  such that
  $U[\beta^-\ell h',p'_hx]\subset V_\alpha(p_k)\subset V_\alpha\subset U(\alpha b2,p_2y)$.

\vspace{0.3cm}

Then, by 1 of Proposition 7.2, we have
  \[E_\cap[\theta h',p'_h]=g(\ell,x)\cap\bigcap_{i\leq n_1} E(\theta_i h_i,p^i_h),
  \quad E[\beta^- e',e'_h]=\cup_{i\leq n_2} E(\beta_ie'_i,e^i_h)\quad \hbox{ and}\]
  \[U[\beta^-\ell h',p'_hx]=E_\cap[\theta h',p'_h]\cap
  g(\ell,x)\cap\big[g_c(b,y)-E[\beta^- e',e'_h]\big]\subset g_c(b,y).\]
  Note $U[\beta^-\ell h',p'_hx]\cap E(\alpha h,p_h)=\emptyset$
  since $U[\beta^-\ell h',p'_hx]\subset V_\alpha(p_k)$.
  Then we have $U[\beta^-\ell h',p'_hx]\subset g_c(b,y)-E(\alpha h,p_h)=U(\alpha bh,p_hy)$.
  Then
  \[U[\beta^-\ell h',p'_hx]=U[\beta^-\ell h',p'_hx]\cap U(\alpha bh,p_hy)
  \quad\hbox{ and}\]
  \[U[\beta^-\ell h',p'_hx]=E_\cap[\theta h',p'_h]\cap
  g(\ell,x)\cap\Big[g_c(b,y)-\big[E[\beta^- e',e'_h]\cup E(\alpha h,p_h)\big]\Big].\]
  Let $E[\beta^- h',e'_h]=E[\beta^- e',e'_h]\cup E(\alpha h,p_h)$
  and $\beta^-=\{\beta_i:i\leq n_2\}$.
\par

 Then we have the following fact in the same way as the proof of Fact B.2.

\par
\vspace{0.3cm}

\textbf{Fact C.2.} $\alpha\in \beta^-$,
  $U[\beta^-\ell h',p'_hx]=E_\cap[\theta h',p'_h]\cap
  g(\ell,x)\cap\big[g_c(b,y)-E[\beta^- h',e'_h]\big]$ and
  $E(\alpha h,p_h)\subset E[\beta^- h',e'_h]$.
\par
\vspace{0.3cm}

Then, in the same way as the proof of Decomposition B.3, there exists
  $g(\ell_s,s)$ and  $\mathscr{H}^*(m_1,s1'_\ell)$
  such that $g(\ell_s,s)\subset g(\ell,x)$ and
  $\emptyset\neq\cup\mathscr{G}^*(m,s1'_\ell)\subset g_c(\ell_s,s)$
  for every $g(m_1,z)\in\mathscr{G}^*(m_1,s1'_\ell)$.
  Then we have the following fact.
\par
\vspace{0.3cm}

\textbf{Fact C.3.}  There exist $g_{c_s}(\ell_s,s)$ and $\mathscr{H}^*(m_1,s1'_\ell)$
  such that $g(\ell_s,s)\subset E_\cap[\theta h',p'_h]$ and
  $H(\ell_s,s)\subset U[\beta^-\ell_sh',e'_hs]=g(\ell_s,s)-E[\beta^- h',e'_h]
  \subset U[\beta^-\ell h',p'_hx]\subset V_\alpha(p_k)$.

\par
\vspace{0.3cm}

\textbf{C**.} Take $\mathscr{G}^*(m_1,h_\sigma)=\cup_j\mathscr{G}^*(m_1,h_j)$
  from Q2 of Construction 6.1. Let
  \[\mathscr{G}^*(m_1,s,h_\sigma)=\{g(m_1,z)\in\mathscr{G}^*(m_1,h_\sigma):
  g(m_1,z)\subset g_{c_s}(\ell_s,s)\}.\]
  Then $\emptyset\neq\mathscr{G}^*(m_1,s1'_\ell)\subset \mathscr{G}^*(m_1,s,h_\sigma)$.
\par

Case 1, there exists  a full $g(m_1,z)\in\mathscr{G}^*(m_1,s,h_\sigma)$
  such that
  \[H(m_5\sigma^\alpha_1,-\beta\gamma,q_hz)\cap Cl_\tau V_\alpha=\emptyset.\]
  Then,  by 3 and 4 of Claim 8.2, we have
  \[H=H(m_5\alpha,-\beta\gamma, q_hz)-H(m_5\sigma^\alpha_1,-\beta\gamma,q_hz)
  =H(m_5\alpha,-\beta\gamma, q_hz)-E(\alpha h,p_h)\quad \hbox{and}\]
  \[H=H(m_5\alpha,-\beta\gamma, q_hz)-E[\alpha^- h',p'_h]
  \subset U[\beta^-\ell_se,p_es]\subset V_\alpha.\]

  Then $V_\alpha$ is fine.
\par

If Case 1 is not true, then we have the following case.

Case 2, for every $g(m_1,z)\in\mathscr{G}^*(m_1,s,h_\sigma)$,
  $H(m_5\sigma^\alpha_1,-\beta\gamma,q_hz)\cap Cl_\tau V_\alpha\neq\emptyset.$
\par

  Let $\delta\in \alpha^-$ with $\delta\neq\alpha$. Then
  $H(m_5\sigma^\alpha_1,-\beta\gamma,q_hz)\cap E(\delta h,p_h)=\emptyset$ by 3 of Claim 8.2.
  Then $E(\alpha h,p_h)$ is the unique open set with $H(m_5\sigma^\alpha_1,-\beta\gamma,q_hz)\subset E(\alpha h,p_h)$
  and $\alpha\in\alpha^-$. Note $h=l+1$ by inductive assumption C. Take
  \[D(\alpha h,p_h)=g[p_2,D(1\sigma^\alpha_1,G_1)]\cup
  g[p_3,H(2\sigma^\alpha_1,q_2\sigma)]\cup...\cup g[p_h,H(l\sigma^\alpha_1,q_l\sigma)].\]
  Then we have $g[p_h,H(l\sigma^\alpha_1,q_l\sigma)]\subset E(\alpha h,p_h)$
  by the definition of $E(\alpha h,p_h)$.
  Note $g[p_h,H(l\sigma^\alpha_1,q_l\sigma)]-E(\alpha l,p_l)$ is open and
  dense in $E(\alpha h,p_h)-E(\alpha l,p_l)$.
  Then
  \[H(m_5\sigma^\alpha_1,-\beta\gamma,q_hz)\cap Cl_\tau \Big[V_\alpha\cap\big[
  g[p_h,H(l\sigma^\alpha_1,q_l\sigma)]-E(\alpha l,p_l)\big]\Big]\neq\emptyset.\]
  Let $V_\alpha(p_h)=V_\alpha\cap\big[
  g[p_h,H(l\sigma^\alpha_1,q_l\sigma)]-E(\alpha l,p_l)\big]$
  and $x\in V_\alpha(p_h)$.
  Then there exists an open neighborhood
  $U[\gamma^-\ell l',p'_lx]\in\mathscr{U}_x$
  such that $U[\gamma^-\ell l',p'_lx]\subset V_\alpha(p_h)$.
  Then
  $U[\gamma^-\ell l',p'_lx]\subset V_\alpha(p_h)\subset V_\alpha$.
  Then we have the following fact.
\par
\vspace{0.3cm}

\textbf{Fact C.4.} There exists an open neighborhood
  $U[\gamma^-\ell l',p'_lx]\in\mathscr{U}_x$
  such that
  $U[\gamma^-\ell l',p'_lx]\subset V_\alpha(p_h)\subset V_\alpha
  \subset U(\alpha b2,p_2y)$.
\vspace{0.3cm}

Then, in the same way as the proof of Fact B.2 and C.2, we have the following fact.
\par
\vspace{0.3cm}

\textbf{Fact C.5.} $\alpha\in \gamma^-$, $U[\gamma^-\ell l',p'_lx]=
  E_\cap[\theta l',p'_l]\cap g(\ell,x)\cap\big[g_c(b,y)-E[\gamma^- l',p'_l]\big]$ and
  $E(\alpha l,p_l)\subset E[\gamma^- l',p'_l]$.
\par

Then Fact C.5 satisfies the assumption of 2 of Fact B.h.
Then $V_\alpha$ is fine by the inductive assumption Fact B.h.
\par

Then, by inductive proof A, B and C, we complete a proof of the proposition.
\end{proof}

\begin{prop}
 Let $U[\alpha^- b k',p'_ky]\subset V_\alpha\subset U(\alpha b 2,p_2y)$ and
 $g(m_1,z)\in\mathscr{G}^*(m_1,1_\ell)$ such that\
 $H(m_5\alpha,-\beta\gamma,q_kz)\ -
  H(m_5\sigma^\alpha_1,-\beta\gamma,q_kz)\ \subset\ U[\alpha^- b k',p'_ky]\subset V_\alpha$
  and
 $H(m_5\sigma^\alpha_1,-\beta\gamma,q_kz)\cap Cl_\tau V_\alpha=\emptyset.$
Then, for every
   $H(v^\alpha_1,t_1)\in\mathscr{H}(m_5\sigma^\alpha_1,-\beta\gamma,q_kz)$ and
   every
   $H(v^\alpha_1,t_2)\in
   \mathscr{H}_{-1}(k\sigma^\alpha_1,jz,q_k\sigma)|H(v^\alpha_0,t_1)$
   with
   \[H(v^\alpha_1,t_2)\subset H(m_5\alpha,-\beta\gamma,q_kz)\ -
   H(m_5\sigma^\alpha_1,-\beta\gamma,q_kz),\]
   $H(v^\alpha_1,t_1)$ and $H(v^\alpha_1,t_2)$ satisfy
   \[H(v^\alpha_1,t_1)\cap Cl_\tau V_\alpha=\emptyset,\quad
   H(v^\alpha_1,t_2)\subset V_\alpha\quad\hbox{ and}\]
   \[H(v^\alpha_1,t_1)\cup H(v^\alpha_1,t_2)\subset H(v^\alpha_0,t_1)
   \in\mathscr{H}(v^\alpha_0,jz^k_v).\]
\end{prop}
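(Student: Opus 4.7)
The plan is to reduce the proposition to an unwinding of the definitions introduced in $\Re1$--$\Re2$ and in B2 of Construction 6.1, together with the refinement properties of Ln covers from Section 4 and Proposition 3.3. My first step is to fix the ambient atom: by the restriction notation $\mathscr{H}_{-1}(k\sigma^\alpha_1,jz,q_k\sigma)|H(v^\alpha_0,t_1)$, every $H(v^\alpha_1,t_2)$ under consideration already lies inside $H(v^\alpha_0,t_1)$, and similarly the member $H(v^\alpha_1,t_1)\in\mathscr{H}_d(v^\alpha_1,jz^k_v)$ (which is the $v^\alpha_1$-level piece of $H(m_5\sigma^\alpha_1,-\beta\gamma,q_kz)$ that forced the labelling $t_1$) lies in the same $v^\alpha_0$-atom $H(v^\alpha_0,t_1)\in\mathscr{H}(v^\alpha_0,jz^k_v)$ because $\mathcal{G}(v^\alpha_0,jz^k_v)=\{\mathscr{G}(v^\alpha_0,jz^k_v),\mathscr{G}(v^\alpha_1,jz^k_v)\}$ is a Ln covers sequence on $H(m_3,jz^k_v)$ and 5 of Proposition 3.3 gives $g(v^\alpha_1,t_1)\subset g(v^\alpha_0,t_1)$. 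This disposes of the final line $H(v^\alpha_1,t_1)\cup H(v^\alpha_1,t_2)\subset H(v^\alpha_0,t_1)$ with no further work.

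Next I would handle $H(v^\alpha_1,t_1)\cap Cl_\tau V_\alpha=\emptyset$. Since
\[H(v^\alpha_1,t_1)\in\mathscr{H}(m_5\sigma^\alpha_1,-\beta\gamma,q_kz)=\mathscr{H}_d(k\sigma^\alpha_1,q_k\sigma)|H(m_5\alpha,-\beta\gamma,q_kz),\]
we have $H(v^\alpha_1,t_1)\subset H(m_5\sigma^\alpha_1,-\beta\gamma,q_kz)$, and the hypothesis already records $H(m_5\sigma^\alpha_1,-\beta\gamma,q_kz)\cap Cl_\tau V_\alpha=\emptyset$, so the inclusion is immediate.

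For $H(v^\alpha_1,t_2)\subset V_\alpha$ I would invoke the definition of $\mathscr{G}_{-1}(v^\alpha_1,jz^k_v)$ from B2 of Construction 6.1: it consists of those $g(v^\alpha_1,t)\in\mathscr{G}(v^\alpha_1,jz^k_v)$ with $g(v^\alpha_1,t)\cap[\cup\mathscr{G}_d(v^\alpha_1,jz^k_v)]=\emptyset$, so in particular $H(v^\alpha_1,t_2)$ avoids every member of $\mathscr{H}_d(v^\alpha_1,jz^k_v)$ and hence avoids $H(m_5\sigma^\alpha_1,-\beta\gamma,q_kz)$. By hypothesis $H(v^\alpha_1,t_2)\subset H(m_5\alpha,-\beta\gamma,q_kz)$ (since it sits in $H(v^\alpha_0,t_1)\subset H(m_3,jz^k_v)\subset H(m_5\alpha,-\beta\gamma,q_kz)$ once one checks via Claim 8.2 that the chain of containments descends from $H(m_1,z)$); together these give
\[H(v^\alpha_1,t_2)\subset H(m_5\alpha,-\beta\gamma,q_kz)-H(m_5\sigma^\alpha_1,-\beta\gamma,q_kz)\subset U[\alpha^- b k',p'_ky]\subset V_\alpha,\]
again using only what is in the hypothesis.

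The only genuinely delicate step I expect is verifying that an arbitrary $H(v^\alpha_1,t_2)$ drawn from $\mathscr{H}_{-1}(k\sigma^\alpha_1,jz,q_k\sigma)|H(v^\alpha_0,t_1)$ really does land in $H(m_5\alpha,-\beta\gamma,q_kz)$ rather than merely in $H(m_3,jz^k_v)$; this is where one needs to unpack that the restriction to $H(v^\alpha_0,t_1)$, combined with the defining ``$-\beta\gamma$'' index conditions on $(j,l)$ recorded in point 3 of the decomposition in the proof of Proposition 8.1 (via Claim 8.2), forces $H(v^\alpha_1,t_2)$ into the ``$-\beta\gamma$'' block. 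Once that containment is checked, the three conclusions of the proposition are just three short citations of the hypotheses.
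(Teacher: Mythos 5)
Your proposal is correct and follows essentially the same route as the paper's own proof, which likewise just unwinds the definitions of $\mathscr{H}_d(v^\alpha_1,jz^k_v)$ and $\mathscr{H}_{-1}$ and reads the three conclusions off the hypothesis chain $H(m_5\alpha,-\beta\gamma,q_kz)-H(m_5\sigma^\alpha_1,-\beta\gamma,q_kz)\subset U[\alpha^- b k',p'_ky]\subset V_\alpha$ together with $H(m_5\sigma^\alpha_1,-\beta\gamma,q_kz)\cap Cl_\tau V_\alpha=\emptyset$ (the paper re-derives these via Proposition 8.3, while you cite them directly). The one "delicate step" you flag — that $H(v^\alpha_1,t_2)$ lands in $H(m_5\alpha,-\beta\gamma,q_kz)-H(m_5\sigma^\alpha_1,-\beta\gamma,q_kz)$ — is in fact imposed as a hypothesis by the "with" clause quantifying over $t_2$, so nothing remains to be verified there.
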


\begin{proof}
Let $U[\alpha^- b k',p'_ky]\subset V_\alpha\subset U(\alpha b 2,p_2y)$.
  Then, by Proposition 8.3, $V_\alpha$ is fine.
  Then there exists an $H(m_5\sigma^\alpha_1,-\beta\gamma,q_kz)$ with
  $H(m_5\sigma^\alpha_1,-\beta\gamma,q_kz)\cap Cl_\tau V_\alpha=\emptyset$
   and
   \[H(m_5\alpha,-\beta\gamma,q_kz)\ -
  H(m_5\sigma^\alpha_1,-\beta\gamma,q_kz)\ \subset\ U[\alpha^- b k',p'_ky]\subset V_\alpha.\]
  Take an $H(v^\alpha_1,t_1)\in\mathscr{H}(m_5\sigma^\alpha_1,-\beta\gamma,q_kz)$,
  and an $H(v^\alpha_1,t_2)\in\mathscr{H}_{-1}(v^\alpha_1,jz^k_v)|$ with
  \[H(v^\alpha_1,t_2)\subset H(m_5\alpha,-\beta\gamma,q_kz)\ -
  H(m_5\sigma^\alpha_1,-\beta\gamma,q_kz)\]
  and $H(v^\alpha_1,t_1)\cup H(v^\alpha_1,t_2)\subset H(v^\alpha_0,t_1)$
  by the definition of $\mathscr{H}_d(v^\alpha_1,jz^k_v)$.
  Then $H(v^\alpha_0,t_1)\in\mathscr{H}(v^\alpha_0,jz^k_v)$.
  Then $H(v^\alpha_1,t_1)\cap Cl_\tau V_\alpha=\emptyset$
  and $H(v^\alpha_1,t_2)\subset V_\alpha$.
\end{proof}

\section{stratifiable space $(X,\tau)$ in not $M_1$-spaces}

Recall $\mathscr{A}$  a \textit{ mad } family on $N$.
  $\alpha, \beta, \delta$ and $\gamma$ are used to
  denote members in $\mathscr{A}$.

\begin{prop}
Let $\mathscr{A}=\cup_i\mathscr{A}_i$.
  Then there exists an $\mathscr{A}_i$
  such that $\mathscr{A}_i$ is unbounded.
\end{prop}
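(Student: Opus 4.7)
The plan is to argue by contradiction, exploiting two classical facts about the poset $([N]^N,\leq^*)$: every countable subfamily admits a $\leq^*$-upper bound (equivalently, the bounding number $\mathfrak{b}>\aleph_0$), and any infinite MAD family on $N$ is itself $\leq^*$-unbounded in $[N]^N$. Both are standard ZFC results, recorded e.g.\ in Van Douwen's survey \cite{dou}. The implicit convention throughout the paper is that $i$ ranges over the countable set $N$, so the decomposition $\mathscr{A}=\cup_i\mathscr{A}_i$ is a countable one.

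Assume for contradiction that every $\mathscr{A}_i$ is bounded in $(\mathscr{A},\leq^*)$; for each $i\in N$ pick a witness $\beta_i\in\mathscr{A}$ with $\alpha\leq^*\beta_i$ for every $\alpha\in\mathscr{A}_i$. Since $\{\beta_i:i\in N\}\subset[N]^N$ is countable, the first classical input yields some $f\in[N]^N$ with $\beta_i\leq^* f$ for all $i$. Then for any $\alpha\in\mathscr{A}=\cup_i\mathscr{A}_i$, we fix some $i$ with $\alpha\in\mathscr{A}_i$; transitivity of $\leq^*$ gives $\alpha\leq^*\beta_i\leq^* f$, so $f$ simultaneously dominates the entire MAD family $\mathscr{A}$. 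This contradicts the second input and completes the reduction.

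The main obstacle, which I would handle either by direct citation or by a self-contained argument, is this second input: that no single $f\in[N]^N$ can $\leq^*$-dominate an infinite MAD family. A self-contained derivation would assume $f$ strictly increasing and decompose $N$ into blocks $I_n=[f(n),f(n+1))$. The condition $\alpha\leq^* f$ forces $|\alpha\cap[0,f(n)]|\geq n$ cofinitely, so elements of $\mathscr{A}$ are constrained to fill these initial segments densely. A careful block-pigeonhole or diagonal construction then produces an infinite $X\subset N$ almost disjoint from every $\alpha\in\mathscr{A}$, contradicting maximality of $\mathscr{A}$. The countable-bounding reduction in the previous paragraph is routine; the whole subtlety of the proof is concentrated in this unboundedness lemma for MAD families.
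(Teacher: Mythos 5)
Your proposal is correct and follows essentially the same route as the paper: the paper's proof is the one-line contradiction ``each $\mathscr{A}_i$ bounded $\Rightarrow$ $\mathscr{A}$ bounded, contradiction,'' which silently relies on exactly the two facts you make explicit, namely that countably many bounds can be $\leq^*$-dominated by a single $f\in[N]^N$ and that an infinite mad family cannot be dominated by any single $f$. Your version simply supplies the details (including the block/pigeonhole argument for the second fact) that the paper leaves implicit.
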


\begin{proof}
Suppose that each $\mathscr{A}_i$ is bounded.
 Then $\mathscr{A}$ is bounded, a contradiction.
\end{proof}

  Take a $\alpha\in \mathscr{A}$.
  Let $k> 1$. Then there exists a family
  $\mathscr{E}(\alpha,k)=\{E(\alpha k,p_k): p_k>m_5\}$ by 1 of Definition B.
  Let $\mathscr{E}(\alpha)=\cup_k\mathscr{E}(\alpha,k)$.
\par

Take $\mathscr{V}_y=\{V_\lambda: \lambda\in \Lambda\}$ and
  $\mathscr{U}(y,2,c)$ from Proposition 8.3.
  Let $b>m_5$, $y\in H_0$ and $p_2>m_5$. Note
  \[\mathscr{U}(y,2,c)=\{U(\alpha b2,p_2y)=g_c(b,y)-E(\alpha2,p_2)\in\mathscr{U}(y):
  \alpha\in \mathscr{A}\}.\]
  Take $\mathscr{U}_y$.
  Then, for every $U(\alpha b2,p_2y)\in\mathscr{U}(y,2,c)$, there exists an
  $U[\alpha^- b_\alpha k,p_ky]\in\mathscr{U}_y$
 and a $V_\alpha\in\mathscr{V}_y$ such that
 \[U[\alpha^- b_\alpha k',p'_ky]=\cap_{i\leq n}U(\alpha_i b_i k_i,p_{k_i}y)
  \subset V_\alpha\subset U(\alpha b2,p_2y)\subset g_c(b_\alpha,y)\]
 with $\alpha\in\alpha^-=\{\alpha_i: i\leq n\}$ by Proposition 8.1.
 Then, by Proposition 8.3, $V_\alpha$ is fine.
 Then there exists a full $g(m_1,z)$
 such that
 \[H(m_5\alpha,-\beta\gamma,q_kz)\ -
  H(m_5\sigma^\alpha_1,-\beta\gamma,q_kz)\ \subset\
  U[\alpha^- b_\alpha k',p'_ky]\subset V_\alpha\subset g_c(b_\alpha,y)\quad\hbox{ and}\]
   \[H(m_5\sigma^\alpha_1,-\beta\gamma,q_kz)\cap Cl_\tau V_\alpha=\emptyset.\]
 Denote $H(m_1,z)$ by $H(m_1,z\alpha)$.
 Let $\mathscr{H}(y,\mathscr{A})=\{H(m_1,z\alpha):
  \alpha\in \mathscr{A}\}$ and
  \[\mathscr{U}[y,\mathscr{A}]=\{U[\alpha^- b_\alpha k',p'_ky]:
  U[\alpha^- b_\alpha k',p'_ky]\subset V_\alpha\subset U(\alpha b2,p_2y)
  \hbox{ and }\alpha\in \mathscr{A}\}.\]
  Note $H(m_1,z\alpha)\in \mathscr{H}_Y$ is countable.
  Then there exists  an $H(m_1,z\alpha)=H(m_1,z)$
  and a unbounded subfamily $\mathscr{A}_1\subset\mathscr{A}$
  such that
  \[\mathscr{U}[y,\mathscr{A}_1]=\{U[\alpha^- b_\alpha k',p'_ky]:
  H(m_1,z\alpha)=H(m_1,z)
  \hbox{ and }\alpha\in \mathscr{A}_1\}.\]
  Take an
  $U[\alpha^- b_\alpha k',p'_ky]=\cap_{i\leq n}U(\alpha_i b_i k_i,p_{k_i}y)\in\mathscr{U}[y,\mathscr{A}_1]$.
  Then $\alpha\in\alpha^-$.
  Then there exists an unbounded subfamily $\mathscr{A}_2\subset\mathscr{A}_1$
  such that $|\alpha^-|=|\beta^-|=n'$, $k_\alpha=k_\beta=k$ and
  $p_{k_\alpha}=p_{k_\beta}=p_k$ if $\alpha,\beta\in \mathscr{A}_2$.
  Let \[\mathscr{U}[y,\mathscr{A}_2]=\{U[\alpha^- b_\alpha k',p'_ky]:
  |\alpha^-|=n', k_\alpha=k, p_{k_\alpha}=p_k
  \hbox{ and }\alpha\in\mathscr{A}_2\}.\]
  Take the relative  $\mathscr{H}(y,\mathscr{A}_2)=\{H(m_1,z\alpha)=H(m_1,z):
  \alpha\in \mathscr{A}_2\}$.
  Then $\mathscr{H}(y,\mathscr{A}_2)=\{H(m_1,z)\}$.

Call $\mathscr{U}[y,\mathscr{A}_2]$ \textbf{an idea family.}
\par

Summing up to the above result  we have the following proposition.

\begin{prop}
Let $V_\alpha$ be fine for every $\alpha\in\mathscr{A}$.
  Then there exists an idea family $\mathscr{U}[y,\mathscr{A}_2]$
  with the same full set $H(m_1,z)$.
\end{prop}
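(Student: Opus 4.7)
The plan is to reduce Proposition 9.2 to three successive applications of Proposition 9.1 (countable unions of unbounded-in-$(\mathscr{A},\leq^*)$ flavor), using that the ``witness'' attached to each $\alpha$ takes only countably many values. First I would invoke the hypothesis that $V_\alpha$ is fine together with Proposition 8.3: for each $\alpha\in\mathscr{A}$, pick an $U[\alpha^- b_\alpha k'_\alpha,p'_{k_\alpha}y]\in\mathscr{U}_y$ with $\alpha\in\alpha^-$ and
\[
U[\alpha^- b_\alpha k'_\alpha,p'_{k_\alpha}y]\subset V_\alpha\subset U(\alpha b 2,p_2y)\subset g_c(b_\alpha,y),
\]
and then, using fineness, produce the distinguished full $g(m_1,z\alpha)\in\mathscr{G}^*(m_1,1_\ell)$ of Proposition 8.3 satisfying the containments
\[
H(m_5\alpha,-\beta\gamma,q_{k_\alpha}z\alpha)-H(m_5\sigma^\alpha_1,-\beta\gamma,q_{k_\alpha}z\alpha)\subset U[\alpha^- b_\alpha k'_\alpha,p'_{k_\alpha}y]
\]
and $H(m_5\sigma^\alpha_1,-\beta\gamma,q_{k_\alpha}z\alpha)\cap Cl_\tau V_\alpha=\emptyset$. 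This gives the correspondence $\alpha\mapsto H(m_1,z\alpha)$.

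Next I would run the pigeonhole. By 2 of Proposition 7.2, $\mathscr{H}_Y=\cup\{\mathscr{H}_x:x\in Y\}$ is a countable network, so in particular $\{H(m_1,z\alpha):\alpha\in\mathscr{A}\}$ is a countable family. Write $\mathscr{A}=\cup_{H\in\mathscr{H}_Y}\mathscr{A}^H$, where $\mathscr{A}^H=\{\alpha\in\mathscr{A}:H(m_1,z\alpha)=H\}$. By Proposition 9.1, some $\mathscr{A}^H$ is unbounded; call it $\mathscr{A}_1$, and let $H(m_1,z)$ be the common value. This yields
\[
\mathscr{U}[y,\mathscr{A}_1]=\{U[\alpha^- b_\alpha k'_\alpha,p'_{k_\alpha}y]:H(m_1,z\alpha)=H(m_1,z),\ \alpha\in\mathscr{A}_1\}.
\]

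Then I would iterate the same countable-partition/Proposition 9.1 argument on the remaining ``type'' data attached to the neighborhoods in $\mathscr{U}[y,\mathscr{A}_1]$. Each $U[\alpha^- b_\alpha k'_\alpha,p'_{k_\alpha}y]$ is determined by finitely many parameters in countable index sets: the cardinality $n'_\alpha=|\alpha^-|\in\omega$, the integer $k_\alpha\in N$, and the tuple $p'_{k_\alpha}\in N^{n'_\alpha}$. Partitioning $\mathscr{A}_1$ first by $n'_\alpha$, then (within the resulting unbounded subfamily) by $k_\alpha$, then by $p_{k_\alpha}$, and applying Proposition 9.1 at each step, produces a nested sequence whose final element is an unbounded $\mathscr{A}_2\subset\mathscr{A}_1$ on which $|\alpha^-|=n'$, $k_\alpha=k$, and $p_{k_\alpha}=p_k$ are all constant. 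By construction the associated $\mathscr{U}[y,\mathscr{A}_2]$ is an idea family with the common full set $H(m_1,z)$.

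The only genuine content is the first pigeonhole step, since the later refinements are by finite/countable indices and are immediate from Proposition 9.1; the first one is where the countable network property of $\mathscr{H}_Y$ is essential, because it guarantees that the map $\alpha\mapsto H(m_1,z\alpha)$ has countable range. Thus the ``main obstacle'' is really a bookkeeping point: verifying that each $H(m_1,z\alpha)$ supplied by the fineness proof actually lies in the \emph{countable} family $\mathscr{H}_Y$ (so that Proposition 9.1 is applicable), which is transparent from the construction $g(m_1,z\alpha)\in\mathscr{G}^*(m_1,1_\ell)\subset\mathscr{G}_{m_1}$ and the definition of $\mathscr{H}_Y$ immediately after Construction 7.
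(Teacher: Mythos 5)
Your proposal is correct and follows essentially the same route as the paper: invoke Proposition 8.3 (fineness) to attach to each $\alpha$ a full set $H(m_1,z\alpha)$ lying in the countable family $\mathscr{H}_Y$, apply Proposition 9.1 to extract an unbounded $\mathscr{A}_1$ on which this set is constant, and then refine by the remaining countably-valued parameters $|\alpha^-|$, $k_\alpha$, $p_{k_\alpha}$ to reach the unbounded $\mathscr{A}_2$. The paper compresses the final refinement into one sentence where you spell out the iterated pigeonhole, but the argument is the same.
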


 Let $[n_1,n_2,...,n_h]
  =\{\alpha\in\mathscr{A}:
  \alpha(i)=n_i \hbox{ for } i=1,2,...,h\}$.
  Then, for arbitrary $h\in N$, \[\mathscr{A}=\cup\{[n_1,n_2,...,n_h]:
  n_1,n_2,...,n_h\in N \hbox{ with }
  n_1<n_2<...<n_h\}.\]
  Let $\mathscr{A}'\subset\mathscr{A}$.
  Fix an $h\in N$ and let
  $\mathcal{A}'_h=\{[n_1,n_2,...,n_h]':
  [n_1,n_2,...,n_h]'\subset\mathscr{A}'\}$.
  Then $\mathscr{A}'=\cup\mathcal{A}'_h$.

\begin{prop}
If there exists $b\in N$ such that, for
 every $h>b$ and every
  $[n_1,n_2,...,n_h]'\in\mathcal{A}'_h$,
  there exists a $k\in N$ with $\alpha(h+1)<k$
  for every $\alpha\in[n_1,n_2,...,n_h]'$,
  then $\mathscr{A}'$ is bounded.
\end{prop}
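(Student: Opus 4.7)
The plan is to build one function $\beta\in N^N$ that $\leq^*$-dominates every member of $\mathscr{A}'$, thereby witnessing that $\mathscr{A}'$ is bounded. The first step is to translate the hypothesis into a finite-branching statement on the tree
\[T=\{(\alpha(1),\dots,\alpha(h)):\alpha\in\mathscr{A}',\ h\in N\}\]
of initial segments realized by $\mathscr{A}'$: a node $\sigma=(n_1,\dots,n_h)\in T$ at level $h>b$ has only finitely many immediate successors, because the hypothesis supplies a $k=k(\sigma)\in N$ with $\alpha(h+1)<k$ for every $\alpha\in[n_1,\dots,n_h]'$.

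Next, for each node $\sigma\in T$ at level $b+1$, I would set
\[\mathscr{A}'_\sigma=\{\alpha\in\mathscr{A}':\alpha(i)=\sigma(i)\text{ for }1\le i\le b+1\}\quad\text{and}\quad \beta_\sigma(n)=\sup\{\alpha(n):\alpha\in\mathscr{A}'_\sigma\}.\]
A straightforward induction on $n\ge b+1$ shows that the subtree $T_\sigma\subset T$ rooted at $\sigma$ has only finitely many nodes at every level: the hypothesis bounds the number of successors of every node at levels $\ge b+1$, so a finite level $n$ yields a finite level $n+1$. Consequently $\beta_\sigma(n)<\infty$ for every $n$, and $\beta_\sigma$ is a well-defined function $N\to N$.

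Finally, since the set $L_{b+1}$ of level-$(b+1)$ nodes of $T$ is at most countable, I would enumerate it as $\{\sigma_i:i\in I\}$ with $I\subset N$ and put $\beta(n)=\max\{\beta_{\sigma_i}(n):i\le n,\ i\in I\}$, a finite maximum for each $n$. For any $\alpha\in\mathscr{A}'$ one can choose $j$ with $\alpha(i)=\sigma_j(i)$ for $1\le i\le b+1$; then for every $n\ge\max(j,b+1)$ one has $\alpha(n)\le\beta_{\sigma_j}(n)\le\beta(n)$, so $\alpha\le^*\beta$ and $\mathscr{A}'$ is bounded. The step I expect to need the most care is the inductive claim that every level of $T_\sigma$ is finite: this is a book-keeping argument combining the node-wise successor bounds rather than any uniform branching bound, and it is where the restriction ``$h>b$'' in the hypothesis is used crucially. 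If the convention ``bounded in $(\mathscr{A},\leq^*)$'' requires the dominator to lie inside the mad family $\mathscr{A}$ itself rather than in $N^N$, one would additionally invoke, as the proof of Proposition 9.1 implicitly does, the existence of some $\gamma\in\mathscr{A}$ with $\beta\leq^*\gamma$ to conclude.
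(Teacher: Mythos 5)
Your proof is correct and follows essentially the same route as the paper: partition $\mathscr{A}'$ by a fixed finite initial segment, use the node-wise successor bounds to build, by induction on the level, a single function dominating each class, and then diagonalize over the countably many classes (your $\max\{\beta_{\sigma_i}(n):i\le n\}$ versus the paper's $\beta(n)=\beta_1(n)+\cdots+\beta_n(n)$). Rooting the classes at level $b+1$ rather than $b$ even sidesteps the paper's harmless off-by-one in invoking the hypothesis at level $h=b$.
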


\begin{proof}
1. Note $\mathcal{A}'_b$ is countable.
 Take a $[1_i,2_i,...,b_i]'$ from $\mathcal{A}'_b$.
 Then there exists a $k_1$ such that $\alpha(b+1)=2'_j<k_1$
  for each $\alpha\in[1_i,...,b_i]'$. Let $\beta_i(b+1)=k_1$.
  Let $\mathcal{A}'_{b+1}[1_i,...,b_i]=\{[1_i,...,b_i,2_j]'
  \in\mathcal{A}'_{b+1}:
  [1_i,...,b_i,2_j]'\subset[1_i,...,b_i]'\}$.
  Then $\mathcal{A}'_{b+1}[1_i,...,b_i]$ is finite.
   Take a $[1_i,...,b_i,2_j]'$ from $\mathcal{A}'_{b+1}[1_i,...,b_i]$.
  Then there exists a $k_{2j}$ such that $\alpha(b+2)<k_{2j}$
  for each $\alpha\in[1_i,...,b_i,2_j]'$.
  Let $\beta_i(b+2)=k_2=\hbox{max}\{k_{2j}: j\leq k_1\}$.
\par

2. Assume that $\mathcal{A}'_k[1_i,...,k_l]$ is finite
 and $\beta_i(b+k)=k_k$.
 Denote $k+1$ by $n$ and let
 $\mathcal{A}'_n[1_i,...,k_l]=\{[1_i,...,k_l,n_j]'\in\mathcal{A}'_k:
 [1_i,...,k_l,n_j]'\subset[1_i,...,k_l]'
 \in\mathcal{A}'_k[1_i,...,k_l]\}$.
 Take a $[1_i,...,k_l,n_j]'$ from $\mathcal{A}'_n[1_i,...,k_l]$.
 Then there exists a $k_{nj}$ such that $\alpha(n+1)<k_{nj}$
  for each $\alpha\in[1_i,...,k_l,n_j]'$.
  Note that $\mathcal{A}'_n[1_i,...,k_l]$ is finite.
  Let $\beta_i(b+n)=k_{k+1}=\hbox{max}\{k_{nj}: j\leq k_k\}$.
\par

Then, by induction, we have a $\beta_i$ for
  every $[1_i,2_i,...,b_i]\in\mathcal{A}'_b$
  such that $\alpha(n)\leq\beta_i(n)$
  if $n>1$ for every $\alpha\in[1_i,2_i,...,b_i]$.
\par

Let $\beta(n)=\beta_1(n)+...+\beta_n(n)$.
    Then $\beta$ is a bounded on $\mathscr{A}'$.
\end{proof}

\begin{cor}
Let $\mathscr{A}'_2$ be unbounded. Then, for each $b\in N$,
 there exists an $h>b$ and a $[n_1,n_2,...,n_h]\in\mathcal{A}'_2$
 such that for each $k\in N$,  $\alpha(h+1)\geq k$
 for some $\alpha\in[n_1,n_2,...,n_h]$.
\end{cor}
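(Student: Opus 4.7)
The plan is to obtain Corollary 9.4 as a direct contrapositive of Proposition 9.3, with $\mathscr{A}' = \mathscr{A}'_2$. There is no genuinely new content; the only work is to negate the hypothesis of Proposition 9.3 carefully and verify that what remains matches the conclusion of the corollary verbatim.

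First I would recall the hypothesis of Proposition 9.3, namely:
\[
\exists\, b\in N\ \forall\, h>b\ \forall\,[n_1,\ldots,n_h]'\in\mathcal{A}'_h\ \exists\, k\in N\ \forall\,\alpha\in[n_1,\ldots,n_h]'\ \bigl(\alpha(h+1)<k\bigr).
\]
By Proposition 9.3, this implies $\mathscr{A}'$ is bounded. Taking $\mathscr{A}'=\mathscr{A}'_2$, which by hypothesis of the corollary is unbounded, the above statement must therefore fail.

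Next I would write out the negation explicitly, pushing $\neg$ through the quantifiers in order:
\[
\forall\, b\in N\ \exists\, h>b\ \exists\,[n_1,\ldots,n_h]\in\mathcal{A}'_h\ \forall\, k\in N\ \exists\,\alpha\in[n_1,\ldots,n_h]\ \bigl(\alpha(h+1)\geq k\bigr).
\]
This is literally the statement of Corollary 9.4, so fixing an arbitrary $b\in N$, I would produce the guaranteed $h>b$ and $[n_1,n_2,\ldots,n_h]\in\mathcal{A}'_h$ from this negated quantifier block, and note that for each $k\in N$ the inner existential supplies an $\alpha\in[n_1,\ldots,n_h]$ with $\alpha(h+1)\geq k$.

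The main (and essentially only) obstacle is to be sure the quantifier negation is done correctly and that the notation $[n_1,\ldots,n_h]'\in\mathcal{A}'_h$ used in Proposition 9.3 corresponds exactly to $[n_1,\ldots,n_h]\in\mathcal{A}'_2$ used in the corollary, i.e.\ that $\mathcal{A}'_h$ here means the $h$-level family associated to $\mathscr{A}'_2$. Once that correspondence is fixed, the proof is a one-line appeal to the contrapositive of Proposition 9.3, with no further construction needed.
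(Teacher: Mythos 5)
Your proposal is correct and is exactly the paper's argument: the paper's entire proof reads ``This is an inverse no proposition of Proposition 9.3,'' i.e.\ the corollary is just the contrapositive of Proposition 9.3 applied to $\mathscr{A}'=\mathscr{A}'_2$, which is what you carry out (with the quantifier negation done correctly). Your remark that $[n_1,\ldots,n_h]\in\mathcal{A}'_2$ in the corollary should be read as membership in the $h$-level family $\mathcal{A}'_h$ built from $\mathscr{A}'_2$ is the right reading of the paper's notation.
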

\begin{proof}
This is an inverse no proposition of Proposition 9.3.
\end{proof}

\begin{prop}
$\mathscr{V}_y=\{V_\lambda: \lambda\in \Lambda\}$
  is not closure preserving.
\end{prop}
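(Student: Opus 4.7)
The plan is to argue by contradiction: assume $\mathscr{V}_y$ is closure preserving and exhibit a subfamily whose union of closures fails to recover the closure of its union. Fix $b>m_5$, $p_2>m_5$, and $c$ on $(g(b,y),m)$, and consider the family $\mathscr{U}(y,2,c)=\{U(\alpha b2,p_2y):\alpha\in\mathscr{A}\}$. For each $\alpha\in\mathscr{A}$, since $\mathscr{V}_y$ is a neighborhood base, choose $V_\alpha\in\mathscr{V}_y$ with $V_\alpha\subset U(\alpha b2,p_2y)$, and pick $U[\alpha^-b_\alpha k_\alpha',p'_{k_\alpha}y]\in\mathscr{U}_y$ nested inside $V_\alpha$; by Proposition~8.1 we may assume $\alpha\in\alpha^-$. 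By Proposition~8.3 each such $V_\alpha$ is fine, so it comes equipped with a full $H(m_1,z\alpha)$ and families $\mathscr{H}(m_5\alpha,-\beta\gamma,q_{k_\alpha}z\alpha)$, $\mathscr{H}(m_5\sigma^\alpha_1,-\beta\gamma,q_{k_\alpha}z\alpha)$ witnessing
\[
H(m_5\alpha,-\beta\gamma,q_{k_\alpha}z\alpha)-H(m_5\sigma^\alpha_1,-\beta\gamma,q_{k_\alpha}z\alpha)\subset V_\alpha,\quad H(m_5\sigma^\alpha_1,-\beta\gamma,q_{k_\alpha}z\alpha)\cap Cl_\tau V_\alpha=\emptyset.
\]

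Next, I would apply Proposition~9.2 to replace $\mathscr{A}$ by an unbounded subfamily $\mathscr{A}_2$ on which the full set $H(m_1,z\alpha)=H(m_1,z)$ is constant and on which $|\alpha^-|$, $k_\alpha=k$, and $p_{k_\alpha}=p_k$ are constant, producing the ideal family $\mathscr{U}[y,\mathscr{A}_2]$. Since $\mathscr{A}_2$ is unbounded, Corollary~9.4 provides, for each $b\in N$, an $h>b$ and a stem $[n_1,\dots,n_h]\in\mathcal{A}'_2$ on which the $(h+1)$-th coordinate $\alpha(h+1)$ ranges without bound. Using this, I would inductively pick a sequence $\{\alpha_i\}\subset\mathscr{A}_2$, all agreeing on longer and longer initial segments of coordinates, such that the associated hole levels $v_i^{\alpha_i}=\alpha_i(m+v)+1$ from Proposition~8.4 diverge to infinity, and record the corresponding pairs $H(v_i^{\alpha_i}_1,t^i_1)$, $H(v_i^{\alpha_i}_1,t^i_2)$ lying inside a common $H(v^{\alpha_i}_0,t^i_1)\in\mathscr{H}(v^{\alpha_i}_0,jz^k_v)$ with $H(m_1,z)$ fixed.

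Now I would locate the witnessing point $x$. Using the $\Delta$-order structure on $\mathscr{H}$ and the fact that each full $H(m_1,z)$ splits into the characterization sets of the successive $\alpha_i$, the coherence of the stems of the $\alpha_i$ lets me choose a point $x$ sitting in $\cap_i H(v_i^{\alpha_i}_1,t_1^i)$ (or, if that intersection is empty, in a descending sequence $H(v_i^{\alpha_i}_1,t_1^i)$ whose intersection is a single point by Proposition~3.6). By construction $x\notin Cl_\tau V_{\alpha_i}$ for every $i$, since $H(m_5\sigma^{\alpha_i}_1,-\beta\gamma,q_kz)\cap Cl_\tau V_{\alpha_i}=\emptyset$. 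On the other hand, the companion sets $H(v_i^{\alpha_i}_1,t_2^i)\subset V_{\alpha_i}$ lie inside the same $H(v^{\alpha_i}_0,t^i_1)$ as $x$, and the levels $v^{\alpha_i}_0\to\infty$; hence every basic neighborhood $U[\delta^-\ell j',p'_jx]\in\mathscr{U}_x$ meets $H(v_i^{\alpha_i}_1,t_2^i)\subset V_{\alpha_i}$ for all sufficiently large $i$, using 1 of Proposition~7.2 together with the fineness property and Fact~B.5/Fact~C.3 propagated through the inductive structure of Proposition~8.3.

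Thus $x\in Cl_\tau(\bigcup_i V_{\alpha_i})\setminus\bigcup_i Cl_\tau V_{\alpha_i}$, contradicting the closure-preserving hypothesis on $\mathscr{V}_y$. The main obstacle, and the heart of the argument, is Step three: arranging the coordinates of the $\alpha_i$ (using Corollary~9.4) so that a single point $x$ simultaneously evades every $Cl_\tau V_{\alpha_i}$ (via the $\sigma$-holes) while still accumulating on $\bigcup_i V_{\alpha_i}$ (via the $H(v^{\alpha_i}_1,t_2^i)\subset V_{\alpha_i}$). Both properties must be delivered from the same full set $H(m_1,z)$, which is precisely why the ideal family of Proposition~9.2 and the fineness of Proposition~8.3 were engineered.
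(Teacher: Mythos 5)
Your proposal follows essentially the same route as the paper: fix the ideal family $\mathscr{U}[y,\mathscr{A}_2]$ with a common full set $H(m_1,z)$ via Proposition 9.2, use Corollary 9.4 to extract a sequence $\alpha_1,\alpha_2,\dots$ with diverging coordinates, apply Propositions 8.3--8.4 to get, for each $\alpha_h$, a pair $H(v^{\alpha_h}_1,t^h_1)$ missing $Cl_\tau V_{\alpha_h}$ and $H(v^{\alpha_h}_1,t^h_2)\subset V_{\alpha_h}$ inside a common $H(v^{\alpha_h}_0,t^h_1)$, nest these to obtain a single point $t$ (3 of Proposition 3.5 — there is no Proposition 3.6) avoiding every $Cl_\tau V_{\alpha_h}$, and observe that the companion points accumulate at $t$, so closure preservation fails. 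The paper closes the accumulation step slightly more directly by noting $t\in g(v^{\alpha_h}_0,t^h_2)$ and invoking the $g$-function convergence property from the proof of Theorem 1.a, and it secures the nesting of the $H(v^{\alpha_h}_1,t^h_1)$ concretely via the meet families $\mathscr{H}(v,\alpha^-_n,\wedge)$ and Corollary 4.9 rather than by "coherence of stems," but these are refinements of the same argument.
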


\begin{proof}
Note $v^\alpha_1=\alpha(m+v)+1$ and $H(m_3,jz^k_v)\subset H(m_1,jz)$
  for $\alpha\in\mathscr{A}_2$ with $H(m_1,z)
  =H(m_1, z\alpha)\subset H(m_1,jz)$.
  Then there exists a $q_k$
 such that, for every $\alpha\in\mathscr{A}_2$, we have
 \[H(m_5\alpha,-\beta\gamma,q_kz)\ -
  H(m_5\sigma^\alpha_1,-\beta\gamma,q_kz)\ \subset\
  U[\alpha^- b k',p'_ky]\subset V_\alpha\subset g_c(b,y)\quad\hbox{ and}\]
   \[H(m_5\sigma^\alpha_1,-\beta\gamma,q_kz)\cap Cl_\tau V_\alpha=\emptyset.\]
\par

Let $[n_1,n_2,...,n_v]'\subset\mathscr{A}_2$
  satisfy Corollary 9.4.
  Then, for arbitrary $l\in N$, there exists an $\alpha_h\in [n_1,n_2,...,n_v]'$
  with $\alpha_h(v+1)\geq l$.
  Let $\alpha^*=\{\alpha_h:h\in N\}$.
  Then we may assume $\alpha_1(m+v)<\alpha_2(m+v)<...<\alpha_h(m+v)<...$.
\par

\textbf{A.} Take $\alpha_1\in\alpha^*$ for $p=1$. Denote $\alpha_1$ by $\alpha$.
  Take relative $U[\alpha^- b_\alpha k',p'_ky]$, $V_\alpha$ and $U(\alpha b2,p_2y)$.
  Then $U[\alpha^- b_\alpha k',p'_ky]=g(b,y)-\cup_{i\leq n}E(\alpha_i k_i,p_{k_i})
  \subset V_\alpha\subset U(\alpha b2,p_2y)$.
  Let $\alpha^-_1=\{\beta_i: i\leq n'\}$.
  Then $\alpha_1\in\alpha^-_1$.
  Then, by Proposition 8.4, for every  $H(v^\alpha_1,t_1)\in\mathscr{H}(m_5\sigma^\alpha_1,-\beta\gamma,q_kz)$
  and every $H(v^\alpha_1,t_2)\in
  \mathscr{H}_{-1}(k\sigma^\alpha_1,jz,q_k\sigma)|H(v^\alpha_0,t_1)$ with
  $H(v^\alpha_1,t_2)\subset
  H(m_5\alpha,-\beta\gamma,q_kz)-H(m_5\sigma^\alpha_1,-\beta\gamma,q'_kz),$
  $H(v^\alpha_1,t_1)$ and $H(v^\alpha_1,t_2)$
  satisfy $H(v^\alpha_1,t_1)\cap Cl_\tau V_\alpha=\emptyset$,
  $H(v^\alpha_1,t_2)\subset V_\alpha$
  and \[H(v^\alpha_1,t_1)\cup H(v^\alpha_1,t_2)\subset H(v^\alpha_0,t_1)
  \in\mathscr{H}(v^\alpha_0,jz^k_v).\]
  Let $\mathscr{H}(v, \alpha^-_1)=\mathscr{H}(m_5\sigma^\alpha_1,-\beta\gamma,q_kz)|
  H(m_5,jz^k_v)$
  and $H(v, \alpha^-_1)=\cup\mathscr{H}(v, \alpha^-_1)$.
\par

\textbf{B.} Let $p=n$ and $\delta=\alpha_n$. Take $\mathscr{H}(m_5\delta,-\beta\gamma,z^k_v)$
  in 3 of proof of Proposition 8.1. Let
  $H(m_5\delta,-\beta\gamma,z^k_v)=\cup\mathscr{H}(m_5\delta,-\beta\gamma,z^k_v)$.
  Assume we have had
  \[\mathscr{H}(v, \alpha^-_n)=\mathscr{H}(m_5\sigma^\delta_1,-\beta\gamma,q_kz)|
  H(m_5\delta,-\beta\gamma,z^k_v)\]
  and $H(v,\alpha^-_n)=\cup\mathscr{H}(v,\alpha^-_n)$.
  Then we have $\cap_{i\leq n}H(v,\alpha^-_i)\neq\emptyset$
  by the definition of $\mathscr{H}(m_5\delta,-\beta\gamma,z^k_v)$.
  Let
  \[\mathscr{H}(v, \alpha^-_n,\wedge)=\bigwedge_{i\leq n}\mathscr{H}(v, \alpha^-_i)
  =\{\cap_{i\leq n}H(v^{\alpha_i}_1,t^i_1):
  H(v^{\alpha_i}_1,t^i_1)\in\mathscr{H}(v, \alpha^-_i)
  \hbox{ for }i\leq n\}.\]
  Then $\mathscr{H}(v, \alpha^-_n,\wedge)\subset \mathscr{H}(v, \alpha^-_n)$ and
  $\cup\mathscr{H}(v, \alpha^-_n,\wedge)=\cap_{i\leq n}H(v,\alpha^-_i)$.
\par

Take an $H(v^\delta_1,t^n_1)\in\mathscr{H}(v, \alpha^-_n,\wedge)$.
  Then, by Corollary 4.9, there exists an $\ell$, exists a Ln cover
  $\mathscr{G}(\ell,jz^k_v)$ of $H(m_3,jz^k_v)$ and exists a
  $\mathscr{G}(\ell,t^n_1)\subset\mathscr{G}(\ell,jz^k_v)$
  such that $\cup\mathscr{H}(\ell,t^n_1)
  \subset H(v^\delta_1,t^n_1)\subset\cup\mathscr{G}(\ell,t^n_1)$.
  Then there exists an $\alpha_h\in\alpha^*$ with
  $\alpha_h(m+v)>\ell$. Denote $\alpha_h$ by $\theta$.
  Note that \[\mathcal{G}(v^\theta_0,jz^k_v)
  =\{\mathscr{G}(v^\theta_0,jz^k_v),
  \mathscr{G}(v^\theta_1,jz^k_v)\}\]
  is a covers sequence of $H(m_3,jz^n_v)$.
  Then there exists an subfamily
  $\mathscr{G}(v^\theta_1,t^n_1)\subset\mathscr{G}(v^\theta_1,jz^k_v)$
  such that  $\cup\mathscr{H}(v^\theta_1,t^n_1)
  \subset H(v^\delta_1,t^n_1)\subset\cup\mathscr{G}(v^\theta_1,t^n_1).$
\par

On the other hand,  note $\theta=\alpha_h\in\alpha^*\subset[n_1,n_2,...,n_v]'\subset\mathscr{A}_2$.
  Then there exist $U[\theta^- bk',p'_ky]$, $V_\theta$ and $U(\theta b2,p_2y)$ such that
 $U[\theta^- bk',p'_ky]\subset V_\theta\subset U(\theta b2,p_2y)$ and
 $H(m_1,z)\in\mathscr{H}(m_1,1_\ell)$ satisfying
 \[H(m_5\theta,-\beta\gamma, q'_kz)-H(m_5\sigma^\theta_1,-\beta\gamma,q'_kz)
 \subset U[\theta^- bk',p'_ky]\subset V_\theta\]
  and
 $H(m_5\sigma^\theta_1,-\beta\gamma,q'_kz)\cap Cl_\tau V_\theta=\emptyset$.
 Let $h=n+1$.
  Then, by Proposition 8.4, for every
  $H(v^\theta_1,t^h_1)\in\mathscr{H}(m_5\sigma^\theta_1,-\beta\gamma,q'_kz)$
  and every $H(v^\theta_1,t^h_2)\in\mathscr{H}_{-1}(k\sigma^\theta_1,jz,q_k\sigma)$
  with $H(v^\theta_1,t^h_2)\subset H(v^\theta_0,t^h_1)$,
  we have \[H(v^\theta_1,t^h_2)\subset H(m_5\theta,-\beta\gamma, q'_kz)-H(m_5\sigma^\theta_1,-\beta\gamma,q'_kz),\]
  \[H(v^\theta_1,t^h_1)\cap Cl_\tau V_\theta=\emptyset,\quad
  H(v^\theta_1,t^h_2)\subset V_\theta \quad \hbox{and}\quad
  H(v^\theta_1,t^h_1)\cup H(v^\theta_1,t^h_2)\subset H(v^\theta_0,t^h_1).\]
  Then, by induction for every $\alpha_h$, we denote $\alpha_h$ by $\theta$. Then
  $H(v^\theta_1,t^h_1)\cap Cl_\tau V_\theta=\emptyset$,
  $H(v^\theta_1,t^h_2)\subset V_\theta$
  and $H(v^\theta_1,t^h_1)\cup H(v^\theta_1,t^h_2)\subset H(v^\theta_0,t^h_1).$
   Let
  \[\mathscr{H}(v, \alpha^-_h)=\mathscr{H}(m_5\sigma^\theta_1,-\beta\gamma,q_kz)|
  H(m_5\theta,-\beta\gamma,z^k_v)\hbox{ and }\]
  \[\mathscr{H}(v, \alpha^-_h,\wedge)=\bigwedge_{i\leq h}\mathscr{H}(v, \alpha^-_i).\]
\par

\textbf{C.} Note $v^\alpha_1=\alpha(m+v)+1$,
  $\alpha_1(m+v)<\alpha_2(m+v)<...<\alpha_h(m+v)<...$ and
  $\cup\mathscr{H}(v^\theta_1,t^n_1)
  \subset H(v^\delta_1,t^n_1)\subset\cup\mathscr{G}(v^\theta_1,t^n_1)$
  for $\delta=\alpha_n$ and $\theta=\alpha_{n+1}$.
  Then, by 3 of Proposition 3.5, we have $\cap_hH(v^{\alpha_h}_0,t^h_1)=\cap_hH(v^{\alpha_h}_1,t^h_1)=\{t\}\neq\emptyset$.
  Note $t\in H(v^{\alpha_h}_1,t^h_1)$ and
  $H(v^{\alpha_h}_1,t^h_1)\cap Cl_\tau V_{\alpha_h}=\emptyset$.
  Then $t\notin Cl_\tau V_{\alpha_h}$ for every $h\in N$.
\par

On the other hand, we have $t\in H(v^{\alpha_h}_0,t^h_1)\subset g(v^{\alpha_h}_0,t^h_1)$.
  Then, by Note D2,
  \[t^h_2\in H(v^{\alpha_h}_1,t^h_2)\subset H(v^{\alpha_h}_0,t^h_1)
  \subset g(v^{\alpha_h}_0,t^h_1)=g(v^{\alpha_h}_0,t^h_2)\quad
  \hbox{ and }\quad H(v^{\alpha_h}_1,t^h_2)\subset V_{\alpha_h}.\]
  Then $t^h_2\rightarrow t$ if $h\rightarrow +\infty$ by 2 in the proof of Theorem 1.a.
  Then we have $t\in Cl_\tau[\cup_hV_{\alpha_h}]$.
  Then $\mathscr{V}_y=\{V_\lambda: \lambda\in \Lambda\}$
  is not closure preserving.
\end{proof}

\vspace{0.3cm}
\textbf{Theorem 1.} b. $(Y,\tau)$ is not an $M_1$-space.

\begin{proof}
Suppose that $\mathscr{B}=\cup_i\mathscr{B}_i$ is a $\sigma$-closure preserving base
 of $(Y,\tau)$.
  Let $\mathscr{B}(i,y)=\{B\cap g(i,y): y\in B\in\mathscr{B}_i\}$.
  Then $\mathscr{B}(y)=\cup_i\mathscr{B}(i,y)$ is a closure preserving outer base
  of $y$ in $(Y,\tau)$.
  It is a contradiction to Proposition 9.5.
\end{proof}

\section{\bf Problems.}

Theorem 1  suggest the following problems.
\par

\textbf{Problem 1.}  Simplify space $(X,\tau)$
  or look for a simpler counterexample.
\par

Note stratifiable $\mu$-spaces
 have various dimension theoretical properties and nice preservation properties
 under topological operations.
 But, for hereditarily $M_1$-spaces, we have only a simple definition.
 So we venture to hope the following problem having a positive answer after the counterexample.
\par

\textbf{Problem 2.}  Are hereditarily $M_1$-spaces stratifiable
  $\mu$-spaces?
\par

Liu \cite{Liu} has given a characterization of  $M_1$-spaces
  by $g$-functions. We don't know how to construct a
  $\sigma$-closure preserving base for $M_1$-spaces.
\par

\textbf{Problem 3.} Construct a $\sigma$-closure preserving base for $M_1$-spaces.
\par

  Recall a well-known theorem in \cite{gr}:
  A space $X$ is stratifiable if and only if $X$ is
  semi-stratifiable and monotonically normal.
  So we have the following problem 4.
 \par
\textbf{Problem 4.} Character differences between stratifiable
  $\mu$-spaces and stratifiable spaces.
\par

The following problem 4 is more difficult than Problem 4.

\textbf{Problem 5.} Character differences between
   $M_1$-spaces and stratifiable spaces.
\par

Junnila \cite{j1} proved that every (first countable)
 topological space is the continuous image of a stratifiable (metrizable)
 $\sigma$-discrete $T_1$-space under an open mapping.
 Reference Lin \cite{L2}. The following problem
 should be difficult.

\textbf{Problem 6.} Character open images of $M_1$-spaces.
\par

\section{\bf Acknowledgement.}

Professor Gary Gruenhage went through the primitive paper, and told us
that some sections are too complicated and too hard to understand.
So we are going to rewriting the paper from Section 5.

\end{document}